\documentclass[11pt]{amsart}

\title{Complete Calabi--Yau metrics on noncompact K3-fibered threefolds}

\author[R. Liang]{Ruiming Liang}
\address[R. Liang]{Peking University}
\email{2201110017@stu.pku.edu.cn}
\date{}

\usepackage{array}
\usepackage{color}
\usepackage{amsmath}
\usepackage{mathrsfs}
\usepackage{mathtools}
\usepackage{overpic}
\usepackage{graphicx}
\usepackage{float}
\usepackage[all]{xy}
\usepackage{pifont}
\usepackage{fancyhdr}

\usepackage{amscd}
\usepackage{amsfonts}
\usepackage{amssymb}
\usepackage{amsthm}
\usepackage{bm}
\usepackage{geometry}
\usepackage{setspace}

\usepackage{booktabs}
\usepackage[labelfont=bf]{caption}
\usepackage{caption}
\usepackage{enumitem}
\usepackage{subfigure}

\usepackage{esint}
\usepackage{tikz-cd}

\usepackage{hyperref}
\hypersetup{
  pdfborder={0 0 0},
  colorlinks=true,
  linkcolor=red,              
  citecolor=green,              
  urlcolor=blue,                
  filecolor=blue,
  linktoc=all,
  pdflinkmargin=0pt,
}

\newtheorem{theorem}{Theorem}[section]   
\newtheorem{lemma}[theorem]{Lemma}       
\newtheorem{corollary}[theorem]{Corollary}
\newtheorem{proposition}[theorem]{Proposition}

\newtheorem{remark}[theorem]{Remark}

	\numberwithin{equation}{section}

\allowdisplaybreaks[4]

\begin{document}

\begin{abstract}

In this article, we construct complete Calabi--Yau metrics on Lefschetz K3 fibrations over $\mathbb{C}$. Our approach involves a gluing construction for a collapsing approximate metric, followed by a perturbation argument. These Calabi--Yau metrics have linear volume growth, unbounded sectional curvature, and are Gromov--Hausdorff asymptotic to a product $\mathbb{R}\times Z$, where $Z$ is a compact singular metric space.

\end{abstract}
\maketitle
\setcounter{tocdepth}{1}
\tableofcontents

\section{Introduction}

Since Yau's seminal work on Ricci curvature \cite{yau1978ricci}, the study of Calabi–Yau metrics has become a central topic in K{\"a}hler geometry, with a major challenge being the construction of complete Calabi–Yau metrics on noncompact K{\"a}hler manifolds. Early on, sporadic concrete examples appeared, such as complete hyperk{\"a}hler metrics constructed via the Gibbons–Hawking ansatz \cite{gibbons1978gravitational}. However, these examples rely on explicit metric expressions and are therefore difficult to generalize to higher-dimensional manifolds. The first systematic investigation of the existence of complete Calabi--Yau metrics on noncompact K\"ahler manifolds was initiated by Tian and Yau in their work \cite{tian1990complete}. Their paper focuses primarily on constructing such metrics on a Fano manifold minus a smooth anti-canonical divisor. It also addresses the case of a non-Fano Calabi--Yau fibration from which a smooth fiber has been removed.

Following their work, Hein considered the construction of such metrics on rational elliptic surfaces (which are not Fano) with certain singular fibers removed in his thesis \cite{hein2010gravitational}. In these cases, the fibration structure is crucial for formulating an explicit metric ansatz. Moreover, the asymptotic behavior of the metric at infinity is determined by the type of the removed singular fiber.

It is natural to ask whether such constructions can be generalized to higher-dimensional fibrations. One attempt in this direction is the work of the author and his collaborator in \cite{liang2025complete}, where such metrics are constructed on abelian fibrations. Wang \cite{wang2022ricci} also considered constructions on isotrivial Calabi–Yau fibrations. A more challenging problem, however, is to construct complete Calabi--Yau metrics on general Calabi--Yau fibrations with varying complex structures, where the geometry at infinity is determined by the removed singular fiber. Unlike the elliptic fibration case, the construction of a metric ansatz in this setting is more implicit, and its properties are far less favorable than those of the semi-flat metric in the elliptic case. 

In this paper, we construct such metrics in the setting of Lefschetz K3 fibrations. A distinctive feature of the Calabi–Yau metric in this setting is that its sectional curvature is expected to be unbounded, a phenomenon absent from the previously cited constructions.

\subsection{Main theorem}

\begin{theorem}[Main]\label{main theorem}

Let $\pi : X \to Y = \mathbb{P}^1$ be a K3-fibered threefold satisfying the following conditions:
\begin{itemize}
    \item $(X,\omega_X)$ is a compact K\"ahler manifold;
    \item near each singular fiber, the fibration is modeled on a standard Lefschetz fibration and admits at worst one nodal singularity per singular fiber;
    \item $K_X = -[F]$, where $F$ denotes a generic fiber.
\end{itemize}

Fix a singular fiber $X_\infty$ and a holomorphic volume form $\Omega$ on $M \coloneqq X \setminus X_\infty$ with a simple pole along $X_\infty$. Then there exists a sufficiently small $\varepsilon_0 > 0$ such that, for every $0 < \varepsilon \le \varepsilon_0$, the manifold $M$ admits a complete Calabi--Yau metric $\omega_{\operatorname{CY},\varepsilon}$ in the K\"ahler class $\left[ \omega_X|_M\right]$, satisfying $\omega_{\operatorname{CY},\varepsilon}^3 = \frac{3\sqrt{-1}}{\varepsilon}\Omega \wedge \overline{\Omega}$ and the following properties:
\begin{itemize}
    \item the sectional curvature is unbounded at infinity; 
    \item the volume growth is linear;
    \item the tangent cone at infinity is the flat half-line $[0,\infty)$;
    \item $\omega_{\operatorname{CY},\varepsilon}$ is Gromov--Hausdorff asymptotic to $\mathbb{R}\times Z$, where $Z$ is a compact singular metric space naturally associated to the removed singular fiber.
\end{itemize}
\end{theorem}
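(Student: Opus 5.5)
We will follow the strategy announced in the abstract: build a collapsing approximate Calabi--Yau metric on $M$ by gluing, then perturb it to a genuine solution of $\omega^3=\frac{3\sqrt{-1}}{\varepsilon}\Omega\wedge\overline{\Omega}$ using weighted analysis adapted to the linear-volume-growth geometry at infinity.

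\emph{Step 1: the model at infinity.} Let $z$ be a local coordinate on $Y$ near $\pi(X_\infty)$, with $X_\infty=\{z=0\}$. Set $w=-\log z$, so that a punctured disc becomes a half-cylinder $\{\operatorname{Re}w>R\}/2\pi\sqrt{-1}\mathbb{Z}$. Since $\Omega$ has a simple pole, $\Omega=dz/z\wedge\Omega_z$ up to sign, where $\Omega_z$ is a family of holomorphic volume forms on the fibres $X_z$. The family $X_z$ degenerates to the nodal fibre $X_\infty$, and as $\operatorname{Re}w\to\infty$ the fibres $X_z$ converge to $X_\infty$. We will use the Ricci-flat metrics $\omega_{F,z}$ on $X_z$ in the fixed class $[\omega_X|_{X_z}]$, normalized by $\Omega_z\wedge\overline{\Omega_z}$. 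Near the node we will glue in a scaled Stenzel (Eguchi--Hanson type) metric on the local Lefschetz model $\{x_1^2+x_2^2+x_3^2=z\}$. The relevant results are the known Ricci-flat degenerations to nodal K3 surfaces, which converge to the orbifold/conifold metric $\omega_{F,\infty}$ on $X_\infty$ with bubbles of size $|z|^{1/2}$. The ansatz is $\omega_{\mathrm{app}}=\omega_{\mathcal{SF}}+\frac{\sqrt{-1}}{\varepsilon}\,\pi^*(dw\wedge d\bar w)$, where $\omega_{\mathcal{SF}}$ is the fibrewise Ricci-flat form made into a closed $(1,1)$-form on the total space (semi-Ricci-flat form). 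The base scale is $\varepsilon^{-1}$, which produces the collapsing regime for small $\varepsilon$.

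The volume form error of $\omega_{\mathrm{app}}$ comes from horizontal-vertical mixing terms and the failure of $\omega_{\mathcal{SF}}$ to be fibrewise exactly Ricci-flat. We will estimate it by $C\varepsilon$ times a decaying function of $\operatorname{Re}w$. This gives the geometry at infinity directly. Each slice $\{\operatorname{Re}w=t\}$ is approximately an $S^1$-family of metrics close to $(X_\infty,\omega_{F,\infty})$, whose Gromov--Hausdorff limit is a compact singular space $Z$ built from $X_\infty$ and the circle. Hence the metric is GH-asymptotic to $\mathbb{R}\times Z$, has linear volume growth, and has tangent cone at infinity $[0,\infty)$. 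The curvature near the node is of order $|z|^{-1}=e^{\operatorname{Re}w}$, which is unbounded. On the compact part we interpolate with $\omega_X$ using a cutoff and $\sqrt{-1}\partial\bar\partial$ of a potential. This uses that $[\omega_X|_M]$ restricted to the end is cohomologous to $[\omega_{\mathcal{SF}}]$, which follows since $H^2$ of the end retracts onto $X_\infty$.

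\emph{Step 2: perturbation.} We will solve $(\omega_{\mathrm{app}}+\sqrt{-1}\partial\bar\partial\varphi)^3=e^{f}\omega_{\mathrm{app}}^3$ with $f$ small and decaying. Two routes are available. One is a Tian--Yau/Hein style continuity method, using the Sobolev inequality and $\mathrm{SOB}(1)$-type geometry with linear volume growth and a decaying $f$ with $\int f\,\omega^3=0$ arranged. The other is an implicit function argument in weighted Hölder spaces $C^{k,\alpha}_{\delta}$ with weight $e^{\delta\operatorname{Re}w}$. We prefer the second, because of the collapsing. It requires invertibility of the Laplacian of $\omega_{\mathrm{app}}$. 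We will decompose functions into their fibre average, where the Laplacian acts as $\varepsilon\Delta_{\text{cyl}}$ on the half-cylinder and is invertible for non-indicial weights after fixing constants, and the fibrewise-oscillating part, controlled by the fibrewise spectral gap. We then patch across the compact part by a Fredholm/blow-up contradiction argument.

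The main obstacle is the uniform estimate for the linearized operator on the oscillating part near the node. There the fibre spectral gap degenerates and the geometry becomes a shrinking Stenzel neck, so curvature is unbounded and Hölder norms must be rescaled. We will handle this with norms weighted by the local curvature scale $\rho\sim$ distance to the vanishing cycle, $+|z|^{1/2}$. We first aim to prove a Schauder estimate in these scaled norms and then a blow-up contradiction argument. Possible blow-up limits are $\mathbb{R}\times$Stenzel, $\mathbb{C}\times$ a flat cone, or K3 fibres, and each will be ruled out via Liouville theorems on these models. Smallness of $\varepsilon$ then makes the error beat the inverse bound, and the fixed point gives $\omega_{\operatorname{CY},\varepsilon}$. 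Since $\varphi$ decays, this metric inherits all the asymptotic properties established for $\omega_{\mathrm{app}}$.
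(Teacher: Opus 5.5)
Your treatment of the end is close to the paper's. You use weights built from $|y|$ and the distance to the vanishing cycle, a Schauder estimate in rescaled norms, and a contradiction argument whose limits are excluded by Liouville theorems. The global part of the argument, however, has two genuine gaps.

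\textbf{First gap: the compact region.} $M$ necessarily contains further nodal fibres. Otherwise the fibration would be smooth over the simply connected $B\cong\mathbb{C}$, forcing trivial monodromy around $X_\infty$, whereas the Picard--Lefschetz monodromy of a node is a nontrivial reflection. ``Interpolating with $\omega_X$'' there is fatal for the route you choose.
\begin{itemize}
\item Wherever the ansatz equals $\omega_X$, the Ricci potential $\log\bigl(\varepsilon\omega^3/3\sqrt{-1}\Omega\wedge\overline{\Omega}\bigr)$ is $\log\varepsilon+O(1)$. So no implicit-function or contraction argument can start.
\item The Tian--Yau--Hein continuity method you list as an alternative is exactly what is unavailable here. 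Its $C^2$ estimate needs a curvature bound on the background metric, which fails at infinity.
\item To get a small error everywhere, the collapsing must be global. The base term has to be $\varepsilon^{-1}\pi^*\omega_B$ with $\omega_B=\pi_*(\sqrt{-1}\Omega\wedge\overline{\Omega})$ on all of $B$. This also supplies the factor $A_y$ missing from your $\frac{\sqrt{-1}}{\varepsilon}dw\wedge d\bar w$; without it the volume ratio is off by $O(|z|)$, which is not $\varepsilon$-small.
\item Fibrewise Ricci-flat potentials must be used over the whole base.
\item Near each finite node the semi-Ricci-flat approximation breaks down at the quantum scale $r\sim\varepsilon^{1/6}$. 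You have to glue in Yang Li's exotic metric $\omega_{\mathbb{C}^3}$, with its weighted Laplacian theory and the intermediate $\mathbb{C}\times X_0$ model.
\item You also need the normalization $\int_M(\varepsilon\omega_\varepsilon^3-3\sqrt{-1}\Omega\wedge\overline{\Omega})=0$; the paper arranges it with a bump form $\pi^*\beta$. On a cylindrical end, decaying solutions of the Poisson equation exist only for forcing terms with zero integral.
\end{itemize}

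\textbf{Second gap: uniformity in $\varepsilon$.} Your bound ``$C\varepsilon$ times a decaying function'' for the error is not justified.
\begin{itemize}
\item The fibrewise error from gluing in the Stenzel model carries no factor of $\varepsilon$; it is $O(|y|^{\frac{7}{12}-\frac{5\beta}{24}})$ in the fibre norm. The paper obtains only $\|h_\varepsilon\|\le C\varepsilon^{\frac12\delta+\frac{7}{20}}$, and only after confining that gluing to $|y|\lesssim\varepsilon^{12/(2+\mu)}$.
\item The inverse Laplacian is of size $\varepsilon^{-1}$ at the level of potentials, since its base part solves $\Delta_{\varepsilon^{-1}\omega_B}u=f_B$. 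A Fredholm argument gives only some $C(\varepsilon)$, as does a contradiction argument that is not run uniformly as $\varepsilon\to0$ through all collapsed and bubbling limits. Either way the contraction cannot close.
\item What is needed, and what the paper proves, is an $\varepsilon$-independent bound for $\mathcal{R}=2\sqrt{-1}\partial\bar\partial\Delta_{\omega_\varepsilon}^{-1}$. It comes from an explicit parametrix $P$ assembled from Green operators on $\mathbb{C}^3$, on $\mathbb{C}\times X_0$, and on $\mathbb{C}\times X_{y_i'}$ (the last with exponential decay in the $\mathbb{C}$-direction), plus the Lockhart--McOwen inverse on the asymptotically cylindrical base. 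The patching gives $\|\Delta_{\omega_\varepsilon}P-\mathrm{Id}\|\le\frac1{20}$ uniformly in $\varepsilon$.
\end{itemize}

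\textbf{Smaller corrections.}
\begin{itemize}
\item The vanishing cycle has metric size $|z|^{1/4}$, so the curvature grows like $|z|^{-1/2}$, not $e^{\operatorname{Re}w}$.
\item The rescaled limit is $\mathbb{C}\times V_1$, not $\mathbb{R}\times V_1$.
\item The fibre spectral gap does not degenerate. The Ricci-flat fibres are uniformly noncollapsed with bounded Ricci curvature and diameter, so Li--Yau gives a uniform lower bound on $\lambda_1$, which drives the exponential decay. Only the bounded geometry degenerates.
\item Decay of $\varphi$ in $C^{2,\alpha}$ does not by itself transfer unbounded curvature to $\omega_{\operatorname{CY},\varepsilon}$. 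You need to rescale at points of the vanishing cycles and use local higher-order Monge--Amp\`ere estimates to get smooth convergence to the non-flat $\omega_{\mathbb{C}}+\omega_{\operatorname{EH},1}$.
\end{itemize}
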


\begin{remark}

The existence of such a Lefschetz K3 fibration over $\mathbb{P}^1$ is guaranteed by Proposition 2.14 in \cite{kovalev2005coassociative}. Concretely, begin with a generic smooth Fano threefold $V$ and consider two generic divisors $D_1, D_2 \in |-K_V|$, i.e., two K3 surfaces. Their intersection $C = D_1 \cap D_2$ is a curve in $V$. Blowing up $C$ yields the aforementioned Lefschetz K3 fibration model. Since the proof in \cite{kovalev2005coassociative} is a bit sketchy, we would like to give a more detailed construction in Appendix A.

\end{remark}

\begin{remark}

The above theorem answers an open question raised by Haskins--Hein--Nordstr\"om in the concluding remarks of \cite{haskins2015asymptotically}: namely, there exist complete Calabi--Yau manifolds of linear volume growth that are Gromov--Hausdorff asymptotic to $\mathbb{R} \times Z$, with $Z$ a singular space. Furthermore, it demonstrates that the local curvature bound assumed in Theorem~A of \cite{yan2025uniqueness} is indeed necessary.

\end{remark}

\begin{remark}

A notable feature of the metric $\omega_{\operatorname{CY},\varepsilon}$ is that its sectional curvature is unbounded. To the best of the author's knowledge, a complete noncompact Calabi--Yau metric with unbounded sectional curvature has not been specified in the existing literature. Of course, using the method in \cite{anderson1989complete}, one could consider the Taub--NUT spaces with infinitely many marked points $\{p_1, p_2, \dots\}$ and choose their positions appropriately — for instance, letting $p_i$ and $p_{i+1}$ be mutually closer while $p_i$ tends to infinity — which should also yield a complete noncompact Calabi--Yau metric with unbounded sectional curvature. However, the resulting manifold has infinite topological type; in particular, it cannot be realized as a compact K\"ahler manifold minus a divisor.

\end{remark}

\subsection{Obstructions and strategy}

In our situation, direct construction of an explicit Calabi--Yau metric is hard. We adopt the current mainstream approach, namely, first constructing an approximate metric and then solving the complex Monge--Amp\`ere equation. However, the unboundedness of the sectional curvature poses a serious difficulty, as the Tian--Yau--Hein package (first shown in \cite{tian1990complete} and later refined in \cite{hein2010gravitational}) can no longer be applied. The essential reason is that most existing methods for solving the noncompact complex Monge--Amp\`ere equation aim to follow Yau's approach for the compact case, which requires a uniform bound on the $L^\infty$ norm of the sectional curvature when deriving $C^2$ estimates. Therefore, we need to find a different path.

Rather than deriving global nonlinear a priori estimates, we treat the complex Monge--Amp\`ere equation as a small perturbation of its linearization. More precisely, we aim to solve the equation by a contraction-mapping argument on suitable weighted Banach spaces, with the Poisson equation providing the essential linear input. For this strategy to work, two quantitative estimates are required: the Ricci potential of the approximate metric must be sufficiently small, and the linearized operator must admit a sufficiently precise right inverse. Perturbative constructions of this type have appeared, for instance, in the Kummer constructions of \cite{donaldson2010calabi,Biquard2011Kummer}, in the collapsing fibration construction of \cite{li2019gluing}, and in the related work \cite{szekelyhidi2019degenerations}.

To obtain the required smallness, we exploit the fibration structure and introduce a collapsing parameter $\varepsilon$. Collapsing suppresses the contribution from the fibers and makes the Ricci potential small even in the finite region. It also creates a new difficulty, however: one must control the inverse of the Laplacian uniformly as $\varepsilon\to 0$. This issue is less severe in the classical collapse of elliptically fibered K3 surfaces studied in \cite{gross2000large}. There, the Ricci potential of the approximate metric is exponentially small in the collapsing parameter, whereas the norm of the inverse Laplacian grows at most polynomially, of order $O(\varepsilon^{-1})$. Thus a relatively coarse estimate for the inverse is already sufficient to close the perturbation argument. In the case of K3 fibrations, and more generally Calabi--Yau fibrations with higher-dimensional fibers, the Ricci potential associated with the semi-Ricci-flat approximation is only of order $O(\varepsilon)$. Crude estimates for the inverse are therefore no longer adequate.

We address this problem by carrying out a global gluing construction on the total space of the fibration. The manifold is divided into several regions according to their geometric properties, and suitable weighted H\"older spaces are introduced on each of them. The local inverses of the Laplacian are then constructed separately and patched together to produce a global parametrix. The resulting estimate is uniform in $\varepsilon$ at the complex-Hessian level: formally, we obtain a uniform bound for $2\sqrt{-1}\partial\bar{\partial}
\Delta_{\varepsilon}^{-1}$, rather than for the potential-level inverse $\Delta_{\varepsilon}^{-1}$ itself. This is precisely the estimate required by the nonlinear perturbation argument. An important reason why this construction succeeds is that the collapsing makes every cutoff error produced by the gluing procedure $\varepsilon$-small.

The main novelty of the paper lies in the analysis of the noncompact end. Near the removed nodal fiber $X_\infty$, the generalized K\"ahler--Einstein metric pulled back from the base contains an additional $|y|^{-2}$ factor. Consequently, the base geometry becomes asymptotically cylindrical while the Ricci-flat K3 fibers simultaneously degenerate towards the singular orbifold metric on $X_\infty$. This behavior is fundamentally different from the finite Lefschetz fibration region considered by Yang Li \cite{li2019gluing}, and requires several new analytic ingredients:

\begin{itemize}

\item First, the most notable difference is that the semi-Ricci-flat approximation at infinity has favorable properties. Although the sectional curvature is unbounded, we can still obtain a quite decent decay of the Ricci potential, as provided by Proposition~\ref{Ricci potential decay 1}. This is in sharp contrast with the compact collapsing situation studied in \cite{li2019gluing}, where the semi-Ricci-flat approximation breaks down at the quantum scale and has to be replaced by the exotic Calabi--Yau metric on $\mathbb C^3$.

\item Second, the asymptotically cylindrical geometry alone is not sufficient to apply the standard elliptic theory, because the cross-sections themselves become increasingly singular as one approaches the removed fiber. We therefore introduce weighted H\"older spaces adapted simultaneously to the cylindrical direction and to the distance from the vanishing cycle. The resulting uniform mapping theorem for the Laplacian is established in Proposition~\ref{Laplace for C times K3 weighed version}.

\item Third, the local Green operators constructed in different geometric regions must be patched together into a single global inverse. Since one has to pass continuously from regions uniformly away from the singular fiber to regions where the fibers degenerate, two different kinds of harmonic analysis have to be coupled. The key compatibility estimate is provided by Lemma~\ref{approximate invert 1}, which allows the construction of a global parametrix.

\item Finally, besides the fiberwise analysis, one must also solve the Poisson equation on the base $\mathbb C$, whose geometry is asymptotically cylindrical. Here we apply the Fredholm theory of Lockhart--McOwen \cite{lockhart1985elliptic}. Combining the Green operator on the base with the local inverses constructed above yields the global right inverse in Proposition~\ref{inverse of Laplacian}, from which the perturbation argument follows.

\end{itemize}

\subsection{Organization of the paper}The paper is organized as follows.

In Section \ref{Construction of the gluing ansatz}, we begin by giving a rough description of the ansatz. We then review several model metrics on specific geometric models to clarify the gluing construction. We conclude this section by establishing the well-definedness of the ansatz.

In Section \ref{Harmonic analysis on model spaces}, we further investigate the geometric properties of the model metrics introduced in Section \ref{Construction of the gluing ansatz}, with a focus on their Laplacian operators. This analysis motivates the definition of a global weighted norm on $M$, which is carried out in Section \ref{Global weighted norms and deviation estimates}. It also aids in constructing a global approximate inverse for the Laplacian of the gluing ansatz, presented in Section \ref{Decomposition, patching and parametrix}.

In Section \ref{Global weighted norms and deviation estimates}, we define a global weighted norm on $M$. We show that, under this norm, the Ricci potential of the gluing ansatz is small. Additionally, we compare the gluing ansatz with the model metrics in their respective regions and control their differences.

In Section \ref{Decomposition, patching and parametrix}, we invert the Laplacian associated to the ansatz by constructing a parametrix for the Green operator via decomposition and patching techniques.

In Section \ref{Perturbation to Calabi--Yau metric}, we use the Banach fixed point theorem to perturb the gluing ansatz into a genuine complete Calabi–Yau metric. A contradiction argument then demonstrates that the resulting complete Calabi--Yau metric still possesses unbounded sectional curvature at infinity.

In Section \ref{Further discussion}, we present further discussion.

In Appendix \ref{Abundance of examples}, we provide a concrete construction of our geometric model and argue that it is generic in the algebro-geometric sense.

\subsection{Acknowledgments} The author would like to express his sincere gratitude to Professor Gang Tian for his encouragement and guidance. The author is deeply grateful to Professor Hans-Joachim Hein for enlightening discussions and comments on this paper. The author acknowledges funding from the PhD Student's Short-term Overseas Research Program of the Graduate School of Peking University. The author is also funded by the Deutsche Forschungsgemeinschaft (DFG, German Research Foundation) under Germany's Excellence Strategy EXC 2044/2–390685587 – ``Mathematics Münster: Dynamics–Geometry–Structure".

\section{Construction of the gluing ansatz}\label{Construction of the gluing ansatz}

\subsection{Rough description of the ansatz}\label{Rough description of the ansatz}For the sake of completeness and rigor, we first review the setting and fix the notation that will be used frequently. Some of the following content parallels that in \cite{li2019gluing} and \cite{spotti2014deformations}, but we will make adjustments to fit our setting.

Let $(X,\omega_X)$ be a compact K{\"a}hler threefold with a Lefschetz K3 fibration structure $\pi:X\to Y=\mathbb{P}^1$ such that $K_X=-[F]$, where $F$ is a generic fiber. We assume there is only one nodal singularity on each singular fiber. Without loss of generality, we can impose the normalization condition $\int_{X_y}\omega_X^2=1$. 

For algebro-geometric reasons, a fibration over $\mathbb{P}^1$ cannot have only one singular fiber. Consequently, we must inevitably discuss the metric behavior near the singular fibers in the finite region. Since the local constructions near different finite singular fibers are completely identical, we shall, purely for notational simplicity, pretend that there are only two singular fibers: one finite singular fiber $X_0$, and one singular fiber $X_\infty$ that is removed. The general case is obtained by repeating the same construction near each finite singular fiber and summing the resulting local contributions. Let $p_0,p_\infty\in \mathbb{P}^1$ be the base points corresponding to $X_0$ and $X_\infty$, and set $S = \{p_0, p_\infty\}$.

Let $\Omega\in H^0(X,K_X\otimes\mathcal{O}_X(X_\infty))$ be a nontrivial holomorphic section with $\operatorname{div}(\Omega)=-X_\infty$. Its restriction to $M$ is a holomorphic volume form.

The generalized K{\"a}hler-Einstein metric $\omega_B$ on $B=Y\backslash\{p_\infty\}\cong \mathbb{C}$ is defined as the pushforward of $\sqrt{-1}\Omega\wedge\overline{\Omega}$ to the base $B$. By definition $\pi_*(\sqrt{-1}\Omega\wedge\overline{\Omega})$ is the unique K{\"a}hler current on $B$ that satisfies
\begin{align*}
\int_{B}\psi\cdot\pi_*(\sqrt{-1}\Omega\wedge\overline{\Omega})=\int_{M}\pi^* \psi\cdot\sqrt{-1}\Omega\wedge\overline{\Omega} 
\end{align*}for all compactly supported smooth functions $\psi$ on $B$. This is globally well-defined because of duality theory in topology.

Near the singular fiber $X_0$, we choose a local coordinate $(W_0, t)$ on $\mathbb{P}^1$ with ${t = 0}$ corresponding to $X_0$; near $X_\infty$, we similarly choose a local coordinate $(W_\infty, y)$ on $\mathbb{P}^1$ with ${y = 0}$ corresponding to $X_\infty$.

In the coordinate $(W_0, t)$, we can write $\Omega$ as $\Omega=dt\wedge \Omega_t$, where $\Omega_{t}$ is a holomorphic volume form on the fiber $X_{t}$. Set $A_t\coloneqq \int_{X_t}\Omega_t\wedge \overline{\Omega}_t$. Then in this coordinate, $\omega_B=\sqrt{-1}A_tdt\wedge d\bar{t}$. By Lemma 2.1 in \cite{li2019gluing}, we know $A_t=\text{smooth terms}-2|g_1(t)|^2|t|$, where $g_1(t)$ is a holomorphic function in $t$. Hence $A_t$ is a Lipschitz function.

In the coordinate $(W_\infty,y)$, we can write $\Omega$ as $\Omega=\frac{dy}{y}\wedge \Omega_{y}$, where $\Omega_{y}$ is a holomorphic volume form on the fiber $X_{y}$. Set $A_y\coloneqq \int_{X_y}\Omega_y\wedge \overline{\Omega}_y$ and $\widetilde{A}_y\coloneqq\frac{1}{|y|^2}A_y$. Then in this coordinate,  $\omega_B=\sqrt{-1}\widetilde{A}_ydy\wedge d\bar{y}$. By Lemma 2.1 in \cite{li2019gluing}, we know $A_y=\text{smooth terms}-2|g_2(y)|^2|y|$, where $g_2(y)$ is a holomorphic function in $y$. Thus, $A_y$ is a Lipschitz function. Although $A_y$ is not an invariant quantity, its value at $y=0$ is independent of the choice of coordinates: suppose $y=f(w)$, where $w$ is another local coordinate near $p_\infty$ such that ${w=0}$ corresponds to the singular fiber $X_{\infty}$. Since $\omega_B$ is a geometric quantity, we have
\begin{align*}
\sqrt{-1}\frac{A_w}{|w|^2}dw\wedge d\bar{w}=\sqrt{-1}\frac{A_y}{|y|^2}dy\wedge d\bar{y}=\sqrt{-1}\frac{A_y}{|f(w)|^2}|f'(w)|^2dw\wedge d\bar{w},
\end{align*}and hence $A_w=A_y\frac{|w|^2}{|f(w)|^2}|f'(w)|^2$. By L'H\^{o}pital's rule, we know $A_{y}|_{y=0}=A_{w}|_{w=0}$. We denote this number by $A_\infty$. By multiplying $\Omega$ by a nonzero constant, we may assume $A_\infty=\frac12$.

We can solve the complex Monge--Amp\`ere equation fiberwise on $X$:
\begin{equation}
\left\{
	\begin{aligned}
&\left(\omega_X+\sqrt{-1}\partial\bar{\partial}\psi_y\right)\big{|}_{X_y}^2=A_y^{-1}\Omega_y\wedge \overline{\Omega}_y \\
   &  \int_{X_y}\psi_y\omega_X^2=0
    \end{aligned}
	\right.
\end{equation}
Note that $A_y^{-1}\Omega_y\wedge \overline{\Omega}_y$ is invariant under change of coordinates, hence is globally defined. Thus the above equation makes sense for all $p\in Y=\mathbb{P}^1$. For those $y\in Y\backslash S$, the equation is just the usual complex Monge--Amp\`ere equation on a smooth compact K{\"a}hler manifold, which can be solved by Yau's theorem \cite{yau1978ricci}. For those $y\in S$, we can still solve the equation in the orbifold sense, see \cite{eyssidieux2009singular}.

Write 
\begin{equation}\label{fiber SRF}
\omega_{\operatorname{SRF}}|_{X_y}=\omega_X|_{X_y}+\sqrt{-1}\partial\bar{\partial}\psi_y.
\end{equation}
We define the global semi-Ricci-flat form $\omega_{\operatorname{SRF}}^{\varepsilon}$ on $M$ as
\begin{equation}\label{SRF}
\omega_{\operatorname{SRF}}^{\varepsilon}=\frac{1}{\varepsilon}\pi^*\omega_B+\omega_X+\sqrt{-1}\partial\bar{\partial}\psi.
\end{equation}

To prevent confusion, let $\psi_t^{(0)}$ be the potential on $X_t$ for $t$ near $p_0$, and $\psi_y^{(\infty)}$ the potential on $X_y$ for $y$ near $p_\infty$. In particular, we denote the orbifold potential on $X_0$ by $\psi_0^{(0)}$, and that on $X_\infty$ by $\psi^{(\infty)}_0$. In most cases, the coordinate we are working with is clear from the context, so for brevity we will simply write $\psi_t$ and $\psi_y$.

By a standard application of the implicit function theorem, the family $\{\psi_y\}$ varies smoothly in the $y$ variable when $y\in Y\backslash S$ and hence gives rise to a global smooth function $\psi$ on $\pi^{-1}(Y\backslash S)$ such that $\psi|_{X_y}=\psi_y$.

Near each nodal point of each singular fiber, the fibration is locally  (up to a local biholomorphism) modeled on $\{(z_1,z_2,z_3,y)\in \mathbb{C}^4|y=z_1^2+z_2^2+z_3^2\}$, which is the standard Lefschetz fibration. To simplify notation, we write $(U_0, (z_1, z_2, z_3, t))$ for the coordinate system around $q_0$ and $(U_\infty, (z_1, z_2, z_3, y))$ for that around $q_\infty$. 

To align with the conventions in Yang Li's works \cite{li2019gluing,li2019new}, we adopt the following notation in the local coordinates of each singular fiber: let $H = r^4 = |z_1|^2 + |z_2|^2 + |z_3|^2$, and $\eta = |t|^2$ in the $t$-chart (or $\eta=|y|^2$ in the $y$-chart). For convenience, we extend the function $r$, originally defined on $U_0$ (or $U_\infty$), smoothly to the entire neighborhood $\pi^{-1}(\pi(U_0))$ (or $\pi^{-1}(\pi(U_\infty))$) so that $r$ is of order $1$ outside the coordinate neighborhood $U_0$ (or $U_\infty$).

By adjusting the coordinates, we can further assume that $\psi_0^{(0)}=c_0+r^2+O(r^4)$, $\psi^{(\infty)}_0=c_\infty+r^2+O(r^4)$. Here the constants $c_0,c_\infty$ are chosen to satisfy the normalization condition.

By a direct computation, in the coordinate neighborhood $(U_0,(t,z_1,z_2,z_3))$ we obtain
\begin{align*}
\sqrt{-1}\Omega\wedge \overline{\Omega}\Big{|}_{z=0}=A_0\prod\limits_{i=1}^3\sqrt{-1}dz_i\wedge d\bar{z}_i.
\end{align*}
Hence, up to a $U(1)$-valued constant, the holomorphic volume form admits a local expression in $U_0$ of the form $\Omega=\sqrt{A_0}(1+O(z))\,dz_1\wedge dz_2\wedge dz_3$. Analogously, in the coordinate neighborhood $(U_\infty,(y,z_1,z_2,z_3))$ we have
\begin{align*}
\sqrt{-1}|y|^2\Omega\wedge \overline{\Omega}\Big{|}_{z=0}=A_\infty\prod\limits_{i=1}^3\sqrt{-1}dz_i\wedge d\bar{z}_i.
\end{align*}
Consequently, up to a $U(1)$-valued constant, the holomorphic volume form is locally expressed in $U_\infty$ as $\Omega=\frac{\sqrt{A_\infty}}{y}(1+O(z))\,dz_1\wedge dz_2\wedge dz_3$.

Suppose that in a local coordinate chart $U_i$ around the node $q_i$, the K\"ahler form $\omega_X$ admits a potential $\Theta_i$, for $i = 0,\infty$. Moreover, we may assume that within this coordinate neighborhood, the potential satisfies $\Theta_i = O(r^4)$.

We refer to these assumptions as Setting (A). All subsequent discussions of Theorem \ref{main theorem} will be conducted under this setting.

Define a partition of unity $\{\chi_0,\chi_1,...,\chi_N,\chi_\infty\}$ on the base $\mathbb{P}^1$ such that:
\begin{itemize}
\item $\chi_0=1$ on $\left\{|t|<\varepsilon^{\frac{6}{14+\tau}}\right\}$ and the support of $\chi_0$ is contained in $\left\{|t|<2\varepsilon^{\frac{6}{14+\tau}}\right\}$. Here $-2<\tau<0$ is a number to be chosen;
\item $\chi_\infty=1$ on $\left\{|y|<\varepsilon^{\frac{12}{2+\mu}}\right\}$ and the support of $\chi_\infty$ is contained in $\left\{|y|<2\varepsilon^{\frac{12}{2+\mu}}\right\}$. Here $-2<\mu<0$ is a number to be chosen;
\item For $1 \le i \le N$, the supports of $\chi_i$ are contained in the complement of $\left\{|t| \le \varepsilon^{\frac{6}{14+\tau}}\right\}\cup \left\{|y|\le \varepsilon^{\frac{12}{2+\mu}}\right\}$, each having length scale $O(\varepsilon^{\frac12})$ with respect to the $\omega_B$ metric, containing a point $y_i$ as the center of that support. We can demand that $0 \le \chi_i \le 1$, and all these $\chi_i$ have uniform $C^k$ bounds with respect to the metric $\frac{1}{\varepsilon}\omega_B$ for any $k\in\mathbb{N}$. Moreover, at each point in $Y$ the number of non-vanishing $\chi_i$ is uniformly bounded independent of $\varepsilon$ and the position of that point, even though $N = O\left(-\frac{12}{2+\mu}\varepsilon^{-1}\log \varepsilon\right)$.
\end{itemize}

Define two smooth cutoff functions on $\mathbb{R}$ as follows. Choose a smooth function $\gamma_1(s)$ such that $\gamma_1(s) = 0$ on $(-\infty, 1]$, $\gamma_1(s) = 1$ on $[2, \infty)$, and $0 \le \gamma_1(s) \le 1$ for all $s \in \mathbb{R}$. Then set $\gamma_2(s) = 1 - \gamma_1(s)$.

We consider the following gluing ansatz:
\begin{equation}\label{Ansatz 1}
\begin{aligned}
\omega_\varepsilon&=\frac{1}{\varepsilon}\pi^*\omega_B+ \pi^*\beta+ \omega_X \\
&\quad + \sqrt{-1}\partial\bar{\partial}\left(\chi_0\left\{c_0+\gamma_1\left(\frac{r}{\varepsilon^{\frac{1}{10}}+\varepsilon^{\frac{1}{12}}\rho'^{\frac16}}\right)G_0^*\left(\psi_0^{(0)}-c_0\right)+\gamma_2\left(\frac{r}{\varepsilon^{\frac{1}{10}}+\varepsilon^{\frac{1}{12}}\rho'^{\frac16}}\right)\left(\frac{\varepsilon}{2A_0}\right)^{\frac13}\phi''_{\mathbb{C}^3}\right\}\right) \\
&\quad + \sqrt{-1}\partial\bar{\partial}\left(\chi_\infty\left\{c_\infty+\gamma_1\left(\frac{H}{|y|^{\frac56}}\right)G_\infty^*\left(\psi_0^{(\infty)}-c_\infty\right)+\gamma_2\left(\frac{H}{|y|^{\frac56}}\right)\left(\sqrt{H+|y|}-\Theta_\infty\right)\right\}\right)\\
&\quad + \sqrt{-1}\partial\bar{\partial}\left(\sum\limits_{i=1}^N\chi_iG_{y_i}^*\psi_{y_i}\right),
\end{aligned}
\end{equation}
where $\Theta_{\infty}$ is the K{\"a}hler potential of $\omega_X$ in the coordinate neighborhood $U_\infty$ ; $G_0$ and $G_\infty$ are local diffeomorphisms around the singular fibers; $G_{y_i}$ is a local diffeomorphism around the smooth fiber $X_{y_i}$. In the subsequent subsections, these maps will be carefully constructed so that every pullback is defined on the support of the corresponding cutoff; $\rho'=\sqrt{H^{\frac12}+\frac{2A_0|t|}{\varepsilon}}$ is an appropriate weight function near the node $q_0\in X_0$; $\phi_{\mathbb{C}^3}''$ is a potential related to Yang Li's exotic metric $\omega_{\mathbb{C}^3}$ ; $\beta$ is a bump $(1,1)$-form on $B$ to guarantee that $\omega_\varepsilon$ satisfies an integration condition.

For simplicity, we write
\begin{equation}\label{ansatz potential}
\begin{aligned}
    \phi\coloneqq &\chi_0\left\{c_0+\gamma_1\left(\frac{r}{\varepsilon^{\frac{1}{10}}+\varepsilon^{\frac{1}{12}}\rho'^{\frac16}}\right)G_0^*\left(\psi_0^{(0)}-c_0\right)+\gamma_2\left(\frac{r}{\varepsilon^{\frac{1}{10}}+\varepsilon^{\frac{1}{12}}\rho'^{\frac16}}\right)\left(\frac{\varepsilon}{2A_0}\right)^{\frac13}\phi''_{\mathbb{C}^3}\right\}\\
&+\chi_\infty\left\{c_\infty+\gamma_1\left(\frac{H}{|y|^{\frac56}}\right)G_\infty^*\left(\psi_0^{(\infty)}-c_\infty\right)+\gamma_2\left(\frac{H}{|y|^{\frac56}}\right)\left(\sqrt{H+|y|}-\Theta_\infty\right)\right\}\\
&+\sum\limits_{i=1}^N\chi_iG_{y_i}^*\psi_{y_i}.
\end{aligned}
\end{equation}
Then 
\begin{align*}
\omega_\varepsilon=\frac{1}{\varepsilon}\pi^*\omega_B+ \pi^*\beta+ \omega_X+\sqrt{-1}\partial\bar{\partial}\phi.
\end{align*}

We can naturally extend $\gamma_1\left(\frac{H}{|y|^{\frac56}}\right)=1$ and $\gamma_2\left(\frac{H}{|y|^{\frac56}}\right)=0$ on the central fiber $y=0$. Then $\phi$ is smooth on $X\backslash \{q_\infty\}$.

To rigorously explain every term in the ansatz and obtain its geometric properties in the corresponding regions, we have to recall some model metrics, including the Eguchi–Hanson metric, the exotic metric $\omega_{\mathbb{C}^3}$ on $\mathbb{C}^3$ constructed in \cite{li2019new} and the semi-Ricci-flat metric on a K3 fibration. A rigorous introduction to these geometric models will be given in the following subsections. The global well-definedness and positivity of the resulting form will be proved in Proposition \ref{Well-definedness of the metric ansatz}.

Nevertheless, we first describe heuristically the geometry of the ansatz in different regions:
\begin{itemize}
    \item In the region $\{|t| \ge 2\varepsilon^{\frac{6}{14+\tau}}\}\cap \{|y|\ge 2\varepsilon^{\frac{12}{2+\mu}}\}$, we are far from the singular fibers, and the metric is simply 
\begin{align*}
\omega_\varepsilon = \frac{1}{\varepsilon}\pi^*\omega_B+\omega_X  + \sqrt{-1}\partial\bar{\partial}\left(\sum\limits_{i=1}^N\chi_iG_{y_i}^*\psi_{y_i}\right).
\end{align*}This is approximately the semi-Ricci-flat metric.

\item In the region $\left\{\varepsilon^{\frac{6}{14+\tau}} \le |t| \le 2\varepsilon^{\frac{6}{14+\tau}}\right\}$, the metric $\omega_\varepsilon$ starts to receive contribution from the nodal fiber $X_0$, but the fluctuation effect of $\omega_{\mathbb{C}^3}$ is not yet significant. 

\item In the region $\left\{|t| < \varepsilon^{\frac{6}{14+\tau}},r > \varepsilon^{\frac{1}{10}} + \varepsilon^{\frac{1}{12}}\rho'^{\frac{1}{6}}\right\}$, the metric $\omega_\varepsilon$ is essentially
\begin{align*}
     \omega_\varepsilon \sim \frac{1}{\varepsilon}\pi^*\omega_B+\omega_X  + \sqrt{-1}\partial\bar{\partial} G_0^* \psi_0,
\end{align*}
    which is approximately the product metric on $ \mathbb{C}\times X_0$.

\item In the region $\left\{|t| < \varepsilon^{\frac{6}{14+\tau}},r< \varepsilon^{\frac{1}{10}} + \varepsilon^{\frac{1}{12}}\rho'^{\frac{1}{6}}\right\}$, the metric $\omega_\varepsilon$ is 
\begin{align*}
     \omega_\varepsilon \sim \frac{1}{\varepsilon}\pi^*\omega_B+\omega_X  + \left(\frac{\varepsilon}{2A_0}\right)^{\frac13}\sqrt{-1}\partial\bar{\partial}\phi''_{\mathbb{C}^3}.
\end{align*}
In this region, the semi-Ricci-flat approximation breaks down completely, so we have to glue a scaled exotic metric $\omega_{\mathbb{C}^3}$.

\item In the region $\left\{ |y| \le 2\varepsilon^{\frac{12}{2+\mu}}\right\}$, the metric is essentially
\begin{align*}
     \omega_\varepsilon =& \frac{\sqrt{-1}A_y}{\varepsilon|y|^2}dy\wedge d\bar{y}+\omega_X\\
     &+  \sqrt{-1}\partial\bar{\partial}\left( c_\infty+\gamma_1\left(\frac{H}{|y|^{\frac56}}\right)G_\infty^*\left(\psi_0^{(\infty)}-c_\infty\right)+\gamma_2\left(\frac{H}{|y|^{\frac56}}\right)\left(\sqrt{H+|y|}-\Theta_\infty\right) \right).
\end{align*}This is essentially the semi-Ricci-flat metric. While its sectional curvature is unbounded, it admits a good estimate on the Ricci potential.
    
\end{itemize}

\subsection{Eguchi--Hanson metric}\label{Eguchi Hanson metric}In this subsection, we review the Eguchi--Hanson metric in a separate setting. 

Let $\pi:\mathbb{C}^3\to \mathbb{C},\; (z_1,z_2,z_3)\mapsto y=z_1^2+z_2^2+z_3^2$ be the standard Lefschetz fibration. Let $H=\sum\limits_{i=1}^3|z_i|^2$, $\eta=|y|^2$ and $\operatorname{Vol}_E=-\sqrt{-1}dz_1\wedge d\bar{z}_1\wedge dz_2\wedge d\bar{z}_2\wedge dz_3\wedge d\bar{z}_3$ be the Euclidean volume form on $\mathbb{C}^3$. Write $V=\{(z_1,z_2,z_3,y)\in \mathbb{C}^4|y=z_1^2+z_2^2+z_3^2\}\cong \mathbb{C}^3$, and $V_y=\pi^{-1}(y)\subset \mathbb{C}^3=V$.

The Eguchi--Hanson metric is given by the potential $\phi=F(H,\eta)=\sqrt{H+\eta^{\frac12}}$. Write $\omega_{\operatorname{EH}}\coloneqq \sqrt{-1}\partial\bar{\partial}\phi$ and $\operatorname{EH}_y=\omega_{\operatorname{EH}_y}\coloneqq \left(\sqrt{-1}\partial\bar{\partial}\phi\right)\Big{|}_{V_y}$.

\begin{lemma}\label{K{\"a}hler of ST}

$\phi$ is a strictly plurisubharmonic function on $\mathbb{C}^3\backslash\{y=0\}$. In particular, $\omega_{\operatorname{EH}}$ is indeed a K{\"a}hler metric on $\mathbb{C}^3\backslash\{y=0\}$. Furthermore, $\operatorname{EH}_y$ is a complete Calabi--Yau metric of ALE type.

\end{lemma}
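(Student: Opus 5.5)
The plan is to prove everything by direct computation with the explicit potential $\phi=\sqrt{u}$, where $u\coloneqq H+|y|$. We work at a point with $y\neq 0$. Write a tangent vector of $\mathbb{C}^3$ as $v=(v_1,v_2,v_3)$, and set $\bar z\cdot v=\sum \bar z_iv_i=\partial H(v)$ and $z\cdot v=\sum z_iv_i$, so that $dy(v)=2\,z\cdot v$. Since $\log|y|$ is pluriharmonic off $\{y=0\}$, we have
\begin{align*}
\partial|y|=\frac{\bar y\,dy}{2|y|},\qquad \sqrt{-1}\partial\bar\partial|y|=\frac{\sqrt{-1}\,dy\wedge d\bar y}{4|y|}.
\end{align*}
Hence $\sqrt{-1}\partial\bar\partial u(v,\bar v)=|v|^2+|z\cdot v|^2/|y|$ and $\partial u(v)=\bar z\cdot v+e^{i\theta}z\cdot v$, where $e^{i\theta}=\bar y/|y|$. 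The chain rule gives
\begin{align*}
\sqrt{-1}\partial\bar\partial\phi(v,\bar v)=\frac{1}{2}u^{-\frac12}\left(|v|^2+\frac{|z\cdot v|^2}{|y|}\right)-\frac14u^{-\frac32}\left|\bar z\cdot v+e^{i\theta}z\cdot v\right|^2 .
\end{align*}

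\textbf{Step 1 (strict plurisubharmonicity).} Strict positivity is equivalent to
\begin{align*}
\left|\bar z\cdot v+e^{i\theta}z\cdot v\right|^2<2u\left(|v|^2+\frac{|z\cdot v|^2}{|y|}\right).
\end{align*}
First apply the weighted inequality $|a+b|^2\le(1+s)|a|^2+(1+s^{-1})|b|^2$ with the choice $s=|y|/H$. Then use Cauchy--Schwarz in the form $|\bar z\cdot v|^2\le H|v|^2$. Together these bound the left side by $u|v|^2+u|z\cdot v|^2/|y|$, which is strictly smaller than the right side whenever $v\neq 0$. This shows that $\omega_{\operatorname{EH}}$ is Kähler on $\mathbb{C}^3\backslash\{y=0\}$. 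Restricting to vectors tangent to $V_y$ (those with $z\cdot v=0$) shows that $\operatorname{EH}_y$ is Kähler on each smooth fiber $V_y$, $y\neq 0$.

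\textbf{Step 2 (Ricci-flatness on fibers).} On $V_y$ we have $|y|=a$ constant, so $\phi|_{V_y}=f(H)$ with $f(H)=\sqrt{H+a}$. The fiber carries the holomorphic volume form $\Omega_y=\operatorname{Res}\,(dz_1\wedge dz_2\wedge dz_3/dy)$. The plan is to compute $(\sqrt{-1}\partial\bar\partial f(H))^2|_{V_y}$ against $\Omega_y\wedge\overline\Omega_y$. By $SO(3,\mathbb{R})$-invariance it suffices to do this at a point in a normal form, e.g.\ $z=(x_1,ix_2,0)$ with $x_j$ real. Using the identity $|\bar z\cdot v|^2$-structure on the quadric, where $\sum|z_i|^2=H$ and $|\sum z_i^2|=a$, I expect to obtain the Stenzel/Patrizio--Wong formula
\begin{align*}
\omega^2=c\big(Hf'^2+(H^2-a^2)f'f''\big)\,\Omega_y\wedge\overline\Omega_y
\end{align*}
for a universal constant $c$. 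Substituting $f'=\tfrac12u^{-1/2}$ and $f''=-\tfrac14u^{-3/2}$ gives
\begin{align*}
\frac{2Hu-H^2+a^2}{8u^2}=\frac{(H+a)^2}{8u^2}=\frac18 ,
\end{align*}
which is constant. Hence $\operatorname{Ric}=0$ on $V_y$. Verifying the determinant formula on the quadric is the main computational obstacle. If the direct linear algebra becomes messy, I will instead reduce to the cohomogeneity-one ODE, or identify $V_y\cong T^*S^2$ with the standard Eguchi--Hanson description.

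\textbf{Step 3 (completeness and ALE).} As $H\to\infty$ on $V_y$, we have $\phi=\sqrt H+O(aH^{-1/2})$, and $\sqrt H=r^2$ is the potential of the flat cone metric on $\{\sum z_i^2=0\}\cong\mathbb{C}^2/\mathbb{Z}_2$. The fiber $V_y$ is asymptotic to this cone under the standard identification of the ends (for instance by projecting along the fibration). The difference of the potentials decays like $r^{-2}$, so the metric differs from the cone metric by $O(r^{-4})$ together with derivatives. This gives the ALE property, and completeness follows from comparison with the complete cone metric at infinity, since the rest of $V_y$ is a compact region.
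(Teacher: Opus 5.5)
Your proposal is correct, but it reaches the lemma by a different route from the paper's.

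\textbf{Positivity.} The paper computes $(\sqrt{-1}\partial\bar\partial\phi)^3$, $\sqrt{-1}\partial\bar\partial H\wedge(\sqrt{-1}\partial\bar\partial\phi)^2$ and $(\sqrt{-1}\partial\bar\partial H)^2\wedge\sqrt{-1}\partial\bar\partial\phi$ against $\operatorname{Vol}_E$ and notes that all three are positive. Implicitly it uses the following criterion. If $\sigma_1,\sigma_2,\sigma_3$ of the real eigenvalues $\lambda_i$ of $\sqrt{-1}\partial\bar\partial\phi$, taken relative to the Euclidean form, are all positive, then $\prod_i(t+\lambda_i)>0$ for $t\ge0$, so every $\lambda_i>0$. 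Your pointwise argument (weighted Young's inequality with $s=|y|/H$, then Cauchy--Schwarz) avoids these three determinant-type computations. It is also quantitative, since it actually yields
\[
\omega_{\operatorname{EH}}\ \ge\ \frac{1}{4\sqrt{H+|y|}}\left(\sqrt{-1}\,\partial\bar\partial H+\frac{\sqrt{-1}\,dy\wedge d\bar y}{4|y|}\right)\quad\text{on }\{y\neq0\}.
\]
The paper's route, in turn, produces the volume ratio $(\sqrt{-1}\partial\bar\partial\phi)^3/\operatorname{Vol}_E$ explicitly, and this same quantity reappears in $\omega_1^3$ in Proposition~\ref{prop of omega 1}.

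\textbf{Fiberwise Calabi--Yau and ALE.} For these assertions the paper gives no argument; it cites Section~2.2 of Li's paper on $\mathbb{C}^3$. Your Steps~2--3 therefore make the lemma self-contained.

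The formula you flag as the main obstacle is only a short rank-one computation. On $T_zV_y=\{z\cdot v=0\}$ the restricted form is $f'|v|^2+f''|\langle v,Pz\rangle|^2$. Here $Pz=z-\frac{y}{H}\bar z$ is the orthogonal projection of $z$ onto $T_zV_y$, and $|Pz|^2=H-\frac{|y|^2}{H}$. So the determinant relative to the Euclidean form is $\frac{f'}{H}\bigl(Hf'+(H^2-|y|^2)f''\bigr)$. Since $|\partial y|^2=4H$, you also have $(\sqrt{-1}\partial\bar\partial H)^2|_{V_y}=c\,H\,\Omega_y\wedge\overline{\Omega}_y$. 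Combining the two gives exactly your formula, consistent with the identity $HF_H^2+(H^2-\eta)F_HF_{HH}=\frac18$ used in the paper.

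\textbf{Step 3.} You should name the identification of the ends explicitly. One choice is the map $z\mapsto z+\frac{y}{2|z|^2}\bar z$ from $V_0$ to $V_y$ that the paper uses later, possibly after rescaling to $V_1$ via $\Psi_y$. This map fails to be holomorphic only by relative errors $O(|y|r^{-4})$, so the $O(r^{-4})$ decay you claim is unaffected.
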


\begin{proof}

By direct computation we have
\begin{align*}
&\frac{\left(\sqrt{-1}\partial\bar{\partial}\sqrt{H+|y|}\right)^3}{\operatorname{Vol}_E}=\frac{3}{8 |y| \sqrt{H+|y|}}>0;\\
&\frac{\sqrt{-1}\partial\bar{\partial}H\wedge\left(\sqrt{-1}\partial\bar{\partial}\sqrt{H+|y|}\right)^2}{\operatorname{Vol}_E}=\frac{3}{4|y|}>0;\\
&\frac{(\sqrt{-1}\partial\bar{\partial}H)^2\wedge\sqrt{-1}\partial\bar{\partial}\sqrt{H+|y|}}{\operatorname{Vol}_E}=\frac{H}{|y| \sqrt{H+|y|}}+\frac{2}{\sqrt{H+|y|}}>0.
\end{align*}Hence $\sqrt{-1}\partial\bar{\partial}\sqrt{H+|y|}$ is a K{\"a}hler metric on $\{y\neq 0\}$.

For the fiberwise Calabi--Yau property, see Section 2.2 in \cite{li2019new}.

\end{proof}

We are primarily interested in the behavior of the Eguchi--Hanson metric in the neighborhood $U = \{H < 1\}$ of the origin.

To better understand the metric, we calculate the metric components of $\omega_{\operatorname{EH}}$ in the local coordinate $y,u=z_1,v=z_2$. From now on, we restrict ourselves to the domain 
\begin{align*}
D&=\left\{(z_1,z_2,z_3,y)\in V\left||y|<1,\;H<1,\;  |z_3|>\frac12\max\{|z_1|,|z_2|\}\right.\right\}\\
&\cong \left\{(y,u,v)\in\mathbb{C}^3\left||y|<1,\;H<1,\;|y-u^2-v^2|^{\frac12}>\frac12\max\{|u|,|v|\}\right.\right\}.
\end{align*}Set $A=y-u^2-v^2=z_3^2$, $L=\sqrt{H+|y|}=\sqrt{|y|+|u|^2+|v|^2+|A|}$. In this region, $|A|= O(H)$ and $L= O(H^{\frac12})$. 

Later, we will estimate the magnitudes of various geometric quantities. Since these quantities are geometric in nature, i.e., independent of the choice of coordinates, the estimates obtained in $D$ actually hold for all other regions as well. The choice of the region $D$ and the corresponding coordinate system above is purely for the purpose of performing crude bounds when estimating the norms.

\begin{lemma}\label{first and second order derivatives}

(1) The first-order derivative is computed as follows:
\begin{align*}
&\frac{\partial A}{\partial y}=1,\; \frac{\partial A}{\partial \bar{y}}=0,\; \frac{\partial |A|}{\partial y}=\frac{\bar{A}}{2|A|},\; \frac{\partial |A|}{\partial \bar{y}}=\frac{A}{2|A|};\\
&\frac{\partial A}{\partial u}=-2u,\; \frac{\partial A}{\partial \bar{u}}=0,\; \frac{\partial |A|}{\partial u}=-\frac{\bar{A}u}{|A|},\; \frac{\partial |A|}{\partial \bar{u}}=-\frac{A\bar{u}}{|A|};\\
&\frac{\partial L}{\partial y}=\frac{1}{2L}\left(\frac{\bar{y}}{2|y|}+\frac{\bar{A}}{2|A|}\right), \; \frac{\partial L}{\partial \bar{y}}=\frac{1}{2L}\left(\frac{y}{2|y|}+\frac{A}{2|A|}\right);\\
&\frac{\partial L}{\partial u}=\frac{1}{2L}\left(\bar{u}-\frac{\bar{A}u}{|A|}\right), \; \frac{\partial L}{\partial \bar{u}}=\frac{1}{2L}\left(u-\frac{A\bar{u}}{|A|}\right).
\end{align*}

(2) The second-order derivatives can be calculated by the following rule:
\begin{align*}
\partial_i\partial_{\bar{j}}L=\frac{\partial_i\partial_{\bar{j}}(H+|y|)}{2L}-\frac{\partial_i(H+|y|)\partial_{\bar{j}}(H+|y|)}{4L^3}.
\end{align*}Hence
\begin{align*}
&\partial_y\partial_{\bar{y}}L=\frac{1}{8L}\left(\frac{1}{|y|+\frac{1}{|A|}}\right)-\frac{1}{4L^3}\left(\frac{\bar{y}}{2|y|}+\frac{\bar{A}}{2|A|}\right)\left(\frac{y}{2|y|}+\frac{A}{2|A|}\right);\\
&\partial_y\partial_{\bar{u}}L=-\frac{\bar{u}}{4|A|L}-\frac{1}{4L^3}\left(\frac{\bar{y}}{2|y|}+\frac{\bar{A}}{2|A|}\right)\left(u-\frac{A\bar{u}}{|A|}\right);\\
&\partial_u\partial_{\bar{u}}L=\frac{1+\frac{|u|^2}{|A|}}{2L}-\frac{1}{4L^3}\left(\bar{u}-\frac{\bar{A}u}{|A|}\right)\left(u-\frac{A\bar{u}}{|A|}\right);\\
&\partial_u\partial_{\bar{v}}L=\frac{u\bar{v}}{2L|A|}-\frac{1}{4L^3}\left(\bar{u}-\frac{\bar{A}u}{|A|}\right)\left(v-\frac{A\bar{v}}{|A|}\right).
\end{align*}

\end{lemma}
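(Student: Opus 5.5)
This is a direct computation in the chart $(y,u,v)$ on $D$, where $z_1=u$, $z_2=v$, $z_3^2=A=y-u^2-v^2$. I will first rewrite $H$ in these coordinates: $H=|z_1|^2+|z_2|^2+|z_3|^2=|u|^2+|v|^2+|A|$. Then $L=\sqrt{H+|y|}$ is a function of $y,u,v$ through $|y|$, $|u|^2$, $|v|^2$ and $|A|$. On $D$ we have $|z_3|>\frac12\max\{|u|,|v|\}$. I will restrict to $y\neq0$ and $A\neq 0$, so that $|y|$ and $|A|$ are smooth there and all formulas make sense.

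For part (1), $A$ is holomorphic in $(y,u,v)$, so $\partial_y A=1$, $\partial_u A=-2u$, and $\partial_{\bar y}A=\partial_{\bar u}A=0$. For the modulus I will use $|A|=(A\bar A)^{1/2}$, which gives $\partial|A|=\bar A\,\partial A/(2|A|)$. This yields $\partial_y|A|=\bar A/(2|A|)$ and $\partial_u|A|=-\bar A u/|A|$. The barred derivatives follow by conjugation, since $|A|$ is real. For $L$, I apply the chain rule $\partial L=\partial(H+|y|)/(2L)$ with
\begin{align*}
\partial_y(H+|y|)=\frac{\bar y}{2|y|}+\frac{\bar A}{2|A|},\qquad \partial_u(H+|y|)=\bar u-\frac{\bar A u}{|A|}.
\end{align*}
The second identity uses $\partial_u|u|^2=\bar u$ and $\partial_u|v|^2=0$, and the $\bar\partial$ versions are again the conjugates.

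For part (2), the stated rule follows by differentiating $\partial_i L=\partial_i(H+|y|)/(2L)$ once more in $\bar j$ and substituting $\partial_{\bar j}L=\partial_{\bar j}(H+|y|)/(2L)$. It then remains to compute the complex Hessian of $H+|y|$. The one-variable identity $\partial\bar\partial|f|=|f'|^2/(4|f|)$ for holomorphic $f$ gives $\partial_i\partial_{\bar j}|A|=\partial_iA\,\overline{\partial_jA}/(4|A|)$. From this I get
\begin{align*}
\partial_y\partial_{\bar y}(H+|y|)=\frac{1}{4|y|}+\frac{1}{4|A|},\qquad \partial_y\partial_{\bar u}(H+|y|)=-\frac{\bar u}{2|A|},
\end{align*}
\begin{align*}
\partial_u\partial_{\bar u}(H+|y|)=1+\frac{|u|^2}{|A|},\qquad \partial_u\partial_{\bar v}(H+|y|)=\frac{u\bar v}{|A|}.
\end{align*}
Dividing each by $2L$ and subtracting the product of first derivatives over $4L^3$ gives the listed formulas. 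For $\partial_y\partial_{\bar y}L$, the first term should read $\frac{1}{8L}\bigl(\frac{1}{|y|}+\frac{1}{|A|}\bigr)$; I read the paper's expression as a typo for this. The remaining three entries agree with the statement as written.

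There is no real obstacle here, only bookkeeping of conjugates and factors of $2$. The one step to double-check is the mixed term $\partial_y\partial_{\bar u}|A|=\partial_yA\,\overline{\partial_uA}/(4|A|)=-2\bar u/(4|A|)=-\bar u/(2|A|)$, which after division by $2L$ gives the $-\bar u/(4|A|L)$ in the statement.
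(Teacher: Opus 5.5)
Your proposal is correct and follows the paper's route exactly: the paper's proof is just ``by direct computation,'' and your chain-rule bookkeeping with $H=|u|^2+|v|^2+|A|$ and $\partial_i\partial_{\bar j}|A|=\partial_iA\,\overline{\partial_jA}/(4|A|)$ reproduces every entry. Your reading of the first term of $\partial_y\partial_{\bar y}L$ as $\frac{1}{8L}\bigl(\frac{1}{|y|}+\frac{1}{|A|}\bigr)$ is also right; this is the form the paper itself uses for $b_{11}$ in the proof of Lemma~\ref{almost product 1}.
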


\begin{proof}

By direct computation.

\end{proof}

We now consider the K\"ahler metric $\omega_1 \coloneqq \omega_{\mathbb D^*} + \omega_{\operatorname{EH}}$ on $\pi^{-1}(\mathbb D^*) \cap B(0,1) \subset \mathbb C^3$, where $\mathbb D^* = \{y \in \mathbb C : 0 < |y| < 1\}$ is the punctured unit disc in $\mathbb{\mathbb{C}}$, $\omega_{\mathbb{D}^*}=\frac{\sqrt{-1}}{|y|^2}dy\wedge d\bar{y}$ is the cylindrical metric on $\mathbb{D}^*$ and $B(0,1)$ is the unit ball in $\mathbb{C}^3$.

In the region $D$, the coordinates $(y, u, v)$ provide a local holomorphic trivialization of the Lefschetz fibration. Using this trivialization, extend the fiber metric $g_{\operatorname{EH},y}$ constantly in the base direction and define the local product metric $g_{\operatorname{P}} \coloneqq  g_{\mathbb D^*} + g_{\operatorname{EH},y}$. The corresponding product connection will be denoted by $\nabla_{g_{\operatorname{P}}}$. In this local coordinate, the covariant derivatives $\nabla_{\omega_{\mathbb{D}^*}}$ and $\nabla_{\operatorname{EH}_y}$ make sense.

The following lemma makes precise the fact that $\omega_1$ is asymptotically close to the product metric.

\begin{lemma}\label{almost product 1}

In the coordinate region $D$, the following statements hold:

(1) For every smooth function $u$, and for $|y|$ sufficiently small,
\begin{align*}
\frac{5}{6}\left(\left|\nabla_{\operatorname{EH}_y}u\right|_{\operatorname{EH}_y}^2+\left|\nabla_{\omega_{\mathbb{D}^*}}u\right|_{\omega_{\mathbb{D}^*}}^2\right)\le \left|\nabla_{\omega_{1}}u\right|^2_{\omega_1}\le \frac65\left(\left|\nabla_{\operatorname{EH}_y}u\right|_{\operatorname{EH}_y}^2+\left|\nabla_{\omega_{\mathbb{D}^*}}u\right|_{\omega_{\mathbb{D}^*}}^2\right)
\end{align*}

(2) For every integer $k \ge 1$, there are constants $C(k) > 0$ such that
\begin{align*}
\left|\nabla^k_{\omega_{1}}u\right|_{\omega_1}\le C(k)\sum\limits_{l=1}^kH^{-\frac{k-l}{4}}\left|\nabla^l_{g_{\operatorname{P}}}u\right|_{g_{\operatorname{P}}}
\end{align*}where $u$ is a smooth function.

\end{lemma}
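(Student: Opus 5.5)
Both statements come from comparing the metric matrix of $\omega_1$ in the coordinates $(y,u,v)$ with the block-diagonal matrix of $g_{\operatorname{P}}$. The comparison rests on the second derivatives listed in Lemma \ref{first and second order derivatives}.

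\textbf{Scalings.} On $D$ we have $|A|\sim H$ and $L\sim H^{1/2}$. The fiber metric $g_{\operatorname{EH},y}$ in $(u,v)$ is $\partial_u\partial_{\bar v}L$ restricted to $y=$ const. The $u\bar u$ entry has size $L^{-1}(1+|u|^2/|A|)$, so the fiber metric is uniformly equivalent to $H^{-1/2}$ times Euclidean. This matches ALE Eguchi--Hanson scaling, with the curvature scale $|y|^{1/2}$ entering through the $|A|$ terms. The $y\bar y$ entry of $\omega_1$ is $|y|^{-2}+\partial_y\partial_{\bar y}L$. From Lemma \ref{first and second order derivatives}, $\partial_y\partial_{\bar y}L=O\big(L^{-1}(|y|^{-1}+|A|^{-1})\big)$. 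The $|y|^{-1}$ part gives $O(|y|^{-1}H^{-1/2})$, which is not obviously negligible against $|y|^{-2}$ unless $H^{1/2}\gg|y|$.

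\textbf{Part (1).} The right comparison is with the coefficients measured in the unit-normalized frame $e_y=|y|\partial_y$ and $e_u=H^{1/4}\partial_u$, $e_v=H^{1/4}\partial_v$ (up to the ALE distortion). I would bound three quantities in this frame:
\begin{itemize}
\item the mixed term $|y|H^{1/4}|\partial_y\partial_{\bar u}L|$, which is $\lesssim |y|H^{1/4}\big(|u|/(|A|L)+L^{-2}\big)\lesssim |y|H^{-3/4}$ (using $|u|\lesssim H^{1/2}$);
\item the extra diagonal term $|y|^2\partial_y\partial_{\bar y}L\lesssim |y|H^{-1/2}+|y|^2H^{-3/2}$.
\end{itemize}
Both are small whenever $|y|\ll H^{3/4}$, which holds in the relevant part of $D$ away from the core.

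When $H\lesssim|y|^{4/3}$ I expect to need a finer look. The term $\partial_y\partial_{\bar y}L$ must be compared honestly: its dominant piece is $\frac{1}{8L|A|}$, which is at most $|y|^{-1}\cdot|y|^{-1/2}$-ish. Since $H\ge|A|$, and inside the fiber $\{y\}$ we have $H\gtrsim|y|$, this gives $|y|^2\partial_y\partial_{\bar y}L\lesssim |y|^{2}|y|^{-3/2}=|y|^{1/2}$. The mixed term is treated the same way using $H\gtrsim|y|$, giving $\lesssim|y|^{1/4}$. So all off-product entries are $O(|y|^{1/4})$ in the normalized frame. For small $|y|$ the Gram matrices are then $(1\pm\frac16)$-equivalent. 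The inverses, and hence the gradient norms, inherit the constants $\frac56$ and $\frac65$.

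\textbf{Part (2).} I would write $\nabla_{\omega_1}=\nabla_{g_{\operatorname{P}}}+T$, where $T$ is the difference tensor. By induction,
\[
\nabla^k_{\omega_1}u=\nabla^k_{g_{\operatorname{P}}}u+\sum_{l=1}^{k-1}(\text{polynomial in }\nabla^j_{g_{\operatorname{P}}}T)\ast\nabla^l_{g_{\operatorname{P}}}u .
\]
Each term multiplying $\nabla^l_{g_{\operatorname{P}}}u$ has total scaling $k-l$ in derivatives of $T$. It therefore suffices to show $|\nabla^j_{g_{\operatorname{P}}}T|_{g_{\operatorname{P}}}\le C_jH^{-(j+1)/4}$. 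This holds because $\omega_1-g_{\operatorname{P}}$ is built from $L$, $|A|$ and $|y|$, which are homogeneous under the weighted scaling $(u,v,A)\mapsto(\lambda u,\lambda v,\lambda^2A)$. Each $g_{\operatorname{P}}$-derivative costs a factor of the fiber length scale $H^{1/4}$ (the $g_{\operatorname{P}}$-distance to the vertex), or $1$ in the cylindrical direction. The $\omega_1$-norms are then converted to $g_{\operatorname{P}}$-norms using part (1).

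\textbf{Main obstacle.} The hardest step is the core region $H\sim|y|$. There the scaling of $\partial_y$-derivatives of $|A|$ and $|y|$ interacts with the Eguchi--Hanson curvature scale, and one must check that the $y$-derivatives of $T$ still obey the $H^{-1/4}$-per-derivative rule. I would handle it by rescaling $z\mapsto|y|^{-1/2}z$ to a fixed Eguchi--Hanson fiber. After rescaling, the cylindrical $y$-direction contributes only bounded derivatives.
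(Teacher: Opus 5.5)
Your proposal is correct. Part (2) is the paper's argument: the difference tensor $\mathcal A=\nabla_{\omega_1}-\nabla_{g_{\operatorname{P}}}$, the iteration identity, and the scale-invariant bound $|\nabla^m_{g_{\operatorname{P}}}\mathcal A|_{g_{\operatorname{P}}}\le C_mH^{-(m+1)/4}$, which the paper also asserts without detailed derivation. Part (1), however, follows a different route.

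\textbf{The paper's route.} The paper never estimates the mixed entries $r=(b_{21},b_{31})^t$ directly. It bounds only the scalar $a^{-1}b_{11}\le C|y|H^{-1/2}\le C|y|^{1/2}$. It then uses positivity of the Hessian $\mathbf B$ of $L$, through the Schur complement $b_{11}-r^*\mathbf C^{-1}r>0$, to get $a^{-1}r^*\mathbf C^{-1}r\le a^{-1}b_{11}$. So the off-diagonal block of the normalized matrix is at most $(a^{-1}b_{11})^{1/2}\le C|y|^{1/4}$, and no information about the size of the fiber metric $\mathbf C$ is needed.

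\textbf{Your route.} You bound $|\partial_y\partial_{\bar u}L|\lesssim H^{-1}$ from Lemma~\ref{first and second order derivatives} and work in the frame $|y|\partial_y,\ H^{1/4}\partial_u,\ H^{1/4}\partial_v$. You reach the same orders, $|y|^{1/2}$ and $|y|^{1/4}$. But turning these entrywise bounds into an operator-norm bound for $\operatorname{diag}(a,\mathbf C)^{-1/2}(\mathbf A+\mathbf B)\operatorname{diag}(a,\mathbf C)^{-1/2}-I$ requires the two-sided comparability $\mathbf C\simeq H^{-1/2}\,\mathrm{Id}$ on $D$. The lower bound is the part you need.

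\textbf{The missing justification.} Your bound on the diagonal entry $\partial_u\partial_{\bar u}L$ gives only an upper bound, so it does not supply this. The lower bound does hold, and you should state it with the following reasons:
\begin{itemize}
\item the scaling $\operatorname{EH}_y=|y|^{1/2}\Psi_y^*\operatorname{EH}_1$;
\item the asymptotically conical structure of the Stenzel metric;
\item the fact that $|z_3|^2\ge H/9$ on $D$, which makes the $(u,v)$-Euclidean metric comparable to the induced Euclidean metric on the fiber.
\end{itemize}
The benefit of your route is that the explicit frame scalings are exactly what part (2) uses anyway. The paper's Schur-complement trick is shorter and avoids the fiber-metric comparison entirely.

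\textbf{The case split is unnecessary.} Since $|y|\le H$ everywhere, the bounds $|y|H^{-1/2}\le|y|^{1/2}$ and $|y|H^{-3/4}\le|y|^{1/4}$ hold uniformly on $D$. In particular, the $|y|^{-1}H^{-1/2}$ contribution is always negligible against $|y|^{-2}$. The region $H\lesssim|y|^{4/3}$ you single out is empty once $|y|$ is small.
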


\begin{proof}

(1) Let $\textbf{A}=\operatorname{diag}(a,0,0)$, $\textbf{B}=(b_{ij})_{3\times 3}$, $\textbf{C}=(b_{ij})_{2\leq i,j\leq 3}$ where   $a=\frac{1}{|y|^2}$, $b_{ij}=\frac{\partial^2L}
{\partial z_{i-1}\partial\overline{z}_{j-1}}$, and where $r=\begin{pmatrix}
b_{21}\\
b_{31}
\end{pmatrix}$, $r^*=\overline r^{t}$.

Thus $\textbf{A}+\textbf{B}=\begin{pmatrix}
a+b_{11}&r^*\\
r&\textbf{C}
\end{pmatrix}$, while the Hermitian matrix of the product metric $g_{\operatorname{P}}$ at the
point under consideration is $\operatorname{diag}(a,\textbf{C})$.

We first estimate the metric coefficients. By Lemma \ref{first and second order derivatives}, we have
\begin{align*}
b_{11}=\frac{1}{8L}\left(\frac{1}{|y|}+\frac{1}{|y-u^2-v^2|}\right)-\frac{1}{4L^3}\left|\frac{\overline y}{2|y|}+\frac{\overline{y-u^2-v^2}}{2|y-u^2-v^2|}\right|^2.
\end{align*}
Since $B$ is positive definite, $b_{11}>0$. Dropping the
nonpositive second term and using the estimates above, we obtain
\begin{align*}
0<b_{11}&\leq\frac{1}{8L}\left(\frac{1}{|y|}+\frac{1}{|y-u^2-v^2|}\right)\leq C\left(\frac{1}{|y|H^{\frac{1}{2}}}+\frac{1}{H^{\frac{3}{2}}}\right)\leq\frac{C}{|y|H^{\frac{1}{2}}},
\end{align*}
where in the last inequality we used $|y|\leq H$. Since
$a=|y|^{-2}$, it follows that 
\begin{align*}
0<a^{-1}b_{11}\leq C\frac{|y|}{H^{\frac{1}{2}}}\leq C|y|^{\frac{1}{2}}.
\end{align*}

Since $\textbf{B}$ is positive definite, its principal block $\textbf{C}$ is
positive definite, and the Schur complement of $\textbf{C}$ in $\textbf{B}$ is
positive. Thus $b_{11}-r^*\textbf{C}^{-1}r>0$, and therefore $0\leq r^*\textbf{C}^{-1}r<b_{11}$. Consequently,
\begin{align*}
a^{-1}r^*\textbf{C}^{-1}r
\leq a^{-1}b_{11}
\leq C|y|^{\frac{1}{2}}.
\end{align*}

We now normalize the matrix $\textbf{A}+\textbf{B}$ by the product metric:
\begin{align*}
&\operatorname{diag}(a,\textbf{C})^{-\frac{1}{2}}
(\textbf{A}+\textbf{B})
\operatorname{diag}(a,\textbf{C})^{-\frac{1}{2}} =
I+
\begin{pmatrix}
a^{-1}b_{11}
&
a^{-\frac{1}{2}}r^*\textbf{C}^{-\frac{1}{2}}\\
a^{-\frac{1}{2}}\textbf{C}^{-\frac{1}{2}}r
&
0
\end{pmatrix}.
\end{align*}
The operator norm of the second matrix is bounded by
\begin{align*}
\left\|
\begin{pmatrix}
a^{-1}b_{11}
&
a^{-\frac{1}{2}}r^*\textbf{C}^{-\frac{1}{2}}\\
a^{-\frac{1}{2}}\textbf{C}^{-\frac{1}{2}}r
&
0
\end{pmatrix}
\right\|_{\mathrm{op}}&\leq
a^{-1}b_{11}
+
a^{-\frac{1}{2}}|\textbf{C}^{-\frac{1}{2}}r|=
a^{-1}b_{11}
+
\left(a^{-1}r^*\textbf{C}^{-1}r\right)^{\frac{1}{2}}\\
&\leq
a^{-1}b_{11}
+
\left(a^{-1}b_{11}\right)^{\frac{1}{2}}\leq C|y|^{\frac{1}{4}}.
\end{align*}
We may therefore assume that
\begin{align*}
\left\|\operatorname{diag}(a,\textbf{C})^{-\frac{1}{2}}(\textbf{A}+\textbf{B})\operatorname{diag}(a,\textbf{C})^{-\frac{1}{2}}
-\operatorname{Id}\right\|_{\mathrm{op}}\leq \frac16
\end{align*}
whenever $|y|$ is sufficiently small. Since the matrix in question is Hermitian,
this gives
\begin{align*}
\frac56 \operatorname{Id}
\leq
\operatorname{diag}(a,\textbf{C})^{-\frac{1}{2}}
(\textbf{A}+\textbf{B})
\operatorname{diag}(a,\textbf{C})^{-\frac{1}{2}}
\leq
\frac76 \operatorname{Id}.
\end{align*}
Equivalently,
\begin{align*}
\frac56
\operatorname{diag}(a^{-1},\textbf{C}^{-1})\leq\frac67
\operatorname{diag}(a^{-1},\textbf{C}^{-1})
\leq
(\textbf{A}+\textbf{B})^{-1}
\leq
\frac65
\operatorname{diag}(a^{-1},\textbf{C}^{-1}).
\end{align*}

Let $v=(v_1,v_2,v_3)\in\mathbb C^3$, $w=(v_2,v_3)\in\mathbb C^2$. Applying the preceding Hermitian matrix inequality to $v$, we find
\begin{align*}
\frac{5}{6}\left(a^{-1}|v_1|^2+w\textbf{C}^{-1}w^*\right)&\leq v(\textbf{A}+\textbf{B})^{-1}v^* \leq \frac65\left(a^{-1}|v_1|^2+w\textbf{C}^{-1}w^*\right).
\end{align*}
Taking $v$ to be the coordinate vector of $du$, this is exactly
\begin{align*}
\frac{5}{6}
\left(
|\nabla_{\omega_{\mathbb{D}^*}}u|_{\omega_{\mathbb{D}^*}}^2
+
|\nabla_{\operatorname{EH}_y}u|_{\operatorname{EH}_y}^2
\right)
\leq
|\nabla_{\omega_1}u|_{\omega_1}^2\leq
\frac65
\left(
|\nabla_{\omega_{\mathbb{D}^*}}u|_{\omega_{\mathbb{D}^*}}^2
+
|\nabla_{\operatorname{EH}_y}u|_{\operatorname{EH}_y}^2
\right).
\end{align*}

(2) Let $\mathcal A \coloneqq \nabla_{\omega_1} - \nabla_{g_{\operatorname{P}}}$. Direct differentiation of the explicit metric coefficients gives the scale-invariant bounds
\begin{align*}
\left|\nabla_{g_{\operatorname{P}}}^m \mathcal A\right|_{g_{\operatorname{P}}} \le C_m H^{-\frac{m+1}{4}}, \quad m \ge 0.
\end{align*}
Here we use a deliberately non-sharp estimate: in fact, the zeroth-order norm of $\mathcal A$ is uniformly bounded, but the weaker bound above is sufficient for the present application.

For a covariant tensor $T$, 
\begin{align*}
\nabla_{\omega_1} T = \nabla_{g_{\operatorname{P}}} T + \mathcal A * T.
\end{align*}
Iterating this identity shows that $\nabla_{\omega_1}^k u$ is a finite sum of terms of the form
\begin{align*}
\nabla_{g_{\operatorname{P}}}^{q_1}\mathcal A * \cdots * \nabla_{g_{\operatorname{P}}}^{q_s}\mathcal A * \nabla_{g_{\operatorname{P}}}^{l} u,
\end{align*}
where $q_1 + \cdots + q_s + s + l = k$. The coefficient of such a term is bounded by
\begin{align*}
C H^{-\frac{q_1+1}{4}} \cdots H^{-\frac{q_s+1}{4}} = C H^{-\frac{k-l}{4}}.
\end{align*}
The equivalence of $g_{\omega_1}$ and $g_{\operatorname{P}}$, proved in part (1), therefore yields
\begin{align*}
\left|\nabla_{\omega_1}^k u\right|_{\omega_1} \le C(k) \sum_{l=1}^k H^{-\frac{k-l}{4}} \left|\nabla_{g_{\operatorname{P}}}^{l} u\right|_{g_{\operatorname{P}}}.
\end{align*}

\end{proof}

\begin{proposition}\label{prop of omega 1}

In the region $U$, the metric $\omega_1$ has the following properties:

(1) The sectional curvature of $\omega_1$ is unbounded as $|y|\to 0$;

(2) The Ricci potential $h_1\coloneqq \log\left(\frac{\omega_1^3}{\frac{3}{\eta}\operatorname{Vol}_E}\right)$ of $\omega_1$ satisfies $\left|\nabla^k_{\omega_1}h_1(x)\right|_{\omega_1}\le C\left(\frac{|y|}{H^{\frac12+\frac{k}{4}}}\right)$, for $x=(y,z_1,z_2,z_3)\in U$, where $C=C(k)$ is a uniform constant.

(3) The Ricci form of $\omega_1$ satisfies: $\left|\nabla^k_{\omega_1}\operatorname{Ric}_{\omega_1}(x)\right|_{\omega_1}\le C\left(\frac{|y|}{H^{1+\frac{k}{4}}}\right)$ for $x=(y,z_1,z_2,z_3)\in U$, where $C=C(k)$ is a uniform constant.

In particular, the Ricci curvature of $\omega_1$ is uniformly bounded when $|y|\to 0$;

\end{proposition}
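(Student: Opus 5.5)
The plan is to compute the volume form of $\omega_1$ in closed form, read off $h_1$ explicitly, and then obtain (2), (3) by differentiating with the help of Lemma \ref{almost product 1}, while (1) follows from a Gauss-equation argument on the fibers.

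\textbf{Step 1 (volume form and $h_1$).} Since $\omega_{\mathbb{D}^*}$ is a multiple of $\sqrt{-1}dy\wedge d\bar y$, we have $\omega_{\mathbb D^*}^2=0$ and
\begin{align*}
\omega_1^3=\omega_{\operatorname{EH}}^3+3\,\omega_{\mathbb{D}^*}\wedge\omega_{\operatorname{EH}}^2 .
\end{align*}
By Lemma \ref{K{\"a}hler of ST}, $\omega_{\operatorname{EH}}^3/\operatorname{Vol}_E=\frac{3}{8|y|L}$ with $L=\sqrt{H+|y|}$. For the mixed term, $\sqrt{-1}dy\wedge d\bar y\wedge\omega_{\operatorname{EH}}^2=\sqrt{-1}dy\wedge d\bar y\wedge \operatorname{EH}_y^2$. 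Because $\operatorname{EH}_y$ is Calabi--Yau with respect to $\Omega_y=dz_1\wedge dz_2\wedge dz_3/dy$, this term is a constant multiple $c\operatorname{Vol}_E$; I expect $c$ to be the one implicit in the normalization $\frac{3}{\eta}\operatorname{Vol}_E$. One could also extract $c$ from the direct computation $\sqrt{-1}\partial\bar\partial\eta\wedge\omega_{\operatorname{EH}}^2$. Hence
\begin{align*}
h_1=\log\Big(1+\frac{c'|y|}{L}\Big),\qquad |h_1|\le C\frac{|y|}{H^{1/2}}\le C|y|^{1/2}.
\end{align*}

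\textbf{Step 2 (derivatives, part (2)).} By Lemma \ref{almost product 1}(2), it suffices to bound $|\nabla^l_{g_{\operatorname P}}(|y|/L)|_{g_{\operatorname P}}\le C|y|H^{-\frac12-\frac l4}$ for $l\le k$. The point is that the coefficient $H^{-\frac{k-l}{4}}$ in that lemma then combines to the claimed power.
\begin{itemize}
\item[] In the fiber directions, $\operatorname{EH}_y$ at the point $x$ has length scale $\sim H^{1/4}$. Since $L\sim H^{1/2}$ is of homogeneous type there, each fiber derivative costs $H^{-1/4}$.
\item[] In the cylindrical base direction, $|y|\partial_y$ applied to $|y|$ returns $O(|y|)$. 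Applied to $L$, it gives $O(|y|/L)$ by Lemma \ref{first and second order derivatives}, which is even better than required.
\end{itemize}
An induction on $l$ using the explicit formulas of Lemma \ref{first and second order derivatives} then gives the bound. Composing with $\log(1+\cdot)$ keeps the estimate, since $|y|/L$ is small.

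\textbf{Step 3 (part (3)).} The form $\frac{1}{\eta}\operatorname{Vol}_E$ has flat associated Ricci form, because $\log|y|^2$ is pluriharmonic on $\{y\neq0\}$. Therefore $\operatorname{Ric}_{\omega_1}=-\sqrt{-1}\partial\bar\partial h_1$, and
\begin{align*}
|\nabla^k\operatorname{Ric}_{\omega_1}|\le C|\nabla^{k+2}h_1|\le C|y|H^{-\frac12-\frac{k+2}{4}}=C|y|H^{-1-\frac k4}.
\end{align*}
Because $|y|\le H$, the case $k=0$ shows the Ricci curvature is bounded.

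\textbf{Step 4 (part (1)), the main obstacle.} Each fiber $V_y$ is a complex submanifold, and $\omega_1|_{V_y}=\operatorname{EH}_y$ exactly, since $\omega_{\mathbb D^*}$ restricts to zero. By the Gauss equation for complex submanifolds, the holomorphic sectional curvature of the ambient metric dominates that of the submanifold:
\begin{align*}
K_{\omega_1}(X)\ge K_{\operatorname{EH}_y}(X)\quad\text{for }X\in T^{1,0}V_y .
\end{align*}
Now $\operatorname{EH}_y$ is Ricci-flat and non-flat, so its scalar curvature, which is an average of holomorphic sectional curvatures, vanishes. Hence some tangent direction has positive holomorphic sectional curvature. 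By the scaling $\operatorname{EH}_y=|y|^{1/2}\cdot(\text{rescaled }\operatorname{EH}_1)$, that value is $\ge c_0/|y|^{1/2}$ at points near the vanishing sphere. This forces $\sup|\operatorname{Rm}_{\omega_1}|\to\infty$ as $|y|\to0$.

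I expect this step to need the most care: checking that the Gauss inequality has the right sign and that the scaling exponent is correct.
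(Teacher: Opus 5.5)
Your proposal is correct. For (2) and (3) it follows the paper's route: the same closed form $1+\frac{|y|}{8L}$ for $\omega_1^3/(\frac{3}{\eta}\operatorname{Vol}_E)$, derivative bounds in the product frame transferred by Lemma~\ref{almost product 1}(2), and then $\operatorname{Ric}_{\omega_1}=-\sqrt{-1}\partial\bar\partial h_1$. Your remark that $\log|y|^2$ is pluriharmonic supplies the justification the paper leaves implicit.

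You obtain the volume form differently. The paper uses the general formula for $(\sqrt{-1}\partial\bar\partial F(H,\eta))^3$. You instead use $\omega_{\mathbb D^*}^2=0$, Lemma~\ref{K{\"a}hler of ST} and the fiberwise Calabi--Yau property. This route needs the constant to be exactly $c=1$, and you should verify that rather than expect it. At $z=(b,0,0)$ with $b^2=y$ one has $dy=2b\,dz_1$, and the $(z_2,z_3)$-block of $\omega_{\operatorname{EH}}$ equals $\frac{1}{2L}\mathrm{Id}$ with $L^2=2|b|^2$. Hence $\sqrt{-1}dy\wedge d\bar y\wedge\omega_{\operatorname{EH}}^2=\operatorname{Vol}_E$ at that point, and by the fiberwise Calabi--Yau property on all of $V_y$.

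For (1) you take a genuinely different route. The paper asserts, by direct calculation, that the largest normalized curvature components are of order $L^{-1}$. You instead combine three ingredients:
\begin{itemize}
\item the Gauss equation $K_{\omega_1}(X)=K_{\operatorname{EH}_y}(X)+2|\sigma(X,X)|^2/|X|^4$ for $X$ tangent to $V_y$, where the sign is right because $\sigma(JX,Y)=J\sigma(X,Y)$;
\item Berger's averaging lemma, which forces a positive holomorphic sectional curvature somewhere on the Ricci-flat, non-flat $\operatorname{EH}_1$;
\item the scaling $\operatorname{EH}_y=|y|^{1/2}\Psi_y^*\omega_{\operatorname{EH}_1}$.
\end{itemize}

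Your argument avoids any ambient curvature computation. It also settles exactly the concern raised in the remark after the proposition, that the fibers are not totally geodesic: the second fundamental form can only raise holomorphic sectional curvature. The paper's computation, in return, also gives the upper bound $O(H^{-1/2})$, and so identifies $|y|^{-1/2}$ as the sharp rate. Your argument yields only the lower bound $c_0|y|^{-1/2}$ at rescaled points of the vanishing sphere, which is all that (1) asserts.
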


\begin{proof}

(1) By direct calculation, we find that the potentially largest normalized curvature components are $\frac{R_{u\bar{u}u\bar{u}}}{g_{u\bar{u}}^2}$, $\frac{R_{u\bar{u}v\bar{v}}}{g_{u\bar{u}}g_{v\bar{v}}}$, $\frac{R_{v\bar{v}v\bar{v}}}{g_{v\bar{v}}^2}$. They are all of order $ \frac1L\sim H^{-\frac12}\to \infty$ as $(y,u,v)\to (0,0,0)$. When $H\sim|y|$, i.e., near the vanishing cycle region, we have the worst blow-up rate $|y|^{-\frac12}$ of the sectional curvature of the ansatz.

(2) Let $\phi=F(H,\eta)=\sqrt{H+\eta^{\frac12}}$. We then compute
\begin{align*}
\omega_1&=\sqrt{-1}\frac{dy\wedge d\bar{y}}{|y|^2}+\sqrt{-1}\partial\bar{\partial}F(H,\eta)\\
&= F_H \sqrt{-1}\partial\bar{\partial}H + F_{HH} \sqrt{-1}\partial H \wedge \bar{\partial}H \\
&\quad + \sqrt{-1}F_{H\eta}(y d\bar{y} \wedge \bar{\partial}H + y \partial H \wedge d\bar{y}) \\
&\quad + \left(\eta F_{\eta\eta} + F_\eta+\frac{1}{\eta}\right) \sqrt{-1} dy \wedge d\bar{y}.
\end{align*}
Hence
\begin{align*}
\omega_1^3 &= 6 \operatorname{Vol}_E\big\{ (F_H^3 + F_H^{2} F_{HH} H) \\
&\quad + 4\left(\eta F_{\eta\eta} + F_\eta+\frac{1}{\eta}\right)\{H F_H^{2} + (H^2 - \eta) F_H F_{HH}\} \\
&\quad + 4 F_{H\eta} \eta F_H^{2} - 4 F_{H\eta}^2 \eta F_H(H^2 - \eta) \big\}. 
\end{align*}

We compute the following basic quantities:
\begin{align*}
&F_H = \frac{1}{2} \frac{1}{\sqrt{H + \eta^\frac12}}, \quad
F_{HH} = -\frac{1}{4} \frac{1}{(H + \eta^\frac12)^{\frac32}},\\
&F_\eta = \frac{1}{4\eta^\frac12\sqrt{H + \eta^\frac12}},\quad  F_{H\eta} = -\frac{1}{8} \frac{1}{\eta^\frac12(H + \eta^\frac12)^{\frac32}},\\
&H F_H^2 + (H^2 - \eta) F_H F_{HH} = \frac{1}{8}, \quad F_H^3 + F_H^2 H F_{HH} = \frac{H + 2\eta^\frac12}{16(H + \eta^\frac12)^{\frac52}} , \\
&F_\eta + \eta F_{\eta\eta}+\frac{1}{\eta} = \frac{1}{8\eta^\frac12\sqrt{H+\eta^\frac12}}-\frac{1}{16(H+\eta^\frac12)^{\frac 32}}+\frac{1}{\eta}.
\end{align*}

Then
\begin{align*}
\omega_1^3=6\operatorname{Vol}_E\left\{\frac{1}{2\eta}+\frac{1}{16\eta^{\frac12}(H+\eta^{\frac12})^{\frac12}}\right\}.
\end{align*}Therefore
\begin{align*}
\frac{\omega_1^3}{\frac{3}{\eta}\operatorname{Vol}_E}=1+\frac{\eta^{\frac12}}{8(H+\eta^{\frac12})^{\frac12}}.
\end{align*}
Hence $|h_1(x)|\le C\frac{|y|}{H^{\frac12}}$.

Let $s=\frac{\eta^{\frac12}}{8(H+\eta^{\frac12})^{\frac12}}$. Direct differentiation in the product coordinates gives, for every $l \ge 0$, $$\left|\nabla_{g_{\operatorname{P}}}^{l} s\right|_{g_{\operatorname{P}}} \le C_l \frac{|y|}{H^{\frac12 + \frac{l}{4}}}.$$ Indeed, a unit horizontal derivative is represented by a bounded combination of $y\partial_y$ and $\bar y\partial_{\bar y}$, whereas a unit fiberwise derivative has coordinate size $O(H^{\frac{1}{4}})$; each fiberwise derivative therefore costs at most one factor $H^{-\frac14}$. The explicit formulas for the derivatives of $H$ and $|y|$ give the displayed estimate by induction.

Because $s$ is uniformly small, the chain rule applied to $h_1 = \log(1 + s)$ gives $$\left|\nabla_{g_{\operatorname{P}}}^{l} h_1\right|_{g_{\operatorname{P}}} \le C_l \frac{|y|}{H^{\frac12 + \frac{l}{4}}}.$$Applying Lemma~\ref{almost product 1}(2), we obtain
\begin{align*}
\left|\nabla_{\omega_1}^k h_1\right|_{\omega_1}
&\le C_k \sum_{l=1}^k H^{-\frac{k-l}{4}} \left|\nabla_{g_{\operatorname{P}}}^{l} h_1\right|_{g_{\operatorname{P}}} \le C_k \sum_{l=1}^k H^{-\frac{k-l}{4}} \frac{|y|}{H^{\frac12 + \frac{l}{4}}} \le C_k \frac{|y|}{H^{\frac12 + \frac{k}{4}}}.
\end{align*}

(3) By definition $\operatorname{Ric}(\omega_1)=-\sqrt{-1}\partial\bar{\partial}h_1$. Since the K\"ahler connection preserves the complex structure,
\begin{align*}
\left|\nabla_{\omega_1}^k \operatorname{Ric}(\omega_1)\right|_{\omega_1}
\le C_k \left|\nabla_{\omega_1}^{k+2} h_1\right|_{\omega_1}\le C_k \frac{|y|}{H^{\frac12 + \frac{k+2}{4}}} = C_k \frac{|y|}{H^{1 + \frac{k}{4}}}.
\end{align*}

\end{proof}

\begin{remark}

There is a more heuristic way to see the blow-up rate of sectional curvature of $\omega_1$. By symmetry, we may assume $y\in \mathbb{R}^+$. Consider the map
\begin{align*}
\Psi_y:V_y\to V_1,\; x\mapsto \frac{x}{|y|^{\frac12}}.
\end{align*}
Then 
\begin{align*}
\left.\left(\sqrt{-1}\partial\bar{\partial}\sqrt{H+|y|}\right)\right|_{V_y}=|y|^{\frac12}\Psi_y^*\omega_{\operatorname{EH}_1},
\end{align*}where $\omega_{\operatorname{EH}_1}=\sqrt{-1}\partial\bar{\partial}\sqrt{H+1}$ is the standard Stenzel metric on the fiber $V_{1}$. So we have
\begin{align*}
\sup \left|\operatorname{Rm}\left(|y|^{\frac12}\Psi_y^*\omega_{\operatorname{EH}_1}\right)\right|_{V_y}\sim |y|^{-\frac12}\to \infty\quad \text{as}\; |y|\to 0.
\end{align*}From the above observation, it is natural to expect that the worst blow-up rate of the sectional curvature is $|y|^{-\frac12}$. However, since the fiber $V_y$ with induced metric is not totally geodesic, a more rigorous calculation is needed.

\end{remark}

\begin{remark}

The estimate above shows that $\operatorname{Ric}(\omega_1)$ is uniformly bounded, but it does not imply any decay as $|y|\to 0$. We now explain that, in fact, no such decay can hold.

A direct computer-algebra calculation gives the following asymptotic expansions as $H\to 0$:
\begin{align*}
&\frac{\left(K\omega_1+\operatorname{Ric}(\omega_1)\right)^3}
{\eta^{-1}\operatorname{Vol}_E}=a_1K^3+a_2K^2-a_3K+\eta^{\frac14}\left(-a_4-a_5K-a_6K^2+a_7K^3\right)
+O\!\left(\eta^{\frac{1}{2}}\right),\\
&\frac{\sqrt{-1}\,\partial\bar{\partial}H
\wedge\left(K\omega_1+\operatorname{Ric}(\omega_1)\right)^2}
{\eta^{-1}\operatorname{Vol}_E}=\eta^{\frac{1}{4}}\left(a_8K^2+a_9K+O\!\left(\eta^{\frac{1}{4}}\right)
\right),\\
&\frac{(\sqrt{-1}\,\partial\bar{\partial}H)^2
\wedge\left(K\omega_1+\operatorname{Ric}(\omega_1)\right)}
{\eta^{-1}\operatorname{Vol}_E}=\eta^{\frac{1}{2}}\left(
a_{10}K
+O\!\left(\eta^{\frac{1}{2}}\right)
\right),
\end{align*}
where the coefficients $a_i$ are strictly positive and remain uniformly bounded away from zero in the asymptotic region under consideration.

It follows that, for $K>0$ sufficiently large and independent of $y$,
\begin{align*}
K\omega_1+\operatorname{Ric}(\omega_1)>0.
\end{align*}
Applying the same calculation to $K\omega_1-\operatorname{Ric}(\omega_1)$ gives
\begin{align*}
K\omega_1-\operatorname{Ric}(\omega_1)>0
\end{align*}
for another uniform constant $K$. Hence
\begin{align*}
-K\omega_1
\leq
\operatorname{Ric}(\omega_1)
\leq
K\omega_1,
\end{align*}
which recovers the uniform boundedness of the Ricci tensor.

Moreover, the coefficients $-a_3$, $-a_4$ in the first expansion show that the constant $K$ cannot be chosen to tend to zero as $H\to 0$.

\end{remark}

\subsection{\texorpdfstring{The exotic metric $\omega_{\mathbb{C}^3}$}{The exotic metric omega C3}}We give a quick review of the model Calabi--Yau metric $\omega_{\mathbb{C}^3}$ on $\mathbb{C}^3$, based on \cite{li2019new}. See also \cite{Conlon2021Newexamples}, \cite{szekelyhidi2019degenerations}. 

Let $\mathbb{C}^3$ be equipped with the standard coordinates $\mathfrak{z}_1, \mathfrak{z}_2, \mathfrak{z}_3$ and the standard Hermitian inner product $|\cdot|$. Define the functions
$$
\begin{cases}
R = (|\mathfrak{z}_1|^2 + |\mathfrak{z}_2|^2 + |\mathfrak{z}_3|^2)^{\frac14}, \\
\tilde{t} = \mathfrak{z}_1^2 + \mathfrak{z}_2^2 + \mathfrak{z}_3^2, \\
\rho = \sqrt{|\tilde{t}|^2 + \sqrt{R^4 + 1}}.
\end{cases}
$$
Here $\tilde{t}$ defines the standard Lefschetz fibration on $\mathbb{C}^3$ over $\mathbb{C}_{\tilde{t}}$. Then there exists a Calabi--Yau metric $\omega_{\mathbb{C}^3} = \sqrt{-1} \partial \bar{\partial} \phi_{\mathbb{C}^3}$ on $\mathbb{C}^3$, with volume normalization
\begin{align*}
\omega_{\mathbb{C}^3}^3 = \frac{3}{2} \prod_{i=1}^3 \sqrt{-1} d\mathfrak{z}_i \wedge d\bar{\mathfrak{z}}_i;
\end{align*}
its leading-order asymptotic expression at infinity is given by
\begin{align*}
\phi_{\infty} = \frac{1}{2} |\tilde{t}|^2 + \sqrt{R^4 + \rho}, \quad \phi_{\mathbb{C}^3} = \phi_{\infty} + \phi_{\mathbb{C}^3}'. 
\end{align*}
Let
\begin{equation}
\phi_{\mathbb{C}^3}''\coloneqq \phi_{\mathbb{C}^3}'+\sqrt{R^4+\rho}.
\end{equation}This is the potential appearing in the defining equation (\ref{Ansatz 1}) of the gluing ansatz $\omega_\varepsilon$.

The geometry of $\omega_{\mathbb{C}^3}$ is carefully discussed in \cite{li2019new}, so we omit it. 

We now follow \cite{Conlon2021Newexamples} and \cite{szekelyhidi2019degenerations} to define the double weighted H\"{o}lder space $C_{\delta, \tau}^{k, \alpha}(\mathbb{C}^3, \omega_{\mathbb{C}^3})$. Let $\kappa$ be a fixed small positive number, and $K$ be a fixed large positive number. We define a weight function $w$ by
\begin{align*}
w = \begin{cases}
1 & \text{if } R \ge \kappa \rho, \\
\frac{R}{\kappa \rho} & \text{if } R \in \left(\kappa^{-1} \rho^{\frac14}, \kappa \rho\right), \\
\kappa^{-2} \rho^{-\frac34} & \text{if } R \le \kappa^{-1} \rho^{\frac14}.
\end{cases}
\end{align*}
The weighted norm of a function $f$ is then defined by
\begin{equation}\label{Weighted norm on C3}
    \|f\|_{C_{\delta, \tau}^{k, \alpha}(\mathbb{C}^3,\omega_{\mathbb{C}^3})} = \|f\|_{C^{k, \alpha}(\rho < 2K)} + \sum_{i=0}^k \sup_{\rho(z) > K} \rho^{-\delta + i} w^{-\tau + i} |\nabla^i f| + [\rho^{-\delta + k} w^{-\tau + k} \nabla^k f]_{C^{0, \alpha}_{\omega_{\mathbb{C}^3}}},
\end{equation}where the H\"{o}lder seminorm of a tensor $T$ is given by
\begin{equation}\label{Semi norm on C3}
[T]_{C^{0, \alpha}_{\omega_{\mathbb{C}^3}}} = \sup_{\rho(z) > K} \rho(z)^{\alpha} w(z)^{\alpha} \sup_{z \ne z', z' \in B(z, cR(z))} \frac{|T(z) - T(z')|}{d_{\omega_{\mathbb{C}^3}}(z, z')^{\alpha}}.
\end{equation}
Here $c > 0$ is such that the metric balls $B(z, cR(z))$ have bounded geometry and are geodesically convex, so we can compare $T(z)$ with $T(z')$ using parallel transport along a geodesic.

The deviation $\phi_{\mathbb{C}^3}'$ of $\phi_{\mathbb{C}^3}$ from its asymptotic expression $\phi_{\infty}$ is 
\begin{align*}
\|\phi_{\mathbb{C}^3}'\|_{C_{\delta, 0}^{k, \alpha}} \le C(k, \alpha, \delta), \quad \forall \delta > -1.
\end{align*}

As pointed out in \cite{li2019gluing}, we can define an embedding map
\begin{equation}
\begin{aligned}
\Upsilon_\varepsilon : \Upsilon_\varepsilon^{-1}(U_0 \cap \{|t| < \epsilon_1\}) \subset \mathbb{C}^3 &\to U_0 \cap \{|t| < \epsilon_1\} \subset X, \\
                   (\mathfrak{z}_1, \mathfrak{z}_2, \mathfrak{z}_3) &\mapsto (z_1, z_2, z_3) = \left( \left(\frac{\varepsilon}{2A_0}\right)^{\frac{1}{3}} \mathfrak{z}_1, \left(\frac{\varepsilon}{2A_0}\right)^{\frac{1}{3}} \mathfrak{z}_2, \left(\frac{\varepsilon}{2A_0}\right)^{\frac{1}{3}} \mathfrak{z}_3 \right)
\end{aligned}
\end{equation}Then
\begin{equation}
r = \left(\frac{\varepsilon}{2A_0}\right)^{\frac{1}{6}} R, \quad t = \left(\frac{\varepsilon}{2A_0}\right)^{\frac{2}{3}} \tilde{t},
\end{equation}
so that
\begin{equation*}
\sqrt{r^4 + |t|} + \frac{1}{\varepsilon} A_0 |t|^2 = \left(\frac{\varepsilon}{2A_0}\right)^{\frac{1}{3}} \left\{ \sqrt{R^4 + |\tilde{t}|} + \frac{1}{2} |\tilde{t}|^2 \right\} \sim \left(\frac{\varepsilon}{2A_0}\right)^{\frac{1}{3}} \phi_{\infty},
\end{equation*}
where in the final step we interpret $\phi_{\infty}$ as a regularized version of the non-smooth expression $\sqrt{R^4 + |\tilde{t}|} + \frac{1}{2} |\tilde{t}|^2$. Consequently, the scaled model metric $(\frac{\varepsilon}{2A_0})^{\frac{1}{3}} (\Upsilon_\varepsilon^{-1})^*\omega_{\mathbb{C}^3}$ is expected to approximate the Calabi--Yau metric $\omega_\varepsilon$ on $U_0 \cap \{|t| < \epsilon_1\}$ up to a small error.

\subsection{Approximate semi-Ricci-flat metric}From this subsection to the end of Section 2, all discussions are carried out under Setting (A).

In this subsection, we focus on the geometry of the semi-Ricci-flat metric. Although most of the material is already known in the literature, we include it here for the sake of completeness.

First, we discuss the construction of $G_0$ and $G_{\infty}$ carefully. Since the two constructions are identical, we will only discuss $G_0$.

As noted before, we can adjust the coordinate system $(U_0,(t,z_1,z_2,z_3))$ of $X$ around the node $q_0$ such that $\psi_0=\left(c_0+H^{\frac12}+O(H)\right)\Big{|}_{X_0}$. Here $c_0$ appears because of the normalization condition $\int_{X_0}\psi_0\omega_X^2=0$. 

Following \cite{spotti2014deformations}, we can construct a family of diffeomorphisms
\begin{align*}
F_{0,t}:X_0\left\backslash\left\{|z|^2\le \frac{|t|}{2}\right.\right\}\to X_t\backslash\{|w|^2=|t|\}
\end{align*}that depends smoothly on $t$. More precisely, we can first work near the node. Let $V_0=\{(z_1,z_2,z_3)\in \mathbb{C}^3|z_1^2+z_2^2+z_3^2=0\}$, $V_t=\{(w_1,w_2,w_3)\in \mathbb{C}^3|w_1^2+w_2^2+w_3^2=t\}$ as before. Consider the diffeomorphism
\begin{equation}
F_{0,t}:V_0\left\backslash\left\{|z|^2\le \frac{|t|}{2}\right.\right\}\to V_t\backslash\{|w|^2=|t|\},\; z_i\mapsto w_i(z)= z_i+\frac{t}{2|z|^2}\bar{z}_i.
\end{equation}Then we can extend this $F_{0,t}$ to $\tilde{F}_{0,t}:X_0\left\backslash\left\{|z|^2\le \frac{|t|}{2}\right.\right\}\to X_t\backslash\{|w|^2=|t|\}$ by flowing along the vector fields orthogonal to the fibers under the $\omega_X$ metric. We can assume that $\tilde{F}_{0,t}=F_{0,t}$ when restricted to $X_0\cap\{|z|\le 1\}$. From now on, we will use $F_{0,t}$ for the extended diffeomorphism. These $\{F_{0,t}\}$ fit into a fiber-preserving diffeomorphism 
\begin{equation}
F_0: \left\{|t|<\epsilon_1\right\}\times X_0\left\backslash \left\{|z|^2\le \frac12\epsilon_1\right.\right\} \to \{x\in X:|t|<\epsilon_1\}\backslash \{|w|^2=|t|\},
\end{equation}which is the identity map on the central fiber $\{0\}\times X_0$. From this point until the end of Section~3, we maintain the convention that $\epsilon_1$ is never larger than any previously introduced $\epsilon_1$ and may be decreased as needed to satisfy later propositions. To avoid ambiguity: when a property is asserted ``for $|y| < \epsilon_1$,'' or ``where $|y| < \epsilon_1$,'' the constant $\epsilon_1$ is understood to be the final, already-fixed one. In contrast, when we say ``there exists an $\epsilon_1$'' (e.g., in a proposition), this signifies that a possibly smaller value is chosen to satisfy the required condition.

Set $G_{0,t}=F_{0,t}^{-1}$. These $\{G_{0,t}\}$ fit into a fiber-preserving diffeomorphism
\begin{equation}\label{fiber pre map 1}
G_0:\left\{x\in X:|t|<\epsilon_1\right\}\left\backslash\left\{r\le \Lambda_1 |t|^{\frac14}\right.\right\}\to U'\subset \{|t|<\epsilon_1\}\times X_0,
\end{equation}
where $\epsilon_1>0$ is a small number and $\Lambda_1$ is a large fixed number.

Note that when restricted to $V_0$ and $V_t$, we have
\begin{align*}
F_{0,t}^*(|w|^2)=|z|^2+\frac{|t|^2}{4|z|^2},\quad G_{0,t}^*(|z|^2)=\frac{|w|^2+\sqrt{|w|^4-|t|^2}}{2}.
\end{align*}

First, we examine the behavior of $\omega_\varepsilon|_{X_t}$. Adopting the construction of Yang Li \cite{li2019gluing}, we equip each fiber $X_t$ with a weighted H\"older space, with emphasis on fibers lying over the small local base $\{ |t| < \epsilon_1 \}$, where $\epsilon_1 \ll 1$. On $X_t \cap U_0$, the rescaled Eguchi--Hanson metric $\operatorname{EH}_t$ is given by $\left(\sqrt{-1} \partial \bar{\partial} \sqrt{H + |t|}\right)\Big{|}_{V_t}$. The weighted H{\"o}lder norm of a function $f$ on $X_t$ can be defined by
\begin{equation}\label{Weighted norm on Xy}
\begin{aligned}
\| f \|_{C^{k,\alpha}_{\beta}(X_t)}\coloneqq & \| f \|_{C^{k,\alpha}(X_t \setminus \{r < c\}, \omega_X)} + \sum_{i =0}^k \sup_{X_t \cap U_0} r^{-\beta + i} | \nabla_{\operatorname{EH}_t}^i f |\\
&+\sup_{\substack{x,x'\in  X_t\cap U_0\\d_{\operatorname{EH}_t}(x,x')\ll r(x)}}r(x)^{-\beta+\alpha+k}
\frac{|\nabla^k_{\operatorname{EH}_t}f(x)-\nabla^k_{\operatorname{EH}_t}f(x')|}{d_{\operatorname{EH}_t}(x,x')^\alpha}.
\end{aligned}
\end{equation}
Here the values of a tensor at two nearby points are compared by parallel transport along the unique minimizing geodesic joining them. The constant $c$ is meant to be small enough to make $\{ r < c \}$ contained in the coordinate neighborhood $U_0$.

We record some useful fiberwise estimates from \cite{li2019gluing}.

\begin{lemma}\label{fiber deviation 0}

On fiber $X_t$, where $|t|<\epsilon_1$, we have the following estimates:

\begin{enumerate}
\item[(a)] $\| G_{0,t}^* r^2 - r^2 \|_{C^k_{-2}\left((U_0 \cap X_t) \setminus \{r\le \Lambda_1 |t|^{\frac14}\}\right)} \le C|t|$; 
\item[(b)] $\| G_{0,t}^* \Omega_0 - \Omega_t \|_{C^k_{-4}\left((U_0 \cap X_t) \setminus \{r\le \Lambda_1 |t|^{\frac14}\}\right)} \le C|t|$;
\item[(c)] $\| G_{0,t}^* (\omega_X|_{X_0}) - \omega_X|_{X_t} \|_{C^k_{-2}\left((U_0 \cap X_t) \setminus \{r\le \Lambda_1 |t|^{\frac14}\}\right)} \le C|t|$;
\item[(d)] $\| G_{0,t}^* (\partial\bar{\partial}\psi_0) - \partial\bar{\partial}G^*_{0,t}\psi_0|_{X_t} \|_{C^k_{-4}\left((U_0 \cap X_t) \setminus \{r\le \Lambda_1 |t|^{\frac14}\}\right)} \le C|t|$;
\item[(e)] $\left\| \omega_X|_{X_t} + \sqrt{-1} \partial \bar{\partial} G_{0,t}^* \psi_0 - G_{0,t}^* \left( \omega_X|_{X_0} + \sqrt{-1} \partial \bar{\partial} \psi_0 \right) \right\|_{C^k_{-4}\left(X_t \cap \{ H > 2|t|^{\frac56} \}\right)} \le C|t|$,
\end{enumerate}
where $C=C(k)$ is a uniform constant.

\end{lemma}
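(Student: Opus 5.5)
The plan is to reduce everything to the explicit local model near the node, since outside $U_0$ all estimates follow from smooth dependence on $t$. Away from $\{r<c\}$, the map $G_{0,t}$ is the inverse of the flow of the $\omega_X$-horizontal lift. That flow depends smoothly on $t$ and is the identity at $t=0$. Hence each quantity in (a)--(e) is a smooth family vanishing at $t=0$, and its $C^k(\omega_X)$ norm is $O(|t|)$ by Taylor's theorem. Inside $U_0\cap X_t$ we may assume $G_{0,t}=F_{0,t}^{-1}$ is given by the explicit formula. By the conventions in (\ref{Weighted norm on Xy}), a bound in $C^k_{\beta}$ then means showing that the $i$-th $\operatorname{EH}_t$-covariant derivative is $O(|t| r^{\beta-i})$ on the region $r\ge \Lambda_1|t|^{1/4}$.

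For (a) I use the stated identity $G_{0,t}^*(|z|^2)=\frac12(|w|^2+\sqrt{|w|^4-|t|^2})$. Writing $|w|^2=r^4$ gives
\begin{align*}
G_{0,t}^*r^4-r^4=-\frac{|t|^2}{2\left(r^4+\sqrt{r^8-|t|^2}\right)}=O\!\left(|t|^2 r^{-4}\right).
\end{align*}
Taking square roots yields $G_{0,t}^*r^2-r^2=O(|t|^2r^{-6})$. Since $r^4\ge \Lambda_1^4|t|$, this is at most $C|t|r^{-2}$. For derivatives I use scaling. On $r\ge\Lambda_1|t|^{1/4}$ the metric $\operatorname{EH}_t$ is uniformly equivalent, with all derivatives and with constants independent of $t$, to the flat cone metric $\sqrt{-1}\partial\bar\partial r^2$. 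Each derivative of a homogeneous expression then costs a factor $r^{-1}$, exactly as the weights require. The key observation is that after rescaling $z\mapsto \lambda z$ with $\lambda=r^{-2}$, the problem becomes one with $|t|$ replaced by $|t|r^{-4}\le\Lambda_1^{-4}$. The map $F_{0,t}$ is then a smooth perturbation of the identity, analytic in the small parameter $t/|z|^2$.

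Parts (b) and (c) follow the same way. The holomorphic volume forms are $\Omega_t=\operatorname{Res}\,\frac{dz_1dz_2dz_3}{\sum z_i^2-t}$ up to a factor $1+O(z)$. In the model, $F_{0,t}^*\Omega_t-\Omega_0$ has homogeneous expansion in $t/|z|^2$ starting at first order, with $|\Omega_0|\sim r^{0}$. This gives the $r^{-4}$ weight times $|t|$. The $1+O(z)$ corrections and the $O(r^4)$ potential $\Theta_0$ of $\omega_X$ only improve the bounds. For (c), $\omega_X=\sqrt{-1}\partial\bar\partial\Theta_0$ with $\Theta_0=O(r^4)$ smooth. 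The deviation $G_{0,t}^*\omega_X-\omega_X$ is controlled by $|G_{0,t}-\mathrm{id}|\sim|t|/|z|=|t|r^{-2}$ measured in Euclidean scale, times bounded derivatives of $\Theta_0$. In cone-weighted norms this is $C|t|r^{-2}$.

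For (d), the commutator between pullback and $\partial\bar\partial$ comes only from the failure of $G_{0,t}$ to be holomorphic. Its $\bar\partial$-part is $\bar\partial G_{0,t}\sim t\,\bar\partial(\bar z/|z|^2)\sim|t|r^{-4}$ in cone norms. Applying it to $\psi_0=c_0+r^2+O(r^4)$, whose derivatives behave like $r^{2-i}$, gives the error $C|t|r^{-4}$ with the right derivative weights. Part (e) then follows by combining (c) and (d): the left side equals $[\omega_X|_{X_t}-G_{0,t}^*\omega_X|_{X_0}]+\sqrt{-1}[\partial\bar\partial G_{0,t}^*\psi_0-G_{0,t}^*\partial\bar\partial\psi_0]$. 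Here (c) supplies $C^k_{-2}\subset C^k_{-4}$ up to a constant, because $r\le C$. The restriction to $H>2|t|^{5/6}$ lies inside $r\ge\Lambda_1|t|^{1/4}$ for small $|t|$. On the complement of $U_0$, smoothness handles the rest.

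The main obstacle is making the scaling argument uniform in derivatives. This requires checking that $\operatorname{EH}_t$ and the cone metric, together with their connections, are uniformly comparable on $r\ge\Lambda_1|t|^{1/4}$, and that the Hölder seminorm transforms correctly. I would prove this by rescaling to $t=1$ via $\Psi_t$ as in the preceding remark, where the comparison is a statement about the fixed ALE end of $\operatorname{EH}_1$.
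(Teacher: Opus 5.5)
Your argument is correct. It is essentially the standard one: the paper gives no proof here and imports these estimates from Li's gluing paper, which rests on Spotti's explicit computations, and those use your ingredients --- the explicit model map $F_{0,t}(z)=z+\frac{t}{2|z|^2}\bar z$ near the node, the scaling $z\mapsto\lambda z$, $t\mapsto\lambda^2t$ reducing each estimate to a unit annulus with small parameter $t/|z|^2\le\Lambda_1^{-4}$, smooth dependence on $t$ away from the node, and (e) obtained from (c) and (d).

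One minor point concerns (c). There the leading error comes from $dG_{0,t}-\mathrm{id}=O(|t|r^{-4})$ acting on $\omega_X$, whose $\operatorname{EH}_t$-norm is $\sim r^2$. It does not come from the displacement $|G_{0,t}-\mathrm{id}|\sim|t|/|z|$, which contributes only $O(|t|)$. Your final bound $C|t|r^{-2}$ is nevertheless the right one.
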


\begin{remark}

All of the above conclusions are also valid for $X_y \cap U_\infty$ where $|y| < \epsilon_1$, after replacing $t$ by $y$ and $0$ by $\infty$, respectively. To avoid notational confusion, the corresponding diffeomorphisms are denoted by $G_\infty$ and $G_{\infty,y}$.

\end{remark}

Now we focus on $\omega_{\varepsilon}|_{X_y}$, where $X_y$ is a smooth fiber near the removed singular fiber $X_{\infty}$.

Consider the $(1,1)$-form
\begin{equation}\label{fiberwise approximate SRF 1}
    \omega_y=\omega_{X}|_{X_y}+\sqrt{-1}\partial\bar{\partial}\widetilde{\phi}_y,
\end{equation} on $X_y$, where
\begin{equation}
\widetilde{\phi}_y=  c_\infty+ \gamma_1\left(\frac{H}{|y|^{\frac56}}\right)G_{\infty,y}^*\left(\psi_0^{(\infty)}-c_\infty\right)+\gamma_2\left(\frac{H}{|y|^{\frac56}}\right)\left(\sqrt{H+|y|}-\Theta_\infty\right).
\end{equation}

We first compare the Eguchi--Hanson metric $\omega_{\operatorname{EH}}=\sqrt{-1}\partial\bar{\partial}\sqrt{H+|y|}$ with the approximate fiberwise Ricci-flat metric $\omega_{2}=\omega_X+\sqrt{-1}\partial\bar{\partial}G_{\infty,y}^*\left(\psi^{(\infty)}_0-c_\infty\right)$ in a small neighborhood of the node $q_\infty$. This is essentially Lemma 2.2 in \cite{spotti2014deformations}.

\begin{lemma}\label{fiber deviation 1}

On the annulus $\left\{|y|^{\frac56}\le H\le 2|y|^{\frac56}\right\}\cap \{|y|\le \epsilon_1\}$, we have the following fiberwise estimate
\begin{align*}
\left|\left.\nabla^k_{\operatorname{EH}_y}\left(\Theta_\infty+G^*_{\infty,y}\left(\psi_0^{(\infty)}-c_\infty\right)-\sqrt{H+|y|}\right)\right|_{X_y}\right|_{\operatorname{EH}_y}=O\!\left(|y|^{\frac{14-5k}{24}}\right).
\end{align*}In particular, for $0<|y|\le \epsilon_1\ll1$, the $(1,1)$-form $\omega_y$ is positive definite, i.e., a K{\"a}hler metric.

\end{lemma}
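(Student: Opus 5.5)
We write the difference on the annulus $\mathcal{A}_y\coloneqq\left\{|y|^{\frac56}\le H\le 2|y|^{\frac56}\right\}\cap X_y$ as a sum of three terms and estimate each in the weighted norm adapted to $\operatorname{EH}_y$. On $\mathcal{A}_y$ we have $H\sim|y|^{\frac56}$, so $r\sim |y|^{\frac{5}{24}}$ and $H\gg |y|$. First expand the Eguchi--Hanson potential:
\begin{align*}
\sqrt{H+|y|}=H^{\frac12}+\tfrac12|y|H^{-\frac12}+O\!\left(|y|^2H^{-\frac32}\right).
\end{align*}
Second, by the normalization in Setting (A), $\psi_0^{(\infty)}-c_\infty=r^2+O(r^4)=H^{\frac12}+O(H)$ on $X_\infty$. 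Third, $\Theta_\infty=O(r^4)=O(H)$. Hence
\begin{align*}
&\Theta_\infty+G^*_{\infty,y}\left(\psi_0^{(\infty)}-c_\infty\right)-\sqrt{H+|y|}\\
&\qquad=\left(G^*_{\infty,y}H^{\frac12}-H^{\frac12}\right)+G^*_{\infty,y}O(H)+\Theta_\infty-\tfrac12|y|H^{-\frac12}+O\!\left(|y|^2H^{-\frac32}\right).
\end{align*}

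For the zeroth-order sizes on $\mathcal{A}_y$: by Lemma~\ref{fiber deviation 0}(a), transferred to the $y$-chart, $G^*_{\infty,y}r^2-r^2=O(|y|r^{-2})=O(|y|^{\frac{7}{12}})$. The terms $O(H)$ and $\Theta_\infty$ are $O(|y|^{\frac56})$. The term $|y|H^{-\frac12}$ is $O(|y|^{1-\frac{5}{12}})=O(|y|^{\frac{7}{12}})$. The dominant size is therefore $|y|^{\frac{7}{12}}=|y|^{\frac{14}{24}}$, which matches the claim for $k=0$.

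For the derivative bounds, each term is a function homogeneous (or nearly so) in $z$ at scale $r$. On $\mathcal{A}_y$ the metric $\operatorname{EH}_y$ is uniformly equivalent to the cone metric $\sqrt{-1}\partial\bar\partial H^{\frac12}|_{X_y}$, because $H\gg|y|$. Each $\nabla_{\operatorname{EH}_y}$ derivative therefore costs a factor $r^{-1}\sim |y|^{-\frac{5}{24}}$. The weighted $C^k_{-2}$ bound of Lemma~\ref{fiber deviation 0}(a) is exactly this statement for the first term. For the $O(H)$ terms we use smoothness in $z$ together with the cone scaling, the explicit formula for $G^*_{\infty,y}$ on $V_y$, and Lemma~\ref{fiber deviation 0}(a) again to control $G^*_{\infty,y}O(H)-O(H)$. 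The terms $|y|H^{-\frac12}$ and $|y|^2H^{-\frac32}$ are explicit powers and can be differentiated directly. This gives the total bound $O(|y|^{\frac{14}{24}}r^{-k})=O(|y|^{\frac{14-5k}{24}})$. The main technical point, which I expect to be the only real obstacle, is justifying uniform equivalence of $\operatorname{EH}_y$ with the cone and with the weights used in Lemma~\ref{fiber deviation 0}. For this I would rescale by $\Psi_y$ from the earlier remark, which turns $\mathcal{A}_y$ into a region of $V_1$ at distance $\sim|y|^{-\frac{1}{12}}\to\infty$, where the Stenzel metric is asymptotically conical with all derivatives.

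For positivity, note that on $\mathcal{A}_y$ we have
\begin{align*}
\omega_y=\operatorname{EH}_y+\sqrt{-1}\partial\bar\partial\left(\gamma_1\cdot(\text{difference})\right).
\end{align*}
The cutoff $\gamma_1(H/|y|^{\frac56})$ has $\nabla^j_{\operatorname{EH}_y}$ norms $O(r^{-j})$. The Hessian of $\gamma_1$ times the difference is then $O(|y|^{\frac{14-10}{24}})=O(|y|^{\frac16})$ in $\operatorname{EH}_y$-norm, which is small. Hence $\omega_y$ is positive on the annulus. Inside the annulus, where $\gamma_1=0$, we have $\omega_y=\operatorname{EH}_y$, which is Kähler by Lemma~\ref{K{\"a}hler of ST}. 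Outside the annulus, where $\gamma_1=1$, we have $\omega_y=G^*_{\infty,y}(\omega_X|_{X_\infty}+\sqrt{-1}\partial\bar\partial\psi_0^{(\infty)})$ up to an $O(|y|)$ error by Lemma~\ref{fiber deviation 0}(e). That form is positive because the orbifold Ricci-flat metric is Kähler. This holds for $|y|\le\epsilon_1$ small.
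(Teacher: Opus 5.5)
Your proposal is correct and follows the paper's argument. Both split the difference against the cone potential $|z|=H^{1/2}$ and bound three pieces separately: the Eguchi--Hanson correction $\sqrt{H+|y|}-H^{1/2}\approx\tfrac12|y|H^{-1/2}$, which is the dominant term; the $O(r^4)$ pieces $\Theta_\infty$ and $G^*_{\infty,y}O(H)$; and the trivialization error $G^*_{\infty,y}H^{1/2}-H^{1/2}$. Both then use cone scaling on the annulus, where each $\nabla_{\operatorname{EH}_y}$ costs a factor $r^{-1}\sim|y|^{-5/24}$.

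The differences are minor. You bound the trivialization error by the cruder $C|y|r^{-2-k}$ from Lemma~\ref{fiber deviation 0}(a), whereas the paper uses the explicit formula to get $C|y|^2|z|^{-3-k/2}$; your bound still suffices because it is of the same order as the dominant term. You also spell out the positivity of $\omega_y$, which the paper only carries out later, by the same three-region argument, in Claim~1 of Proposition~\ref{Well-definedness of the metric ansatz}.
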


\begin{proof}

Let $\phi_1=\sqrt{H+|y|}$, $\phi_2=\Theta_\infty+G_{\infty,y}^*(\psi_0^{(\infty)}-c_\infty)$. 

On the annular region $\omega_{\operatorname{EH}} \approx i\partial\bar{\partial}(H^{\frac12})|_{V_{y}}$. This implies that $\left|\nabla_{\operatorname{EH}_y}^k(|z|^j)\right|_{\operatorname{EH}_y} \le C|z|^{j-\frac{k}{2}}$.

By the definition of $\phi_1$, $\phi_2$ and $G_{\infty,y}$, it follows that
\begin{itemize}

\item $\left|\nabla_{\operatorname{EH}_y}^k(\phi_1 - |z|)\right|_{\operatorname{EH}_y} \le C\frac{|y|}{|z|^{1+\frac{k}{2}}}$;

\item $\left|\nabla_{\operatorname{EH}_y}^k\left(\phi_2 - \sqrt{\frac{|z|^2 + \sqrt{|z|^4 - |y|^2}}{2}}\right)\right|_{\operatorname{EH}_y} \le C|z|^{2-\frac{k}{2}}$.

\end{itemize}

Thus,
\begin{align*}
\left|\nabla_{\operatorname{EH}_y}^k(\phi_1 - \phi_2)\right|_{\operatorname{EH}_y} \le& \left|\nabla_{\operatorname{EH}_y}^k(\phi_1 - |z|)\right|_{\operatorname{EH}_y}+\left|\nabla_{\operatorname{EH}_y}^k\left(\phi_2 - \sqrt{\frac{|z|^2 + \sqrt{|z|^4 -|y|^2}}{2}}\right)\right|_{\operatorname{EH}_y}\\
&+  \left|\nabla_{\operatorname{EH}_y}^k\left(|z| - \sqrt{\frac{|z|^2 + \sqrt{|z|^4 -|y|^2}}{2}}\right)\right|_{\operatorname{EH}_y}\\
\le& C\left( \frac{|y|}{|z|^{1+\frac{k}{2}}}+|z|^{2-\frac{k}{2}} + \frac{|y|^2}{|z|^{3+\frac{k}{2}}}\right)\\
\le& C |y|^{\frac{14-5k}{24}}.
\end{align*}

\end{proof}

\begin{corollary}\label{total deviation 1}

On the annulus $\left\{|y|^{\frac56}\le H\le 2|y|^{\frac56}\right\}\cap \{|y|\le \epsilon_1\}$, we have the following estimate
\begin{align*}
\left|\nabla^k_{\widetilde{\omega}_1}\left(\Theta_\infty+G^*_{\infty,y}\left(\psi_0^{(\infty)}-c_\infty\right)-\sqrt{H+|y|}\right)\right|_{\widetilde{\omega}_1}= O\!\left(|y|^{\frac{14-5k}{24}}\right),
\end{align*}where $\widetilde{\omega}_1=\pi^*\omega_B+\omega_{\operatorname{EH}}$.

\end{corollary}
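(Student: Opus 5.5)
The idea is to upgrade the fiberwise estimate of Lemma~\ref{fiber deviation 1} to a full total-space estimate for $\widetilde{\omega}_1$, using the almost-product structure of Lemma~\ref{almost product 1}. The only new ingredient is control of derivatives in the base direction.

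Set $f\coloneqq \Theta_\infty+G^*_{\infty,y}\bigl(\psi_0^{(\infty)}-c_\infty\bigr)-\sqrt{H+|y|}$. Near $y=0$ the base metric $\pi^*\omega_B=\sqrt{-1}\widetilde A_y\,dy\wedge d\bar y$, with $\widetilde A_y=A_y/|y|^2$ and $A_y\to A_\infty=\tfrac12$ Lipschitz. So $\pi^*\omega_B$ is uniformly equivalent to $\omega_{\mathbb D^*}$, with all derivatives of $\log A_y$ bounded in the cylindrical sense where $A_y$ is smooth. Hence $\widetilde\omega_1$ and $\omega_1$ are uniformly equivalent. Running the argument of Lemma~\ref{almost product 1} with $a=\widetilde A_y$ in place of $|y|^{-2}$ gives the same two conclusions for $\widetilde\omega_1$:
\begin{align*}
\bigl|\nabla^k_{\widetilde\omega_1}f\bigr|\le C(k)\sum_{l=1}^k H^{-\frac{k-l}{4}}\bigl|\nabla^l_{g_{\operatorname P}}f\bigr|_{g_{\operatorname P}},
\end{align*}
with $g_{\operatorname P}=g_{\mathbb D^*}+g_{\operatorname{EH},y}$ in the local holomorphic trivialization on the region $D$. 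Since the annulus has $H\sim|y|^{5/6}$, the factor $H^{-(k-l)/4}$ equals $|y|^{-5(k-l)/24}$. It therefore suffices to show that every mixed $g_{\operatorname P}$-derivative of order $l$ satisfies
\begin{align*}
\bigl|\nabla^l_{g_{\operatorname P}}f\bigr|_{g_{\operatorname P}}=O\bigl(|y|^{\frac{14-5l}{24}}\bigr).
\end{align*}
Then $|y|^{-5(k-l)/24}\,|y|^{(14-5l)/24}=|y|^{(14-5k)/24}$, which is the claimed bound.

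For pure fiber derivatives this is exactly Lemma~\ref{fiber deviation 1}. For derivatives involving the base, a unit horizontal vector is a bounded combination of $y\partial_y$ and $\bar y\partial_{\bar y}$ (with $u,v$ fixed). I would therefore differentiate the two explicit comparison bounds from the proof of Lemma~\ref{fiber deviation 1} in $y$:
\begin{itemize}
\item $\sqrt{H+|y|}-|z|$: applying $y\partial_y$ to $|y|$ or to $|A|$ reproduces a term of the same size $|y|/|z|$. Each fiber derivative still costs $|z|^{-1/2}$.
\item $\sqrt{(|z|^2+\sqrt{|z|^4-|y|^2})/2}-|z|$: the same reasoning applies, giving size $|y|^2/|z|^3$.
\item The smooth parts $\Theta_\infty=O(r^4)$ and $G^*_{\infty,y}(\psi_0^{(\infty)}-c_\infty)$ minus its leading term: the diffeomorphism $G_{\infty,y}$ depends smoothly on $y$ through $t/(2|z|^2)$-type expressions (the extension of the explicit formula for $F_{0,t}$, with $t$ replaced by $y$). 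Thus $y\partial_y$ acting on these produces terms no worse than the undifferentiated ones, namely $O(|z|^{2})$ with the usual fiber-derivative losses.
\end{itemize}
Altogether each mixed derivative obeys the same bound $C\bigl(|y||z|^{-1-l/2}+|z|^{2-l/2}+|y|^2|z|^{-3-l/2}\bigr)$. With $|z|^2\sim H\sim|y|^{5/6}$ this is $O(|y|^{(14-5l)/24})$.

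I expect the main obstacle to be the rigorous treatment of the $y$-dependence of $G^*_{\infty,y}\psi_0^{(\infty)}$. The map $F_{\infty,y}$ is extended away from the node by a flow orthogonal to the fibers, so one must verify that $y\partial_y$-derivatives of the pullback are bounded with the same fiberwise losses. Inside $U_\infty$, where $F_{\infty,y}$ is the explicit map $z_i\mapsto z_i+\frac{y}{2|z|^2}\bar z_i$, this reduces to a direct computation: $y\partial_y$ of the correction term $\frac{y}{2|z|^2}\bar z_i$ is the same term, which is of relative size $|y|/|z|^2\ll1$. The annulus lies well inside $U_\infty$ for $|y|\le\epsilon_1$, so the explicit formula suffices.
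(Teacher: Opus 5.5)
Your proposal is correct and follows essentially the same route as the paper. Both arguments use Lemma~\ref{almost product 1} to reduce to vertical and horizontal product derivatives, take the vertical ones from Lemma~\ref{fiber deviation 1}, and note that cylindrical horizontal derivatives ($y\partial_y$, $\bar y\partial_{\bar y}$) cost nothing beyond the zeroth-order size $O(|y|^{7/12})$. Your piece-by-piece check of the $y$-derivatives, and of the harmless factor $A_y$ in $\widetilde\omega_1$, fills in details that the paper states in one line.
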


\begin{proof}

By Lemma \ref{almost product 1}, it suffices to estimate the vertical and horizontal derivatives separately. The fiber part has already been done in Lemma \ref{fiber deviation 1}. Also we have zeroth-order estimate 
\begin{align*}
\left|\Theta_\infty+G^*_{\infty,y}\left(\psi_0^{(\infty)}-c_\infty\right)-\sqrt{H+|y|}\right| = O\!\left(|y|^{\frac{7}{12}}\right).
\end{align*}
Note that $\omega_B=\sqrt{-1}\frac{A_y}{|y|^2}dy\wedge d\bar{y}\sim\frac{\sqrt{-1}}{2|y|^2}dy\wedge d\bar{y}$, hence 
\begin{align*}
\left|\nabla^k_{\omega_B}\left(\Theta_\infty+G^*_{\infty,y}\left(\psi_0^{(\infty)}-c_\infty\right)-\sqrt{H+|y|}\right)\right|_{\widetilde{\omega}_1}\sim |y|^{\frac{7}{12}}.
\end{align*}

Combining the above two estimates, the desired estimate follows.

\end{proof}

Following the proof of Lemma 2.2 in \cite{li2019gluing} with the gluing region changed, we obtain

\begin{proposition}[\text{cf.} \cite{li2019gluing} Lemma 2.2]\label{est on phi y}

For $0<|y|\le \epsilon_1\ll1$, the K{\"a}hler form $\omega_y$ satisfies:
\begin{align*}
\omega_y^2=\frac{1}{A_y}e^{h_y}\Omega_y\wedge \overline{\Omega}_y,\quad \|h_y\|_{C^{k,\alpha}_{\beta-2}}\le C(k,\alpha)|y|^{\frac{7}{12}-\frac{5\beta}{24}},
\end{align*}where $-2\le \beta<0$.
    
\end{proposition}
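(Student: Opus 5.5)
We define $h_y$ through the displayed identity, $h_y\coloneqq\log\bigl(A_y\,\omega_y^2/(\Omega_y\wedge\overline{\Omega}_y)\bigr)$, and estimate it region by region on $X_y$. The cutoffs $\gamma_1,\gamma_2$ in $\widetilde\phi_y$ split the fiber into three regions. The \emph{inner} region is $\{H\le|y|^{\frac56}\}$, where $\omega_y=\omega_{\operatorname{EH}_y}$ exactly. The \emph{outer} region is $\{H\ge 2|y|^{\frac56}\}$, where $\omega_y=\omega_X|_{X_y}+\sqrt{-1}\partial\bar\partial G^*_{\infty,y}\psi^{(\infty)}_0$. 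The \emph{gluing annulus} is $\{|y|^{\frac56}\le H\le 2|y|^{\frac56}\}$. In each region we bound $h_y$ in $C^{k,\alpha}$ with respect to $\operatorname{EH}_y$, weighted by $r^{-(\beta-2)+i}$ at order $i$. We then take the worst region, using that $r^4=H$ ranges over a bounded set of scales.

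\textbf{Inner region.} Here $\omega_{\operatorname{EH}_y}$ is Ricci-flat and its volume form is an exact constant multiple of $\Omega^{V}_y\wedge\overline{\Omega^{V}_y}$ on the model fiber $V_y$. By the local expression $\Omega=\frac{\sqrt{A_\infty}}{y}(1+O(z))\,dz_1\wedge dz_2\wedge dz_3$, the true $\Omega_y$ differs from the model form by a factor $1+O(z)$, i.e. by $O(r^2)$ with the natural scaling. Also $A_y=A_\infty+O(|y|^{\frac12})$ by the Lipschitz expansion of $A_y$. Hence $|h_y|\lesssim r^2+|y|^{\frac12}$, and each $\operatorname{EH}_y$-derivative costs $r^{-1}$. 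The weighted norm $\sup r^{2-\beta}|h_y|$ over $r^4\le|y|^{5/6}$ is then at most $|y|^{\frac{5}{24}(4-\beta)/2}$ (from the $r^2$ term) plus $|y|^{\frac12}$ times a power of $r$. Both are bounded by $|y|^{\frac7{12}-\frac{5\beta}{24}}$ for $-2\le\beta<0$. This needs a quick check of exponents, including the case where $\beta-2$ is very negative and $r$ is small.

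\textbf{Outer region.} Here we use Lemma~\ref{fiber deviation 0}(b),(c),(e) for the $\infty$-fiber, as allowed by the subsequent remark. The form $\omega_y$ is the pullback under $G_{\infty,y}$ of the orbifold Ricci-flat metric on $X_\infty$, up to an error of size $C|y|$ in $C^k_{-4}$. Likewise $\Omega_y$ is the pullback of $\Omega_0$ up to an error $C|y|$ in $C^k_{-4}$, and $A_y-A_\infty=O(|y|^{\frac12})$ with the $X_\infty$ volume normalization. So $h_y=O(|y|\,r^{-4})+O(|y|^{\frac12})$ after the usual linearization of $\log\det$. In weight $\beta-2$, the first term evaluated at $r^4\sim|y|^{5/6}$ is again at most $|y|^{\frac7{12}-\frac{5\beta}{24}}$. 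The constant $O(|y|^{1/2})$ piece needs care: the normalization of $A_y$ should cancel it against the total volume, or else it should be absorbed, since $|y|^{1/2}\le|y|^{\frac7{12}-\frac{5\beta}{24}}$ holds when $\beta\le-2/5$. I expect to follow \cite{li2019gluing} here and note that the fiber volume identity pins the constant.

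\textbf{Gluing annulus (main step).} Write $\omega_y=\omega_{\operatorname{EH}_y}+\sqrt{-1}\partial\bar\partial\bigl(\gamma_1\cdot(\phi_2-\phi_1)\bigr)$, using the notation of Lemma~\ref{fiber deviation 1}. That lemma gives $|\nabla^k_{\operatorname{EH}_y}(\phi_2-\phi_1)|=O(|y|^{\frac{14-5k}{24}})$. The derivatives of $\gamma_1(H/|y|^{5/6})$ cost $r^{-1}\sim|y|^{-5/24}$ each, which is consistent with the same scaling. So the perturbation of the metric is $O(|y|^{\frac{14-10}{24}})=O(|y|^{1/6})$ relative to $\operatorname{EH}_y$. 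Thus $\omega_y$ stays positive, and $h_y$ equals the EH-volume defect (controlled as in the inner region) plus $O(|y|^{\frac{4}{24}})$ with matching derivative bounds. Multiplying by the weight $r^{2-\beta}\sim|y|^{\frac{5(2-\beta)}{24}}$ gives exactly $|y|^{\frac{14-5\beta}{24}}=|y|^{\frac7{12}-\frac{5\beta}{24}}$. This also explains the form of the claimed exponent.

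The Hölder seminorm is handled by interpolation between the $k$ and $k+1$ estimates on balls of size $\ll r$.
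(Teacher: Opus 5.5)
Your overall route is the one the paper relies on. The paper simply invokes Li's Lemma 2.2 with the gluing scale moved to $H\sim|y|^{5/6}$. Your treatment of the gluing annulus is correct: $\sqrt{-1}\partial\bar\partial\left(\gamma_1(\phi_2-\phi_1)\right)$ is $O(|y|^{1/6})$ relative to $\operatorname{EH}_y$, each further derivative costs $r^{-1}$, and the weight $r^{2-\beta}\sim|y|^{5(2-\beta)/24}$ gives exactly $|y|^{(14-5\beta)/24}$.

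\textbf{The gap: the constant term in the outer region.} You take $A_y-A_\infty=O(|y|^{1/2})$ and propose to absorb it using $|y|^{1/2}\le|y|^{\frac{7}{12}-\frac{5\beta}{24}}$ for $\beta\le-\frac25$. That inequality is reversed. Since $\frac{7}{12}-\frac{5\beta}{24}>\frac12$ for every $\beta<0$, it never holds for small $|y|$. On $\{r\sim1\}$, where the weight is $\sim1$, your bound would only give $|y|^{1/2}$, which is strictly worse than the claim for every admissible $\beta$.

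Your fallback, letting the fiber-volume identity fix the constant, is not carried out. With your pointwise bounds it yields only $|\log(A_y/A_\infty)|\lesssim|y|\log(1/|y|)$, coming from integrating $|y|r^{-4}$ against $r^3\,dr$ over the outer region. That suffices for $-2<\beta<0$ but misses the endpoint $\beta=-2$.

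The missing input is the expansion quoted from Li's Lemma 2.1: $A_y=(\text{smooth in }y)-2|g_2(y)|^2|y|$. Hence $A_y-A_\infty=O(|y|)$, not $O(|y|^{1/2})$. The constant then contributes $r^{2-\beta}|\log(A_y/A_\infty)|\le C|y|\le C|y|^{\frac{14-5\beta}{24}}$, exactly because $\frac{14-5\beta}{24}\le1\iff\beta\ge-2$.

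Likewise, your evaluation of the other outer term at the inner edge needs the same hypothesis. The supremum $\sup_{r\ge|y|^{5/24}}r^{2-\beta}|y|r^{-4}=|y|\sup r^{-2-\beta}$ is attained at $r=|y|^{5/24}$ only when $-2-\beta\le0$. This is where $\beta\ge-2$ enters, and your write-up should say so.

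\textbf{An arithmetic slip in the inner region.} From $|h_y|\lesssim r^2+|y|$ on $\{r\le|y|^{5/24}\}$ one gets $r^{2-\beta}\cdot r^2\le|y|^{\frac{5(4-\beta)}{24}}$, not $|y|^{\frac{5(4-\beta)}{48}}$. With your halved exponent the comparison with $|y|^{\frac{14-5\beta}{24}}$ would be false. With the correct exponent it holds, since $20-5\beta\ge14-5\beta$.

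With these two corrections the argument goes through. The H\"older seminorms follow from the $C^{k+1}$ bounds on balls of radius $\ll r$, as you say.
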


The following mapping property of the Laplacian on the weighted H{\"o}lder spaces is standard.

\begin{lemma}[\text{cf.} \cite{li2019gluing} Lemma 2.3 and \cite{spotti2014deformations} Proposition 3.2]\label{fiber iso}

If $-2<\beta<0$, then there exists $\epsilon_1>0$ such that for every fiber $X_y$ with $0<|y|<\epsilon_1$, the Laplacian
\begin{align*}
\Delta_{\omega_y}: C^{k+2,\alpha}_{\beta}(X_y) \to C^{k,\alpha}_{\beta-2}(X_y),
\end{align*}
when restricted to the subspace of functions satisfying $\int_{X_y} f \, \omega_y^2 = 0$, is an isomorphism.

 Moreover, its inverse satisfies a uniform estimate in $y$:
\begin{equation}
\left\| \Delta_{\omega_y}^{-1} f \right\|_{C^{k+2,\alpha}_{\beta}(X_y)} \le C(k, \alpha, \beta) \left\| f \right\|_{C^{k,\alpha}_{\beta-2}(X_y)}.
\end{equation}

\end{lemma}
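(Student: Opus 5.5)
We prove Lemma~\ref{fiber iso} by the standard blow-up (contradiction) argument for weighted spaces on degenerating Eguchi--Hanson-type fibers, following \cite{spotti2014deformations}. First, for each fixed $y$, $\Delta_{\omega_y}$ is self-adjoint on the compact manifold $X_y$. Its kernel on $C^{k+2,\alpha}_\beta$ consists of the constants, so it is invertible on the mean-zero subspace; here the weight only changes the norm and not the space. Given invertibility for each $y$, interior Schauder estimates on balls of radius $\sim r(x)$, rescaled so that they have bounded geometry, reduce the task to a weighted $C^0$ estimate. Concretely, the bound $\|u\|_{C^{k+2,\alpha}_\beta}\le C(\|\Delta u\|_{C^{k,\alpha}_{\beta-2}}+\sup_{X_y} r^{-\beta}|u|)$ holds uniformly in $y$. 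On $U_\infty\cap X_y$ the metric $\omega_y$ is uniformly equivalent, with all derivatives, to $\operatorname{EH}_y$ in the scale-invariant sense: this follows from Lemma~\ref{fiber deviation 1} and Proposition~\ref{est on phi y} together with the cutoff structure of $\widetilde\phi_y$. Away from the node it is uniformly close to $\omega_y|_{\{r\ge c\}}$ of the orbifold metric. So the Schauder constants can be taken independent of $y$.

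The core step is the uniform weighted $C^0$ bound $\sup r^{-\beta}|u|\le C\|\Delta_{\omega_y}u\|_{C^{0,\alpha}_{\beta-2}}$ for mean-zero $u$. Suppose it fails. Then there are $y_j\to 0$ and mean-zero $u_j$ with $\sup r^{-\beta}|u_j|=1$ and $\|\Delta u_j\|_{C^{0,\alpha}_{\beta-2}}\to 0$. Pick points $x_j$ where $r^{-\beta}|u_j|\ge 1/2$. There are three cases.
\begin{enumerate}
\item[(i)] $r(x_j)\ge c'>0$. The fibers converge smoothly away from the node to the orbifold $(X_\infty,\omega_{\infty})$, and $u_j$ converges there to a harmonic $u_\infty$ with $|u_\infty|\le r^{\beta}$. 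Since $\beta>-2$, the singularity at the node is removable: on $\mathbb{C}^2/\mathbb{Z}_2$ the harmonic functions decaying slower than $r^{-2}$ extend. Hence $u_\infty$ is constant. Because $\beta>-2$ makes $r^\beta$ integrable against the volume, the mean-zero condition passes to the limit, so $u_\infty=0$, contradicting $|u_\infty(x_\infty)|\ge c''$.
\item[(ii)] $r(x_j)\sim |y_j|^{1/4}$. Rescale by $|y_j|^{-1/2}$, as in the remark after Proposition~\ref{prop of omega 1}. The limit is a harmonic function on the Eguchi--Hanson space with $|u|\le C\,r^\beta$. Since $\beta<0$ it decays at infinity, so by the maximum principle it vanishes, a contradiction.
\item[(iii)] $|y_j|^{1/4}\ll r(x_j)\ll 1$. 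Rescale by $r(x_j)^{-2}$. The limit is harmonic on the flat cone $\mathbb{C}^2/\mathbb{Z}_2$ minus the origin and bounded by $r^\beta$. Since $\beta\in(-2,0)$ avoids the indicial roots, which are $0,-2$ and the others coming from spherical harmonics on $S^3/\mathbb{Z}_2$, the function must vanish.
\end{enumerate}
In each case, uniform $C^{2,\alpha}$ control from the Schauder step gives local convergence, so the contradiction is genuine.

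I expect case (i) to be the main obstacle, specifically the mean-zero normalization. One must ensure that $\int u_j\,\omega_{y_j}^2=0$ passes to the limit, which requires uniform integrability of $r^{\beta}$ near the node (true since $\beta>-4$) and uniform volume convergence of $\omega_{y_j}^2$. One must also exclude kernel elements of the orbifold Laplacian in the weighted space, which is where $\beta>-2$ is used.

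Once the weighted $C^0$ bound holds, the Schauder bound gives the full uniform estimate. Surjectivity onto the mean-zero part follows from Fredholm theory on each compact $X_y$, completing the proof.
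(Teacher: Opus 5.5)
Your proposal is correct and is essentially the argument the paper relies on. The paper gives no separate proof here, deferring to \cite{li2019gluing} and \cite{spotti2014deformations}, and your uniform weighted Schauder estimate plus three-regime contradiction argument is that standard proof: the orbifold limit uses the mean-zero condition and $\beta>-2$, the Eguchi--Hanson scale uses decay from $\beta<0$, and the intermediate flat-cone scale uses that $(-2,0)$ contains no indicial roots; the same three-case blow-up appears explicitly in the paper's proof of Proposition~\ref{Laplace for C times K3 weighed version} for $\mathbb{C}\times X_y$.
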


We can use the implicit function theorem to prove

\begin{proposition}\label{fiber deviation 2}

Let $-2<\beta<0$ and $0<|y|<\epsilon_1$. Then
\begin{align*}
\left\|\psi_y-\widetilde{\phi}_y\right\|_{C^{k+2,\alpha}_\beta(X_y)}\le C(k,\alpha,\beta)|y|^{\frac{7}{12}-\frac{5\beta}{24}}.
\end{align*}
In particular, we have
\begin{align*}
\left\|\omega_y-\omega_{\operatorname{SRF}}|_{X_y}\right\|_{C^{k,\alpha}_{\beta-2}(X_y)}<C(k,\alpha,\beta)|y|^{\frac{7}{12}-\frac{5\beta}{24}}.
\end{align*}

\end{proposition}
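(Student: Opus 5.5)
We treat the fiberwise complex Monge--Amp\`ere equation as a small perturbation of the approximate solution $\widetilde{\phi}_y$. Write $\psi_y=\widetilde{\phi}_y+u$. Since $\omega_{\operatorname{SRF}}|_{X_y}=\omega_y+\sqrt{-1}\partial\bar\partial u$ and $\omega_y^2=A_y^{-1}e^{h_y}\Omega_y\wedge\overline{\Omega}_y$ by Proposition~\ref{est on phi y}, the defining equation for $\psi_y$ becomes
\begin{align*}
\mathcal{F}_y(u)\coloneqq \log\frac{(\omega_y+\sqrt{-1}\partial\bar\partial u)^2}{\omega_y^2}+h_y=0 .
\end{align*}
We solve this in $C^{k+2,\alpha}_\beta(X_y)$ by a contraction argument, using the fixed-point formulation
\begin{align*}
u=\Delta_{\omega_y}^{-1}\bigl(-h_y-Q(u)\bigr)+\text{const},
\end{align*}
where $Q(u)=\log\frac{(\omega_y+\sqrt{-1}\partial\bar\partial u)^2}{\omega_y^2}-\Delta_{\omega_y}u$ is the nonlinear remainder. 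The additive constant is dictated by the normalization $\int_{X_y}\psi_y\omega_X^2=0$. Before applying Lemma~\ref{fiber iso} we must check that the right-hand side has mean zero with respect to $\omega_y^2$. This can be arranged because $\int e^{-h_y}\omega_y^2=\int\omega_{\operatorname{SRF}}^2$ is the cohomological volume. Alternatively, we add a constant $c_y$ to $h_y$ as an unknown, which is harmless since $|c_y|\lesssim\|h_y\|$.

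The linear input is Lemma~\ref{fiber iso}: on mean-zero functions, $\|\Delta_{\omega_y}^{-1}f\|_{C^{k+2,\alpha}_\beta}\le C\|f\|_{C^{k,\alpha}_{\beta-2}}$, uniformly in $y$. The forcing term is controlled by Proposition~\ref{est on phi y}, which gives $\|h_y\|_{C^{k,\alpha}_{\beta-2}}\le C|y|^{\frac{7}{12}-\frac{5\beta}{24}}$.

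For the nonlinear estimate we need the quadratic bound
\begin{align*}
\|Q(u)-Q(v)\|_{C^{k,\alpha}_{\beta-2}}\le C\bigl(\|u\|+\|v\|\bigr)\|u-v\|
\end{align*}
for $u,v$ in a small ball of $C^{k+2,\alpha}_\beta$. Pointwise, $Q$ is a smooth function of $\sqrt{-1}\partial\bar\partial u$ measured in $\omega_y$, vanishing to second order. By the definition of the weighted norm, $|\nabla^2u|_{\omega_y}\lesssim r^{\beta-2}\|u\|$. Hence the quadratic term obeys
\begin{align*}
|Q|\lesssim r^{2(\beta-2)}\|u\|^2=r^{\beta-2}\cdot r^{\beta-2}\|u\|^2 .
\end{align*}
Since $\beta-2<0$, the extra factor $r^{\beta-2}$ is large near the vanishing cycle, where $r\sim|y|^{1/4}$. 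This is the main obstacle: the weights must be absorbed by the smallness of $u$. The plan is to show that the fixed point has size $\delta=C|y|^{\frac{7}{12}-\frac{5\beta}{24}}$ and to check that $\delta\cdot|y|^{\frac{\beta-2}{4}}\ll1$. This holds because
\begin{align*}
\frac{7}{12}-\frac{5\beta}{24}+\frac{\beta-2}{4}=\frac{1}{12}+\frac{\beta}{24}>0 \quad\text{for } \beta>-2 .
\end{align*}
This gives $\|\sqrt{-1}\partial\bar\partial u\|_{C^0(\omega_y)}\ll1$, so $\omega_y+\sqrt{-1}\partial\bar\partial u$ stays uniformly equivalent to $\omega_y$ and $Q$ is a contraction on the ball of radius $2\delta$. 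The Hölder and higher-derivative parts follow the same scaling, since the weighted norm is scale-invariant on balls of size $\sim r$ and the fibers have bounded geometry after rescaling, as in the Eguchi--Hanson comparison of Lemma~\ref{fiber deviation 1}.

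The Banach fixed point theorem then yields a solution $u$ with $\|u\|_{C^{k+2,\alpha}_\beta}\le 2\delta$. By uniqueness of the fiberwise Calabi--Yau potential with the given normalization (Yau's theorem), $\widetilde{\phi}_y+u$ coincides with $\psi_y$ up to the fixed constant, and that constant is itself $O(\delta)$ by the normalization. This proves the first estimate. The second follows at once, since $\omega_y-\omega_{\operatorname{SRF}}|_{X_y}=-\sqrt{-1}\partial\bar\partial u$ and $\partial\bar\partial$ maps $C^{k+2,\alpha}_\beta$ to $C^{k,\alpha}_{\beta-2}$ boundedly.
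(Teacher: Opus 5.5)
Your proposal is correct and is essentially the paper's argument. It runs a contraction around $\widetilde\phi_y$ with the uniform inverse from Lemma~\ref{fiber iso} and the forcing bound from Proposition~\ref{est on phi y}. The key observation is that $|y|^{\frac{7}{12}-\frac{5\beta}{24}}\,|y|^{\frac{\beta-2}{4}}=|y|^{\frac{\beta+2}{24}}=o(1)$ (via $r\ge|y|^{\frac14}$), which makes the quadratic term contractive. The normalizing constant is then bounded through $\int_{X_y}u\,\omega_X^2$ and $\int_{X_y}\widetilde\phi_y\,\omega_X^2$, the latter being small by the explicit gluing construction.

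The differences are cosmetic. The paper uses the non-logarithmic operator $u\mapsto(\omega_y+\sqrt{-1}\partial\bar\partial u)^2/\omega_y^2-1$, for which mean-zero compatibility with $e^{-h_y}-1$ is automatic. Also, with the paper's convention $\Delta_\omega=2\operatorname{Tr}_\omega\sqrt{-1}\partial\bar\partial$ the linearization is $\frac12\Delta_{\omega_y}$, so your remainder should read $Q(u)=\log\frac{(\omega_y+\sqrt{-1}\partial\bar\partial u)^2}{\omega_y^2}-\frac12\Delta_{\omega_y}u$ to be genuinely quadratic.
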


\begin{proof}

Consider the following operator between Banach spaces
\begin{align*}
\mathcal{N}_y:\mathscr{X}_y \to \mathscr{Y}_y,\; u\mapsto \frac{(\omega_y+\sqrt{-1}\partial\bar{\partial}u)^2}{\omega_y^2}-1,
\end{align*}where
\begin{align*}
\mathscr{X}_y\coloneqq\left\{u\in C^{2,\alpha}_\beta(X_y)\left|\int_{X_y}u\omega_y^2=0\right.\right\},\quad \mathscr{Y}_y\coloneqq\left\{f\in C^{0,\alpha}_{\beta-2}(X_y)\left|\int_{X_y}f\omega_y^2=0\right.\right\}.
\end{align*}
Note that the tangent map of $\mathcal{N}_y$ at $0$ is $d_0\mathcal{N}_y=\frac12\Delta_{\omega_y}:\mathscr{X}_y \to \mathscr{Y}_y$, which is an isomorphism by Lemma \ref{fiber iso}. Moreover, the inverse of $d_0 \mathcal{N}_y = \frac12\Delta_{\omega_y}$ is uniformly bounded by Lemma \ref{fiber iso}. Writing $\mathcal{N}_y(u)=\frac12\Delta_{\omega_y}u+\mathcal{Q}_y(u)$, the term $\mathcal{Q}_y(u)$ is quadratic in $\sqrt{-1}\,\partial\bar\partial u$. On the ball $\|u\|_{C^{2,\alpha}_{\beta}(X_y)} \le  |y|^{\frac{7}{12} - \frac{5\beta}{24}}$, the definition of the weighted norm, together with $r \ge |y|^{\frac14}$ imply $\left\|\sqrt{-1}\,\partial\bar\partial u\right\|_{L^\infty} \le C |y|^{\frac{\beta+2}{24}} = o(1)$. Hence the Lipschitz norm of $\mathcal{Q}_y$ on this ball is uniformly $o(1)$. The quantitative implicit-function theorem, or equivalently the contraction-mapping theorem applied to $u \mapsto 2\Delta_{\omega_y}^{-1} \left( e^{-h_y} - 1 - \mathcal{Q}_y(u) \right)$, therefore yields a solution $u_y$ with $\|u_y\|_{C^{k+2,\alpha}_{\beta}(X_y)}\le C|y|^{\frac{7}{12}-\frac{5\beta}{24}}$, where $C$ is a constant independent of $y$.

By definition, we have $u_y = \psi_y - \widetilde{\phi}_y  + c_y$, where $c_y$ is a normalizing constant chosen so that $\int_{X_y} u_y \, \omega_y^2 = 0$. Moreover, it holds that $\left|\int_{X_y} u_y \, \omega_X^2\right| \leq C |y|^{\frac{7}{12} - \frac{5\beta}{24}}$. On the other hand, the explicit gluing construction yields $\left|\int_{X_y}\widetilde{\phi}_y \omega_X^2\right| \leq C|y|^{\frac{7}{12} - \frac{5\beta}{24}}$. Consequently, we obtain $|c_y| \leq C |y|^{\frac{7}{12} - \frac{5\beta}{24}}$. As a result, we have the estimate
\begin{align*}
\|\psi_y - \widetilde{\phi}_y\|_{C^{k+2,\alpha}_\beta(X_y)} \le C |y|^{\frac{7}{12} - \frac{5\beta}{24}}.
\end{align*}

\end{proof}

We now consider the deformation of the Calabi--Yau metrics $\omega_{\operatorname{SRF}}|_{X_y}$ as the complex structure varies with $y$. The following discussion is mainly an exposition of Section 2.2 in \cite{li2019gluing}.

Let $\epsilon_1,\epsilon_2>0$ be small positive numbers. $\epsilon_2$ is chosen sufficiently small so as to define the local trivialization maps in both Case 1 and Case 2.

\textbf{Case 1:} $X_{y'}$ is uniformly bounded away from the singular fibers, i.e., $y'$ is in the region $\{|t|>\epsilon_1,|y|>\epsilon_1\}$. 

Without loss of generality, we assume that $y'$ lies in the coordinate chart $(W_\infty, y)$. If $y'$ belongs to the chart $(W_0, t)$, the discussion that follows applies verbatim. If $y'$ lies outside both charts, one may simply work in another local coordinate neighborhood on the base and repeat the same argument.

In our case, we can take a fiber-preserving trivialization $G_{y'}$ over the disc $\{y: |y - y'| \le \epsilon_2 \}$,
\begin{equation*}
G_{y'}: \{x \in X: |y' - \pi(x)| \le \epsilon_2 \} \to  \{|y - y'| \le \epsilon_2 \}\times  X_{y'}  \subset  \mathbb{C}\times X_{y'}.
\end{equation*}
Such a trivialization can be constructed, for instance, by flowing along the vector fields obtained as the orthogonal horizontal lifts of tangent vector fields on $Y$, using the ambient metric $\omega_X$. For $|y - y'| \le \epsilon_2$, this induces the diffeomorphisms $G_{y',y}$ from $X_y$ to $X_{y'}$, depending smoothly on $y$.

Moreover, we can arrange that the following estimates hold:
\begin{align*}
&\left\|G_{y',y}^*\Omega_{y'} - \Omega_y\right\|_{C^k(X_y)} \le C|y - y'|; \\
&\left\|G_{y',y}^*(\omega_X|_{X_{y'}}) - \omega_X|_{X_y}\right\|_{C^k(X_y)} \le C|y - y'|.
\end{align*}  
We then define an approximate Calabi--Yau metric on $X_y$ by
\begin{equation*}
    \widehat{\omega}_y = \omega_X|_{X_y} + \sqrt{-1}\partial\bar{\partial} G_{y',y}^*\psi_{y'}.
\end{equation*}

The following fiberwise estimates hold.

\begin{lemma}\label{fiber deviation 3}

Suppose that $y'$ lies in the region $\{|t|>\epsilon_1,|y|>\epsilon_1\}$. Then for $|y-y'|<\epsilon_2$, where $\epsilon_2$ is a fixed sufficiently small positive number, we have

(a) $\|\widehat{\omega}_y - G_{y',y}^*(\omega_{\operatorname{SRF}}|_{X_{y'}})\|_{C^k(X_y)} \le C|y - y'|;
$

(b) $\|(\widehat{\omega}_y)^2 - A_y^{-1}\Omega_y \wedge \overline{\Omega}_y\|_{C^{k,\alpha}(X_y)} \le C|y - y'|$;

(c) $\left\| G_{y',y}^* \psi_{y'} - \psi_y \right\|_{C^{k+2,\alpha}(X_y)} \le C |y-y'| $.

Here $C=C(k, \alpha,\epsilon_1)$.

\end{lemma}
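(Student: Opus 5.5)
The plan is to work entirely on a fixed fiber over the compact region $\{|t|\ge\epsilon_1,|y|\ge\epsilon_1\}$. There all fibers are smooth K3 surfaces with uniformly bounded geometry: the metrics $\omega_X|_{X_{y'}}$ and $\omega_{\operatorname{SRF}}|_{X_{y'}}$ have uniform $C^k$ bounds, by Yau's estimates and smooth dependence on $y'$. I would prove (a) and (b) by direct comparison, and deduce (c) from a quantitative implicit function argument, in the spirit of Proposition~\ref{fiber deviation 2}. All constants depend on $\epsilon_1$ only through compactness of this parameter region.

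For (a), pull back the identity $\omega_{\operatorname{SRF}}|_{X_{y'}}=\omega_X|_{X_{y'}}+\sqrt{-1}\partial\bar\partial\psi_{y'}$ by $G_{y',y}$. The difference $\widehat\omega_y-G_{y',y}^*(\omega_{\operatorname{SRF}}|_{X_{y'}})$ then splits into two pieces:
\begin{itemize}
\item the term $\omega_X|_{X_y}-G_{y',y}^*(\omega_X|_{X_{y'}})$, which is $O(|y-y'|)$ in $C^k$ by the stated trivialization estimate;
\item the term $\sqrt{-1}\partial_{J_y}\bar\partial_{J_y}G^*_{y',y}\psi_{y'}-G^*_{y',y}\sqrt{-1}\partial_{J_{y'}}\bar\partial_{J_{y'}}\psi_{y'}$.
\end{itemize}
The second piece is controlled by $\|G_{y',y}^*J_{y'}-J_y\|_{C^{k+1}}\cdot\|\psi_{y'}\|_{C^{k+2}}$. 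Since $G_{y',y}$ depends smoothly on $y$ and equals the identity at $y=y'$, we have $\|G_{y',y}^*J_{y'}-J_y\|_{C^{k+1}}=O(|y-y'|)$, and $\|\psi_{y'}\|_{C^{k+2}}$ is uniformly bounded.

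For (b), write $\widehat\omega_y^2-G_{y',y}^*\big(A_{y'}^{-1}\Omega_{y'}\wedge\overline\Omega_{y'}\big)$ using (a); the difference is $O(|y-y'|)$. Next, compare $G_{y',y}^*(\Omega_{y'}\wedge\overline\Omega_{y'})$ with $\Omega_y\wedge\overline\Omega_y$ using the first trivialization estimate. Finally use $|A_y-A_{y'}|\le C|y-y'|$, which holds because $A_y$ is smooth away from $S$ and $A_y\ge c>0$ there. This gives the bound in $C^k$, hence in $C^{k,\alpha}$ after raising $k$ by one.

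For (c), apply the operator $\mathcal N(u)=(\widehat\omega_y+\sqrt{-1}\partial\bar\partial u)^2/\widehat\omega_y^2-1$ on mean-zero $C^{k+2,\alpha}(X_y)$ functions.
\begin{itemize}
\item The linearization $\tfrac12\Delta_{\widehat\omega_y}$ is invertible with a bound uniform in $y,y'$. This holds because $\widehat\omega_y$ is $C^k$-close to the pullback of the Ricci-flat metric, which has uniformly bounded geometry, so Schauder estimates and a uniform first eigenvalue lower bound apply.
\item By (b), the error $A_y^{-1}\Omega_y\wedge\overline\Omega_y/\widehat\omega_y^2-1$ has size $O(|y-y'|)$.
\item The contraction argument then produces $u$ with $\|u\|_{C^{k+2,\alpha}}\le C|y-y'|$ and $\widehat\omega_y+\sqrt{-1}\partial\bar\partial u$ Ricci-flat with the prescribed volume form.
\item By uniqueness in Yau's theorem, $G_{y',y}^*\psi_{y'}+u=\psi_y+c$.
\item The constant $c$ is fixed by the normalization $\int\psi_y\omega_X^2=0$. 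Comparing with $\int_{X_{y'}}\psi_{y'}\omega_X^2=0$, pulled back with $O(|y-y'|)$ error, gives $|c|\le C|y-y'|$.
\end{itemize}

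The main obstacle is ensuring uniformity in $y'$ over the noncompact-looking but actually compact set of smooth fibers, especially the uniform invertibility of $\Delta_{\widehat\omega_y}$. I would handle it by compactness of the parameter set $\{(y',y)\}$ together with the smooth dependence of $\psi_{y'}$ on $y'$ from the implicit function theorem, as noted earlier.
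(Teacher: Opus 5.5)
Your proposal is correct and follows the paper's route. The paper states this lemma without proof, deferring to Section~2.2 of \cite{li2019gluing}, but it proves the weighted analogue, Lemma~\ref{fiber deviation 4}, by exactly your scheme:
\begin{itemize}
\item for (a), a bound on the error caused by the variation of the complex structure;
\item for (b), combining (a) with $|A_y-A_{y'}|\le C|y-y'|$ and the comparison of $G_{y',y}^*\Omega_{y'}$ with $\Omega_y$;
\item for (c), solving $(\widehat\omega_y+\sqrt{-1}\partial\bar\partial\hat u_y)^2=A_y^{-1}\Omega_y\wedge\overline{\Omega}_y$ by the implicit function theorem, then fixing the additive constant through the normalization $\int_{X_y}\psi_y\,\omega_X^2=0$.
\end{itemize}
In the present unweighted setting, the uniform invertibility of the fiber Laplacian comes from compactness of the set of smooth fibers, as you observe, so no weighted fiber analysis is needed.
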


\textbf{Case 2:} $X_{y'}$ is near a singular fiber, i.e., $y'$ is in the region $\{|y|<\epsilon_1\}$ or $\{|t|<\epsilon_1\}$.

By symmetry, it suffices to treat the case $y' \in W_\infty$, that is, $y'$ lies near the singular fiber $X_\infty$.

Given a fiber $X_{y'}$ with $|y'| < \epsilon_1$, we can take a fiber-preserving trivialization $G_{y'}$ over the disc $\{y: |y - y'| \le \epsilon_2|y'|\}$,
\begin{equation*}
G_{y'}: \{x \in X: |y' - \pi(x)| \le \epsilon_2|y'|\} \to \{|y - y'| \le \epsilon_2|y'|\}\times  X_{y'} \subset  \mathbb{C}\times X_{y'}.
\end{equation*}
For $|y - y'| \le \epsilon_2|y'|$, this induces the diffeomorphisms $G_{y',y}$ from $X_y$ to $X_{y'}$, depending smoothly on $y$.

We may arrange
\begin{equation}\label{prop of Gy'y}
\begin{aligned}
&\left\|G_{y',y}^*\Omega_{y'} - \Omega_y\right\|_{C_{-4}^k(X_y)} \le C|y - y'|; \\
&\left\|G_{y',y}^*(\omega_X|_{X_{y'}}) - \omega_X|_{X_y}\right\|_{C_{-2}^k(X_y)} \le C|y - y'|.
\end{aligned}
\end{equation}

Since the discussion in \cite{li2019gluing} does not provide explicit expressions, we will give a concrete construction of the diffeomorphism $G_{y',y}$, which makes the subsequent error estimates more transparent.

It suffices to work in the coordinate neighborhood $U_\infty$, as the same technique used in Lemma~2.1 of \cite{spotti2014deformations} can be employed to extend the map beyond this neighborhood. In these coordinates, the fibration is modeled on the standard Lefschetz fibration:
\begin{align*}
\pi:\mathbb{C}^3\to \mathbb{C},\; (z_1,z_2,z_3)\mapsto y=z_1^2+z_2^2+z_3^2.
\end{align*}
 Consider the distinguished lift $\Xi=\sum\limits_{i=1}^3\frac{\bar{z}_i}{2|z|^2}\frac{\partial}{\partial z_i}$ of the base vector field $\frac{\partial}{\partial y}$. Let $\gamma(s)=y+s(y'-y)\subset \mathbb{C}$ be a path connecting $y$ and $y'$, where $s\in [0,1]$. Then
\begin{align}\label{base line}
\dot\gamma(s)=(y'-y)\frac{\partial}{\partial y}+\overline{(y'-y)}\frac{\partial}{\partial \bar{y}}.
\end{align}Let $\widetilde{\gamma}(s)=(z_1(s),z_2(s),z_3(s))\subset \mathbb{C}^3$ be the horizontal lift of $\gamma(s)$ defined by
\begin{align}\label{ODE 1}
\frac{d\widetilde{\gamma}(s)}{ds}=(y'-y)\Xi+\overline{(y'-y)}\overline{\Xi}.
\end{align}Then
\begin{align}\label{ODE 2}
\dot{z}_i(s)=(y'-y)\frac{\overline{z_i(s)}}{2|z(s)|^2},\quad \forall i=1,2,3.
\end{align}
Choose a point $\widetilde{\gamma}(0)=(w_1,w_2,w_3)\in V_y$. We define $G_{y',y}(w)\coloneqq \widetilde{\gamma}(1)\in V_{y'}$. By the smooth dependence of solutions of ODEs on initial data and parameters, the maps $G_{y',y}$ and $G_{y'}$ are indeed smooth.

The remaining task is to solve the ODE system (\ref{ODE 2}). 

To simplify notation, we temporarily write $\varrho=y'-y$. Note that
\begin{align*}
\frac{d|z(s)|^2}{ds}&=\sum\limits_{i=1}^3\left(z_i(s)\overline{\dot{z}_i(s)}+\dot{z}_i(s)\overline{z_i(s)}\right)\\
&=\sum\limits_{i=1}^3\left(z_i(s)\varrho\frac{z_i(s)}{|z(s)|^2}+\overline{\varrho\frac{z_i(s)}{|z(s)|^2}}\overline{z_i(s)}\right)\\
&=\frac{\varrho\overline{(y+s\varrho)}+\bar{\varrho}(y+s\varrho)}{2|z(s)|^2}.
\end{align*}Set $R(s)=|z(s)|^2$, then
\begin{align*}
\frac{d}{ds}R^2=\varrho\overline{(y+s\varrho)}+\bar{\varrho}(y+s\varrho).
\end{align*}
Set $w_i=z_i(0)$. Then $R(0)=|w|^2$, which yields 
\begin{align*}
R(s)=|z(s)|^2=\sqrt{|w|^4+s(\varrho\bar{y}+\bar{\varrho}y)+s^2|\varrho|^2}.
\end{align*}In particular, $|z(1)|^2=\sqrt{|w|^4+|y'|^2-|y|^2}$.

Plugging the explicit formula for $R(s)$ back into the ODE (\ref{ODE 1}), we get
\begin{align}
\frac{d}{ds}\begin{pmatrix}
    z(s)\\
    \overline{z(s)}
\end{pmatrix}=\frac{1}{2R(s)}\begin{pmatrix}
    0 & \varrho\\
    \bar{\varrho} & 0
\end{pmatrix}\begin{pmatrix}
    z(s)\\
    \overline{z(s)}
\end{pmatrix}.
\end{align}We can obtain an explicit solution
\begin{align}\label{solution of ODE 1}
\begin{pmatrix}
    z(s)\\
    \overline{z(s)}
\end{pmatrix}=\exp\left(J(s)\begin{pmatrix}
    0 & \varrho\\
    \bar{\varrho} & 0
\end{pmatrix}\right)\begin{pmatrix}
   w\\
    \bar{w}
\end{pmatrix},
\end{align}or equivalently, 
\begin{align}\label{solution of ODE 2}
z(s)=\cosh(|\varrho|J(s))w+\sinh(|\varrho|J(s))\frac{\varrho}{|\varrho|}
\bar{w},
\end{align}
where $J(s)=\int_0^s\frac{dt}{2R(t)}$.

If $y'=0$, then
\begin{equation}\label{Yang Li trivialization}
\begin{aligned}
G_{0,y}(w)=&\frac{1}{\sqrt{2}}\left\{\sqrt{\frac{H+\sqrt{H^2-\eta}}{\sqrt{H^2-\eta}}}w+\sqrt{\frac{H-\sqrt{H^2-\eta}}{\sqrt{H^2-\eta}}}\frac{y}{|y|}\bar{w}\right\}\\
=&\frac{1}{2} \left\{ \left( \frac{H - \eta^{\frac{1}{2}}}{H + \eta^{\frac{1}{2}}} \right)^{\frac{1}{4}} + \left( \frac{H + \eta^{\frac{1}{2}}}{H - \eta^{\frac{1}{2}}} \right)^{\frac{1}{4}} \right\} w\\
&+ \frac{1}{2} \left\{ \left( \frac{H - \eta^{\frac{1}{2}}}{H + \eta^{\frac{1}{2}}} \right)^{\frac{1}{4}} - \left( \frac{H + \eta^{\frac{1}{2}}}{H - \eta^{\frac{1}{2}}} \right)^{\frac{1}{4}} \right\} \frac{y }{|y|}\bar{w}
\end{aligned},
\end{equation}where $\eta=|y|^2$, $H=|w|^2$. This formula is exactly the one given in Lemma 5.4 in \cite{li2019new}. The map $G_{0,y}$ can be regarded as a refinement of the map $G_{0,t}$ or $G_{\infty,y}$ defined by (\ref{fiber pre map 1}). In what follows, whenever we use the notation $G_0$ or $G_\infty$, we continue to refer to the coarser formula (\ref{fiber pre map 1}), as it is sufficiently well behaved for our purposes and simpler to work with.

If $y'\neq 0$, the integral $J(1)$ can be evaluated explicitly. For the estimates below, however, it is more convenient to expand its integrand in the small parameter $\varrho=y'-y$.

Set $a=|w|^2$, $b=\operatorname{Re}(y\bar\varrho)$, $c=|\varrho|^2$. 

Since $w\in V_y$, we have $|y|\le |w|^2=a$. Moreover,
\begin{align*}
J(1)=\frac12\int_0^1\frac{dt}{\sqrt{|w|^4-|y|^2+|y+t\varrho|^2}}=\frac1{2a}\int_0^1\left(1+\frac{2tb+t^2c}{a^2}\right)^{-\frac{1}{2}}dt.
\end{align*}
Since $y'=y+\varrho$, we have $|\varrho|\le\epsilon_2(|y|+|\varrho|)$, and hence $|\varrho|
\le\frac{\epsilon_2}{1-\epsilon_2}|y|\le C\epsilon_2a$. It follows that $\left|
\frac{2tb+t^2c}{a^2}
\right|
\le
2\frac{|\varrho|}{a}
+
\frac{|\varrho|^2}{a^2}$ is uniformly smaller than $1$ for $0\le t\le1$. We may therefore use the convergent binomial expansion $(1+x)^{-\frac12}=1-\frac12x+\frac38x^2+O(x^3)$ and integrate term by term. This gives
\begin{align*}
J(1)=\frac1{2a}-\frac{b}{4a^3}+\frac{b^2}{4a^5}-\frac{c}{12a^3}+O\!\left(\frac{|\varrho|^3}{a^5}\right).
\end{align*}
Since $b=\frac12\left(y\bar\varrho+\bar y\varrho\right)$, $c=\varrho\bar\varrho$, the function $J(1)$ is real-analytic in $(\varrho,\bar\varrho)$ whenever $\frac{|\varrho|}{|w|^2}$ is sufficiently small.

Recall that $G_{y',y}(w)
=
\cosh\left(|\varrho|J(1)\right)w
+
\frac{\varrho}{|\varrho|}
\sinh\left(|\varrho|J(1)\right)\bar w$. The apparent singularity at $\varrho=0$ disappears after using
\begin{align*}
\cosh\left(|\varrho|J\right)
=
\sum_{m=0}^{\infty}
\frac{|\varrho|^{2m}J^{2m}}{(2m)!},\quad \sinh\left(|\varrho|J\right)
=
|\varrho|
\sum_{m=0}^{\infty}
\frac{|\varrho|^{2m}J^{2m+1}}{(2m+1)!}.
\end{align*}
Because $|\varrho|^{2m}=(\varrho\bar\varrho)^m$ and $J$ is real-analytic in $(\varrho,\bar\varrho)$, these formulas show that $G_{y',y}(w)$ also admits a convergent real-analytic expansion in $\varrho$ and $\bar\varrho$. In conclusion, the parity of $\cosh$ and $\sinh$ ensures that no non-smooth terms involving bare odd powers of $|\varrho|$, or the factor $\frac{\varrho}{|\varrho|}$, remain in the final expansion. 

Substituting the expansion of $J(1)$, we obtain, through quadratic order,
\begin{equation}\label{expansion of Gy'y}
G_{y',y}(w)=w+\frac{\varrho}{2|w|^2}\bar w+\frac{|\varrho|^2}{8|w|^4}w-\frac{\varrho\,\operatorname{Re}(y\bar\varrho)}{4|w|^6}\bar w+O\!\left(|w|\left(\frac{|\varrho|}{|w|^2}\right)^3\right),
\end{equation}
uniformly whenever $|\varrho|<\epsilon_2|y'|$ and $\epsilon_2$ is sufficiently small. Thus $w+\frac{\varrho}{2|w|^2}\bar w$ provides a uniform first-order approximation to $G_{y',y}(w)$.

Using this explicitly defined $G_{y',y}$, (\ref{prop of Gy'y}) can be verified by direct computation.

We then define an approximate Calabi--Yau metric on $X_y$ by
\begin{align*}
\widehat{\omega}_y = \omega_X|_{X_y} + \sqrt{-1}\partial\bar{\partial} G_{y',y}^*\psi_{y'}.
\end{align*}

We have the following fiberwise estimates:

\begin{lemma}\label{fiber deviation 4}

For $-2<\beta<0$, $|y'| < \epsilon_1, |y-y'| \le \epsilon_2|y'|$, we have:

(a) $\left\|G^*_{y',y}\left(\partial\bar{\partial}\widetilde{\phi}_{y'}\right)-\partial\bar{\partial}G^*_{y',y}\widetilde{\phi}_{y'}\right\|_{C^k_{-4}(X_y)}\le C|y-y'|$;

(b) $\left\|G^*_{y',y}\left(\partial\bar{\partial}\psi_{y'}\right)-\partial\bar{\partial}G^*_{y',y}\psi_{y'}\right\|_{C^k_{-4}(X_y)}\le C|y-y'|$;

(c) $\|\widehat{\omega}_y - G_{y',y}^*(\omega_{\operatorname{SRF}}|_{X_{y'}})\|_{C_{-4}^k(X_y)} \le C|y - y'|;
$

(d) $\left\|(\widehat{\omega}_y)^2 - A_y^{-1}\Omega_y \wedge \overline{\Omega}_y\right\|_{C_{\beta-2}^{k,\alpha}(X_y)} \le C|y - y'||y'|^{-\frac{1}{4}(\beta+2)}$;

(e) $\left\| G_{y',y}^* \widetilde{\phi}_{y'} - \widetilde{\phi}_y \right\|_{C^{k+2,\alpha}_{\beta}(X_y)} \le C |y-y'| |y'|^{-\frac{1}{4}(\beta+2)}$;

(f) $\left\| G_{y',y}^* \psi_{y'} - \psi_y \right\|_{C^{k+2,\alpha}_{\beta}(X_y)} \le C |y-y'| |y'|^{-\frac{1}{4}(\beta+2)}$. 

Here $C=C(k, \alpha, \beta)$.

\end{lemma}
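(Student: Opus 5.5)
The plan is to prove (a)–(b) directly from the explicit formula for $G_{y',y}$, then get (c)–(d) by combining them with (\ref{prop of Gy'y}), and finally obtain (e)–(f) by the implicit function theorem, in the same way as Proposition \ref{fiber deviation 2}.

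\emph{Step 1: parts (a) and (b).} The failure of $G_{y',y}$ to commute with $\partial\bar\partial$ comes from $G_{y',y}$ not being holomorphic. By (\ref{expansion of Gy'y}), $G_{y',y}=\mathrm{Id}+\frac{\varrho}{2|w|^2}\bar w+\dots$, so its anti-holomorphic differential satisfies
\begin{align*}
\left|\bar\partial G_{y',y}\right| \le C\frac{|\varrho|}{|w|^2}.
\end{align*}
For a potential $f$ on $X_{y'}$, the difference $G^*\partial\bar\partial f-\partial\bar\partial G^*f$ is then a sum of terms of the form $\bar\partial G * \nabla^2 f$ and $\nabla\bar\partial G*\nabla f$. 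Near the node, $\widetilde\phi_{y'}$ and $\psi_{y'}$ have $|\nabla^j_{\operatorname{EH}}f|\lesssim r^{2-j}$, by Proposition \ref{fiber deviation 2} together with the explicit form of $\widetilde\phi$. Counting weights, the difference is bounded by $|\varrho|\,r^{-4}$ in the $C^k_{-4}$ norm, with each further derivative costing $r^{-1}$. Away from $U_\infty$ the map $G_{y',y}$ is constructed as in \cite{spotti2014deformations} and depends smoothly on $y$, so the bound there is simply $C|\varrho|$.

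\emph{Step 2: parts (c) and (d).} Write
\begin{align*}
\widehat\omega_y-G^*_{y',y}\omega_{\operatorname{SRF}}|_{X_{y'}}=\bigl(\omega_X|_{X_y}-G^*\omega_X|_{X_{y'}}\bigr)+\sqrt{-1}\bigl(\partial\bar\partial G^*\psi_{y'}-G^*\partial\bar\partial\psi_{y'}\bigr).
\end{align*}
The first bracket is controlled by (\ref{prop of Gy'y}) and the second by (b); this gives (c). For (d), compare $\widehat\omega_y^2$ with
\begin{align*}
G^*(\omega_{\operatorname{SRF}}|_{X_{y'}})^2=G^*\bigl(A_{y'}^{-1}\Omega_{y'}\wedge\overline\Omega_{y'}\bigr),
\end{align*}
then pass to $A_y^{-1}\Omega_y\wedge\overline\Omega_y$ using (\ref{prop of Gy'y}) and the Lipschitz bound $|A_y-A_{y'}|\le C|\varrho|$.

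\emph{Main obstacle: the weight conversion.} These errors are naturally measured in the weight $-4$, but we need them in $C^{k,\alpha}_{\beta-2}$. On $X_y$ we have $r\gtrsim|y|^{1/4}\sim|y'|^{1/4}$, which gives
\begin{align*}
r^{-4}=r^{\beta-2}\,r^{-(\beta+2)}\le r^{\beta-2}\,|y'|^{-\frac14(\beta+2)}.
\end{align*}
This is exactly the loss $|y'|^{-\frac14(\beta+2)}$ appearing in the statement. The delicate point is that the expansion (\ref{expansion of Gy'y}) requires $|\varrho|/|w|^2\ll1$. This holds because $|w|^2\ge|y|\ge(1-\epsilon_2)|y'|$ and $|\varrho|\le\epsilon_2|y'|$, so it is uniform even on the vanishing cycle. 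One must also check that the Hölder seminorms transform correctly, which follows from the uniform quasi-isometry of $G_{y',y}$ with respect to $\operatorname{EH}_y$.

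\emph{Step 3: parts (e) and (f).} For (f), let $u=G^*\psi_{y'}-\psi_y$, normalized by a constant. Then $\omega_{\operatorname{SRF}}|_{X_y}=\widehat\omega_y-\sqrt{-1}\partial\bar\partial u$ solves the fiberwise Monge--Ampère equation, with error measured by (d). Apply Lemma \ref{fiber iso} and the contraction argument of Proposition \ref{fiber deviation 2} around $\widehat\omega_y$, whose uniform equivalence to $\omega_y$ follows from (c). Since the ball has radius $|\varrho||y'|^{-\frac14(\beta+2)}\ll|y'|^{(\beta+2)/4}$ in the relevant scale, the quadratic term is $o(1)$-Lipschitz, and we get the bound up to a constant $c_y$. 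We control $c_y$ exactly as before, by integrating against $\omega_X^2$.

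For (e), $\widetilde\phi$ is explicit, so the same estimate can be checked directly: differentiate $\widetilde\phi_y$ in $y$ and compare with the pullback along the flow $\Xi$. The cutoff $\gamma_i(H/|y|^{5/6})$ changes by $O(|\varrho|/|y|)$ under the flow, which is again absorbed after the weight conversion. Alternatively, (e) follows from (f) combined with Proposition \ref{fiber deviation 2} whenever that bound is at least as good.
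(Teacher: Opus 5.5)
Your proposal is correct and follows the paper's proof essentially step for step. The explicit weight conversion $r^{-4}\le r^{\beta-2}|y'|^{-\frac14(\beta+2)}$ (from $r\ge|y|^{\frac14}$ and $|y|\sim|y'|$) supplies what the paper leaves implicit when passing from its $C^k_{-4}$ bounds to (d). Three details need tightening, and each is fixable with results already in the paper.

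In (e), the $O(|y-y'|/|y|)$ change of the cutoff is absorbed only because, after using $\gamma_1+\gamma_2=1$, it multiplies the difference of the two glued potentials. That difference is $O(|y|^{7/12})$ by Lemma~\ref{fiber deviation 1}. If you only use the bare size $r^2\sim|y|^{5/12}$ of the potentials, the weight conversion misses by a factor $|y|^{(\beta-2)/24}\to\infty$.

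In (f), the quadratic term has small Lipschitz constant when $\|u\|_{C^{2,\alpha}_\beta}\ll|y'|^{\frac14(2-\beta)}$, not $|y'|^{\frac14(\beta+2)}$ as you wrote. Your radius is $\le C\epsilon_2|y'|^{\frac14(2-\beta)}$, so the Lipschitz constant is $O(\epsilon_2)$ rather than $o(1)$. This is still enough for the contraction.

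Also in (f), to transfer Lemma~\ref{fiber iso} to $\widehat\omega_y$ you need $\widehat\omega_y$ to be close to $\omega_y$, not merely uniformly equivalent to it. That closeness comes from combining (c), (a), (e) and Proposition~\ref{fiber deviation 2}, not from (c) alone.
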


\begin{proof}

(a) Note that
\begin{align*}
r^2 \left| G^*_{y',y} (\partial \bar{\partial} \widetilde{\phi}_{y'}) - \partial \bar{\partial} G^*_{y',y} \widetilde{\phi}_{y'} \right|_{\operatorname{EH}_y} \leq \frac{C |y - y'|}{r^4} \sum_{j=1}^2 \left|r^j \nabla^j \widetilde{\phi}_{y'}\right|_{\operatorname{EH}_y} \leq \frac{C |y - y'|}{r^2},
\end{align*}where the first inequality uses the fact that the relative error caused by the variation of the complex structure is of order $O\left(\frac{|y-y'|}{r^4}\right)$, and the second inequality follows from the explicit construction of $\widetilde{\phi}_y$. Differentiating further gives the $C^k$ weighted estimate.

(b), (c) By Proposition \ref{fiber deviation 2}, we get
\begin{align*}
r^j\left|  \nabla^j_{X_y}(\psi_y-\widetilde{\phi}_y)\right|_{\operatorname{EH}_y}\leq C |y|^{\frac{7}{12}-\frac{5}{24}\beta}r^\beta\ll C r^2.
\end{align*}The same argument as in (a) then establishes (b) and (c).

(d) By (c), we obtain
\begin{align*}
\left\| \omega_y'^2 - G_{y',y}^* \omega_{\operatorname{SRF}} |_{X_{y'}}^2 \right\|_{C^k_{-4}(X_y)} \leq C |y-y'|.
\end{align*}
On the other hand,
\begin{align*}
&\omega_{\operatorname{SRF}} |_{X_{y'}}^2 = A_{y'}^{-1} \Omega_{y'} \wedge \overline{\Omega}_{y'},  \\
& |A_y - A_{y'}| \leq C |y-y'|,\\
& \left\| \Omega_y - G_{y',y}^* \Omega_{y'} \right\|_{C^{k}_{-4} (X_y )} \leq C |y-y'|.
\end{align*}
Combining these facts gives
\begin{align*}
\left\|(\widehat{\omega}_y)^2 - A_y^{-1}\Omega_y \wedge \overline{\Omega}_y\right\|_{C_{-4}^{k}(X_y)} \leq C|y - y'|.
\end{align*}

(e)  This follows directly from the explicit definition of $\widetilde{\phi}_y$ together with the expansion (\ref{expansion of Gy'y}) of $G_{y',y}$.

(f) Using Lemma~\ref{fiber iso} and the implicit function theorem, one can solve
\begin{align*}
\left(\widehat{\omega}_y+\sqrt{-1}\partial\bar{\partial}\hat{u}_y\right)^2=\frac{1}{A_y}\Omega_y\wedge \overline{\Omega}_y,
\end{align*}with the estimate $\left\|\hat{u}_y\right\|_{C^{k+2,\alpha}_\beta(X_y)} \leq C(k, \alpha, \beta) |y - y'| |y'|^{-\frac{1}{4}(\beta+2)}$. The normalization condition then implies
\begin{align*}
\left\|G_{y',y}^* \psi_{y'} - \psi_y\right\|_{C^{k+2,\alpha}_\beta(X_y)}=\left\|\hat{u}_y\right\|_{C^{k+2,\alpha}_\beta(X_y)} \leq C(k, \alpha, \beta) |y - y'| |y'|^{-\frac{1}{4}(\beta+2)}.
\end{align*}

\end{proof}

\begin{remark}\label{fiber deviation 6}

From the above proposition, we know that if $|y|\sim \varepsilon^{\frac{12}{2+\mu}}$ and $|y-y'|\sim\varepsilon^{\frac12}|y'|$, then $\widetilde{\phi}_y-\psi_y$ and $G^*_{y',y}\psi_{y'}-\psi_y$ have comparable $\|\cdot\|_{C^{k+2,\alpha}_\mu}$ norm estimates.

\end{remark}

\begin{remark}

Similarly, we can define 
\begin{align*}
\widetilde{\phi}_t= c_0+  \gamma_1\left(\frac{r}{|t|^{\frac16}}\right)G_{0,t}^*\left(\psi_0^{(0)}-c_0\right)+\gamma_2\left(\frac{r}{|t|^{\frac16}}\right) \sqrt{H+|t|} .
\end{align*}
All the properties for $\widetilde{\phi}_y$ and $\psi_y$ carry over verbatim to $\widetilde{\phi}_t$ and $\psi_t$ respectively.

Also note that if $|t|\sim \varepsilon^{\frac{6}{14+\tau}}$ and $|t-t'|\sim\varepsilon^{\frac12}$, then $\widetilde{\phi}_t-\psi_t$ and $G^*_{t',t}\psi_{t'}-\psi_t$ have comparable $\|\cdot\|_{C^{k+2,\alpha}_\tau}$ norm estimates.

\end{remark}

\begin{proposition}\label{fiber deviation 7}

Fix $-2 < \mu < 0$. When $|y| \ge \varepsilon^{\frac{12}{2+\mu}}$, the deviation of $\omega_\varepsilon|_{X_y}$ from the Calabi--Yau metric $\omega_{\operatorname{SRF}}|_{X_y}$ satisfies the estimate
\begin{equation}
\left\| \omega_\varepsilon|_{X_y} - \omega_{\operatorname{SRF}}|_{X_y} \right\|_{C^{k,\alpha}_{\mu-2}(X_y)} \le C(k, \alpha, \mu) \varepsilon^{\frac12} |y|^{1-\frac{1}{4}(\mu+2)}.
\end{equation}

\end{proposition}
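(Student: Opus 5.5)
We restrict the ansatz to a fiber: pulled-back base forms vanish there, so
\[
\omega_\varepsilon|_{X_y}=\omega_X|_{X_y}+\sqrt{-1}\partial\bar\partial\bigl(\phi|_{X_y}\bigr).
\]
Since the $\chi_i$ and $\chi_\infty$ are functions on the base, they are constant on $X_y$. On the region $|y|\ge\varepsilon^{\frac{12}{2+\mu}}$ (and away from $X_0$; the $t$-side is handled identically using the preceding remark), this gives
\[
\omega_\varepsilon|_{X_y}-\omega_{\operatorname{SRF}}|_{X_y}=\sqrt{-1}\partial\bar\partial\Bigl(\chi_\infty(y)\bigl(\widetilde\phi_y-\psi_y\bigr)+\sum_{i}\chi_i(y)\bigl(G_{y_i,y}^*\psi_{y_i}-\psi_y\bigr)\Bigr).
\]
Here we used the partition of unity $\chi_\infty+\sum_i\chi_i=1$ (with $\chi_0=0$ there) to subtract $\psi_y$ inside each term. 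The fiberwise $\sqrt{-1}\partial\bar\partial$ maps $C^{k+2,\alpha}_\mu(X_y)\to C^{k,\alpha}_{\mu-2}(X_y)$ boundedly, uniformly in $y$. At each point only a uniformly bounded number of $\chi_i$ are nonzero. It therefore suffices to bound each potential difference in $C^{k+2,\alpha}_\mu(X_y)$ by $C\varepsilon^{\frac12}|y|^{1-\frac14(\mu+2)}$.

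\emph{The $\chi_i$ terms.} If $\chi_i(y)\neq0$, then $y$ lies in the support of $\chi_i$. That support has $\omega_B$-length scale $O(\varepsilon^{\frac12})$, and near $p_\infty$ we have $\omega_B\sim|y|^{-2}|dy|^2$, so $|y-y_i|\le C\varepsilon^{\frac12}|y_i|$ and $|y|\sim|y_i|$. For $\varepsilon$ small this is within $\epsilon_2|y_i|$, so Lemma~\ref{fiber deviation 4}(f) with $\beta=\mu$ applies and gives
\[
\bigl\|G_{y_i,y}^*\psi_{y_i}-\psi_y\bigr\|_{C^{k+2,\alpha}_\mu(X_y)}\le C|y-y_i|\,|y_i|^{-\frac14(\mu+2)}\le C\varepsilon^{\frac12}|y|^{1-\frac14(\mu+2)}.
\]
For $y$ in the compact middle region $\{|t|>\epsilon_1,|y|>\epsilon_1\}$, the support scale is $O(\varepsilon^{\frac12})$ in the usual sense. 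Lemma~\ref{fiber deviation 3}(c) then gives a bound $C\varepsilon^{\frac12}$, which is of the required form since $|y|\sim1$ there and the weighted and unweighted norms are comparable away from the nodes.

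\emph{The $\chi_\infty$ term.} It is nonzero only for $\varepsilon^{\frac{12}{2+\mu}}\le|y|\le2\varepsilon^{\frac{12}{2+\mu}}$. There Proposition~\ref{fiber deviation 2} with $\beta=\mu$ gives
\[
\bigl\|\widetilde\phi_y-\psi_y\bigr\|_{C^{k+2,\alpha}_\mu}\le C|y|^{\frac7{12}-\frac{5\mu}{24}}.
\]
This must be compared with $\varepsilon^{\frac12}|y|^{1-\frac{\mu+2}4}=\varepsilon^{\frac12}|y|^{\frac12-\frac\mu4}$, i.e.\ we need $|y|^{\frac1{12}+\frac{\mu}{24}}\lesssim\varepsilon^{\frac12}$. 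Since $\frac1{12}+\frac\mu{24}=\frac{2+\mu}{24}$ and $|y|\sim\varepsilon^{\frac{12}{2+\mu}}$, we get $|y|^{\frac{2+\mu}{24}}\sim\varepsilon^{\frac12}$, as required. This is precisely why the cutoff scale was chosen, and it matches Remark~\ref{fiber deviation 6}.

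The main obstacle is bookkeeping rather than new analysis. First, Lemma~\ref{fiber deviation 4} must be applied uniformly: its constant must be independent of $y'$ and $\varepsilon$, and the hypothesis $|y-y_i|\le\epsilon_2|y_i|$ must hold on all overlapping supports. Second, the transition between the charts near $p_\infty$, the middle region and near $p_0$ needs care. In the middle region, $|y|$ is replaced by a quantity of order one, and one must check that the weighted norm $C^{k,\alpha}_{\mu-2}$ there reduces to the ordinary norm with uniform constants.
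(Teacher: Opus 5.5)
Your proposal is correct and follows the paper's proof. You restrict to the fiber, rewrite $\phi|_{X_y}-\psi_y$ using the partition of unity, and bound the $\chi_i$ terms by Lemma~\ref{fiber deviation 4}(f) with $|y-y_i|\lesssim\varepsilon^{\frac12}|y_i|$. You then handle the extra $\chi_\infty$ term on $\varepsilon^{\frac{12}{2+\mu}}\le|y|\le 2\varepsilon^{\frac{12}{2+\mu}}$; your explicit exponent check against Proposition~\ref{fiber deviation 2} is exactly what the paper packages as Remark~\ref{fiber deviation 6}.

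One small quibble concerns your parenthetical about the $t$-side. That region lies outside the statement, and there the analogous bound (Proposition~\ref{fiber deviation 5}(a)) carries the weight $|t|^{-\frac14(\tau+2)}$, not a bound of the stated form.
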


\begin{proof}

When $|y| > 2\varepsilon^{\frac{12}{2+\mu}}$, the support of $\chi_i$ has $\omega_B$-length scale $\varepsilon^{\frac12}$, and consequently $|y - y_i| \sim \varepsilon^{\frac12} |y_i|$, where $y\in \operatorname{supp}\chi_i$. By Lemma \ref{fiber deviation 4}, we obtain
\begin{align*}
\left\|\psi_y - G_{y_i,y}^*\psi_{y_i}\right\|_{C^{k+2,\alpha}_\mu(X_y)} \leq C |y - y_i| \, |y_i|^{-\frac14(\mu+2)} \leq C \varepsilon^{\frac12} |y|^{1 - \frac14(\mu+2)}.
\end{align*}
Since, for any $y$, the number of non‑vanishing $\chi_i$ is bounded independently of $\varepsilon$, no accumulation of errors occurs. Therefore, summing $\chi_i\left(\psi_y - G_{y_i,y}^*\psi_{y_i}\right)$ and applying $\sqrt{-1}\partial\bar{\partial}$ in the fiber direction yields the desired conclusion.

When $\varepsilon^{\frac{12}{2+\mu}} < |y| < 2\varepsilon^{\frac{12}{2+\mu}}$, we have $\chi_\infty \neq 0$, so $\widetilde{\phi}_y$ also contributes. By Lemma \ref{fiber deviation 4} and Remark \ref{fiber deviation 6}, we have
\begin{align*}
\left\|\widetilde{\phi}_y - \psi_y\right\|_{C^{k+2,\alpha}_\mu(X_y)} \sim \left\|\psi_y - G_{y_i,y}^*\psi_{y_i}\right\|_{C^{k+2,\alpha}_\mu(X_y)}.
\end{align*}
Hence the conclusion remains valid.

\end{proof}

Next we focus on $\omega_\varepsilon|_{X_t}$, where $X_t$ is a smooth fiber near the finite singular fiber $X_0$.

\begin{proposition}[\text{cf.} \cite{li2019gluing} Lemma 2.6]\label{fiber deviation 5}

Fix $-2 < \tau < 0$. 

(a) When $|t| \ge \varepsilon^{\frac{6}{14+\tau}}$, the deviation of $\omega_\varepsilon|_{X_t}$ from the Calabi--Yau metric $\omega_{\operatorname{SRF}}|_{X_t}$ is estimated by
\begin{equation}
\left\| \omega_\varepsilon|_{X_t} - \omega_{\operatorname{SRF}}|_{X_t} \right\|_{C^{k,\alpha}_{\tau-2}(X_t)} \le C(k, \alpha, \tau) \varepsilon^{\frac12} |t|^{-\frac{1}{4}(\tau+2)}.
\end{equation}

(b) When $|t| < \varepsilon^{\frac{6}{14+\tau}}$, but $r \gtrsim \varepsilon^{\frac{1}{10}} + \varepsilon^{\frac{1}{12}}\rho'^{\frac16}$, the deviation of $\omega_\varepsilon|_{X_t}$ from $G_{0,t}^*(\omega_{\operatorname{SRF}}|_{X_0})$ is estimated by
\begin{equation}
\left\| \omega_\varepsilon|_{X_t} - G_{0,t}^*(\omega_{\operatorname{SRF}}|_{X_0}) \right\|_{C^{k,\alpha}_{-4}(X_t \cap \{r \gtrsim \varepsilon^{\frac{1}{10}} + \varepsilon^{\frac{1}{12}}\rho'^{\frac16}\})} \le C(k, \alpha, \sigma, \tau) \max\left\{|t|, \varepsilon^{\frac35-\sigma}\right\},
\end{equation}
where $\sigma > 0$ can be made arbitrarily small.

\end{proposition}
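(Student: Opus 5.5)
For part (a) the plan mirrors the proof of Proposition~\ref{fiber deviation 7}. On the fiber $X_t$ the ansatz restricts to
\begin{align*}
\omega_\varepsilon|_{X_t}=\omega_X|_{X_t}+\sqrt{-1}\partial\bar\partial\Big(\chi_0\,\Phi_t+\sum_i\chi_i\,G_{t_i,t}^*\psi_{t_i}\Big),
\end{align*}
where $\Phi_t$ denotes the bracket in the $\chi_0$-term. The base forms $\pi^*\omega_B$ and $\pi^*\beta$ restrict to zero on fibers. Since $\chi_0+\sum_i\chi_i=1$ near $X_t$ and the $\chi_i$ are constant on fibers, we can write
\begin{align*}
\omega_\varepsilon|_{X_t}-\omega_{\operatorname{SRF}}|_{X_t}=\sqrt{-1}\partial\bar\partial\Big(\chi_0(\Phi_t-\psi_t)+\sum_i\chi_i\big(G_{t_i,t}^*\psi_{t_i}-\psi_t\big)\Big).
\end{align*}
It therefore suffices to bound each difference in $C^{k+2,\alpha}_\tau(X_t)$. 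The number of nonvanishing $\chi_i$ at a point is uniformly bounded, so there is no accumulation, and $\sqrt{-1}\partial\bar\partial$ maps $C^{k+2,\alpha}_\tau$ to $C^{k,\alpha}_{\tau-2}$ boundedly.

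For $|t|\ge 2\varepsilon^{\frac{6}{14+\tau}}$ only the $\chi_i$ contribute. The supports have $\omega_B$-scale $\varepsilon^{\frac12}$, and $\omega_B\sim A_t\sqrt{-1}dt\wedge d\bar t$ with $A_t$ bounded above and below near $p_0$, so $|t-t_i|\lesssim\varepsilon^{\frac12}$. Here I use the $t$-analogue of Lemma~\ref{fiber deviation 4}(f), which by the subsequent Remark carries over verbatim. In the region $\{|t|<\epsilon_1\}$ it gives
\begin{align*}
\|G_{t_i,t}^*\psi_{t_i}-\psi_t\|_{C^{k+2,\alpha}_\tau}\le C|t-t_i|\,|t_i|^{-\frac14(\tau+2)}\le C\varepsilon^{\frac12}|t|^{-\frac14(\tau+2)},
\end{align*}
using $|t_i|\sim|t|$. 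That comparability holds because $|t|\ge\varepsilon^{\frac{6}{14+\tau}}\gg\varepsilon^{\frac12}$, which is where the choice of the exponent $\frac{6}{14+\tau}<\frac12$ enters. Away from the singular fibers, Lemma~\ref{fiber deviation 3}(c) gives $C\varepsilon^{\frac12}$ directly. One point to check is the requirement $|t-t_i|\le\epsilon_2|t_i|$ in Lemma~\ref{fiber deviation 4}, which again follows from $\varepsilon^{\frac12}\ll|t|$.

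In the transition annulus $\varepsilon^{\frac{6}{14+\tau}}\le|t|\le2\varepsilon^{\frac{6}{14+\tau}}$ we have $r\gtrsim|t|^{\frac14}\gg\varepsilon^{\frac1{10}}+\varepsilon^{\frac1{12}}\rho'^{\frac16}$, provided $\rho'\sim(|t|/\varepsilon)^{1/2}$. I need to verify this exponent comparison; it is the main obstacle in (a). Granting it, the $\gamma_2$-piece vanishes on the relevant region (or is controlled near the vanishing cycle $r\sim|t|^{1/4}$), and $\Phi_t$ reduces to $G_{0,t}^*\psi_0$ up to the Eguchi--Hanson gluing near the node. So $\Phi_t$ plays the role of $\widetilde\phi_t$, and the Remark comparing $\widetilde\phi_t-\psi_t$ with $G^*_{t',t}\psi_{t'}-\psi_t$ at $|t|\sim\varepsilon^{\frac6{14+\tau}}$, $|t-t'|\sim\varepsilon^{\frac12}$, gives the same bound. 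Alternatively one can estimate it directly via the $t$-version of Proposition~\ref{fiber deviation 2} by $|t|^{\frac7{12}-\frac{5\tau}{24}}$ and check that this is $\lesssim\varepsilon^{\frac12}|t|^{-\frac14(\tau+2)}$ exactly when $|t|\lesssim\varepsilon^{\frac{6}{14+\tau}}$. This arithmetic is what dictates the cutoff scale.

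For part (b), on $\{|t|<\varepsilon^{\frac6{14+\tau}}\}$ we have $\chi_0=1$. On the region $r\gtrsim\varepsilon^{\frac1{10}}+\varepsilon^{\frac1{12}}\rho'^{\frac16}$ the form is $\omega_X|_{X_t}+\sqrt{-1}\partial\bar\partial G_{0,t}^*\psi_0$ plus a cutoff transition where $\gamma_2\ne0$. The first part is compared with $G_{0,t}^*(\omega_{\operatorname{SRF}}|_{X_0})$ using Lemma~\ref{fiber deviation 0}(c),(d), which gives $C|t|$ in $C^k_{-4}$. In the cutoff annulus I will use the asymptotics $\phi''_{\mathbb C^3}=\sqrt{R^4+\rho}+\phi'_{\mathbb C^3}$ with $\|\phi'\|_{C^{k,\alpha}_{\delta,0}}$ bounded for $\delta>-1$. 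After rescaling by $\Upsilon_\varepsilon$, the difference $(\frac{\varepsilon}{2A_0})^{1/3}\phi''-G_{0,t}^*(\psi_0-c_0)$ is $\sqrt{H+|t|}-H^{1/2}$, plus $O(r^4)$ terms, plus the $\phi'$ error. Each term is evaluated at $r\sim\varepsilon^{\frac1{10}}$ in the $C^k_{-4}$ weight (multiplying by $r^{4}$, with derivatives of the cutoff costing $r^{-1}$). This produces the powers $\varepsilon^{\frac35-\sigma}$, with $\sigma$ absorbing the loss from $\delta>-1$ being non-sharp.
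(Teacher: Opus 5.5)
Your argument for $|t|\ge 2\varepsilon^{\frac{6}{14+\tau}}$ is correct, and it is the same one the paper uses for Proposition~\ref{fiber deviation 7}.

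\textbf{The gap.} It lies in the transition annulus $\varepsilon^{\frac{6}{14+\tau}}\le |t|\le 2\varepsilon^{\frac{6}{14+\tau}}$, at the comparison you flagged, and that comparison is false. On $X_t$ you only have $r\ge |t|^{\frac14}\sim\varepsilon^{\frac{3}{2(14+\tau)}}$. Since $\frac{3}{2(14+\tau)}>\frac{1}{10}$, this is already much smaller than $\varepsilon^{\frac1{10}}$. The $\rho'$ term only enlarges the gluing radius: with the normalization $\rho'\approx(\frac{\varepsilon}{2A_0})^{\frac16}\rho$ under $\Upsilon_\varepsilon$, one has $\rho'\simeq\sqrt{2A_0}\,|t|\varepsilon^{-\frac12}$, so the radius is $\simeq|t|^{\frac16}$. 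Hence the scaled $\mathbb{C}^3$ potential $\gamma_2(\cdot)(\frac{\varepsilon}{2A_0})^{\frac13}\phi''_{\mathbb{C}^3}$ fills the whole region $\{|t|^{\frac14}\le r\lesssim |t|^{\frac16}\}$ of $X_t$, not just a neighbourhood of the vanishing cycle. In particular $\Phi_t$ cannot reduce to $G_{0,t}^*\psi_0$ there; indeed $G_0$ is not even defined on $r\le\Lambda_1|t|^{\frac14}$.

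\textbf{What is missing.} You need a genuine estimate $\|\Phi_t-\widetilde\phi_t\|_{C^{k+2,\alpha}_\tau(X_t)}\lesssim\varepsilon^{\frac12}|t|^{-\frac14(\tau+2)}$, and it has to come from the asymptotics of $\omega_{\mathbb{C}^3}$.
\begin{itemize}
\item Fiberwise, $(\frac{\varepsilon}{2A_0})^{\frac13}\sqrt{R^4+\rho}=\sqrt{H+(\frac{\varepsilon}{2A_0})^{\frac23}\rho}$, with $(\frac{\varepsilon}{2A_0})^{\frac23}\rho=|t|+O(\varepsilon r^2/|t|)$.
\item Using $\|\phi'_{\mathbb{C}^3}\|_{C^{k,\alpha}_{\delta,0}}\le C$ with $\delta$ close to $-1$ and $\rho\sim\varepsilon^{-\frac23}|t|$, the term $(\frac{\varepsilon}{2A_0})^{\frac13}\phi'_{\mathbb{C}^3}$ has $C^{k+2,\alpha}_\tau$-norm $O(\varepsilon^{1-\sigma}|t|^{-1-\frac\tau6})$ on $\{r\lesssim|t|^{\frac16}\}$.
\item Both errors beat the target by a factor $\varepsilon^{\frac{4+\tau}{14+\tau}-\sigma}$ at $|t|\sim\varepsilon^{\frac{6}{14+\tau}}$.
\item Replacing the ansatz cutoff by $\gamma_1(r/|t|^{\frac16})$ costs $O(|t|^{\frac23-\frac\tau6})$.
\end{itemize}

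\textbf{Your fallback uses the wrong exponent.} The rate $|t|^{\frac{7}{12}-\frac{5\tau}{24}}$ belongs to the gluing at $H\sim|y|^{\frac56}$. For the gluing at $r\sim|t|^{\frac16}$ used in $\widetilde\phi_t$ (and, as just explained, in the ansatz), the computation of Lemma~\ref{fiber deviation 1} gives potential errors $O(|t|^{\frac23}r^{-j})$. Hence $\|\widetilde\phi_t-\psi_t\|_{C^{k+2,\alpha}_\tau}\lesssim|t|^{\frac23-\frac\tau6}$.

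With your exponent the check fails: $|t|^{\frac{7}{12}-\frac{5\tau}{24}}\le\varepsilon^{\frac12}|t|^{-\frac14(\tau+2)}$ holds only for $|t|\le\varepsilon^{\frac{12}{26+\tau}}$, which excludes $|t|\sim\varepsilon^{\frac{6}{14+\tau}}$. With the correct exponent, $|t|^{\frac23-\frac\tau6}\le\varepsilon^{\frac12}|t|^{-\frac14(\tau+2)}$ if and only if $|t|\le\varepsilon^{\frac{6}{14+\tau}}$. This is the real origin of the cutoff scale, and combined with the $\mathbb{C}^3$ comparison above it closes (a).

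\textbf{Part (b).} This is right in outline, but the same misreading of the gluing radius recurs.
\begin{itemize}
\item The radius is $\simeq\max\{\varepsilon^{\frac1{10}},|t|^{\frac16}\}$, not always $\varepsilon^{\frac1{10}}$.
\item The fiberwise model coming from $\phi''_{\mathbb{C}^3}$ is approximately $\sqrt{H+\sqrt{|t|^2+\varepsilon r^2/(2A_0)}}$, not $\sqrt{H+|t|}$. Its $\varepsilon r^2$ correction has weighted size $\varepsilon^{\frac12}r$. It balances the $O(r^6)$ term exactly at $r=\varepsilon^{\frac1{10}}$ and is a source of the $\varepsilon^{\frac35}$.
\item For $|t|\gtrsim\varepsilon^{\frac35}$, all cutoff errors are $O(|t|)$.
\end{itemize}
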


\subsection{Well-definedness of the metric ansatz}

\begin{proposition}\label{Well-definedness of the metric ansatz}

There exists a bump $(1,1)$-form $\beta$ on $\mathbb{P}^1\backslash\{p_\infty\}$ such that
\begin{itemize}

\item for $\varepsilon$ sufficiently small, the $(1,1)$-form $\omega_{\varepsilon}$ defined by (\ref{Ansatz 1}) is positive definite;

\item $\omega_\varepsilon$ satisfies the following integration condition: $\int_M\left(\varepsilon\omega_\varepsilon^3-3\sqrt{-1}\Omega\wedge \overline{\Omega}\right)=0$, where the integral is understood in the sense of Lebesgue;

\item $[\omega_\varepsilon]=\left[\omega_X|_M\right]$.

\end{itemize}

\end{proposition}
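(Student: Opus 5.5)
The plan is to establish the three properties in the order cohomology class, integration condition, positivity. The bump form is taken as $\beta=c\,\beta_0$, where $\beta_0$ is a fixed nonnegative $(1,1)$-form on $B$ with compact support in a disc $\{|t|>\epsilon_1,|y|>\epsilon_1\}$ away from both singular fibers, normalized by $\int_B\beta_0=1$. The constant $c=c(\varepsilon)$ is chosen at the end. The key point is to show that $c$ stays $O(1)$, so that $\pi^*\beta$ is negligible next to $\varepsilon^{-1}\pi^*\omega_B$.

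\emph{Cohomology class.} Two observations suffice.

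1. Since $B\cong\mathbb{C}$, every $(1,1)$-form on $B$ is $\sqrt{-1}\partial\bar\partial$-exact, and hence so are $\pi^*\omega_B$ and $\pi^*\beta$.
2. The potential $\phi$ in (\ref{ansatz potential}) is a smooth function on $M$ (it is smooth on $X\setminus\{q_\infty\}$), so $\sqrt{-1}\partial\bar\partial\phi$ is exact on $M$.

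Hence $[\omega_\varepsilon]=[\omega_X|_M]$ for every choice of $c$.

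\emph{Integration condition.} Write $\omega_\varepsilon=\varepsilon^{-1}\pi^*\omega_B+\omega'$ with $\omega'=\pi^*\beta+\omega_X+\sqrt{-1}\partial\bar\partial\phi$. Because $B$ is one-dimensional, $(\pi^*\omega_B)^2=0$, and therefore
\begin{align*}
\varepsilon\omega_\varepsilon^3-3\sqrt{-1}\Omega\wedge\overline\Omega=3\,\pi^*\omega_B\wedge\omega'^2-3\sqrt{-1}\Omega\wedge\overline\Omega+\varepsilon\,\omega'^3 .
\end{align*}
The wedge with $\pi^*\omega_B$ only sees the fiber restriction $\omega'|_{X_y}=\omega_X|_{X_y}+\sqrt{-1}\partial\bar\partial(\phi|_{X_y})$. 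By Fubini, the first two terms integrate to
\begin{align*}
3\int_B\omega_B\Big(\int_{X_y}\omega'^2-\int_{X_y}A_y^{-1}\Omega_y\wedge\overline\Omega_y\Big),
\end{align*}
and both inner integrals equal $1$ on every smooth fiber, by the normalization $\int_{X_y}\omega_X^2=1$ and Stokes. So this part vanishes exactly. Absolute integrability near $X_\infty$ follows from Proposition \ref{est on phi y} and Proposition \ref{fiber deviation 2}: the fiberwise density error decays like a positive power of $|y|$, which beats the cylindrical measure $|y|^{-2}\sqrt{-1}dy\wedge d\bar y$ after radial integration.

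Next, using again that $\beta\wedge\beta=0$ on $B$,
\begin{align*}
\varepsilon\int_M\omega'^3=\varepsilon\int_M\omega''^3+3c\,\varepsilon\int_B\beta_0\cdot 1 ,\qquad \omega''=\omega_X+\sqrt{-1}\partial\bar\partial\phi .
\end{align*}
The condition is therefore the linear equation $c=-\tfrac13\int_M\omega''^3$, and the whole matter reduces to proving that $\int_M\omega''^3$ is finite and bounded independently of $\varepsilon$. Near infinity I plan to bound the mixed and base components of $\sqrt{-1}\partial\bar\partial\phi$ via Corollary \ref{total deviation 1} and Lemma \ref{almost product 1}: horizontal derivatives are measured by $y\partial_y$, so they do not blow up. Near $q_0$, the $\omega_{\mathbb{C}^3}$ region has Euclidean volume $O(\varepsilon^{\text{positive}})$, which gives its contribution there. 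Provided these bounds hold, $c=O(1)$.

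\emph{Positivity.} I expect this to be the main obstacle, and I will check it region by region following the heuristic list in Section \ref{Rough description of the ansatz}. In each region the fiber part is compared to a known Kähler metric:

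1. Far from the singular fibers, use Proposition \ref{fiber deviation 7} and Proposition \ref{fiber deviation 5}(a).
2. On $\{|t|<\varepsilon^{\frac{6}{14+\tau}}\}$ away from the node, use Proposition \ref{fiber deviation 5}(b).
3. Near $q_0$, use the scaled $\omega_{\mathbb{C}^3}$ and its decay $\|\phi'_{\mathbb{C}^3}\|_{C^{k,\alpha}_{\delta,0}}$.
4. Near $X_\infty$, use Lemma \ref{fiber deviation 1} and Lemma \ref{almost product 1}, so that $\omega_\varepsilon$ is uniformly equivalent to $\varepsilon^{-1}\omega_B+\omega_y$.

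The mixed horizontal–vertical terms, including those generated by derivatives of the cutoffs $\chi_i$, $\chi_0$, $\chi_\infty$, are controlled by a Schur-complement argument as in Lemma \ref{almost product 1}(1). The large factor $\varepsilon^{-1}\omega_B$ absorbs them once each cross term is $o(\varepsilon^{-1/2})$ relative to the product metric; the cutoffs have length scale $\varepsilon^{1/2}$ in $\omega_B$, so the required bound is plausible, but it must be verified carefully in the transition annuli.
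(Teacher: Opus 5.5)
\textbf{Gap: the estimate for $\int_M\omega''^3$.} Your handling of the cohomology class agrees with the paper. So does your treatment of the piece $3\pi^*\omega_B\wedge\omega''^2-3\sqrt{-1}\Omega\wedge\overline{\Omega}$ (fiberwise normalization, plus $|y|$-decay from Proposition~\ref{fiber deviation 2} for absolute integrability). The reduction of the integration condition to $c=-\frac13\int_M\omega''^3$ is also the paper's, as is your region-by-region positivity outline. The problem is the step you call ``the whole matter'': the pointwise bounds you propose do not show that $\int_M\omega''^3$ is finite and independent of $\varepsilon$.

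First, you never treat the region where the partition $\{\chi_i\}$ is active, and there the base component of $\sqrt{-1}\partial\bar\partial\phi$ is large. Since $\chi_j$ has length scale $\varepsilon^{1/2}$, $\partial\bar\partial\chi_j$ is of order $\varepsilon^{-1}$ relative to $\omega_B$. By Lemma~\ref{fiber deviation 3}, $G_{y_j}^*\psi_{y_j}-G_{y_i}^*\psi_{y_i}=O(\varepsilon^{1/2})$. Hence $\omega''^3$ contains a term of pointwise size $\varepsilon^{-1/2}\,\pi^*\omega_B\wedge\omega''^2$ (cf.\ the $\varepsilon^{-1/2}$ terms in $\eta_{y\bar y}$ in the proof of Lemma~\ref{Ricci potential decay 3}). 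Integrating absolute values gives only $O(\varepsilon^{-1/2})$; the true $O(1)$ value comes from a cancellation that pointwise estimates cannot see.

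Second, on the end, knowing that the base and mixed components ``do not blow up'' in $y\partial_y$ units only gives $|\omega''^3|\lesssim\pi^*\omega_B\wedge\omega_y^2$. This is not integrable, because $\omega_B$ has infinite area on the cylindrical end. You need decay by a positive power of $|y|$, and near $q_\infty$ this has to be checked explicitly.

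\textbf{Missing idea: Stokes on $X\setminus\{q_\infty\}$.} No estimates of this kind are needed away from $q_\infty$. The potential $\phi$ is smooth on $X\setminus\{q_\infty\}$: it extends across $X_\infty\setminus\{q_\infty\}$, where $\chi_\infty=\gamma_1=1$. Therefore $\omega''^3-\omega_X^3=\sqrt{-1}\partial\bar\partial\phi\wedge(\omega''^2+\omega''\wedge\omega_X+\omega_X^2)$ is exact there. Applying Stokes on $X\setminus B(q_\infty,R)$ gives $\int_M\omega''^3=\int_X\omega_X^3$, provided two things hold: each $(\partial\bar\partial\phi)^q\wedge\omega_X^{3-q}$ is integrable near $q_\infty$, and the fluxes $\int_{\partial B(q_\infty,R)}\bar\partial\phi\wedge\omega_X^{3-q}\wedge(\partial\bar\partial\phi)^{q-1}$ tend to zero.

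The paper verifies both from the Euclidean bounds $|\partial\phi|\le CH^{-1/5}$ and $|\partial\bar\partial\phi|\le CH^{-9/10}$ near the node. This is where the gluing scale $H\sim|y|^{5/6}$ matters (see the remark after the proposition). The result is $c=-\frac13\int_X\omega_X^3$ exactly, so all cutoff contributions cancel automatically.

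Your positivity step would in fact tolerate $c=O(\varepsilon^{-1/2})$, since only $|c|\beta_0\ll\varepsilon^{-1}\omega_B$ is needed. You would still have to prove absolute integrability at $q_\infty$, however, and the Stokes computation gives both at once.
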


\begin{proof}

As a preliminary step, we omit the $\pi^*\beta$ term in the definition of $\omega_\varepsilon$ and establish the positive definiteness of the resulting form.

Following \cite{li2019gluing}, we can prove the positive definiteness of $\omega_\varepsilon$ outside $\left\{|y|<\varepsilon^{\frac{12}{2+\mu}}\right\}$. The idea is to first restrict the gluing ansatz $\omega_\varepsilon$ to a fiber $X_y$ and then compare it with certain model K\"ahler metrics. Lemma \ref{fiber deviation 0}, \ref{fiber deviation 1}, \ref{fiber deviation 3}, \ref{fiber deviation 4} and \ref{fiber deviation 5}, together with Proposition \ref{fiber deviation 2} will serve this purpose. For the horizontal derivative, one may apply the argument used in the proof of Lemma~2.7 of \cite{li2019gluing}; see also the proof of Lemma~\ref{Ricci potential decay 3} in Section~4. These arguments demonstrate that the pullback of the generalized K\"ahler--Einstein metric from the base is the dominant contribution, so positive definiteness is not an issue.

We only need to show the positive definiteness of $\omega_\varepsilon$ in the region $\left\{|y|<\varepsilon^{\frac{12}{2+\mu}}\right\}$ since $\omega_\varepsilon$ is described by a new geometric model in this region, namely
\begin{align*}
     \omega_\varepsilon =& \frac{\sqrt{-1}A_y}{\varepsilon|y|^2}dy\wedge d\bar{y}+\omega_X\\
     &+  \sqrt{-1}\partial\bar{\partial}\left(c_\infty+ \gamma_1\left(\frac{H}{|y|^{\frac56}}\right)G_\infty^*\left(\psi_0^{(\infty)}-c_\infty\right)+\gamma_2\left(\frac{H}{|y|^{\frac56}}\right)\left(\sqrt{H+|y|}-\Theta_\infty\right) \right).
\end{align*}

\textbf{Claim 1:} $\omega_\varepsilon$ is positive definite on $\left\{|y|<\varepsilon^{\frac{12}{2+\mu}}\right\}$.

By Lemma \ref{K{\"a}hler of ST}, we know that $\omega_\varepsilon$ is positive definite on $H<|y|^{\frac56}$.

On the region $H\geq 2|y|^{\frac56}$, the ansatz takes the form
\begin{align*}
\omega_\varepsilon=\frac{\sqrt{-1}A_y}{\varepsilon|y|^2}dy\wedge d\bar{y}+\omega_X+\sqrt{-1}\partial\bar{\partial}G_{\infty,y}^*\psi_0^{(\infty)}
\end{align*}
By Lemma \ref{fiber deviation 0} (d), we know that for $\epsilon_1>0$ sufficiently small, the restricted form 
\begin{align*}
\left.\left(\frac{\sqrt{-1}A_y}{\varepsilon|y|^2}dy\wedge d\bar{y}+\omega_X+\sqrt{-1}\partial\bar{\partial}G_{\infty,y}^*\psi_0^{(\infty)}\right)\right|_{X_y}=\omega_X|_{X_y} + \sqrt{-1} \partial \bar{\partial} G_{\infty,y}^* \psi_0^{(\infty)}
\end{align*}
is positive definite on $X_y$, where $0<|y|<\epsilon_1$.

On the subregion $2|y|^{\frac56}\le H\le 1$, we have $G_{\infty,y}^*(r^2)=\sqrt{\frac{|w|^2+\sqrt{|w|^4-|y|^2}}{2}}$. Thus
\begin{align*}
\left|\frac{\partial^2}{\partial y\partial\bar{ y}}G_{\infty,y}^*\psi_0^{(\infty)}\right|dy\wedge d\bar{y}= O(H^{-\frac32})dy\wedge d\bar{y}\ll \frac{A_y}{|y|^2}dy\wedge d\bar{y},
\end{align*}when $|y|<\epsilon_1$ sufficiently small. 

On the subregion $ H\ge 1$, $G_{\infty,y}^*\psi_0^{(\infty)}$ is a smooth function with respect to $y$, hence 
\begin{align*}
\left|\frac{\partial^2}{\partial y\partial\bar{ y}}G_{\infty,y}^*\psi_0^{(\infty)}\right|dy\wedge d\bar{y}= O(1)dy\wedge d\bar{y}\ll \frac{A_y}{|y|^2}dy\wedge d\bar{y}.
\end{align*}

Hence, for $0<|y|<\varepsilon^{\frac{12}{2+\mu}}<\epsilon_1$, the $(1,1)$-form $\frac{\sqrt{-1}A_y}{\varepsilon|y|^2}dy\wedge d\bar{y}+\omega_X+\sqrt{-1}\partial\bar{\partial}G_{\infty,y}^*\psi_0^{(\infty)}$ is positive definite.

On the gluing region $\{|y|^{\frac56}\leq H\leq 2|y|^{\frac56}\}$, we can use Corollary \ref{total deviation 1} to control the difference of two model metrics. Indeed, by the chain rule, we have
\begin{align*}
\left|\partial\bar{\partial}\left(\gamma_1\left(\frac{H}{|y|^{\frac56}}\right)\left(\Theta_\infty+G_{\infty,y}^*(\psi_0^{(\infty)}-c_\infty)-\sqrt{H+|y|}\right)\right)\right|\le C|y|^{\frac16},
\end{align*}where the difference is measured either by $\frac{\sqrt{-1}A_y}{\varepsilon|y|^2}dy\wedge d\bar{y}+\sqrt{-1}\partial\bar{\partial}\sqrt{H+|y|}$ or by $\frac{\sqrt{-1}A_y}{\varepsilon|y|^2}dy\wedge d\bar{y}+\omega_X+\sqrt{-1}\partial\bar{\partial}G_{\infty,y}^*(\psi_0^{(\infty)}-c_\infty)$. 

Hence $\omega_\varepsilon$ is positive definite everywhere.

Now we add a bump form $\pi^*\beta$ pulled back from $B$ such that $\omega_\varepsilon$ remains positive definite and satisfies the integration condition.

First, we analyze $\int_M \left( \omega_X + \sqrt{-1}\partial\bar{\partial}\phi \right)^3$.

\textbf{Claim 2:} The top-degree form $(\omega_X + \sqrt{-1}\,\partial\bar\partial \phi)^3$ is absolutely integrable on $M$, and
\begin{align*}
\int_M \left( \omega_X + \sqrt{-1}\,\partial\bar{\partial}\phi \right)^3 = \int_X \omega_X^3.
\end{align*}

It suffices to show that $(\sqrt{-1}\,\partial\bar{\partial}\phi)^3$, $\omega_X \wedge (\sqrt{-1}\,\partial\bar{\partial}\phi)^2$, and $\omega_X^2 \wedge \sqrt{-1}\,\partial\bar{\partial}\phi$ are all Lebesgue integrable on $M$, and that each of their integrals vanishes.

The potential $\phi$ extends smoothly across $X_\infty \setminus \{q_\infty\}$, so the only point requiring analysis is the nodal point $q_\infty$. We therefore work in the coordinate chart $(U_\infty, (z_1, z_2, z_3))$. Let $B(0,R)$ denote the Euclidean ball in $(\mathbb{C}^3, \omega_{\operatorname{Euc}})$.

We only need to prove that, for each $q = 1, 2, 3$,
\begin{align*}
\int_{B(0,R)\setminus\{0\}} \left|(\partial\bar{\partial}\phi)^q \wedge \omega_X^{3-q}\right| < +\infty,
\quad 
\lim_{R\to 0} \int_{\partial B(0,R)} \left|\bar{\partial}\phi \wedge \omega_X^{3-q} \wedge (\partial\bar{\partial}\phi)^{q-1}\right| = 0.
\end{align*}

Recall that
\begin{align*}
G_\infty^*(\psi_0^{(\infty)} - c_\infty)
= \sqrt{\frac{|z|^2 + \sqrt{|z|^4 - |y|^2}}{2}} + \text{smooth terms}.
\end{align*}
A direct computation then yields
\begin{align*}
\left|\frac{\partial}{\partial z_i}G_\infty^*\left(\psi_0^{(\infty)}-c_\infty\right)\right|\leq C;\; \left|\frac{\partial}{\partial \bar{z}_i}G_\infty^*\left(\psi_0^{(\infty)}-c_\infty\right)\right|\leq C;\;  \left|\frac{\partial^2}{\partial z_j\partial \bar{z}_i}G_\infty^*\left(\psi_0^{(\infty)}-c_\infty\right)\right|\leq \frac{C}{H^{\frac12}},
\end{align*}where $C$ is a uniform constant.

Moreover, direct calculations give
\begin{align*}
&\frac{\partial}{\partial z_1}\sqrt{H+|y|}=\frac{\bar{z}_1+\frac{\bar{y}z_1}{|y|}}{2\sqrt{H+|y|}};\quad\frac{\partial}{\partial \bar{z}_1}\sqrt{H+|y|}=\frac{z_1+\frac{y\bar{z}_1}{|y|}}{2\sqrt{H+|y|}};\\
&\frac{\partial^2}{\partial z_1\partial \bar{z}_1}\sqrt{H+|y|}=\frac{1+\frac{|z_1|^2}{|y|}}{2\sqrt{H+|y|}}-\frac{\left(z_1+\frac{y\bar{z}_1}{|y|}\right)\left(\bar{z}_1+\frac{\bar{y}z_1}{|y|}\right)}{4(H+|y|)^{\frac32}};\\
&\frac{\partial^2}{\partial z_1\partial \bar{z}_2}\sqrt{H+|y|}=\frac{z_1\bar{z}_2}{2|y|\sqrt{H+|y|}}-\frac{\left(z_2+\frac{y\bar{z}_2}{|y|}\right)\left(\bar{z}_1+\frac{\bar{y}z_1}{|y|}\right)}{4(H+|y|)^{\frac32}}.
\end{align*}
From these we obtain
\begin{align*}
\left|\frac{\partial}{\partial z_i}\sqrt{H+|y|}\right|\le C;\quad\left|\frac{\partial}{\partial \bar{z}_i}\sqrt{H+|y|}\right|\le C;\quad \left|\frac{\partial^2}{\partial z_j\partial \bar{z}_i}\sqrt{H+|y|}\right|\le C\frac{H^{\frac12}}{|y|},
\end{align*}where $C$ is a uniform constant.

Similarly, we compute the derivatives of the cutoff:
\begin{align*}
&\frac{\partial}{\partial z_1}\gamma_2\left(\frac{H}{|y|^\alpha}\right)=\gamma_2'\left(\frac{H}{|y|^\alpha}\right)\left(\frac{\bar{z}_1}{|y|^{\alpha}}-\alpha Hz_1\frac{\bar{y}}{|y|^{\alpha+2}}\right);\\
&\frac{\partial}{\partial \bar{z}_1}\gamma_2\left(\frac{H}{|y|^\alpha}\right)=\gamma_2'\left(\frac{H}{|y|^\alpha}\right)\left(\frac{z_1}{|y|^{\alpha}}-\alpha H\bar{z}_1\frac{y}{|y|^{\alpha+2}}\right);
\end{align*}and
\begin{align*}
\frac{\partial^2}{\partial \bar{z}_1 \partial z_1}\gamma_2\left(\frac{H}{|y|^\alpha}\right)=&\gamma_2''\left(\frac{H}{|y|^\alpha}\right)\left(\frac{\bar{z}_1}{|y|^{\alpha}}-\frac{\alpha Hz_1\bar{y}}{|y|^{\alpha+2}}\right)\left(\frac{z_1}{|y|^{\alpha}}-\frac{\alpha H\bar{z}_1y}{|y|^{\alpha+2}}\right)\\
&+\gamma_2'\left(\frac{H}{|y|^\alpha}\right)\left(\frac{1}{|y|^\alpha}-\frac{\alpha\bar{z}_1^2y}{|y|^{\alpha+2}}-\frac{\alpha z_1^2\bar{y}}{|y|^{\alpha+2}}+\frac{\alpha^2H|z_1|^2}{|y|^{2+\alpha}}\right);\\
\end{align*}
\begin{align*}
\frac{\partial^2}{\partial \bar{z}_2 \partial z_1}\gamma_2\left(\frac{H}{|y|^\alpha}\right)=&\gamma_2''\left(\frac{H}{|y|^\alpha}\right)\left(\frac{\bar{z}_1}{|y|^{\alpha}}-\frac{\alpha Hz_1\bar{y}}{|y|^{\alpha+2}}\right)\left(\frac{z_2}{|y|^{\alpha}}-\frac{\alpha H\bar{z}_2y}{|y|^{\alpha+2}}\right)\\
&+\gamma_2'\left(\frac{H}{|y|^\alpha}\right)\left(-\frac{\alpha\bar{z}_1\bar{z}_2y}{|y|^{\alpha+2}}-\frac{\alpha z_1z_2\bar{y}}{|y|^{\alpha+2}}+\frac{\alpha^2Hz_1\bar{z}_2}{|y|^{2+\alpha}}\right).
\end{align*}
These calculations yield the estimates
\begin{align*}
&\left|\frac{\partial}{\partial z_i}\gamma_2\left(\frac{H}{|y|^{\frac56}}\right)\right|\leq C\frac{H^{\frac32}}{|y|^{\frac{11}{6}}}\quad \left|\frac{\partial}{\partial \bar{z}_i}\gamma_2\left(\frac{H}{|y|^{\frac56}}\right)\right|\leq C\frac{H^{\frac32}}{|y|^{\frac{11}{6}}};\\
&\left|\frac{\partial^2}{\partial\bar{z}_j\partial z_i}\gamma_2\left(\frac{H}{|y|^{\frac56}}\right)\right|\leq C\left(\frac{H^2}{|y|^{\frac{17}{6}}}+\frac{H^3}{|y|^{\frac{11}{3}}}\right).
\end{align*}

Taking the cutoff scale into account, the above estimates imply
\begin{align*}
|\partial\phi|_{\omega_{\operatorname{Euc}}} \le C H^{-\frac15},\quad 
|\bar{\partial}\phi|_{\omega_{\operatorname{Euc}}} \le C H^{-\frac15},\quad 
|\partial\bar{\partial}\phi|_{\omega_{\operatorname{Euc}}} \le C H^{-\frac{9}{10}}.
\end{align*}

Hence, for $q = 1, 2, 3$,
\begin{align*}
\int_{B(0,R)\setminus\{0\}} \left|(\partial\bar{\partial}\phi)^q \wedge \omega_X^{3-q}\right|
&\le C\int_{B(0,R)\setminus\{0\}} |\partial\bar{\partial}\phi|_{\omega_{\operatorname{Euc}}}^q \,dV_{\operatorname{Euc}}\\
&\le C\int_{B(0,R)\setminus\{0\}} H^{-\frac{9q}{10}}\,dV_{\operatorname{Euc}}\\
&\le C\int_0^R \rho^{-\frac{9q}{5}}\,\rho^5\,d\rho\\
&= C R^{6 - \frac{9q}{5}} < +\infty.
\end{align*}
On the other hand,
\begin{align*}
\int_{\partial B(0,R)} \left|\bar{\partial}\phi \wedge \omega_X^{3-q} \wedge (\partial\bar{\partial}\phi)^{q-1}\right|
&\le \int_{\partial B(0,R)} |\bar{\partial}\phi|_{\omega_{\operatorname{Euc}}}\,|\partial\bar{\partial}\phi|_{\omega_{\operatorname{Euc}}}^{q-1}\,dS\\
&\le R^{5 - \frac25 - \frac{9(q-1)}{5}} \to 0 \quad \text{as } R \to 0,
\end{align*}
again for $q = 1, 2, 3$.

Consequently,
\begin{align*}
\int_M \left(\omega_X + \sqrt{-1}\,\partial\bar{\partial}\phi\right)^3
= \int_{X\setminus\{q_\infty\}} \left(\omega_X + \sqrt{-1}\,\partial\bar{\partial}\phi\right)^3
= \int_X \omega_X^3.
\end{align*}

\textbf{Claim 3:} The following equality holds in the sense of Lebesgue integral \begin{align*}
\int_M\left(\pi^*\omega_B\wedge (\omega_X+\sqrt{-1}\partial\bar{\partial}\phi)^2-\sqrt{-1}\Omega\wedge \overline{\Omega}\right)=0.
\end{align*}

By definition, we have
\begin{align*}
\pi^*\omega_B\wedge (\omega_X+\sqrt{-1}\partial\bar{\partial}\psi)^2=\sqrt{-1}\Omega\wedge \overline{\Omega}
\end{align*}
on $M$. So it suffices to prove that
\begin{align*}
\int_M\pi^*\omega_B\wedge \left((\omega_X+\sqrt{-1}\partial\bar{\partial}\psi)^2-(\omega_X+\sqrt{-1}\partial\bar{\partial}\phi)^2\right)=0
\end{align*}
as a Lebesgue integral.

To this end, we first establish that the integrand is Lebesgue integrable. By Proposition \ref{fiber deviation 2}, we obtain
\begin{align*}
&\int_{\pi^{-1}(B(p_\infty,1))\backslash \pi^{-1}(B(p_\infty,R))}\left|\pi^*\omega_B\wedge \left((\omega_X+\sqrt{-1}\partial\bar{\partial}\psi)^2-(\omega_X+\sqrt{-1}\partial\bar{\partial}\phi)^2\right)\right|\\
\leq&C\int_{\pi^{-1}(B(p_\infty,1))\backslash \pi^{-1}(B(p_\infty,R))}|y|^{\frac{1}{24}}\pi^*\omega_B\wedge (\omega_X+\sqrt{-1}\partial\bar{\partial}\phi)^2\\
\leq  &C\int_{B(0,1)\backslash B(0,R)}(\pi_*(\omega_X+\sqrt{-1}\partial\bar{\partial}\phi)^2)|y|^{\frac{1}{24}}\omega_B \\
\leq  &C\int_{B(0,1)\backslash B(0,R)}\sqrt{-1}\frac{|y|^{\frac{1}{24}}}{|y|^2}dy\wedge d\bar{y}<+\infty
\end{align*}
Here, the third inequality relies on the basic fact that $$\pi_*(\omega_X+\sqrt{-1}\partial\bar{\partial}\phi)^2(y)=\int_{X_y}(\omega_X+\sqrt{-1}\partial\bar{\partial}\phi)^2=\int_{X_y}\omega_X^2=1.$$ Thus 
\begin{align*}
\lim\limits_{R\to 0}\int_{\pi^{-1}(B(p_\infty,1))\backslash \pi^{-1}(B(p_\infty,R))}\left|\pi^*\omega_B\wedge \left((\omega_X+\sqrt{-1}\partial\bar{\partial}\psi)^2-(\omega_X+\sqrt{-1}\partial\bar{\partial}\phi)^2\right)\right|\leq C
\end{align*}

Since the integrand is Lebesgue integrable, we may compute the integral as an improper integral. Hence by the definition of the pushforward,
\begin{align*}
&\int_M\pi^*\omega_B\wedge \left((\omega_X+\sqrt{-1}\partial\bar{\partial}\psi)^2-(\omega_X+\sqrt{-1}\partial\bar{\partial}\phi)^2\right)\\
=&\lim\limits_{R\to 0}\int_{M\backslash \pi^{-1}(B(p_\infty,R))}\pi^*\omega_B\wedge \left((\omega_X+\sqrt{-1}\partial\bar{\partial}\psi)^2-(\omega_X+\sqrt{-1}\partial\bar{\partial}\phi)^2\right)\\
=&\lim\limits_{R\to 0}\int_{B\backslash B(p_\infty,R)}\omega_B\wedge \pi_*\left((\omega_X+\sqrt{-1}\partial\bar{\partial}\psi)^2-(\omega_X+\sqrt{-1}\partial\bar{\partial}\phi)^2\right)\\
=&\lim\limits_{R\to 0}\int_{B\backslash B(p_\infty,R)}\omega_B\wedge 0=0.
\end{align*}

Combining the above calculations, we get
\begin{align*}
\int_M\varepsilon \omega_\varepsilon^3-3\sqrt{-1}\Omega\wedge \overline{\Omega}=&\int_M3\pi^*\omega_B\wedge   (\omega_X+\sqrt{-1}\partial\bar{\partial}\phi)^2-3\pi^*\omega_B\wedge (\omega_X+\sqrt{-1}\partial\bar{\partial}\psi)^2\\
&+3\varepsilon\int_X\pi^*\beta\wedge (\omega_X+\sqrt{-1}\partial\bar{\partial}\phi)^2+\varepsilon\int_X(\omega_X+\sqrt{-1}\partial\bar{\partial}\phi)^3\\
=&3\varepsilon\int_X\pi^*\beta\wedge \omega_X^2+\varepsilon\int_X\omega_X^3,
\end{align*}where the integral is understood in the Lebesgue sense. Thus, it suffices to find a bump form $\beta$ such that 
\begin{align*}
3 \int_X\pi^*\beta\wedge \omega_X^2=-\int_X\omega_X^3.
\end{align*}To this end, choose a bump form $\beta=-\frac{\sqrt{-1}}{2|y|^2}\widetilde{\chi}dy\wedge d\bar{y}$, where $0\le \widetilde{\chi}\le 1$, $\widetilde{\chi}=1$ on $\{\delta_1<|y|<\delta_2\}$ and $\widetilde\chi$ is supported in $\{\frac12\delta_1<|y|<2\delta_2\}$. Since $\int_{\{\delta_1<|y|<\delta_2\}}\frac{\sqrt{-1}}{2|y|^2}dy\wedge d\bar{y}$ can achieve any positive real number if we carefully choose $\delta_1,\delta_2$, it is always possible to find such a bump form $\beta$ to guarantee the integration condition. Moreover, $\pi^*\beta$ does not destroy positive definiteness. 

Since $B \cong \mathbb{C}$ is contractible, the closed forms $\omega_B$ and $\beta$ are exact on $B$. Therefore their pullbacks are exact on $M$, and (\ref{Ansatz 1}) gives $[\omega_\varepsilon]=\left[\omega_X|_M\right]$.

\end{proof}

\begin{remark}

The above calculation explains why we glue the Eguchi--Hanson metric and the semi-Ricci-flat metric at the scale $H \sim |y|^{\frac{5}{6}}$ instead of $H \sim |y|^{\frac{2}{3}}$. Although the latter scale is the optimal approximation scale between the two gluing metrics, the derivative estimates of the cutoff function are poor at this scale, making it impossible to obtain the aforementioned integration condition. In fact, it suffices to glue at a scale of the form $H \sim |y|^{\sigma}$ with $\sigma > \frac{4}{5}$ in order to satisfy the integration condition.

\end{remark}

\section{Mapping properties of the Laplacian on model spaces}\label{Harmonic analysis on model spaces}

Later, we will construct an approximate inverse for $\Delta_{\omega_\varepsilon}$ directly by patching together local Green operators on several model regions, and then perturb this approximate inverse to obtain the genuine one. The advantage of this approach is that it yields an optimal bound on the inverse map that is independent of $\varepsilon$. To carry out this strategy, we must first review or develop the harmonic analysis on each of the relevant geometric models in this section.

Throughout, we adopt the normalization that the Laplacian associated to a K\"ahler metric $\omega$ is
\begin{align*}
\Delta_{\omega} \coloneqq 2\operatorname{Tr}_{\omega} \sqrt{-1}\,\partial\bar{\partial},
\end{align*}
for consistency.

\subsection{\texorpdfstring{Analysis of the Laplacian for $\omega_{\mathbb{C}^3}$}{Analysis of the Laplacian for C3}}In this subsection, we recall the mapping property of the Laplacian on $\left(\mathbb{C}^3,\omega_{\mathbb{C}^3}\right)$.

\begin{proposition}[\text{cf.} \cite{li2019gluing} Proposition 3.3]\label{Laplace for C3}

Let $-2 < \tau < 0$, and assume that $\delta > -4$ and that $\delta$ avoids a discrete set of values. Then there exists a bounded right inverse $P_{\mathbb{C}^3} : C^{0,\alpha}_{\delta-2, \tau-2}(\mathbb{C}^3,\omega_{\mathbb{C}^3}) \to C^{2,\alpha}_{\delta, \tau}(\mathbb{C}^3,\omega_{\mathbb{C}^3})$ to the Laplacian $\Delta_{\omega_{\mathbb{C}^3}}$.

\end{proposition}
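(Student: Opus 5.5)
The plan follows the standard scheme for doubly weighted spaces on $\omega_{\mathbb{C}^3}$ from \cite{li2019gluing, szekelyhidi2019degenerations, Conlon2021Newexamples}. We first build a parametrix out of model inverses on the three asymptotic regions singled out by the weight $w$. We then correct it using the Fredholm theory for the parameter $\delta$.

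\textbf{Regions and model inverses.} We split $\{\rho>K\}$ into three pieces:
\begin{itemize}
\item the region $R\ge\kappa\rho$, where $\omega_{\mathbb{C}^3}$ is close to the flat cone $\mathbb{C}\times(\mathbb{C}^2/\mathbb{Z}_2)$ after rescaling;
\item the region $R\le\kappa^{-1}\rho^{1/4}$ near the vanishing cycles, where after rescaling by $\rho^{-1/4}$ the metric is close to $\mathbb{C}\times\operatorname{EH}$ (Eguchi--Hanson times a flat factor);
\item the intermediate region, modelled on $\mathbb{C}\times\mathbb{C}^2/\mathbb{Z}_2$ with both weights active.
\end{itemize}
On each model we construct a right inverse for the Laplacian. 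On the product $\mathbb{C}\times \operatorname{EH}$ we take the fiberwise average over the Eguchi--Hanson factor and invert on the flat base. On the orthogonal complement we use the fiberwise inverse on ALE space, which is bounded on weighted spaces with rate $\tau\in(-2,0)$; this is the range with no indicial roots, compare Lemma \ref{fiber iso}. The fiberwise-constant part is handled by the flat Laplacian on $\mathbb{C}$ and $\mathbb{C}^3$ with weight $\delta$. The requirement that $\delta$ avoid a discrete set is exactly the indicial-root condition for this flat/conical part, and $\delta>-4$ is needed so that the relevant Newtonian potential converges.

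\textbf{Patching.} Next we patch the local inverses with cutoffs adapted to the scales $R$ and $\rho$, giving an operator $P_0$. The error $\Delta P_0-\mathrm{Id}$ splits into two parts:
\begin{itemize}
\item cutoff commutator terms, which gain a factor of $\kappa$ or $K^{-1}$ because the cutoffs vary on scales larger than the local injectivity scale;
\item the deviation $\omega_{\mathbb{C}^3}-\omega_{\mathrm{model}}$, controlled by $\|\phi'_{\mathbb{C}^3}\|_{C^{k,\alpha}_{\delta,0}}$ for $\delta>-1$, together with the decay of the asymptotic expression $\phi_\infty$ towards the model metrics.
\end{itemize}
Both parts are small in the operator norm on $C^{0,\alpha}_{\delta-2,\tau-2}$ outside a large compact set. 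Local Schauder estimates on balls $B(z,cR(z))$, which have bounded geometry by the choice of $c$, upgrade these $C^0$ weighted bounds to the $C^{2,\alpha}_{\delta,\tau}$ norm.

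\textbf{Compact part and main obstacle.} The remaining error is supported in the compact region $\{\rho<2K\}$, where it is only compact rather than small. We therefore show that $\Delta:C^{2,\alpha}_{\delta,\tau}\to C^{0,\alpha}_{\delta-2,\tau-2}$ is Fredholm for nonexceptional $\delta$, and that it is surjective. For surjectivity we take the cokernel element, which lies in the dual weighted space, and argue that it is a harmonic function with the dual growth rate. A Liouville argument using the $C^{0}$ decay and the Ricci-flat Yau gradient estimate then forces it to vanish; the range $\delta>-4$ is what puts the dual weight below the Liouville threshold. We expect this step to be the main obstacle.

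Given Fredholmness with trivial cokernel, we choose a bounded complement to the finite-dimensional kernel and obtain the bounded right inverse $P_{\mathbb{C}^3}$. In practice the cleanest route is to quote Proposition 3.3 of \cite{li2019gluing}, whose proof follows exactly this scheme.
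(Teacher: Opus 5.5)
The paper gives no argument for this statement. It imports Proposition~3.3 of \cite{li2019gluing}, whose weighted spaces come from \cite{szekelyhidi2019degenerations,Conlon2021Newexamples}, so your closing sentence is exactly the paper's route. The outline before it, however, would not work as a self-contained proof, and you should not present it as the cited argument. There are three concrete problems.

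(i) There is no ``fiberwise average over the Eguchi--Hanson factor.'' $\operatorname{EH}_1$ is ALE with volume growth $r^4$, so $\int_{\operatorname{EH}_1} f$ diverges for $|f|\sim r^{\tau-2}$ with $\tau>-2$. No splitting is needed anyway: for $\tau\in(-2,0)$, strictly between the indicial roots $-2$ and $0$, the ALE Laplacian $C^{2,\alpha}_{\tau}\to C^{0,\alpha}_{\tau-2}$ is already an isomorphism. The mean-zero/base decomposition belongs to the compact-fiber problem on $\mathbb{C}\times X_0$ (Proposition~\ref{Laplace for C times K3 singular version}); Lemma~\ref{fiber iso} likewise concerns compact fibers. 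The exceptional values of $\delta$ come from homogeneous harmonic functions on the tangent cone $\mathbb{C}\times\mathbb{C}^2/\mathbb{Z}_2$, not from a flat Laplacian acting on a fiber-constant part.

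(ii) The patching errors are not small. The norm (\ref{Weighted norm on C3}) weights $\nabla^i f$ by $(\rho w)^i$ relative to $f$, so it is scale invariant. A cutoff switching on the local scale therefore produces commutator terms $2\langle\nabla\chi,\nabla u\rangle+u\,\Delta\chi$ of exactly the target size $\rho^{\delta-2}w^{\tau-2}$; no factor $\kappa$ or $K^{-1}$ is gained. Because the local scale changes across the Eguchi--Hanson/conical transition, stretching an ordinary cutoff over a longer range does not help either. You would need logarithmic cutoffs over $\log\Lambda$ scales, as with $\tilde\eta_1,\tilde\eta_2$ in Lemma~\ref{approximate invert 2}.

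(iii) Most seriously, you assume rather than prove two things: bounded model inverses on $\mathbb{C}\times\mathbb{C}^2/\mathbb{Z}_2$ and on $\mathbb{C}\times\operatorname{EH}_1$ in these doubly weighted spaces, and the Fredholm property for an end whose tangent cone has singular cross-section. These carry essentially all of the content of the proposition.

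A self-contained route that does work is the one the paper carries out for the analogous problem on $\mathbb{C}\times X_y$ in Proposition~\ref{Laplace for C times K3 weighed version}.
\begin{enumerate}
\item Prove weighted Schauder estimates on the balls $B(z,cR(z))$.
\item For $-4<\delta<0$, prove $\|u\|_{C^{2,\alpha}_{\delta,\tau}}\le C\|\Delta u\|_{C^{0,\alpha}_{\delta-2,\tau-2}}$ by contradiction. Depending on where the weighted supremum sits, the rescaled limit is a harmonic function on one of four spaces: $(\mathbb{C}^3,\omega_{\mathbb{C}^3})$ itself, the cone $\mathbb{C}\times\mathbb{C}^2/\mathbb{Z}_2$, its rescaling near the singular line, or $\mathbb{C}\times\operatorname{EH}_1$ near the vanishing cycles.
\item Show each such limit vanishes, using removability across the singular line ($\tau>-2$) together with a Liouville or maximum-principle argument ($\tau<0$, $-4<\delta<0$).
\item Obtain existence by exhaustion.
\end{enumerate}
This already gives an isomorphism in the range $-\frac{1}{40}<\delta<0$ used later in the paper. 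Larger $\delta$ is then handled modulo the finite-dimensional space of harmonic functions of polynomial growth.
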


\begin{remark}

When $-2 < \delta < 0$, $\delta$ automatically lies outside the discrete set of indicial roots.

\end{remark}

\subsection{\texorpdfstring{Analysis of the Laplacian on $\mathbb{C}\times \mathrm{K}3$}{Analysis of the Laplacian on C x K3}}In this subsection, we discuss the mapping properties of the Laplacian on $\mathbb{C} \times \mathrm{K}3$, where the $\mathrm{K}3$ surface may be either smooth or singular. Most of the material is adapted from Section 3.2 of \cite{li2019gluing}. However, to invert the Laplacian of our ansatz $\omega_\varepsilon$ at infinity, we will prove a new weighted coercive Schauder estimate. This will provide us with the necessary information on the Laplacian on $\mathbb{C} \times X_y$ as $y$ approaches the singular point $p_\infty$.

\noindent\textit{\textbf{Case 1:}} Smooth fiber with the usual H{\"o}lder norm.

First let $X_y$ be a smooth K3 fiber equipped with the Calabi--Yau metric $\omega_{\operatorname{SRF}}|_{X_y}$ in the class $[\omega_X|_{X_y}]$. Equip $\mathbb{C}\times X_y$ with the product metric $\omega_{\operatorname{SRF},y}\coloneqq \frac{\sqrt{-1}}{2} d\zeta \wedge d\bar{\zeta}+\omega_{\operatorname{SRF}}|_{X_y} $, where $\zeta$ denotes the standard coordinate on $\mathbb{C}$.

We first consider the usual H{\"o}lder space $C^{k,\alpha}(\mathbb{C}\times X_y)$. Let $C^{k,\alpha,\operatorname{ave}}(\mathbb{C}\times X_y)$ denote the subspace of functions with zero average on each $X_y$-fiber.

\begin{proposition}[\text{cf.} \cite{li2019gluing} Lemma 3.4]\label{Laplace for C times K3}

The Laplacian $\Delta_{\omega_{\operatorname{SRF},y}}: C^{2,\alpha,\operatorname{ave}}(\mathbb{C}\times X_y) \to C^{0,\alpha,\operatorname{ave}}(\mathbb{C}\times X_y)$ is an isomorphism, with bounded inverse $P_y = \Delta_{\omega_{\operatorname{SRF},y}}^{-1}, \|P_y\| \le C(y,\alpha)$. 

Moreover, if the forcing term $f \in C^{0,\alpha,\operatorname{ave}}$ is supported in $\{|\zeta| < B\}\times X_y $ for some $B > 1$, then outside of $\{|\zeta| < B+1\}\times X_y$ the function $P_y f$ has exponential decay: 
\begin{equation}\label{exponential decay coarse version}
\begin{aligned}
&|P_y f| \le C(y,\alpha) e^{-m(|\zeta|-B)} \|f\|_{C^{0,\alpha}},  \\
&|\nabla_{\omega_{\operatorname{SRF},y}} P_y f|_{\omega_{\operatorname{SRF},y}} \le C(y,\alpha)e^{-m(|\zeta|-B)} \|f\|_{C^{0,\alpha}},  \\
&|\nabla^2_{\omega_{\operatorname{SRF},y}}P_y f|_{\omega_{\operatorname{SRF},y}} \le C(y, \alpha) e^{-m(|\zeta|-B)} \|f\|_{C^{0,\alpha}},
\end{aligned}
\end{equation} 
where $m>0$ is a uniform constant independent of $y$. 

The constants $C(y,\alpha)$ are independent of $B$ and are uniform for $y$ uniformly bounded away from $S=\{p_0,p_\infty\}$.

\end{proposition}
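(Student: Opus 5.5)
The plan is to diagonalize the Laplacian using the product structure. Let $\{\lambda_j,\varphi_j\}_{j\ge1}$ be the nonzero eigenvalues and $L^2$-orthonormal eigenfunctions of $-\Delta_{\omega_{\operatorname{SRF}}|_{X_y}}$ on the compact Calabi--Yau fiber $X_y$. The first eigenvalue $\lambda_1(y)$ is bounded below by a constant $m^2>0$ uniformly for $y$ away from $S$, since the metrics $\omega_{\operatorname{SRF}}|_{X_y}$ vary smoothly and have uniformly bounded geometry there. (Uniformity in $y$ near $S$ is not claimed except through $m$; $\lambda_1$ stays bounded below even near nodes by a Cheeger-type argument, but we only need it on compact subsets away from $S$ for the constants $C(y,\alpha)$, and we take $m$ as a lower bound for $\sqrt{\lambda_1}$ over the whole range considered.) For $f$ with zero fiber average, write $f=\sum_j f_j(\zeta)\varphi_j$. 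The equation $\Delta u=f$ then decouples into the ODE/PDE system $(\Delta_{\mathbb{C}}-\lambda_j)u_j=f_j$ on $\mathbb{C}$.

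Each of these equations has a unique bounded solution $u_j=-\int_{\mathbb{C}}G_{\lambda_j}(\zeta-\zeta')f_j(\zeta')$, where $G_\lambda$ is the massive Green's function $\frac{1}{2\pi}K_0(\sqrt{\lambda}|\zeta|)$ (up to normalization). This kernel decays like $e^{-\sqrt{\lambda}|\zeta|}$. Summing the modes gives $P_y f$.

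\textbf{$L^\infty$ bound.} I would avoid summing modes pointwise and instead use a maximum principle argument. On $\mathbb{C}\times X_y$ the operator $\Delta$ restricted to fiber-mean-zero functions has spectrum contained in $(-\infty,-m^2]$. Local $L^2$ control on unit cylinders $\{|\zeta-\zeta_0|<1\}\times X_y$ follows by testing against a weight $e^{-m|\zeta-\zeta_0|/2}$: integrating by parts with the fiberwise Poincaré inequality gives
\begin{equation*}
\int e^{-m|\zeta-\zeta_0|}\bigl(|\nabla u|^2+m^2u^2\bigr)\le C\int e^{-m|\zeta-\zeta_0|}|f|^2 .
\end{equation*}
Here the weight's gradient is absorbed because it only costs $m/2$. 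This yields the existence of $u$ via exhaustion and the bound $\sup|u|\le C\|f\|_{C^0}$ after local elliptic estimates (Moser iteration / Schauder on unit balls, which have uniformly bounded geometry since $X_y$ is fixed and smooth). Interior Schauder estimates then give $\|u\|_{C^{2,\alpha}}\le C\|f\|_{C^{0,\alpha}}$. Uniqueness among bounded mean-zero functions follows from the same weighted estimate with $f=0$. This also shows that $u$ has zero fiber average, since averaging commutes with $\Delta$.

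\textbf{Exponential decay.} If $\operatorname{supp} f\subset\{|\zeta|<B\}$, apply the weighted energy identity centered at a point $\zeta_0$ with $|\zeta_0|>B+1$. The right-hand side is then at most $Ce^{-m(|\zeta_0|-B)}\|f\|^2$ (after adjusting $m$ by a factor). Local elliptic estimates on the unit cylinder around $\zeta_0$, where $\Delta u=0$, convert this $L^2$ decay into $C^0$, $C^1$ and $C^2$ decay with the same exponent. Alternatively, use the explicit kernel bound $|G_\lambda|\lesssim e^{-\sqrt{\lambda}|\zeta|}$ mode by mode, with $\sqrt{\lambda_j}\ge m$.

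The constants depend only on $m$, on the bounded geometry of $X_y$, and on $\alpha$, hence are independent of $B$ and uniform for $y$ away from $S$.

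The main obstacle is the uniformity of the spectral gap and of the bounded-geometry constants. I would handle this by the smooth dependence of $\omega_{\operatorname{SRF}}|_{X_y}$ on $y\in Y\setminus S$ together with compactness of the parameter set.
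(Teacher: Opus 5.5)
Your argument is sound in its analytic core, but it is organized differently from the paper's. The paper imports this statement from \cite{li2019gluing}. When it redoes the argument in the weighted setting (Proposition~\ref{Laplace for C times K3 weighed version}), it separates two steps. First come the isomorphism and a global bound, obtained there by a blow-up/contradiction argument with Liouville theorems. Then comes the decay: $g(\zeta)=\int_{X_y}u^2$ satisfies $\Delta_{\mathbb{C}}g\ge 2\lambda_1 g$ on $\{|\zeta|>B\}$, it is compared with a radial barrier whose value on $\{|\zeta|=B\}$ is supplied by the global bound, and Moser iteration and Schauder estimates finish.

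Your Agmon-type estimate with weight $e^{-a|\zeta-\zeta_0|}$, $a<\sqrt{\lambda_1}$ (say $a^2\le\lambda_1/4$), does everything in one stroke. It gives existence by exhaustion: the Dirichlet solutions on $\{|\zeta|<R\}\times X_y$ keep zero fiber average, because $\int_{X_y}u_R$ is harmonic with zero boundary values, so the fiberwise Poincar\'e inequality is available. It also gives uniqueness, the sup bound, and the decay, with effective constants and a rate that visibly depends only on $\lambda_1$. Do write out why the decay constant is independent of $B$. Bound $\int_{\{|\zeta|<B\}}e^{-a|\zeta-\zeta_0|}\,d\zeta$ by the integral over $\{|\zeta-\zeta_0|\ge d\}$, $d=|\zeta_0|-B$, which equals $2\pi a^{-2}(1+ad)e^{-ad}$; the cruder bound $\pi B^2e^{-ad}$ is not enough. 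The paper's split (coercive estimate by compactness, decay by a maximum principle for $g$) is the form it reuses in the weighted spaces $C^{k,\alpha}_{\lambda,\mu}$. There the uniform-in-$y$ bounds with weight $r^{-\mu}$ near the vanishing cycle come from blow-up analysis rather than directly from an energy estimate.

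The one step that fails as you justify it is the uniformity of $m$. The statement asserts that $m$ is independent of $y$ on all of $Y\setminus S$, and the paper uses this. For example, in Lemma~\ref{approximate invert 1}, $\Lambda_3$ and $\epsilon_4$ are chosen together, and $e^{-m\Lambda_3}$ has to beat a negative power of $\epsilon_4$. Smooth dependence on $y$ plus compactness bounds $\lambda_1(X_y)$ below only on compact subsets of $Y\setminus S$, which is itself not compact. The unspecified ``Cheeger-type argument'' is not yet an argument. The missing input is a uniform diameter bound for the Ricci-flat fibers $(X_y,\omega_{\operatorname{SRF}}|_{X_y})$ as $y\to S$; the paper takes this from Example~2.3 of \cite{Li2021Oncollapsing}. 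Since $\operatorname{Ric}=0$, a Li--Yau type bound $\lambda_1\ge c\,\operatorname{diam}^{-2}$ \cite{Li1980Estimates} then gives a uniform spectral gap. Because your decay rate involves only $\lambda_1$, this is all that is needed. The geometry-dependent local elliptic constants may legitimately stay in $C(y,\alpha)$.
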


\noindent\textit{\textbf{Case 2:}} Smooth fiber with weighted H{\"o}lder norm.

Let $X_y$ be a smooth K3 fiber with $0<|y|<\epsilon_1$, equipped with the K{\"a}hler metric $\omega_y=\omega_X|_{X_y}+\sqrt{-1}\partial\bar{\partial}\widetilde{\phi}_y$. We endow the product manifold $\mathbb{C}\times X_y$ with the product metric $\omega_{\mathrm{P},y}\coloneqq \frac{\sqrt{-1}}{2}\,d\zeta\wedge d\bar{\zeta}+\omega_y$. 

We now introduce a doubly weighted H\"older space on $\mathbb{C}\times X_y$. Recall that a regularized distance function $r$ has been defined on $\pi^{-1}(\pi(U_\infty))$. Restricting $r$ to the fiber $X_y$, we continue to denote the resulting function by the same symbol. For parameters $\lambda,\mu\in\mathbb{R}$, define
\begin{equation}\label{Weighted norm on C Xy}
\begin{aligned}
\|f\|_{C^{k,\alpha}_{\lambda,\mu}(\mathbb{C}\times X_y,\omega_{\mathrm{P},y})}
 \coloneqq &
\sum_{i=0}^{k}
\sup_{x\in \mathbb{C}\times X_y}
\frac{r^{-\mu+i}}{|y|^\lambda}
\left|\nabla^i_{\omega_{\mathrm{P},y}}f\right|_{\omega_{\mathrm{P},y}}\\
&+\sup_{\substack{x,x'\in \mathbb{C}\times X_y\\d_{\omega_{\mathrm{P},y}}(x,x')\ll r(x)}}
\frac{r(x)^{-\mu+\alpha+k}}{|y|^\lambda}
\frac{|\nabla^k_{\omega_{\mathrm{P},y}}f(x)-\nabla^k_{\omega_{\mathrm{P},y}}f(x')|}{d_{\omega_{\mathrm{P},y}}(x,x')^\alpha}
\end{aligned}
\end{equation}
We further denote by $C^{k,\alpha,\mathrm{ave}}_{\lambda,\mu}(\mathbb{C}\times X_y)$ the subspace consisting of functions satisfying $\int_{X_y} f(\zeta,\cdot)\,\omega_y^2=0$ for every $\zeta\in\mathbb C$.

We first prove a weighted Schauder estimate on $\mathbb{C}\times \mathrm{K}3$.

\begin{lemma}[Weighted Schauder estimate]\label{weighted Schauder 1}

If $-2<\mu<0$ and $0<|y|<\epsilon_1$, then
\begin{equation}\label{weighted Schauder 2}
\|u\|_{C^{2,\alpha}_{\lambda,\mu}(\mathbb{C}\times X_y)}\leq C\left(\|\Delta_{\omega_{\operatorname{P},y}} u\|_{C^{0,\alpha}_{\lambda,\mu-2}(\mathbb{C}\times X_y)}+\|u\|_{C^0_{\lambda,\mu}(\mathbb{C}\times X_y)}\right),
\end{equation}
where $C=C(\alpha,\mu)$ is a uniform constant independent of $y$.

\end{lemma}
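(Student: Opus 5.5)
The plan is a local scaling argument. The weighted norm is built so that every ball of radius comparable to $r(x)$, after rescaling by $r(x)^{-1}$, has bounded geometry uniformly in $y$. The estimate will then follow from standard interior Schauder estimates on unit balls, multiplied back by the weights. The factor $|y|^{-\lambda}$ is constant on $\mathbb{C}\times X_y$, so it factors out of both sides; we may take $\lambda=0$.

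\textbf{Step 1: uniform local geometry.} For each point $x=(\zeta,p)\in\mathbb{C}\times X_y$, set $\rho=c\,r(p)$ with $c>0$ small and fixed, and consider the ball $B_\rho(x)$ in $\omega_{\mathrm{P},y}$. We distinguish three regions.
\begin{itemize}
\item Away from the node ($r\ge c_0$), $\omega_y$ is uniformly equivalent in $C^k$ to $\omega_X|_{X_y}$. This follows from Lemma \ref{fiber deviation 0} and the explicit form of $\widetilde{\phi}_y$, together with smooth dependence on $y$.
\item In the Eguchi--Hanson region $H\le 2|y|^{5/6}$, the rescaling $\Psi_y$ of the Remark after Proposition \ref{prop of omega 1} identifies $\omega_y$ with $|y|^{1/2}\Psi_y^*\omega_{\operatorname{EH}_1}$. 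After dilating by $r^{-2}$, balls of radius $c r$ become unit-size balls of the fixed Stenzel/Eguchi--Hanson space $V_1$, possibly near its core where $r\sim|y|^{1/4}$, or in its ALE end. Both have bounded geometry and a positive injectivity radius at the appropriate scale.
\item In the gluing and cone region $|y|^{5/6}\lesssim H\lesssim 1$, Lemma \ref{fiber deviation 1} and Lemma \ref{fiber deviation 0} show that $r^{-2}\omega_y$ is $C^k$-close to the flat cone metric $\mathbb{C}^2/\mathbb{Z}_2$ on unit balls.
\end{itemize}
Taking the product with the flat factor $\frac{\sqrt{-1}}{2}d\zeta\wedge d\bar\zeta$, rescaled by $r^{-2}$, preserves bounded geometry. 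We therefore obtain harmonic coordinates on $r^{-1}B_{\rho}(x)$ in which the metric coefficients are bounded in $C^{1,\alpha}$, uniformly in $x$ and in $y$.

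\textbf{Step 2: rescaled Schauder estimate.} In these coordinates, the interior Schauder estimate for the rescaled metric $\tilde g=r(x)^{-2}g_{\mathrm{P},y}$ gives
\[
\|u\|_{C^{2,\alpha}(B_{1/2},\tilde g)}\le C\left(\|\Delta_{\tilde g}u\|_{C^{0,\alpha}(B_1,\tilde g)}+\|u\|_{C^0(B_1)}\right),
\]
with $C$ independent of $x$ and $y$. On $B_\rho(x)$ the function $r$ is comparable to $r(x)$, since $|\nabla r|$ is bounded by scaling. Undoing the dilation, we have $\Delta_{\tilde g}=r(x)^2\Delta$, and $\tilde g$-derivatives of order $i$ scale like $r(x)^i$. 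Multiplying the estimate by $r(x)^{-\mu}$ then turns each term into exactly the corresponding term of the weighted norm (\ref{Weighted norm on C Xy}) at $x$. This includes the H\"older seminorm over pairs with $d(x,x')\ll r(x)$, which is precisely the range covered by the local balls. Taking the supremum over $x$ yields (\ref{weighted Schauder 2}).

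\textbf{Main obstacle.} The main difficulty is Step 1: establishing bounded geometry uniformly across the transition region $H\sim|y|^{5/6}$ and near the vanishing cycle $r\sim|y|^{1/4}$, with constants independent of $y$. There the metric is neither exactly Eguchi--Hanson nor exactly conical. I would handle this by combining the explicit derivative bounds of Lemma \ref{fiber deviation 1} with the scale-invariant estimates of Lemma \ref{almost product 1}(2). The goal is to show that $r^{-2}\omega_y$ differs from the model metric (scaled Stenzel or cone) by $o(1)$ in $C^{k}$ on unit balls. The curvature is unbounded on the unscaled metric, which is exactly why we work at scale $r$: the scale-invariant quantity $r^2|\mathrm{Rm}|$ remains bounded. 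The range $-2<\mu<0$ plays no role in this local estimate; it matters only later, for invertibility.
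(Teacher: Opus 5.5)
Your proposal is correct and follows essentially the same route as the paper. The paper also reduces to $\lambda=0$ and rescales at the scale of $r$, using the explicit dilations $\Phi_y$ on the region $r\ge 1$ and on the shells $r\sim|y|^{\sigma/4}$ with $0\le\sigma\le 1$. It then checks that the rescaled product metric is uniformly equivalent to the Euclidean metric to all orders, applies the standard Schauder estimate on unit balls, and multiplies back by $r^{-\mu}$. Your pointwise balls $B_{c\,r(x)}(x)$ and your explicit treatment of the transition region $H\sim|y|^{5/6}$ are a somewhat more careful version of the paper's covering, which handles that region only with a ``$\sim$''.
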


\begin{proof}

Without loss of generality, we may assume $\lambda=0$ and $y\in \mathbb{R}^+$.

\textbf{Region 1:} Consider $\{|\zeta-\zeta_{k,l}|<1\}\times X_y\cap \{r\geq 1\}$, where $\zeta_{k,l}=k+\sqrt{-1}l$ and $k,l\in \mathbb{Z}$. 

In this area, $r\sim 1$ and the weighted Schauder estimate (\ref{weighted Schauder 2}) is just the usual Schauder estimate. Note that in this area the product metric is also uniformly equivalent to the Euclidean metric independent of the choice of $k,l$, so the conclusion follows.

\textbf{Region 2:} Consider $\{|\zeta-\zeta_{k,l}|<1\}\times X_y\cap \{y^{\frac14}\le r\le2y^{\frac14}\}$, $k,l\in \mathbb{Z}$. 

Since we are considering the product metric, and the Euclidean metric factor is translation invariant, we may just assume $k=l=0$. The following estimates are independent of the choice of $k,l$.

Consider the map
\begin{align*}
\Phi_y : \{|\tilde{\zeta}|<y^{-\frac14}\}\times (X_1 \cap \{1 \leq r \leq 2\}) &\to \{|\zeta|<1\} \times \left( X_y \cap \{y^{\frac{1}{4}} \le r \le 2y^{\frac{1}{4}}\} \right),\\
(\tilde{\zeta},(z_1, z_2, z_3)) &\mapsto (\zeta,(w_1, w_2, w_3)) = \left(y^{\frac14}\tilde{\zeta},(y^{\frac{1}{2}}z_1, y^{\frac{1}{2}}z_2, y^{\frac{1}{2}}z_3)\right).
\end{align*}
Let $\tilde{u}(\tilde{\zeta},z)=\Phi^*_yu(\zeta,w)=u(\zeta,w(z))$, $\widetilde{\omega}_{\operatorname{P},y}=y^{-\frac12}\Phi_y^*\omega_{\operatorname{P},y}$.

By definition, we have $\widetilde{\omega}_{\operatorname{P},y}=\frac{\sqrt{-1}}{2}d\tilde{\zeta}\wedge \bar{\tilde{\zeta}}+\omega_{\operatorname{EH},1}$ on $ \{|\tilde{\zeta}|<y^{-\frac14}\}\times (X_1 \cap \{1 \leq r \leq 2\})$. This metric is uniformly equivalent to the Euclidean metric to every order. 

Now we can use the usual Schauder estimate on a unit ball $B(\tilde{p},1)\subset \{ |\tilde{\zeta}|<y^{-\frac14}\}\times (X_1 \cap \{1 \leq r \leq 2\})$, which reads
\begin{align*}
\|\tilde{u}\|_{C^{2,\alpha}_{\widetilde{\omega}_{\operatorname{P},y}}(B(\tilde{p},\frac12))}\leq C\left(\left\|\Delta_{\widetilde{\omega}_{\operatorname{P},y}} \tilde{u}\right\|_{C^{0,\alpha}_{\widetilde{\omega}_{\operatorname{P},y}}(B(\tilde{p},1))}+\|\tilde{u}\|_{L^\infty(B(\tilde{p},1))}\right).
\end{align*}Pulling back the above estimate to $X_y$, and setting $B'=\Phi_y(B(\tilde{p},\frac12))$, $B=\Phi_y(B(\tilde{p},1))$, we have
\begin{align*}
&\|u\|_{L^\infty(B')}+|y|^{\frac14}\|\nabla_{\omega_{\operatorname{P},y}} u\|_{L^\infty(B')}+|y|^{\frac12}\|\nabla^2_{\omega_{\operatorname{P},y}} u\|_{L^\infty(B')}+|y|^{\frac12+\frac\alpha 4}\left[\nabla^2_{\omega_{\operatorname{P},y}} u\right]_{C^{0,\alpha}_{\omega_{\operatorname{P},y}}(B')}\\
\leq& C\left(|y|^{\frac12}\|\Delta_{\omega_{\operatorname{P},y}}u\|_{L^\infty(B)}+|y|^{\frac12+\frac\alpha 4}\left[\Delta_{\omega_{\operatorname{P},y}}u\right]_{C^{0,\alpha}_{\omega_{\operatorname{P},y}}(B)}+\|u\|_{L^\infty(B)}\right)
\end{align*}Multiply both sides by $r^{-\mu}$, we obtain the desired weighted Schauder estimate in the corresponding region.

\textbf{Region 3:} Consider $\{|\zeta-\zeta_{k,l}|<1\}\times X_y\cap \{2y^{\frac14\sigma}\le r\le4y^{\frac14\sigma}\}$, $k,l\in \mathbb{Z}$, where $0\leq \sigma<1$.

We may still assume $k=l=0$. The following estimates are independent of the choice of $k,l$.

Consider the map
\begin{align*}
\Phi_y : \{|\tilde{\zeta}|<y^{-\frac14\sigma}\}\times (X_{y^{1-\sigma}} \cap \{2 \leq r \leq 4\}) &\to \{|\zeta|<1\} \times \left( X_y \cap \{2y^{\frac{1}{4}\sigma} \le r \le 4y^{\frac{1}{4}\sigma}\} \right),\\
(\tilde{\zeta},(z_1, z_2, z_3)) &\mapsto (\zeta,(w_1, w_2, w_3)) = \left(y^{\frac14\sigma}\tilde{\zeta},(y^{\frac{1}{2}\sigma}z_1, y^{\frac{1}{2}\sigma}z_2, y^{\frac{1}{2}\sigma}z_3)\right).
\end{align*}
Let $\tilde{u}(\tilde{\zeta},z)=\Phi^*_yu(\zeta,w)=u(\zeta,w(z))$, $\widetilde{\omega}_{\operatorname{P},y}=y^{-\frac12\sigma}\Phi_y^*\omega_{\operatorname{P},y}$.

Now the gluing ansatz $\omega_y$ is determined by both $\sqrt{-1}\partial\bar{\partial}\sqrt{H+|y|}$ and $\omega_X|_{X_y}+\sqrt{-1}\partial\bar{\partial}G_{\infty,y}^*(\psi^{(\infty)}_0-c_\infty)$. By direct calculation, we have
\begin{align*}
y^{-\frac12\sigma}\Phi_y^*\left(\frac{\sqrt{-1}}{2}d\zeta\wedge d\bar{\zeta}+\sqrt{-1}\partial\bar{\partial}\sqrt{H+|y|}\right)=\frac{\sqrt{-1}}{2}d\tilde{\zeta}\wedge d\bar{\tilde{\zeta}}+\sqrt{-1}\partial\bar{\partial}\sqrt{|z|^2+y^{1-\sigma}}
\end{align*}and
\begin{align*}
&y^{-\frac12\sigma}\Phi_y^*\left(\frac{\sqrt{-1}}{2}d\zeta\wedge d\bar{\zeta}+\omega_X|_{X_y}+\sqrt{-1}\partial\bar{\partial}G_{\infty,y}^*(\psi^{(\infty)}_0-c_\infty)\right)\\
&\sim \frac{\sqrt{-1}}{2}d\tilde{\zeta}\wedge d\bar{\tilde{\zeta}}+\sqrt{-1}\partial\bar{\partial}\sqrt{\frac{|z|^2+\sqrt{|z|^4-y^{2-2\sigma}}}{2}}.
\end{align*}In either case, the metrics when restricted to $\{|z|\geq 2\}$ are uniformly equivalent to the Euclidean metric. Therefore, the same procedure as in Region 2 can be applied.

Finally, by covering the whole manifold with the three types of regions, we obtain a global weighted Schauder estimate.

\end{proof}

\begin{proposition}\label{Laplace for C times K3 weighed version}

Let $\lambda=\frac{13}{24}$, $\mu=-\frac{1}{12}$. 

There exists a sufficiently small number $\epsilon_1>0$ such that for all $y$ with $0<|y|<\epsilon_1$, the Laplacian $\Delta_{\omega_{\operatorname{P},y}}: C^{2,\alpha,\operatorname{ave}}_{\lambda,\mu}(\mathbb{C}\times X_y) \to C^{0,\alpha,\operatorname{ave}}_{\lambda,\mu-2}(\mathbb{C}\times X_y)$ is an isomorphism, with bounded inverse $P_y = \Delta^{-1}_{\omega_{\operatorname{P},y}}, \|P_y\| \le C(\alpha)$. 

Moreover, if the forcing term $f \in C^{0,\alpha,\operatorname{ave}}_{\lambda,\mu-2}$ is supported in $\{|\zeta| < B\}\times X_y $ for some $B > 1$, then outside of $\{|\zeta| < B+1\}\times X_y  $ the function $P_y f$ has exponential decay property: for all $|\zeta|\ge B+1$,
\begin{equation}\label{exponential decay with weight}
\begin{aligned}
&|P_y f| \le C( \alpha)|y|^{\frac{13}{24}} e^{-m(|\zeta|-B)} \|f\|_{C^{0,\alpha}_{\lambda,\mu-2}},  \\
&|\nabla_{\omega_{\operatorname{P},y}} P_y f|_{\omega_{\operatorname{P},y}} \le C( \alpha)\frac{|y|^{\frac{13}{24}}}{r} e^{-m(|\zeta|-B)} \|f\|_{C^{0,\alpha}_{\lambda,\mu-2}},  \\
&|\nabla^2_{\omega_{\operatorname{P},y}}P_y f|_{\omega_{\operatorname{P},y}} \le C( \alpha)\frac{|y|^{\frac{13}{24}}}{r^2} e^{-m(|\zeta|-B)} \|f\|_{C^{0,\alpha}_{\lambda,\mu-2}} .
\end{aligned}
\end{equation}

The constants $C(\alpha)$ and $m$ are independent of $y$ and $B$.

\end{proposition}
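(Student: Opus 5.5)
Since $\lambda$ only enters as an overall multiplicative factor $|y|^{-\lambda}$ in the norm, it suffices to treat $\lambda=0$ and multiply at the end; the specific value $\frac{13}{24}$ only matters for the stated decay constants. The plan has three steps: a spectral-gap-based $L^2$ theory, an upgrade to the weighted space, and exponential decay.

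\textbf{Step 1 (spectral gap and $L^2$ solvability).} Decompose $f$ fiberwise. On $\mathbb{C}\times X_y$ with the product metric $\omega_{\mathrm{P},y}$ we have $\Delta_{\omega_{\mathrm{P},y}}=\Delta_{\mathbb{C}}+\Delta_{\omega_y}$. The key input is a uniform lower bound $\lambda_1(X_y,\omega_y)\ge c_0>0$ for the first nonzero eigenvalue, independent of $0<|y|<\epsilon_1$. This follows because $(X_y,\omega_y)$ converges in the Gromov--Hausdorff sense, and smoothly away from the node, to the orbifold $(X_\infty,\omega_{\operatorname{SRF}}|_{X_\infty})$, with uniformly bounded diameter and a uniform Sobolev/Poincar\'e inequality (the Eguchi--Hanson bubbles have bounded geometry after rescaling). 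Alternatively, one can argue by contradiction using Lemma~\ref{fiber iso} with $\beta$ close to $0$: an eigenfunction with eigenvalue tending to zero would produce a fiberwise harmonic, mean-zero limit, which is impossible. With the gap in hand, the fiberwise-mean-zero operator $-\Delta_{\mathbb{C}}+(-\Delta_{\omega_y})\ge c_0$ is invertible in $L^2$ on each strip, and Fourier analysis in $\zeta$ gives a Green kernel decaying like $e^{-\sqrt{c_0}|\zeta-\zeta'|}$. This yields existence and uniqueness in $C^{0}$-bounded, fiberwise mean-zero classes, with exponential off-diagonal decay; the rate $m<\sqrt{c_0}$ is uniform in $y$.

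\textbf{Step 2 (weighted $C^0$ bound, the main obstacle).} We need $\|u\|_{C^0_{0,\mu}}\le C\|\Delta u\|_{C^{0,\alpha}_{0,\mu-2}}$ uniformly in $y$; Lemma~\ref{weighted Schauder 1} then upgrades this to the full $C^{2,\alpha}_{\lambda,\mu}$ estimate. I would prove the weighted bound by contradiction and blow-up. Suppose there are $y_j\to0$ and $u_j$ with $\|u_j\|_{C^0_{0,\mu}}=1$ and $\|\Delta u_j\|\to0$. Pick points $x_j$ where $r^{-\mu}|u_j|\ge\frac12$ and translate in $\zeta$ so that $\zeta(x_j)=0$. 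There are three cases.
\begin{itemize}
\item If $r(x_j)\sim1$, the limit is a bounded, fiberwise mean-zero harmonic function on $\mathbb{C}\times X_\infty^{\mathrm{reg}}$ with $|u|\le r^{\mu}$. Since $\mu>-2$, the singularity at the orbifold point is removable, and the spectral gap forces $u=0$, a contradiction.
\item If $r(x_j)\to0$, rescale as in Regions 2 and 3 of Lemma~\ref{weighted Schauder 1}. Since $\zeta$ is rescaled as well, the limit is a harmonic function on $\mathbb{R}^2\times\mathrm{EH}$ or $\mathbb{R}^2\times(\mathbb{C}^2/\mathbb{Z}_2)$ bounded by $r^\mu$ with $-2<\mu<0$. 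A Liouville theorem excludes this: separate variables and observe that there are no indicial roots of the cone Laplacian in $(-2,0)$. Here the mean-zero condition is lost in the limit but is not needed, because of the decay.
\end{itemize}
The delicate point is justifying the Liouville theorem with the extra flat $\mathbb{R}^2$ factor, since harmonic functions growing slower than $r^{\mu}$ in all directions are still needed. I would first try taking the Fourier transform in $\zeta$, which reduces to $(\Delta_{\mathrm{EH}}-|\xi|^2)\hat u=0$ with decay, forcing $\hat u=0$.

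\textbf{Step 3 (exponential decay).} For $f$ supported in $\{|\zeta|<B\}$, the solution $u=P_yf$ is fiberwise harmonic-free for $|\zeta|>B$. Taking $L^2$ fiber norms $E(\zeta)=\int_{X_y}u^2\omega_y^2$, the spectral gap gives $\Delta_{\mathbb{C}}E\ge 2c_0E$ outside the support, so comparison with $e^{-m(|\zeta|-B)}$ yields $L^2$ decay. Local weighted Schauder estimates (Lemma~\ref{weighted Schauder 1}) on unit $\zeta$-balls, combined with the Step 2 bound at $|\zeta|=B+1$, convert this to the pointwise bounds with $r^{-1}$ and $r^{-2}$ factors for the derivatives, and finally the factor $|y|^{13/24}$ is restored.
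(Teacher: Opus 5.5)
Your architecture is the paper's: reduce to $\lambda=0$, prove the uniform weighted $C^0$ bound by contradiction using the three blow-up limits $\mathbb{C}\times X_\infty$, $\mathbb{C}\times V_1$ and $\mathbb{C}\times\mathbb{C}^2/\mathbb{Z}_2$, and get decay from $\Delta_{\mathbb{C}}E\ge cE$ for the fiber $L^2$ norm. The Liouville step you single out as delicate is in fact routine. The bound $|u|\le Cr^{\mu}$ is uniform in $\zeta$, so each limit is globally bounded: directly on $V_1$, where $|z|\ge 1$, and on $\mathbb{C}^3$ via the mean-value property after removing the singularity. Yau's Liouville theorem on the complete Ricci-flat product, together with decay in the fiber direction, then forces the limit to vanish. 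This is exactly what the paper does.

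\textbf{Gap in Step 3.} The real gap is the last sentence of Step~3. After the comparison argument you only know that $E(\zeta)=\int_{X_y}u^2\,\omega_y^2$ decays exponentially. Lemma~\ref{weighted Schauder 1} cannot turn this into pointwise decay. Its right-hand side contains $\|u\|_{C^0_{\lambda,\mu}}$, so it only upgrades a sup bound you already have, and the Step~2 bound you feed in at $|\zeta|=B+1$ does not decay. No pointwise barrier can replace it either, since constants are harmonic. The decay comes from the fiberwise mean-zero condition, which is only visible through fiber integrals.

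What is missing is a local estimate $\|u\|_{L^\infty(B_{1/2})}\le C\|u\|_{L^2(B_1)}$ for harmonic $u$ on unit balls of $\mathbb{C}\times X_y$, with $C$ independent of $y$. This is not a bounded-geometry statement, because such balls contain the shrinking Eguchi--Hanson neck. The paper obtains it by Moser iteration, which uses only Sobolev and Poincar\'e inequalities. These are uniform because $\omega_{\mathrm{P},y}$ is uniformly equivalent to $\omega_{\operatorname{SRF},y}$, and the fibers $(X_y,\omega_{\operatorname{SRF}}|_{X_y})$ are uniformly noncollapsed with bounded Ricci curvature and diameter. You invoke a uniform Sobolev inequality in Step~1, but it has to be deployed here.

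This step is also what gives the first line of \eqref{exponential decay with weight} without an $r^{\mu}=r^{-1/12}$ loss. An argument that only uses $C^0_{\lambda,\mu}$ control naturally yields only $|P_yf|\le C|y|^{13/24}r^{-1/12}e^{-m(|\zeta|-B)}$; one example is conjugating $\Delta$ by $e^{m\operatorname{Re}\zeta}$ and perturbing the Step~2 estimate. Only once the unweighted sup bound is in hand do the rescaled Schauder estimates produce the $r^{-1}$ and $r^{-2}$ factors for the derivatives.
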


\begin{proof}

First, by Proposition \ref{Laplace for C times K3}, for every $f\in C^{0,\alpha,\operatorname{ave}}_{\lambda,\mu-2}(\mathbb{C}\times X_y)$, there exists a unique $u\in C^{2,\alpha}(\mathbb{C}\times X_y)$ such that $\Delta u=f$. Hence it suffices to prove the boundedness of the inverse, namely, for $0<|y|\ll1$, for all $u\in C^{2,\alpha,\operatorname{ave}}_{\lambda,\mu}(\mathbb{C}\times X_y)$:
\begin{equation}\label{coercive Schauder}
    \|u\|_{C^{2,\alpha}_{\lambda,\mu}}\le C\|\Delta u\|_{C^{0,\alpha}_{\lambda,\mu-2}}.
\end{equation}
Without loss of generality, we may assume $\lambda=0$ and $y\in \mathbb{R}^+$.

By the weighted Schauder inequality (\ref{weighted Schauder 2}), we have
\begin{align*}
\|u\|_{C^{2,\alpha}_{0,\mu} }\le C\left(\|\Delta u\|_{C^{0,\alpha}_{0,\mu-2}}+\|r^{-\mu}u\|_{L^\infty}\right).
\end{align*}

We argue by contradiction. If \eqref{coercive Schauder} were false, there would exist a sequence $y_i \to 0$ and a sequence of smooth functions $u_i$ on $\mathbb{C} \times X_{y_i}$ with $\|u_i\|_{C^{2,\alpha}_{0,\mu}} = 2$ and $\|\Delta u_i\|_{C^{0,\alpha}_{0,\mu-2}} \to 0$.

By the weighted Schauder inequality, $|r^{-\mu}u_i|_{L^\infty}$ is bounded away from zero. Thus we may assume there exists a sequence of points $p_i\in X_{y_i}$ and a sequence of points $\zeta_i \in \mathbb{C}$ such that $r^{-\mu}(p_i)|u_i(\zeta_i,p_i)|=1$. By translation invariance of the metric $\frac{\sqrt{-1}}{2}d\zeta \wedge d\bar{\zeta}$, we can take $\zeta_i = 0$ for all $i$.

\textbf{Situation 1:} $r(p_i)\ge c>0$. Then $p_i\to x_*\in X_\infty\backslash \operatorname{Sing}(X_\infty)$.

By the Arzel\`a--Ascoli theorem, after passing to a subsequence (still denoted $\{u_i\}$), we may assume that on every compact subset of $\mathbb{C} \times (X_\infty \setminus \operatorname{Sing}(X_\infty))$, the sequence converges in the $C^{2,\alpha-\epsilon}$ topology to a limit function $u_\infty$ on $\mathbb{C}\times X_\infty$ satisfying $u_\infty(0,x_*) = c > 0$ and $|r^{-\mu} u_\infty| \le C$. Locally pulling back $u_\infty$ to $\mathbb{C} \times \mathbb{C}^2$ via the orbifold covering map and taking into account its behavior near the origin, it follows that $u_\infty$ is a weak solution of $\Delta u = 0$ on $\mathbb{C} \times X_\infty$, provided that $\mu > -2$.

Indeed, for all $\varphi\in C^\infty_c(B_\zeta(0,l)\times B(0,R))$, we have
\begin{align*}
\int_{B_\zeta(0,l)\times \left(B(0,R)\backslash B(0,\rho) \right)}u_\infty \Delta \varphi dV=&\int_{B_\zeta(0,l)\times \left(B(0,R)\backslash B(0,\rho) \right)}\varphi\Delta u_\infty  dV\\
&+\int_{B_\zeta(0,l)\times \partial B(0,\rho)}\left(u_\infty\frac{\partial \varphi}{\partial \nu}-\varphi\frac{\partial u_\infty}{\partial \nu}\right)dS\\
\le &C_1\rho^3\sup\limits_{B_\zeta(0,l)\times B(0,\rho)}|\nabla \varphi|\sup\limits_{B_\zeta(0,l)\times \partial B(0,\rho)}|u_\infty|\\
&+C_2\rho^3\sup\limits_{B_\zeta(0,l)\times B(0,\rho)}|\varphi|\sup\limits_{B_\zeta(0,l)\times \partial B(0,\rho)}|\nabla u_\infty|\\
\le & C\rho^3(\rho^\mu+\rho^{\mu-1})\to 0
\end{align*}if $\mu>-2$.

By regularity theory of elliptic operators, it follows that the weak solution $u_\infty$ is actually orbifold smooth. Thus we get a harmonic function $u_\infty$ on $\mathbb{C}\times X_\infty$ which is orbifold smooth and satisfies fiberwise average zero property. Hence $u_\infty=0$, a contradiction to $u_\infty(0,x_*) \neq 0$.

\textbf{Situation 2:} $\frac{|y_i|}{r^4(p_i)}\to c>0$.

Define the map 
\begin{align*}
\Phi_y : \mathbb{C}\times V_1 &\to \mathbb{C}\times V_y,\\
(\tilde{\zeta},(z_1, z_2, z_3)) &\mapsto (\zeta,(w_1, w_2, w_3)) = \left(y^{\frac14}\tilde{\zeta},(y^{\frac{1}{2}}z_1, y^{\frac{1}{2}}z_2, y^{\frac{1}{2}}z_3)\right).
\end{align*}

Consider $\widetilde{u}_i(\tilde{\zeta},z)=y_i^{-\frac{\mu}{4}}u_i(\zeta,w(z))=y_i^{-\frac{\mu}{4}}\Phi_{y_i}^*u_i(\zeta,w)$; $\widetilde{\omega}_{\operatorname{P},y_i}=y_i^{-\frac12}\Phi^*_{y_i}\omega_{\operatorname{P},y_i}$.

Then we have

$\bullet\;\left\|\widetilde{u}_i\right\|_{C^{2,\alpha}_{\widetilde{\omega}_{\operatorname{P},y_i}}(K)}\le 2$ for all compact subsets $K\subset \mathbb{C}\times V_1$. 

This follows from the assumption $\|u_i\|_{C^{2,\alpha}_{0,\mu}} = 2$ and $|y_i|\sim cr^4(p_i)$ for $i\gg 1$.

$\bullet\;\left|\widetilde{u}_i(\tilde{\zeta},z)\right|\le C|z|^{\frac{\mu}{2}}\to 0$ as $|z|\to \infty$ if $\mu<0$. Here the constant $C$ is independent of $\zeta$.

By definition $r(w)=|w|^{\frac12}=y_i^{\frac14}|z|^{\frac12}$. $|u_i(\zeta,w)|\le C r(w)^\mu$. Hence
\begin{align*}
\left|\widetilde{u}_i(\tilde{\zeta},z)\right|=|y_i|^{\frac{-\mu}{4}}\left|u_i(\zeta,w(z))\right|\le C|y_i|^{\frac{-\mu}{4}}|y_i|^{\frac{\mu}{4}}|z|^{\frac12\mu}=C|z|^{\frac12 \mu}.
\end{align*}

$\bullet\;\left|\widetilde{u}_i(0,\tilde{p}_i)\right|\sim c>0$, where $\tilde{p}_i=\Phi_{y_i}^{-1}(0,p_i)$.

Note $\left\|\widetilde{u}_i\left(\Phi^{-1}_{y_i}(0,p_i)\right)\right\|=|y_i|^{\frac{-\mu}{4}}|u_i(0,p_i)|\to c r^{-\mu}(p_i)|u_i(0,p_i)|=c$.

By the Arzel\`a--Ascoli theorem, there exists a subsequence, still denoted by $\widetilde{u}_i$, that converges to a function $\widetilde{u}_\infty$ in the $C^{2,\alpha-\varepsilon}(K)$ sense for every compact subset $K \subset \mathbb{C} \times V_1$.

By uniform convergence, we obtain $|\widetilde{u}_\infty(\tilde{\zeta},z)|\le C|z|^{\frac{\mu}{2}}$ and $|\widetilde{u}_\infty(0,\tilde{x}_*)|=c>0$, where $\tilde{x}_*\in V_1$ is some point at finite distance from the vanishing cycle.

Since $\|\Delta_{\omega_{\operatorname{P},y_i}} u_i\|_{C^{0,\alpha}_{0,\mu-2}}\to 0$, we have $\left\|\Delta_{ \widetilde{\omega}_{\operatorname{P},y_i}} \widetilde{u}_i\right\|_{L^\infty(K)}\to 0$, which implies $\Delta_{\omega_{\mathbb{C}}+\operatorname{EH}_1}\widetilde{u}_\infty=0$. Hence $\widetilde{u}_\infty$ is a harmonic function on the product manifold $\mathbb{C}\times V_1$ equipped with the product metric $\omega_\mathbb{C}+\operatorname{EH}_1$. 

If $\mu \le 0$, then $\widetilde{u}_\infty$ is uniformly bounded. 
Since $(\mathbb{C}\times V_1,\omega_\mathbb{C}+\operatorname{EH}_1)$ is a noncompact Ricci-flat manifold, 
we can apply the Cheng--Yau inequality to obtain a Liouville-type theorem, 
which implies that $\widetilde{u}_\infty$ must be constant. 
Moreover, if $\mu < 0$, then $\widetilde{u}_\infty(0,z) \to 0$ as $|z| \to \infty$, 
forcing $\widetilde{u}_\infty = 0$. 
This, however, contradicts the fact that $\widetilde{u}_\infty(0,\tilde{x}_*) = c > 0$.

\textbf{Situation 3:} $\frac{|y_i|}{r^4(p_i)}\to 0$ and $r(p_i)\to 0$.

Consider the map
\begin{align*}
\Psi_i:\mathbb{C}&\times \left\{\xi_1^2+\xi_2^2+\xi_3^2=\frac{y_i}{r^4(p_i)}\right\}\to\mathbb{C}\times V_{y_i}, \\
&(\tilde{\zeta},(\xi_1,\xi_2,\xi_3))\mapsto (\zeta,(w_1,w_2,w_3))= \left(r(p_i)\tilde{\zeta},(r(p_i)^2\xi_1,r(p_i)^2\xi_2,r(p_i)^2\xi_3)\right).
\end{align*}Let $\widetilde{u}_i(\tilde{\zeta},\xi)=r(p_i)^{-\mu}u_i(\zeta,w(\xi))=r(p_i)^{-\mu}\Psi_i^*(u_i(\zeta,w))$ and $\widetilde{\omega}_{\operatorname{P},y_i}=r^{-2}(p_i)\Psi^*_i\omega_{\operatorname{P},y_i}\to \omega_{\mathbb{C}}+\operatorname{EH}_0$ on $\mathbb{C}\times \mathbb{C}^2/\mathbb{Z}_2$.

The assumption $\|u_i\|_{C^{2,\alpha}_{0,\mu}}=2$ implies that $\left\|\widetilde{u}_i\right\|_{C^{2,\alpha}_{\widetilde{\omega}_{\operatorname{P},y_i}}}\leq2$ and $\left\|\Delta_{\omega_{\operatorname{P},y_i}} u_i\right\|_{C^{0,\alpha}_{0,\mu-2}}\to 0$ implies that $\left\|\Delta_{\widetilde{\omega}_{\operatorname{P},y_i}} \widetilde{u}_i\right\|_{L^\infty(K)}\to 0$. Passing to the limit, we obtain $\Delta_{\omega_{\mathbb{C}}+\operatorname{EH}_0}\widetilde{u}_\infty=0$ on $\mathbb{C}\times \mathbb{C}^2/\mathbb{Z}_2$ and $\widetilde{u}_\infty(0,\tilde{x}_*)=1$, where $\tilde{x}_*\in V_0$ is some point at finite distance from the node.

Also observe that
\begin{align*}
\left|\widetilde{u}_i(\tilde{\zeta},\xi)\right|=r(p_i)^{-\mu}u_i(\zeta,w(\xi))\le C r(p_i)^{-\mu}r(p_i)^{\mu}|\xi|^{\frac\mu2}=C|\xi|^{\frac\mu2}.
\end{align*}Hence $\left|\widetilde{u}_\infty(\tilde{\zeta},\xi)\right|\le C|\xi|^{\frac{\mu}{2}}$.

Pulling $\widetilde{u}_\infty$ back to $\mathbb{C}\times \mathbb{C}^2$ and using the estimate $\left|\widetilde{u}_\infty(\tilde{\zeta},\xi)\right|\le C|\xi|^{\frac{\mu}{2}}$ with $\mu>-2$, we conclude that the singularities along $\mathbb{C}\times \{0\}$ are removable. Hence, $\widetilde{u}_\infty$ extends to a smooth harmonic function on $\mathbb{C}^3$. Since $\mu<0$, $\widetilde{u}_\infty(\tilde{\zeta},\xi)$ is uniformly bounded on $\mathbb{C}^3$ and satisfies $\left|\widetilde{u}_\infty(\tilde{\zeta},\xi)\right|\le C|\xi|^{\frac\mu2}\to 0$ as $|\xi|\to \infty$. By Liouville's theorem, this forces $\widetilde{u}_\infty \equiv 0$, contradicting the condition $\widetilde{u}_\infty(0,\tilde{x}_*)=1$.

Finally, we prove the exponential decay property.

Let $f \in C^{0,\alpha,\operatorname{ave}}_{\lambda,\mu-2}$ be supported in $ \{|\zeta| < B\}\times X_y$ for some $B > 1$, let $u=P_yf\in C^{2,\alpha}_{\lambda,\mu}$ be the solution to the Poisson equation $\Delta u=f$. 

Define $g(\zeta)\coloneqq\int_{X_y}u^2(\zeta,\cdot )\omega_y^2$.

By the coercive weighted Schauder estimate, we have $\|u\|_{C^{2,\alpha}_{\lambda,\mu}}\leq C\|\Delta u\|_{C^{0,\alpha}_{\lambda,\mu-2}}$. Hence $|u(\zeta,\cdot)|\leq \frac{|y|^{\frac{13}{24}}}{r^{\frac{1}{12}}}\|u\|_{C^{2,\alpha}_{\lambda,\mu}}\leq C\frac{|y|^{\frac{13}{24}}}{r^{\frac{1}{12}}}\|\Delta u\|_{C^{0,\alpha}_{\lambda,\mu-2}}$. Then
\begin{align*}
g(\zeta)\leq C\|\Delta u\|_{C^{0,\alpha}_{\lambda,\mu-2}}^2\int_{X_y}\frac{|y|^{\frac{13}{12}}}{r^{\frac16}}\omega_y^2\leq C|y|^{\frac{13}{12}}\|\Delta u\|_{C^{0,\alpha}_{\lambda,\mu-2}}^2,
\end{align*}where the last inequality uses a simple integration trick which will be discussed carefully in Lemma \ref{decay of base function}.

By Example~2.3 in \cite{Li2021Oncollapsing}, the family $\left\{(X_y,\omega_{\operatorname{SRF}}|_{X_y})\right\}_{0<|y|<\epsilon_1}$ is uniformly noncollapsed and has uniformly bounded Ricci curvature and diameter. Consequently, by \cite{Li1980Estimates}, there exists a constant $m''>0$, independent of $y$, such that the first positive eigenvalue of $-\Delta_{\omega_{\operatorname{SRF}}|_{X_y}}$ satisfies $\lambda_1(X_y, \omega_{\operatorname{SRF}}|_{X_y})>m'' > 0$, where $m''$ is independent of $y$. By Proposition~\ref{fiber deviation 2}, the metrics $\omega_{\operatorname{SRF}}|_{X_y}$ and $\omega_y$ are uniformly equivalent, hence $\lambda_1(X_y, \omega_y) >m'> 0$, where $m'$ is independent of $y$. Equivalently, there exists a uniform constant $m' > 0$ such that
\begin{align*}
\int_{X_y} |\nabla_{\omega_y} u|^2 \, \omega_y^2 \ge \frac{1}{2} m'^2 \int_{X_y} u^2 \, \omega_y^2
\end{align*}for all functions $u$ with fiberwise average zero property on $X_y$.

Then
\begin{align*}
\Delta_{\mathbb{C}} g &= \int_{X_y}\left( 2|\nabla_{\mathbb{C}} u|^2 + 2u\Delta_{\mathbb{C}} u\right)\omega_{y}^2 = \int_{X_y} \left(2|\nabla_{\mathbb{C}} u|^2 - 2u\Delta_{\omega_y} u\right)\omega_{y}^2 = \int_{X_y} 2|\nabla u|^2\omega_{y}^2\geq   m'^2g.
\end{align*}

Consider 
\begin{align*}
\tilde{g}_\epsilon = C|y|^{\frac{13}{12}} \|\Delta u\|_{C^{0,\alpha}_{\lambda,\mu-2}}^2 \log\left(\frac{2|\zeta|}{B}\right) e^{-m'(|\zeta|-B)} + |y|^{\frac{13}{12}}\epsilon e^{\epsilon|\zeta|}, 
\end{align*}for some $\quad 0 < \epsilon \ll 1$. By direct computation
\begin{align*}
\Delta_\mathbb{C}\tilde{g}_\epsilon&=C|y|^{\frac{13}{12}}e^{-m'(|\zeta|-B)}\left\{\left(m'^2-\frac{m'}{|\zeta|}\right)\log\left(\frac{2|\zeta|}{B}\right)-\frac{2m'}{|\zeta|}\right\}\|\Delta u\|_{C^{0,\alpha}_{\lambda,\mu-2}}^2+|y|^{\frac{13}{12}}\epsilon^2e^{\epsilon|\zeta|}\left(\epsilon+\frac{1}{|\zeta|}\right)\\
&\leq m'^2\tilde{g}_\epsilon.
\end{align*}
Now
\begin{align*}
\left\{
	\begin{aligned}
&\left(\Delta_{\mathbb{C}}-m'^2\right)\left(g-\tilde{g}_\epsilon\right)\geq 0; \\
   &  g- \tilde{g}_\epsilon\leq 0\quad \text{on}\quad \{|\zeta|=B\};\\
   & g- \tilde{g}_\epsilon\leq 0\quad \text{on}\quad \{|\zeta|=R\},\quad R\gg B.
    \end{aligned}
	\right.
\end{align*}
By the maximum principle for the uniformly elliptic operator $\Delta_{\mathbb{C}}-m'^2$ and letting $R\to \infty$, we get
\begin{align*}
g(\zeta)\leq \tilde{g}_\epsilon(\zeta),\quad \forall \;|\zeta|\geq B. 
\end{align*}Letting $\epsilon \to 0$, we obtain
\begin{align*}
g(\zeta)\leq C|y|^{\frac{13}{12}}\log\left(\frac{2|\zeta|}{B}\right) e^{-m'(|\zeta|-B)}\|\Delta u\|_{C^{0,\alpha}_{\lambda,\mu-2}}^2,\quad \forall \;|\zeta|>B.
\end{align*}Choosing $0 < 2m < m'$ then yields
\begin{align*}
g(\zeta)\leq C|y|^{\frac{13}{12}}  e^{-2m(|\zeta|-B)}\|\Delta u\|_{C^{0,\alpha}_{\lambda,\mu-2}}^2,\quad \forall \;|\zeta|>B.
\end{align*}
Integrating $g(\zeta)$ on $\{B+k<|\zeta|<B+k+1\}$, we then obtain the $L^2$ estimate of $u$ on a fixed ball
\begin{align*}
\|u\|_{L^2(B_1(\zeta,p))}\leq C|y|^{\frac{13}{24}}  e^{-m(|\zeta|-B)}\|\Delta u\|_{C^{0,\alpha}_{\lambda,\mu-2}},
\end{align*}where $(\zeta,p)\in \{B+k<|\zeta|<B+k+1\}\times X_y$. From this, one can imitate the classical local Moser iteration for second-order elliptic PDE, as presented in Theorem~4.1 of \cite{han2011elliptic}, to conclude that
\begin{align*}
\|u\|_{L^\infty(B_{\frac12}(\zeta,p))}\leq C \|u\|_{L^2(B_1(\zeta,p))}\leq C|y|^{\frac{13}{24}}  e^{-m(|\zeta|-B)}\|\Delta u\|_{C^{0,\alpha}_{\lambda,\mu-2}},
\end{align*}where $(\zeta,p)\in \{B+k<|\zeta|<B+k+1\}\times X_y$. We remark that when proving the local Moser iteration, only integration by parts, Sobolev inequalities and Poincar{\'e} inequalities are used. Since the product metric $\omega_{\operatorname{P},y}$ is uniformly equivalent to $\omega_{\operatorname{SRF},y}$, the constant $C$ can be chosen uniformly.

For higher-order estimates, we can use the weighted Schauder estimate on $\{|\zeta|>B\}$, which gives $\|u\|_{C^{2,\alpha}_{\lambda,\mu}(\mathbb{C}\times X_y)}\leq C\|u\|_{C^0_{\lambda,\mu}(\mathbb{C}\times X_y)}$.

\end{proof}

\begin{remark}\label{Everything holds for SRF,y}

On $\mathbb{C}\times X_y$, one can also consider the weighted H\"older norm with respect to the product metric $\omega_{\operatorname{SRF},y}=\frac{\sqrt{-1}}{2}d\zeta\wedge d\bar{\zeta}+\omega_{\operatorname{SRF}}|_{X_y}$, defined as follows:
\begin{equation}
\begin{aligned}
\|f\|_{C^{k,\alpha}_{\lambda,\mu}(\mathbb{C}\times X_y,\omega_{\operatorname{SRF},y})}\coloneqq & \sum\limits_{i=0}^k\sup\limits_{x\in \mathbb{C}\times X_y}\frac{r^{-\mu+i}}{|y|^\lambda}\left|\nabla^i_{\omega_{\operatorname{SRF},y}}f\right|_{\omega_{\operatorname{SRF},y}}\\
&+\sup_{\substack{x,x'\in \mathbb{C}\times X_y\\d_{\omega_{\operatorname{SRF},y}}(x,x')\ll r(x)}}
\frac{r(x)^{-\mu+\alpha+k}}{|y|^\lambda}
\frac{|\nabla^k_{\omega_{\operatorname{SRF},y}}f(x)-\nabla^k_{\omega_{\operatorname{SRF},y}}f(x')|}{d_{\omega_{\operatorname{SRF},y}}(x,x')^\alpha}
\end{aligned}
\end{equation}
By an argument completely parallel to that of Lemma \ref{weighted Schauder 1}, we can also establish a weighted Schauder estimate for $\omega_{\operatorname{SRF},y}$. More precisely, if $-2<\mu<0$ and $|y|<\epsilon_1$, then
\begin{equation}\label{weighted Schauder 3}
\|u\|_{C^{2,\alpha}_{\lambda,\mu}(\mathbb{C}\times X_y,\omega_{\operatorname{SRF},y})}\leq C\left(\|\Delta_{\omega_{\operatorname{SRF},y}} u\|_{C^{0,\alpha}_{\lambda,\mu-2}(\mathbb{C}\times X_y,\omega_{\operatorname{SRF},y})}+\|u\|_{C^0_{\lambda,\mu}(\mathbb{C}\times X_y,\omega_{\operatorname{SRF},y})}\right),
\end{equation}
where $C=C(\alpha,\mu)$ is a uniform constant independent of $y$.

\begin{proof}

When $H\geq 1$, the metric $\frac{\sqrt{-1}}{2} d\zeta \wedge d\bar{\zeta}+\omega_{\operatorname{SRF}}|_{X_y}$ is uniformly equivalent to the Euclidean metric, so we have the uniform Schauder estimate. 

When $H<1$, consider the region $\{|\zeta|<1\} \times \left( X_y \cap \{|y|^{\frac{1}{4}\sigma} \le r \le 2|y|^{\frac14\sigma}\} \right)$, since the other regions are similar.

Consider the map
\begin{align*}
\Phi_y : \{|\tilde{\zeta}|<y^{-\frac14\sigma}\}\times \left(X_{y^{1-\sigma}} \cap \{1 \leq r \leq 2\}\}\right) &\to \{|\zeta|<1\} \times \left( X_y \cap \{y^{\frac{1}{4}\sigma} \le r \le 2y^{\frac14\sigma}\} \right),\\
(\tilde{\zeta},(z_1, z_2, z_3)) &\mapsto (\zeta,(w_1, w_2, w_3)) = \left(y^{\frac14\sigma}\tilde{\zeta},(y^{\frac{1}{2}\sigma}z_1, y^{\frac{1}{2}\sigma}z_2, y^{\frac{1}{2}\sigma}z_3)\right).
\end{align*}
Consider the metrics
\begin{align*}
\widetilde{\omega}_{\operatorname{SRF},y}=y^{-\frac12\sigma}\Phi_y^*\omega_{\operatorname{SRF},y},\quad \widetilde{\omega}_{\operatorname{P},y}=y^{-\frac12\sigma}\Phi_y^*\omega_{\operatorname{P},y}.
\end{align*}Note that $\frac{\sqrt{-1}}{2}d\tilde{\zeta}\wedge d\bar{\tilde{\zeta}}+\operatorname{EH}_{y^{1-\sigma}}=y^{-\frac12\sigma}\Phi_y^*\left(\frac{\sqrt{-1}}{2}d\zeta\wedge d\bar{\zeta}+\operatorname{EH}_y\right)$. Hence by Proposition \ref{fiber deviation 2}, we have
\begin{align*}
\left|\nabla^k_{\widetilde{\omega}_{\operatorname{P},y}}\left(\widetilde{\omega}_{\operatorname{SRF},y}-\widetilde{\omega}_{\operatorname{P},y}\right)\right|_{\widetilde{\omega}_{\operatorname{P},y}}&=\left|\nabla^k_{y^{-\frac12\sigma}\Phi_y^*\omega_{y}}\left(\widetilde{\omega}_{\operatorname{SRF},y}-\widetilde{\omega}_{\operatorname{P},y}\right)\right|_{y^{-\frac12\sigma}\Phi_y^*\omega_{y}}\\
&\leq C\left|\nabla^k_{y^{-\frac12\sigma}\Phi_y^*\operatorname{EH}_y}y^{-\frac12\sigma}\Phi_y^*\left(\omega_{\operatorname{SRF},y}-\omega_{\operatorname{P},y}\right)\right|_{y^{-\frac12\sigma}\Phi_y^*\operatorname{EH}_y}\\
&\leq C|y|^{\frac k4\sigma}\left|\nabla^k_{ \operatorname{EH}_y}  \left(\omega_{\operatorname{SRF},y}-\omega_{\operatorname{P},y}\right)\right|_{ \operatorname{EH}_y}\\
&\leq C\frac{|y|^{\frac{7}{12}-\frac{5}{24}\mu+\frac{1}{4}k\sigma}}{H^{\frac12-\frac{1}{4}\mu+\frac14k}}\\
&\leq C|y|^{\frac{1}{24}}.
\end{align*}Hence $\widetilde{\omega}_{\operatorname{SRF},y}$ is uniformly equivalent to $\widetilde{\omega}_{\operatorname{P},y}$ to any order. Consequently, their $C^{k,\alpha}$ H{\"o}lder norms are uniformly equivalent on $\{|\tilde{\zeta}|<y^{-\frac14\sigma}\}\times (X_{y^{1-\sigma}} \cap \{1 \leq r \leq 2\})$. Now $\widetilde{\omega}_{\operatorname{P},y}$, $\widetilde{\omega}_{\operatorname{SRF},y}$ are both uniformly equivalent to the Euclidean metric to every order. The usual Schauder estimate holds for $\widetilde{\omega}_{\operatorname{SRF},y}$ on a unit ball. Pulling the estimate back to the $X_y$-fiber, we obtain the desired conclusion.

\end{proof}

Furthermore, the statements in Proposition \ref{Laplace for C times K3 weighed version} also hold for $\omega_{\operatorname{SRF},y}$. Specifically, let $\lambda=\frac{13}{24}$ and $\mu=-\frac{1}{12}$. Then there exists a sufficiently small number $\epsilon_1>0$ such that for all $y$ with $0<|y|<\epsilon_1$, the Laplacian 
\begin{align*}
\Delta_{\omega_{\operatorname{SRF},y}}: C^{2,\alpha,\operatorname{ave}}_{\lambda,\mu}(\mathbb{C}\times X_y,\omega_{\operatorname{SRF},y}) \to C^{0,\alpha,\operatorname{ave}}_{\lambda,\mu-2}(\mathbb{C}\times X_y,\omega_{\operatorname{SRF},y})
\end{align*}
is an isomorphism, with bounded inverse $P_y = \Delta^{-1}_{\omega_{\operatorname{SRF},y}}$ satisfying $\|P_y\| \le C$, where the constant $C$ is independent of $y$.

This result follows from the closeness between $\omega_{\operatorname{P},y}$ and $\omega_{\operatorname{SRF},y}$, which allows the proof of Proposition~\ref{Laplace for C times K3 weighed version} to be adapted to the case of $\omega_{\operatorname{SRF},y}$.

In particular, for $X_y$ with $0<|y|<\epsilon_1$, we have for any $f\in C^{0,\alpha,\operatorname{ave}}(\mathbb{C}\times X_y)$,
\begin{align}\label{coarse weighted inverse}
\left\|\Delta_{\omega_{\operatorname{SRF},y}}^{-1}f\right\|_{C^{2,\alpha}(\mathbb{C}\times X_y)}\leq \frac{C}{|y|}\left\| f\right\|_{C^{0,\alpha}(\mathbb{C}\times X_y)},
\end{align}where the constant $C$ is independent of $y$.

Moreover, we can obtain an exponential decay property completely analogous to that in formula (\ref{exponential decay with weight}). If we replace the weighted norm by the usual H\"older norm and use the elementary inequality $\|f\|_{C^{0,\alpha}_{\lambda,\mu-2}} \le C|y|^{-\frac{13}{24}} \|f\|_{C^{0,\alpha}}$, we obtain the following exponential decay estimate: if the forcing term $f \in C^{0,\alpha,\operatorname{ave}}$ is supported in $\{|\zeta| < B\} \times X_y$ for some $B > 1$, then on the region $\{|\zeta| > B+1\} \times X_y$, it holds that  
\begin{equation}\label{exponential decay without weight}
\begin{aligned}
&|P_y f| \le C( \alpha) e^{-m(|\zeta|-B)} \|f\|_{C^{0,\alpha}},  \\
&|\nabla_{\omega_{\operatorname{SRF},y}} P_y f|_{\omega_{\operatorname{SRF},y}} \le C( \alpha)r^{-1} e^{-m(|\zeta|-B)} \|f\|_{C^{0,\alpha}},  \\
&|\nabla^2_{\omega_{\operatorname{SRF},y}}P_y f|_{\omega_{\operatorname{SRF},y}} \le C( \alpha)r^{-2} e^{-m(|\zeta|-B)} \|f\|_{C^{0,\alpha}},
\end{aligned}
\end{equation}
which makes the coefficient $C(y,\alpha)$ claimed in Proposition~\ref{Laplace for C times K3} completely explicit.

We fix the choice of the constant $\epsilon_1$ from now on.

\end{remark}

\noindent\textit{\textbf{Case 3:}} Singular fiber with weighted H{\"o}lder norm.

Let $X_0$ be the nodal K3 fiber equipped with the Ricci-flat orbifold metric $\omega_{\operatorname{SRF}}|_{X_0}$. We endow $\mathbb{C}\times X_0$ with the singular product metric $\omega_{\operatorname{SRF},0}\coloneqq \frac{\sqrt{-1}}{2}d\zeta\wedge d\bar{\zeta} +\omega_{\operatorname{SRF}}|_{X_0}$.

We next introduce a doubly weighted H\"older space on $\mathbb{C}\times X_0$. Recall that there exists a function $r$ on $X_0$ which is uniformly equivalent to the distance from the nodal point. Placing the origin at $\zeta=0$ on the singular line of $\mathbb{C}\times X_0$, we consider the auxiliary weight functions
\begin{align*}
\rho' = \sqrt{r^2 + |\zeta|^2},
\quad 
w' =
\begin{cases}
1, & \text{if } r > \kappa \rho', \\[4pt]
\dfrac{r}{\kappa \rho'}, & \text{if } r \le \kappa \rho'.
\end{cases}
\end{align*}

Using these weights, we define the norm
\begin{equation}\label{weighted Weighted norm on C X0}
\| f \|_{C^{k,\alpha}_{\delta,\tau}( \mathbb{C}\times X_0)}
=
\sum_{j=0}^k
\sup
\rho'^{-\delta+j}
w'^{-\tau+j}
|\nabla^j_{\omega_{\operatorname{SRF},0}} f|_{\omega_{\operatorname{SRF},0}}
+
[\rho'^{-\delta+k}w'^{-\tau+k}\nabla^k_{\omega_{\operatorname{SRF},0}}f]_{C^{0,\alpha}_{\omega_{\operatorname{SRF},0}}},
\end{equation}
where the H\"older seminorm of a tensor field $T$ is given by
\begin{equation}\label{Semi norm on C X0}
[T]_{C^{0,\alpha}_{\omega_{\operatorname{SRF},0}}}
=
\sup_{\substack{x,x'\in \mathbb{C}\times X_0 \\ d(x,x')\ll r(x)}}
\min\{\rho'(x)^\alpha w'(x)^\alpha,\rho'(x')^\alpha w'(x')^\alpha\}
\frac{|T(x)-T(x')|}{d(x,x')^\alpha},
\end{equation}
where the values of $T$ at nearby points are compared via parallel transport along the unique minimizing geodesic joining them.

\begin{lemma}[\text{cf.} \cite{li2019gluing} Lemma 3.5]\label{holder on pushforward}

Let $-2 < \tau < 0$ and $\delta > -2$. Then every function $f \in C^{0,\alpha}_{\delta-2,\tau-2}(\mathbb{C}\times X_0)$ can be integrated over the $X_0$ fibers, and
\begin{align*}
\pi_* f(\zeta) = \int_{X_0 } f(\zeta,\cdot)\omega_{\operatorname{SRF}}|_{X_0}^2, \quad |\pi_* f(\zeta)| \le C (1+|\zeta|)^{\delta-\tau} \|f\|_{C^{0,\alpha}_{\delta-2,\tau-2}(\mathbb{C}\times X_0)}.
\end{align*}
If, in addition, $\min\{\tau, \delta\} > -2 + \alpha$, then for all $\zeta, \zeta'$ with $|\zeta - \zeta'| \le 1$,
\begin{align*}
|\pi_* f(\zeta) - \pi_* f(\zeta')| \le C (1+|\zeta|)^{\delta-\tau} |\zeta - \zeta'|^\alpha \|f\|_{C^{0,\alpha}_{\delta-2,\tau-2}(\mathbb{C}\times X_0)}.
\end{align*}

\end{lemma}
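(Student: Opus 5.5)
The plan is to prove both estimates by a direct pointwise bound on the fiber integrand, splitting $X_0$ into the region near the node and its complement. Near the node, $\omega_{\operatorname{SRF}}|_{X_0}$ is uniformly equivalent to the flat cone metric on $\mathbb{C}^2/\mathbb{Z}_2$ in the coordinate $r$ (distance comparable to $r$), so the volume form is comparable to $r^3\,dr$ times the link measure. Fix $\zeta$ and set $\rho_0=\sqrt{1+|\zeta|^2}$; since $\rho'=\sqrt{r^2+|\zeta|^2}$, for bounded $r$ and $|\zeta|\ge1$ we have $\rho'\sim\rho_0$. The definition of the norm gives
\begin{align*}
|f(\zeta,x)|\le \rho'^{\,\delta-2}\,w'^{\,\tau-2}\,\|f\|_{C^{0,\alpha}_{\delta-2,\tau-2}}.
\end{align*}

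\textbf{Step 1 (case $|\zeta|\ge 1$, i.e.\ $\rho'\sim\rho_0$).} Where $r>\kappa\rho'$ (only possible if $\rho_0\lesssim 1$, which we exclude here), $w'=1$. Otherwise $w'=r/(\kappa\rho')$, so
\begin{align*}
|f|\lesssim \rho_0^{\delta-2}\left(\frac{r}{\rho_0}\right)^{\tau-2}\|f\| = \rho_0^{\delta-\tau} r^{\tau-2}\|f\|.
\end{align*}
Since $\tau>-2$, $\int_0^{c} r^{\tau-2}r^3\,dr=\int_0^c r^{\tau+1}\,dr<\infty$. On the compact part $\{r\ge c\}$, $r\sim1$ gives $|f|\lesssim\rho_0^{\delta-\tau}$ as well. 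Integrating yields $|\pi_*f(\zeta)|\lesssim (1+|\zeta|)^{\delta-\tau}\|f\|$.

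\textbf{Step 2 (case $|\zeta|\le1$).} Here $\rho'\sim r$ near the node. The weight $w'$ is either $1$ or bounded below by a multiple of $1/\kappa$, since $r\le\rho'\lesssim r$ when $|\zeta|\lesssim r$. For $r\lesssim|\zeta|$ we are back in the Step 1 regime at scale $|\zeta|$, which gives $|f|\lesssim |\zeta|^{\delta-\tau}r^{\tau-2}$. For $r\gtrsim|\zeta|$ we get $|f|\lesssim r^{\delta-2}$. Both pieces are integrable against $r^3\,dr$ because $\delta,\tau>-2$. The first contributes $|\zeta|^{\delta-\tau}|\zeta|^{\tau+2}=|\zeta|^{\delta+2}\lesssim1$, and the second is also bounded. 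Hence $|\pi_*f|\lesssim1\sim(1+|\zeta|)^{\delta-\tau}$.

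\textbf{Step 3 (Hölder estimate).} Since the metric is a product, the translation $\zeta\mapsto\zeta'$ along the $\mathbb{C}$ factor is an isometry preserving the fibers. Write
\begin{align*}
\pi_*f(\zeta)-\pi_*f(\zeta')=\int_{X_0}\bigl(f(\zeta,x)-f(\zeta',x)\bigr)\,\omega_{\operatorname{SRF}}|_{X_0}^2,
\end{align*}
and set $d=|\zeta-\zeta'|\le1$. Split $X_0$ into $\{r\ge d/c\}$ and $\{r<d/c\}$.

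On $\{r\ge d/c\}$ the points $(\zeta,x)$ and $(\zeta',x)$ are at distance $d\ll r(x)$, so the seminorm applies. The weight $\min\{\rho'^\alpha w'^\alpha\}$ is comparable at the two points because $\rho'$ and $w'$ change by bounded factors. This gives
\begin{align*}
|f(\zeta,x)-f(\zeta',x)|\lesssim d^\alpha\,\rho'^{\,\delta-2-\alpha}\,w'^{\,\tau-2-\alpha}\,\|f\|.
\end{align*}
Running Steps 1 and 2 with the exponents shifted by $\alpha$ needs $\tau-\alpha>-2$ and $\delta-\alpha>-2$, which is exactly the extra hypothesis $\min\{\tau,\delta\}>-2+\alpha$. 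The resulting factor is again $(1+|\zeta|)^{\delta-\tau}$, since the shifts by $\alpha$ cancel in $\rho_0^{(\delta-\alpha)-(\tau-\alpha)}$.

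On $\{r<d/c\}$, bound each term by the $C^0$ estimate. Using $\int_0^{d} r^{\tau+1}\,dr\sim d^{\tau+2}\le d^\alpha$ (as $\tau+2>\alpha$), together with the analogous bound in the $|\zeta|\le1$ regime, this piece is also $\lesssim d^\alpha(1+|\zeta|)^{\delta-\tau}\|f\|$.

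\textbf{Main obstacle.} The main difficulty is the careful bookkeeping in Step 3 near the singular line $\{r=0\}$: the Hölder seminorm is only available for $d\ll r$, and the weights $w',\rho'$ must be shown comparable at the two points. I expect the near/far splitting at scale $r\sim d$ to handle this, with the threshold $\min\{\tau,\delta\}>-2+\alpha$ entering precisely there.
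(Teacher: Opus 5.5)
Your argument is correct. The paper gives no proof of its own here and simply imports Li's Lemma 3.5, and what you wrote is the standard direct proof of that lemma: the pointwise bound $|f|\lesssim \rho'^{\delta-\tau}r^{\tau-2}\|f\|$ integrated against $r^3\,dr$, followed by a split of $X_0$ at $r\sim|\zeta-\zeta'|$ into a seminorm region and a $C^0$ region, which is exactly where $\min\{\tau,\delta\}>-2+\alpha$ enters.

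Two small tidy-ups. First, since $r\le\rho'$ one has $w'^{\tau-2}\le C(\kappa)(r/\rho')^{\tau-2}$ everywhere, so Step 1 needs no case distinction; in fact the region $r>\kappa\rho'$ is not excluded by $|\zeta|\ge1$, only by $|\zeta|\gtrsim\kappa^{-1}$. Second, the seminorm in the definition is taken of $\rho'^{2-\delta}w'^{2-\tau}f$ rather than of $f$, so you should record that $|\nabla_\zeta\log(\rho'^{\delta-2}w'^{\tau-2})|\lesssim 1/\rho'\le 1/r$. This shows the weight changes by a factor $1+O(d/r)$ between $(\zeta,x)$ and $(\zeta',x)$, not merely by a bounded factor, and that is what lets you transfer the seminorm bound to $|f(\zeta,x)-f(\zeta',x)|$.
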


With these weights in hand, we denote by $C^{0,\alpha,\operatorname{ave}}_{\delta-2,\tau-2}(\mathbb{C}\times X_0)$ the subspace of $C^{0,\alpha}_{\delta-2,\tau-2}(\mathbb{C}\times X_0)$ consisting of functions satisfying $\int_{X_0} f(\zeta,\cdot)\,\omega_{\operatorname{SRF}}|_{X_0}^2=0$ for every $\zeta\in\mathbb C$. Likewise, we define $C^{2,\alpha,\operatorname{ave}}_{\delta,\tau}(\mathbb{C}\times X_0)$ as the corresponding subspace of $C^{2,\alpha}_{\delta,\tau}(\mathbb{C}\times X_0)$. 

Finally, the weighted growth conditions ensure that fiberwise $L^2$ integrals are well-defined for all functions in $C^{2,\alpha}_{\delta,\tau}(\mathbb{C}\times X_0)$.

\begin{proposition}[\text{cf.} \cite{li2019gluing} Proposition 3.6]\label{Laplace for C times K3 singular version}

Let $-2 + \alpha < \tau < 0$, and $-2 + \alpha < \delta < 0$. Then the Laplacian $\Delta : C^{2,\alpha,\operatorname{ave}}_{\delta,\tau}(\mathbb{C}\times X_0) \to C^{0,\alpha,\operatorname{ave}}_{\delta-2,\tau-2}(\mathbb{C}\times X_0)$ is an isomorphism whose inverse $P_{t=0} = \Delta^{-1}_{\omega_{\operatorname{SRF},0}}$ is bounded.

\end{proposition}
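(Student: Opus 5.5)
The plan has three parts: prove injectivity, prove a coercive estimate by blow-up, and obtain surjectivity from the smooth case by approximation. For injectivity, take $u\in C^{2,\alpha,\operatorname{ave}}_{\delta,\tau}$ with $\Delta u=0$. The growth bound $|u|\le C\rho'^{\delta}w'^{\tau}$ with $\tau>-2$ makes $u$ locally integrable near $\mathbb{C}\times\{q\}$ in orbifold charts. The removable-singularity argument from Situation 1 of Proposition \ref{Laplace for C times K3 weighed version} then shows that $u$ is an orbifold-smooth harmonic function on $\mathbb{C}\times X_0$. Next I expand $u$ in eigenfunctions of $\Delta_{\omega_{\operatorname{SRF}}|_{X_0}}$. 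The fiberwise average vanishes, so every mode has eigenvalue $\lambda_j\ge\lambda_1>0$, and each coefficient $u_j(\zeta)$ solves $\Delta_{\mathbb{C}}u_j=\lambda_j u_j$. Since $\delta<0$, each $u_j$ is bounded and therefore vanishes. Alternatively, I can use the identity $\Delta_{\mathbb{C}}g\ge m'^2 g$ for $g=\int u^2$, together with boundedness of $g$ and the maximum principle. Either way $u=0$.

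For the estimate $\|u\|_{C^{2,\alpha}_{\delta,\tau}}\le C\|\Delta u\|_{C^{0,\alpha}_{\delta-2,\tau-2}}$, I first prove a doubly weighted Schauder estimate. It comes from rescaling balls of radius $\sim\rho' w'\sim r$ near the singular line, modelled on $\mathbb{C}\times\mathbb{C}^2/\mathbb{Z}_2$, and balls of radius $\sim 1$ elsewhere. This reduces the problem to a $C^0$ bound, which I prove by contradiction: take $u_i$ with normalized norm, $\Delta u_i\to0$, and points $x_i$ where $\rho'^{-\delta}w'^{-\tau}|u_i|\sim1$. I split into cases by the location of $x_i$, as in Proposition \ref{Laplace for C times K3 weighed version}. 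If $\rho'(x_i)$ stays bounded and $r(x_i)$ stays bounded below, the limit is a harmonic function on $\mathbb{C}\times X_0$ satisfying the same weighted bound, and injectivity gives a contradiction. If $r(x_i)\to0$ with $\rho'(x_i)$ bounded, I rescale by $r(x_i)$. The limit is harmonic on $\mathbb{C}\times\mathbb{C}^2/\mathbb{Z}_2$ and bounded by $C|\xi|^{\tau}$ up to a base factor. Lifting to $\mathbb{C}^3$ and using $-2<\tau<0$ together with Liouville gives zero. If $|\zeta(x_i)|\to\infty$, I translate in $\zeta$ and rescale by $\rho'(x_i)$. The fibre directions collapse, so the limit lives on $\mathbb{C}$, or on $\mathbb{C}\times\mathbb{C}^2/\mathbb{Z}_2$ when the point sits at scale $r\ll\rho'$. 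Here the average-zero condition must rule out a nonzero limit. I intend to handle this case with the exponential decay of the fibrewise $L^2$ norm coming from the spectral gap. That decay shows nonzero-mean modes cannot contribute at scales $\rho'\gg1$ except near the singular line, where the conical limit with weight $w'^{\tau}$ and $\tau<0$ again forces zero by Liouville.

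Surjectivity follows from the estimate together with an approximation. Given $f$, I truncate it to $f_R$ supported in $\{|\zeta|<R\}$, keeping the fiberwise average zero. I solve $\Delta u_R=f_R$ variationally in the Hilbert space $\{u:\int|\nabla u|^2<\infty,\ \text{average zero}\}$, where the fiber Poincaré inequality gives coercivity. Lemma \ref{holder on pushforward} and the conditions $\min\{\delta,\tau\}>-2+\alpha$ guarantee that the relevant pairings are finite. Elliptic regularity plus exponential decay then place $u_R$ in the weighted space. The uniform estimate lets me pass to the limit $R\to\infty$ by Arzelà--Ascoli. I expect the main obstacle to be the contradiction case where $|\zeta_i|\to\infty$ and the points approach the singular line, since there the two weights interact and the collapsed limit must be excluded by the average-zero condition.
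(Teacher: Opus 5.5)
The paper does not prove this statement; it imports it from \cite{li2019gluing}, Proposition 3.6. Your outline follows the blow-up template the paper uses for Proposition~\ref{Laplace for C times K3 weighed version}, and your first two blow-up cases are fine in outline, apart from the origin subcase below. The case $|\zeta(x_i)|\to\infty$, however, has a genuine gap: it rests on a misreading of the weights.

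\textbf{The gap.} Since $r$ is bounded on $X_0$, for $|\zeta|\ge\kappa^{-1}\sup_{X_0}r$ one has $r\le\kappa\rho'$. Hence $w'=r/(\kappa\rho')$ and
\begin{equation*}
\rho'^{-\delta+j}w'^{-\tau+j}=\kappa^{\tau-j}\rho'^{\tau-\delta}r^{j-\tau}.
\end{equation*}
So at large $|\zeta|$ the norm is $|\zeta|^{\tau-\delta}$ times a fibrewise conical norm. It controls derivatives only at scale $r\lesssim1$, never at scale $\rho'$.

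\emph{Why rescaling by $\rho'(x_i)$ fails.} After this rescaling, the only gradient bound for the normalized $u_i$ is $\rho'(x_i)r^{\tau-1}\to\infty$. Zero fibre average forces an oscillation $\gtrsim1$ across the fibre through $x_i$, whose rescaled diameter tends to $0$. So there is no compactness, and any collapsed limit on $\mathbb{C}$ is identically zero, which contradicts nothing.

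\emph{Why the spectral-decay remedy does not help.} Exponential decay of $g=\int u^2$ would require a localized form of the very estimate you are proving. Moreover, the identity $\Delta_{\mathbb{C}}g=2\int|\nabla u|^2+2\int uf$ needs $\int|uf|<\infty$, which the weights only guarantee for $\tau>-1$.

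\emph{The fix.} Do not rescale. Translate by $\zeta_i$ and multiply by $|\zeta_i|^{\tau-\delta}$. Since $\rho'(\zeta+\zeta_i,\cdot)/|\zeta_i|\to1$ locally uniformly, you get uniform $C^{2,\alpha}$ bounds at scale $r$ and a non-collapsed limit.
\begin{itemize}
\item If $r(x_i)\ge c$, the limit is a nonzero harmonic function on $\mathbb{C}\times X_0$ with $|u_\infty|\le Cr^{\tau}$ and zero fibre averages, which your injectivity argument excludes.
\item If $r(x_i)\to0$, rescale by $r(x_i)$ and you are in your second case.
\end{itemize}
At infinity the weights simply factorize, so there is no interaction between them.

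\emph{The origin subcase.} The only conical limit that still sees $\delta$ is the one at the origin, $\rho'\to0$. There the limit is $\mathbb{C}\times\mathbb{C}^2/\mathbb{Z}_2$ with $|u_\infty|\le C\rho^{\delta}w^{\tau}\le C\rho^{\delta-\tau}r^{\tau}$, and your phrase ``up to a base factor'' hides it. Removability at the vertex needs $\delta>-4$. Vanishing needs $\delta<0$, via the mean value bound $|u(p)|\le CR^{-6}\int_{B_{2R}}\rho^{\delta-\tau}r^{\tau}\le CR^{\delta}$.

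\textbf{Two smaller corrections.}

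\emph{Injectivity.} The modes are not bounded: you only have $|u_j(\zeta)|\lesssim(1+|\zeta|)^{\delta-\tau}$, and $\delta-\tau>0$ for the paper's indices. The conclusion survives because $\Delta_{\mathbb{C}}-\lambda_j$ has no nonzero tempered solutions, but your justification should say that.

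\emph{Surjectivity.} For $\tau<0$, $f_R$ need not lie in $L^2$. Lemma~\ref{holder on pushforward} is a fibrewise $L^1$ statement and does not make $v\mapsto\int f_Rv$ bounded on $\{\nabla v\in L^2\}$. Hardy's inequality $\int v^2r^{-2}\le C\int|\nabla v|^2$ does, but only when $\int f_R^2r^2<\infty$, i.e.\ $\tau>-1$. That covers $\tau=-\frac23$. For $\tau\in(-2+\alpha,-1]$, though, the local solution $\sim r^{\tau}$ has infinite energy. In that range you must also cut $f$ off near $\mathbb{C}\times\{q\}$ and restore zero fibre averages. Do this by subtracting the fibre average, which is H\"older in $\zeta$ by Lemma~\ref{holder on pushforward}, times a fixed bump away from the node. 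These truncations are uniformly bounded in $C^{0,\alpha}_{\delta-2,\tau-2}$ and give orbifold-smooth solutions. Your a priori estimate then still lets you pass to the limit.
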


\section{Global weighted norms and deviation estimates}\label{Global weighted norms and deviation estimates}

\subsection{\texorpdfstring{Weighted H{\"o}lder norm on $M$}{Weighted Holder norm on M}} To enable harmonic analysis on $M$, we introduce a weighted H\"older space. This is done by decomposing $M$ into typical regions, defining H\"older norms according to the geometry of each region, and then patching them together. 

Recall we have fixed the constant $\epsilon_1>0$ in Section \ref{Harmonic analysis on model spaces}.

\textbf{Region 1:} $U_0\cap \{|t|<\epsilon_1\}$, where $U_0$ is the coordinate neighborhood around the nodal point $q_0$, $\epsilon_1>0$ is a small positive number.

Via the embedding map $\Upsilon_\varepsilon$, we have $U_0\cap  \{|t|<\epsilon_1\}\cong \Upsilon_\varepsilon^{-1}\left(U_0\cap  \{|t|<\epsilon_1\}\right)\subset \mathbb{C}^3$.

In this region $\omega_\varepsilon\sim \left(\frac{\varepsilon}{2A_0}\right)^{\frac13}(\Upsilon_\varepsilon^{-1})^*\omega_{\mathbb{C}^3}$.

Recall we have already defined the weighted norm for $(\mathbb{C}^3,\omega_{\mathbb{C}^3})$ by (\ref{Weighted norm on C3}). Via the diffeomorphism $\Upsilon_\varepsilon$, we define the weighted norm $\|\cdot \|_{C^{k,\alpha}_{\delta,\tau}(U_0\cap \{|t|<\epsilon_1\})}\coloneqq \|\cdot\|_{C^{k,\alpha}_{\delta,\tau}\left(\Upsilon_\varepsilon^{-1}(U_0\cap \{|t|<\epsilon_1\}),\omega_{\mathbb{C}^3}\right)}$.

\textbf{Region 2:}   $U_1=\left\{|t|<\epsilon_1,\; r>\Lambda_1|t|^{\frac14},\; r>\varepsilon^{\frac16}\right\}\subset X$. This is the region close to the singular fiber $X_0$ while remaining uniformly away from its nodal point.

By construction, the diffeomorphism $G_0$ identifies $U_1$ with a subset of $\mathbb{C}\times X_0$, namely $U_1 \cong G_0(U_1)\subset \mathbb{C}\times X_0$ and the pulled-back metric satisfies $(G_0^{-1})^*\omega_\varepsilon \sim \frac{\sqrt{-1}A_0}{\varepsilon}\,dt\wedge d\bar t +\omega_{\operatorname{SRF}}|_{X_0}$. 

To normalize the scale in the base direction, consider the rescaling map
\begin{align*}
\upsilon:\mathbb{C}_t\times X_0\longrightarrow \mathbb{C}_\zeta\times X_0,
\quad 
(t,x)\longmapsto
\left(\left(\frac{\varepsilon}{2A_0}\right)^{-\frac{1}{2}}t,x\right).
\end{align*}
The composition $\upsilon\circ G_0$ identifies $U_1$ with a region in the model space $\mathbb{C}_\zeta\times X_0$. 

We then use the weighted H\"older spaces
$C^{k,\alpha}_{\delta,\tau}(\mathbb{C}_\zeta\times X_0)$ with respect to the product metric $\omega_{\operatorname{SRF},0}
=\frac{\sqrt{-1}}{2}\,d\zeta\wedge d\bar\zeta +\omega_{\operatorname{SRF}}|_{X_0}$
introduced in Section 3.2 to define corresponding weighted norms on $U_1$. More precisely, for a function $f$ on $U_1$, we set
\begin{align*}
\|f\|_{C^{k,\alpha}_{\delta,\tau}(U_1)}
\coloneqq
\left\|
\left((\upsilon\circ G_0)^{-1}\right)^*f
\right\|_{C^{k,\alpha}_{\delta,\tau}(\mathbb{C}_\zeta\times X_0,\omega_{\operatorname{SRF},0})}.
\end{align*}

\textbf{Region 3:} $U_2=\{|t|>\frac12\epsilon_1,|y|>\frac12\epsilon_1 \}\subset M$. This is the region uniformly away from the singular fibers.

In this region, $\omega_{\varepsilon}\sim \frac{1}{\varepsilon}\pi^*\omega_B+\omega_X$. We employ the usual $C^{k,\alpha}$ norm with respect to the metric $ \frac{1}{\varepsilon}\pi^*\omega_B+\omega_X$. This weighted norm is denoted by $\|\cdot \|_{C^{k,\alpha}(U_2)}$.

\textbf{Region 4:} $U_3=\{|y|<\epsilon_1\}$. This is the end near $X_\infty$.

We introduce the following weighted H{\"o}lder norm
\begin{equation}\label{weighted norm on U3 1}
\begin{aligned}
\|f\|_{C^{k,\alpha}_{\lambda,\mu}(U_3,\omega_\varepsilon)}=& \sum\limits_{i=0}^k\sup\limits_{x\in U_3}\frac{r^{-\mu+i}}{|y|^\lambda}\left|\nabla^i_{\omega_\varepsilon}f\right|_{\omega_\varepsilon}\\
&+\sup_{\substack{x,x'\in U_3\\d_{\omega_\varepsilon}(x,x')\ll r(x)}}
\frac{r(x)^{-\mu+\alpha+k}}{|y|^\lambda}
\frac{|\nabla^k_{\omega_\varepsilon}f(x)-\nabla^k_{\omega_\varepsilon}f(x')|}{d_{\omega_\varepsilon}(x,x')^\alpha}.
\end{aligned}
\end{equation}

With all the above preparation, we can define the global weighted H{\"o}lder norm for smooth functions on $M$ as follows:
\begin{equation}\label{global weighted norm}
\|f\|_{C^{k,\alpha}_{\delta,\tau,\lambda,\mu,\varepsilon}(M, \omega_\varepsilon)}\coloneqq\varepsilon^{-\frac{\delta}{6}}\|f\|_{C^{k,\alpha}_{\delta,\tau}(U_0\cap \{|t|<\epsilon_1\})}+\|f\|_{C^{k,\alpha}_{\delta,\tau}\left(U_1\right)}+\varepsilon^{\frac12(\delta-\tau)}\|f\|_{C^{k,\alpha}(U_2)}+\varepsilon^{\frac12(\delta-\tau)}\|f\|_{C^{k,\alpha}_{\lambda,\mu}(U_3)}.
\end{equation}

 By \cite{li2019gluing}, on the overlapping regions of $U_0\cap \{|t|<\epsilon_1\}$, $U_1$ and $U_2$, the different weighted norms appearing in (\ref{global weighted norm}) are uniformly equivalent to one another. On the overlapping region $U_2\cap U_3$, both $|y|$ and $r$ are uniformly bounded away from zero, so $\|f\|_{C^{k,\alpha}(U_2)}$ and $\|f\|_{C^{k,\alpha}_{\lambda,\mu}(U_3)}$ are uniformly equivalent to each other on $U_2\cap U_3$. Here, the uniform equivalence can depend on $\epsilon_1$ but not on the collapsing parameter $\varepsilon$.

Similarly, we can define weighted H{\"o}lder norms for tensor fields on $M$. For example, for a $(0,2)$-tensor field $\theta$ on $M$, we define 
\begin{align*}
\|\theta\|_{C^{k,\alpha}_{\delta-2,\tau-2,\lambda,\mu-2,\varepsilon}(M, \omega_\varepsilon)}\coloneqq&\varepsilon^{-\frac{\delta}{6}}\|\theta\|_{C^{k,\alpha}_{\delta-2,\tau-2}(U_0\cap \{|t|<\epsilon_1\})}+\|\theta\|_{C^{k,\alpha}_{\delta-2,\tau-2}\left(U_1\right)}\\
&+\varepsilon^{\frac12(\delta-\tau)}\|\theta\|_{C^{k,\alpha}(U_2)}+\varepsilon^{\frac12(\delta-\tau)}\|\theta\|_{C^{k,\alpha}_{\lambda,\mu-2}(U_3)}.
\end{align*}

\begin{remark}\label{numerical properties of the weighted Holder spaces}

The elementary numerical properties of the weighted H{\"o}lder spaces are summarized as
\begin{itemize}

\item $\| fg \|_{C^{0,\alpha}_{\delta-2,\tau-2,\lambda,\mu-2,\varepsilon}(M)} \le C \| f \|_{C^{0,\alpha}_{0,0,0,0,\varepsilon}(M)} \| g\|_{C^{0,\alpha}_{\delta-2,\tau-2,\lambda,\mu-2,\varepsilon}(M)}$;

\item $\| f \|_{C^{0,\alpha}_{0,0,0,0,\varepsilon}(M)}\le C\max\{\varepsilon^{\frac{\delta-2}{6}},\varepsilon^{\frac12(\tau-\delta)}\}\| f \|_{C^{0,\alpha}_{\delta-2,\tau-2,\lambda,\mu-2,\varepsilon}(M)}$ whenever $\lambda+\frac{\mu-2}{4}>0$.

\end{itemize}
Here $f$ and $g$ are arbitrary elements of the function spaces. The same statements hold for  differential forms.

\end{remark}

\subsection{Estimate for the Ricci potential}\label{Estimate for the Ricci potential}In this subsection, we analyze the $\varepsilon$-smallness of the Ricci potential of the metric ansatz.  

The Ricci potential $h_\varepsilon$ with respect to the metric ansatz $\omega_\varepsilon$ is defined as
\begin{equation}\label{def of Ricci potential}
h_\varepsilon\coloneqq \log\left(\frac{\varepsilon\omega_\varepsilon^3}{3\sqrt{-1}\Omega\wedge \overline{\Omega}}\right).
\end{equation}

\begin{proposition}\label{Ricci potential decay 1}

(1) Let $-\frac{1}{40}<\delta<0$, $\tau=-\frac23$, $\lambda=\frac{13}{24}$ ,$\mu=-\frac{1}{12}$.  Then the Ricci potential satisfies the global estimate
\begin{equation}\label{Ricci potential est 1}
\|h_\varepsilon\|_{C^{0,\alpha}_{\delta-2,\tau-2,\lambda,\mu-2,\varepsilon}(M)}< C\varepsilon^{\frac12\delta+\frac{7}{20}},
\end{equation}where $C=C(\alpha,\delta,\tau,\lambda,\mu,\epsilon_1)$ is a uniform constant independent of $\varepsilon$.

(2) Without weights, the Ricci potential satisfies the global estimate
\begin{equation}\label{Ricci potential est 2}
    \|h_\varepsilon\|_{C^{0,\alpha}_{0,0,0,0,\varepsilon}(M)}< C\varepsilon^{\frac{2}{3}\delta+\frac{1}{60}},
\end{equation}where $C=C(\alpha,\delta,\tau,\lambda,\mu,\epsilon_1)$ is a uniform constant independent of $\varepsilon$.

\end{proposition}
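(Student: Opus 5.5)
The estimate is local, so I will decompose $M$ into the regions used to define the global norm (\ref{global weighted norm}) and bound $h_\varepsilon$ separately on each piece in the corresponding local norm. Then I multiply by the prefactors $\varepsilon^{-\delta/6}$, $1$, $\varepsilon^{\frac12(\delta-\tau)}$, $\varepsilon^{\frac12(\delta-\tau)}$ and take the worst exponent. The basic identity throughout is the expansion
\begin{align*}
\frac{\varepsilon\omega_\varepsilon^3}{3\sqrt{-1}\Omega\wedge\overline{\Omega}}=\frac{\pi^*(\omega_B+\varepsilon\beta)\wedge(\omega_\varepsilon|_{\text{vert}})^2}{\sqrt{-1}\Omega\wedge\overline\Omega}+O(\varepsilon\cdot\text{horizontal/mixed terms}).
\end{align*}
Hence $h_\varepsilon$ is controlled by two quantities: the fiberwise volume defect $\log\bigl(A_y(\omega_\varepsilon|_{X_y})^2/\Omega_y\wedge\overline\Omega_y\bigr)$, and the $O(\varepsilon)$ contributions of $\sqrt{-1}\partial\bar\partial\phi$ in horizontal and mixed directions measured against $\frac1\varepsilon\pi^*\omega_B$.

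Regions 1–3 (near $X_0$ and away from singular fibers) are the setting of \cite{li2019gluing}. There I will invoke the analogue of Li's Ricci potential estimate (his Proposition on the error of the ansatz) together with Proposition \ref{fiber deviation 5} and Lemmas \ref{fiber deviation 3}, \ref{fiber deviation 0}.
\begin{itemize}
\item In $U_2$, Lemma \ref{fiber deviation 3} and the $\varepsilon^{1/2}$ base scale of the $\chi_i$ give $|h_\varepsilon|_{C^{0,\alpha}}\lesssim\varepsilon^{1/2}$ in the fiber defect and $O(\varepsilon)$ in the horizontal terms. With the prefactor $\varepsilon^{\frac12(\delta-\tau)}=\varepsilon^{\frac12\delta+\frac13}$, this is below $\varepsilon^{\frac12\delta+\frac7{20}}$ once I also use that the cutoff derivatives contribute at relative order $\varepsilon$.
\item In $U_1$ and on the $\mathbb C^3$ region, Proposition \ref{fiber deviation 5}(b) gives errors $\max\{|t|,\varepsilon^{3/5-\sigma}\}$ with $|t|\lesssim\varepsilon^{6/(14+\tau)}=\varepsilon^{1/2}$ for $\tau=-2/3$. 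The deviation of $\phi''_{\mathbb C^3}$ in $C^{k,\alpha}_{\delta,0}$ handles the exotic region.
\end{itemize}
This is where the exponent $\frac7{20}$ (from $\varepsilon^{3/5}$ and the $\varepsilon^{1/10}$ gluing scale) should come out. I will track the weights $\rho'^{-\delta+2}w'^{-\tau+2}$ exactly as in Li.

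Region 4 ($U_3=\{|y|<\epsilon_1\}$) is new, and I expect it to be the main obstacle. I split it in two.
\begin{itemize}
\item For $\{|y|\ge 2\varepsilon^{12/(2+\mu)}\}$, I use Proposition \ref{fiber deviation 7}. The fiber defect is bounded by $\varepsilon^{1/2}|y|^{1-\frac14(\mu+2)}$ in $C^{k,\alpha}_{\mu-2}(X_y)$, and dividing by $|y|^\lambda$ with $\lambda=\frac{13}{24}$, $1-\frac14(\mu+2)=\frac{13}{24}+\frac{1}{48}\cdot\ldots$, leaves a nonnegative power of $|y|$.
\item For $\{|y|<2\varepsilon^{12/(2+\mu)}\}$, I combine Proposition \ref{est on phi y} and Proposition \ref{fiber deviation 2} for the fiber part, which gives $|y|^{\frac7{12}-\frac{5\mu}{24}}=|y|^{\frac{13}{24}+\frac{5}{288}}$, at least $|y|^\lambda$. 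For the total space I use Proposition \ref{prop of omega 1}(2): the horizontal/mixed Eguchi–Hanson contribution yields $\varepsilon|y|/H^{1/2}$, which is weighted by $r^{2-\mu}/|y|^\lambda$ with $r\ge|y|^{1/4}$. In the gluing annulus $H\sim|y|^{5/6}$, Corollary \ref{total deviation 1} controls the cutoff errors.
\end{itemize}
The delicate points in Region 4 are twofold. First, the metric used to measure (via Lemma \ref{almost product 1}) is $\omega_\varepsilon$, with base factor $\frac1{\varepsilon|y|^2}$, so horizontal derivatives are only cheaper, and I must verify that the base rescaling does not spoil the Hölder seminorm at scale $r$. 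Second, I must check that the exponent bookkeeping $\lambda+\frac{\mu-2}{4}>0$ is compatible. After multiplying by $\varepsilon^{\frac12(\delta-\tau)}$, the end contributes at most $\varepsilon^{\frac12\delta+\frac13+\frac12}$-type powers, so it is not the bottleneck.

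Part (2) follows from (1) by the second bullet of Remark \ref{numerical properties of the weighted Holder spaces}. Since $\lambda+\frac{\mu-2}4=\frac{13}{24}-\frac{25}{48}>0$, multiplying (1) by $\max\{\varepsilon^{\frac{\delta-2}6},\varepsilon^{\frac12(\tau-\delta)}\}=\varepsilon^{\frac{\delta-2}{6}}$ (for small $|\delta|$) gives $\varepsilon^{\frac12\delta+\frac7{20}+\frac\delta6-\frac13}=\varepsilon^{\frac23\delta+\frac1{60}}$, as claimed.
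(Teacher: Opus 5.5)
Your overall plan matches the paper's. You hand $\{|y|>\frac12\epsilon_1\}$ to Li (this is Lemma~\ref{Ricci potential decay 2}), treat $U_3$ through the expansion $e^{h_\varepsilon}=1+E^{(1)}_\varepsilon+E^{(2)}_\varepsilon+E^{(3)}_\varepsilon$ split at $|y|=\varepsilon^{\frac{12}{2+\mu}}$, and deduce (2) from Remark~\ref{numerical properties of the weighted Holder spaces}; your arithmetic for (2) is correct. The problem is the exponent bookkeeping on the end: as written, it does not produce the needed $\varepsilon$-smallness.

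\textbf{Outer part.} Here $1-\frac14(\mu+2)=\frac{25}{48}<\frac{26}{48}=\lambda$. So Proposition~\ref{fiber deviation 7} gives $|E^{(1)}_\varepsilon|\le C\varepsilon^{\frac12}|y|^{-\frac1{48}}\,|y|^{\lambda}r^{\mu-2}$. The leftover power of $|y|$ is negative, not nonnegative. It is controlled only through the lower bound $|y|\ge\varepsilon^{\frac{12}{2+\mu}}=\varepsilon^{\frac{144}{23}}$, which costs a factor $\varepsilon^{-\frac3{23}}$.

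\textbf{Inner part.} Here the fiber defect carries no collapsing factor at all. Knowing only that it is ``at least $|y|^\lambda$'' gives an $O(1)$ bound on $\|h_\varepsilon\|_{C^{0,\alpha}_{\lambda,\mu-2}(U_3)}$, i.e.\ only $\varepsilon^{\frac12\delta+\frac13}$ globally, which misses the claim. The gain must come from the excess decay $|y|^{\frac7{12}-\frac{5\mu}{24}-\lambda}=|y|^{\frac{17}{288}}$ combined with the upper bound $|y|\lesssim\varepsilon^{\frac{144}{23}}$. Note that $\frac7{12}-\frac{5\mu}{24}=\frac{173}{288}$, not $\frac{13}{24}+\frac5{288}$.

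\textbf{Role of the threshold.} The scale $\varepsilon^{\frac{12}{2+\mu}}$ is exactly where the two losses balance. Writing $|y|=\varepsilon^{a}$, the condition $\frac12-\frac{a}{48}=\frac{17a}{288}$ gives $a=\frac{144}{23}=\frac{12}{2+\mu}$, and both sides then yield $\varepsilon^{\frac{17}{46}}$. Neither side of the split carries the smallness by itself (cf.\ the remark following Lemma~\ref{Ricci potential decay 3}).

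\textbf{Consequence for your conclusion.} Your claim that the end contributes ``$\varepsilon^{\frac12\delta+\frac13+\frac12}$-type powers'' and is not the bottleneck is unjustified, and your attribution of the exponent $\frac7{20}$ is backwards.
\begin{itemize}
\item Li's estimate on $\{|y|>\frac12\epsilon_1\}$ gives $\varepsilon^{\frac1{20}\delta+\frac{23}{60}}$, using $\frac{6}{14+\tau}=\frac9{20}$ (not $\frac12$). This is strictly better than $\varepsilon^{\frac12\delta+\frac7{20}}$ for $\delta<0$.
\item In the paper the exponent is produced by the end. Lemma~\ref{Ricci potential decay 3} records the relative gain $\varepsilon^{\frac1{60}}$ on $U_3$, and $\varepsilon^{\frac12(\delta-\tau)}\cdot\varepsilon^{\frac1{60}}=\varepsilon^{\frac12\delta+\frac7{20}}$.
\item That lemma also checks the horizontal and mixed terms $E^{(2)}_\varepsilon$ and their first derivatives, including the $\chi_i$-cutoff and $\sqrt{H+|y|}$ contributions, using $|y|^{\frac14}\le r$ and $r\lesssim|y|^{\frac5{24}}$ in the Eguchi--Hanson region. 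Your outline does not carry out this check in the outer region.
\end{itemize}
Once the end bookkeeping is corrected in this way, your outline closes.
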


We divide the manifold $M$ into two regions $\{|y|>\frac12\epsilon_1\}$ and $\{|y|<\epsilon_1\}$. The first region lies uniformly away from infinity, and its Ricci potential behavior is already covered by \cite{li2019gluing}. Therefore, we only need to analyze the second region.

First, by an argument identical to that used in the proofs of Lemma 2.7, Lemma 2.8 in \cite{li2019gluing}, we obtain the following estimate on the Ricci potential $h_\varepsilon$.

\begin{lemma}[\text{cf.} \cite{li2019gluing} Lemma 2.7, Lemma 2.8 and Proposition 2.9]\label{Ricci potential decay 2}

Let $-2 < \tau < 0$ and $\frac{3}{4}\tau - \frac{1}{2} < \delta < \frac{2}{3} + \frac{5\tau}{6}$. The Ricci potential on $\{|y|>\frac12\epsilon_1\}$ is estimated by
\begin{equation}
\| h_\varepsilon \|_{C^{0,\alpha}_{\delta-2, \tau-2,\lambda,\mu-2 ,\varepsilon}(\{|y|>\frac12\epsilon_1\})} \le C \varepsilon^{\delta'},
\end{equation}
where $\delta' = \frac{6}{14+\tau}(\frac{2}{3} + \frac{5\tau}{6} - \delta) + \frac{1}{2}\delta - \frac{1}{2}\tau$, $C=C(\alpha,\delta,\tau,\lambda,\mu,\epsilon_1)$ is a uniform constant independent of $\varepsilon$.

In particular, if $\tau=-\frac23$, $\lambda=\frac{13}{24}$, $\mu=-\frac{1}{12}$ and $-\frac{1}{40}<\delta<0$, then
\begin{equation}
\| h_\varepsilon \|_{C^{0,\alpha}_{\delta-2, \tau-2,\lambda,\mu-2 ,\varepsilon}(\{|y|>\frac12\epsilon_1\})} \le C \varepsilon^{\frac{1}{20}\delta+\frac{23}{60}},
\end{equation}where $C=C(\alpha,\delta,\epsilon_1)$ is a uniform constant independent of $\varepsilon$.

\end{lemma}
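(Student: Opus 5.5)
The plan is to take the estimate from \cite{li2019gluing} directly and only check two things: that it transfers to our global norm restricted to $\{|y|>\frac12\epsilon_1\}$, and that the exponent arithmetic comes out right for the chosen parameters.

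\emph{Step 1: which norms are active.} On $\{|y|>\frac12\epsilon_1\}$ only the pieces of (\ref{global weighted norm}) coming from $U_0\cap\{|t|<\epsilon_1\}$, $U_1$ and $U_2$ contribute. The $U_3$ piece enters only on the overlap $\{\frac12\epsilon_1<|y|<\epsilon_1\}$. There $|y|$ and $r$ are bounded above and below in terms of $\epsilon_1$, so that piece is uniformly equivalent to the $U_2$ piece. The weights $\lambda,\mu$ therefore change only the constant, which may depend on $\epsilon_1$.

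\emph{Step 2: the ansatz agrees with Li's.} On this region the ansatz (\ref{Ansatz 1}) coincides with Li's ansatz up to two modifications, neither of which alters the Ricci potential estimate:
\begin{itemize}
\item the term $\pi^*\beta$, which is supported at fixed base scale and pulled back from the base, so it perturbs $h_\varepsilon$ by $O(\varepsilon)$ smoothly;
\item the cutoff $\chi_\infty$, which vanishes on this region.
\end{itemize}

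\emph{Step 3: decompose $h_\varepsilon$ as in Lemmas 2.7, 2.8 of \cite{li2019gluing}.} Split $h_\varepsilon$ into three contributions.
\begin{itemize}
\item Fiberwise deviation from $\omega_{\operatorname{SRF}}$: controlled by Proposition~\ref{fiber deviation 5}(a), giving $\varepsilon^{1/2}|t|^{-(\tau+2)/4}$, and by Proposition~\ref{fiber deviation 7}.
\item Horizontal and mixed terms: these are $O(\varepsilon)$ relative to $\frac1\varepsilon\pi^*\omega_B$.
\item The region near the node: here one compares with the rescaled $\omega_{\mathbb{C}^3}$ and with $G_0^*\omega_{\operatorname{SRF}}|_{X_0}$, using Proposition~\ref{fiber deviation 5}(b) and the decay of $\phi'_{\mathbb{C}^3}$ in $C^{k,\alpha}_{\delta,0}$.
\end{itemize}

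\emph{Step 4: optimize over $|t|$.} Converting each bound to the $(\delta-2,\tau-2)$-weighted norm and optimizing over $|t|\gtrsim\varepsilon^{6/(14+\tau)}$ (the gluing scale) gives the exponent $\delta'$. The range $\frac34\tau-\frac12<\delta<\frac23+\frac{5\tau}6$ is exactly what makes each term's power of $|t|$ have the correct sign.

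\emph{Step 5: specialize the exponent.} Substituting $\tau=-\frac23$ gives $\frac{6}{14+\tau}=\frac{9}{20}$ and $\frac23+\frac{5\tau}{6}=\frac19$. Then
\[
\delta'=\frac{9}{20}\Big(\frac19-\delta\Big)+\frac12\delta+\frac13=\frac1{20}+\frac13+\frac1{20}\delta=\frac{1}{20}\delta+\frac{23}{60}.
\]
The range condition becomes $-1<\delta<\frac19$, which contains $(-\frac1{40},0)$.

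\emph{Main obstacle.} The delicate part is the node region in Step 3, where the cutoff $\gamma$ at $r\sim\varepsilon^{1/10}+\varepsilon^{1/12}\rho'^{1/6}$ produces the $\varepsilon^{3/5-\sigma}$ error. That step I would cite verbatim from \cite{li2019gluing}, Proposition 2.9, since the geometry there is unchanged.
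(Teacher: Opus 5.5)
Your proposal is correct and takes the same route as the paper, which gives no independent argument and simply invokes the proofs of Li's Lemmas 2.7, 2.8 and Proposition 2.9 on $\{|y|>\frac12\epsilon_1\}$, where the ansatz has the same local structure as Li's. Your additional checks are accurate: the $U_3$-piece of the norm is equivalent to the $U_2$-piece on $\{\frac12\epsilon_1<|y|<\epsilon_1\}$, $\chi_\infty$ vanishes there, the $\pi^*\beta$ term contributes only $O(\varepsilon^{1+\frac12(\delta-\tau)})$ (which is $\le C\varepsilon^{\delta'}$ on the whole admissible range of $\delta$), and the specialization $\delta'=\frac{1}{20}\delta+\frac{23}{60}$ with range $-1<\delta<\frac19$ is right.
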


Now we turn to the region $\{|y|<\epsilon_1\}$.

\begin{lemma}\label{Ricci potential decay 3}

In the region $U_3=\{|y|<\epsilon_1\}$, the Ricci potential is estimated by
\begin{align*}
\|h_\varepsilon\|_{C^{0,\alpha}_{\frac{13}{24},-\frac{25}{12}}(U_3)}<C(\alpha)\varepsilon^{\frac{1}{60}}.
\end{align*}

\end{lemma}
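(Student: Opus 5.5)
The plan is to split $U_3$ according to the cutoff $\chi_\infty$ into an inner region $U_3^{\mathrm{in}}=\{|y|<2\varepsilon^{\frac{12}{2+\mu}}\}$ and an outer region $U_3^{\mathrm{out}}=\{\varepsilon^{\frac{12}{2+\mu}}<|y|<\epsilon_1\}$. With $\mu=-\frac1{12}$ we have $\frac{12}{2+\mu}=\frac{144}{23}$. On each piece we write $\omega_\varepsilon$ as a model metric with an explicitly computable volume form plus a controlled error. Recall $h_\varepsilon=\log\frac{\varepsilon\omega_\varepsilon^3}{3\sqrt{-1}\Omega\wedge\overline\Omega}$. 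We then estimate $h_\varepsilon$ in the weighted norm, which at order zero reads $r^{\frac{25}{12}}|y|^{-\frac{13}{24}}|h_\varepsilon|$. Higher derivatives and the H\"older seminorm will come from the same computations after differentiation, using Lemma~\ref{almost product 1}(2) to pass from product-type derivatives to $\nabla_{\omega_\varepsilon}$.

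\emph{Outer region.} Here $\omega_\varepsilon=\frac1\varepsilon\pi^*\omega_B+\omega_X+\sqrt{-1}\partial\bar\partial\big(\sum_i\chi_iG_{y_i}^*\psi_{y_i}\big)$ (plus $\chi_\infty$ terms near the inner edge). We expand
\begin{align*}
\varepsilon\omega_\varepsilon^3=3\pi^*\omega_B\wedge(\omega_\varepsilon|_{\mathrm{fib}})^2+\varepsilon\,(\text{mixed and horizontal terms}),
\end{align*}
and use $\pi^*\omega_B\wedge\omega_{\operatorname{SRF}}^2=\sqrt{-1}\Omega\wedge\overline\Omega$. The fiberwise contribution is then controlled by Proposition~\ref{fiber deviation 7}, which gives $r^{2-\mu}|\cdot|\lesssim\varepsilon^{\frac12}|y|^{1-\frac{23}{48}}$. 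Dividing by $|y|^{\frac{13}{24}}$ leaves $\varepsilon^{\frac12}|y|^{-\frac1{48}}\le\varepsilon^{\frac12-\frac{3}{23}}$, which is better than $\varepsilon^{\frac1{60}}$.

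The $\varepsilon$-terms involve $y$-derivatives of $G_{y_i}^*\psi_{y_i}$ and of the $\chi_i$, measured against $\frac1{\varepsilon}\omega_B\sim\frac{1}{\varepsilon|y|^2}\sqrt{-1}dy\wedge d\bar y$. In logarithmic coordinates $y\partial_y$ these are bounded using the explicit trivialization \eqref{expansion of Gy'y} and Lemma~\ref{fiber deviation 4}. We expect a bound of the form $C\varepsilon\,|y|\,r^{-2}$ from the $\frac{\varrho}{2|w|^2}\bar w$ term, with extra negative powers of $r$ only in the vanishing-cycle region $r\sim|y|^{1/4}$. In the worst case $r^4\sim|y|\sim\varepsilon^{144/23}$, this still gives a positive power of $\varepsilon$ after applying the weight $r^{\frac{25}{12}}|y|^{-\frac{13}{24}}$.

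\emph{Inner region.} Here the potential is the glued $\widetilde\phi_y$. On $\{H<|y|^{5/6}\}$ the form is
\begin{align*}
\frac{\sqrt{-1}A_y}{\varepsilon|y|^2}dy\wedge d\bar y+\sqrt{-1}\partial\bar\partial\sqrt{H+|y|},
\end{align*}
and we reduce to Proposition~\ref{prop of omega 1}(2) as follows. The factor $\frac1\varepsilon$ is absorbed by rescaling, since $\log$ of the ratio only picks up the term $\frac{\varepsilon\eta^{1/2}}{(H+\eta^{1/2})^{1/2}}$-type correction. The difference $A_y-\frac12=O(|y|)$ contributes $O(|y|)$. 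Together with the local form $\Omega=\frac{\sqrt{A_\infty}}{y}(1+O(z))dz_1\wedge dz_2\wedge dz_3$, this gives $|h_\varepsilon|\lesssim|y|r^{-2}+r^4$. Weighted, this is $|y|^{\frac{11}{24}}r^{\frac1{12}}+r^{\frac{73}{12}}|y|^{-\frac{13}{24}}$, which is small because $r^4\le|y|^{5/6}$.

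On $\{H>2|y|^{5/6}\}$ we compare with $\frac1\varepsilon\pi^*\omega_B+G_\infty^*\omega_{\operatorname{SRF}}|_{X_\infty}$ via Lemma~\ref{fiber deviation 0} and Proposition~\ref{fiber deviation 2}. On the gluing annulus we use Corollary~\ref{total deviation 1}: the cutoff error in $\partial\bar\partial$ is $O(|y|^{1/6})$ relative to the metric. With $r^4\sim|y|^{5/6}$, the weight turns this into a positive power of $|y|\lesssim\varepsilon^{144/23}$.

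\emph{Main obstacle.} The hard step is the horizontal and mixed contribution in the region $r\sim|y|^{1/4}$, both in the outer region and across the transition $|y|\sim\varepsilon^{144/23}$ where $\chi_\infty$ and the $\chi_i$ overlap. There the sectional curvature blows up, and each vertical derivative costs $H^{-1/4}$. I would first bound these terms by the cylindrical computation already used in Proposition~\ref{prop of omega 1}, which gives the $s$-type formula $\frac{\eta^{1/2}}{8(H+\eta^{1/2})^{1/2}}$. I would then check that the least favorable exponent bookkeeping, at the top H\"older order with $d_{\omega_\varepsilon}(x,x')\ll r(x)$, still leaves at least $\varepsilon^{\frac1{60}}$. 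I expect that exponent to be produced precisely at this transition scale.
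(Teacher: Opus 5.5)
Your decomposition is the paper's: you split at $|y|=\varepsilon^{\frac{144}{23}}$, separate the fiberwise volume ratio from the term proportional to $\varepsilon(\omega_X+\sqrt{-1}\partial\bar\partial\phi)^3$ (the paper's $E^{(1)}_\varepsilon$ and $E^{(2)}_\varepsilon$), bound the fiberwise term by Proposition~\ref{fiber deviation 7} with exactly your exponent $\varepsilon^{\frac12}|y|^{-\frac1{48}}$, and use three $H$-subregions for small $|y|$. The explicitly computable pieces are fine. The gap is the step you postpone as the ``main obstacle'': control of horizontal and mixed derivatives of $\sum_i\chi_iG_{y_i}^*\psi_{y_i}+\chi_\infty\widetilde\phi$ on $\{|y|\ge\varepsilon^{\frac{144}{23}}\}$. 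This is needed for $E^{(2)}_\varepsilon$, and also for the horizontal part of the H\"older seminorm of $E^{(1)}_\varepsilon$, since Proposition~\ref{fiber deviation 7} is purely fiberwise.

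The tool you propose does not reach this region. The identity of Proposition~\ref{prop of omega 1} holds only where the potential is literally $\sqrt{H+|y|}$, i.e.\ on $\{H<|y|^{\frac56}\}$ with $\chi_\infty\equiv1$. That excludes even the strip $\varepsilon^{\frac{144}{23}}\le|y|\le2\varepsilon^{\frac{144}{23}}$ that you assign to the inner region. Near the vanishing cycle in the outer region, the potential is the non-explicit Ricci-flat $\psi_{y_i}$ transported by $G_{y_i}$, and no such formula exists.

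The paper fills this gap with four ingredients:
\begin{itemize}
\item the identity $(\omega_X+\sqrt{-1}\partial\bar\partial\phi)^3=3\sqrt{-1}\bigl(\eta_{y\bar y}-\sum\eta^{\alpha\bar\beta}\eta_{y\bar\beta}\eta_{\alpha\bar y}\bigr)dy\wedge d\bar y\wedge(\omega_X+\sqrt{-1}\partial\bar\partial\phi)|_{X_y}^2$, which reduces $E^{(2)}_\varepsilon$ to metric components;
\item the rewriting $\phi=G_{y_i}^*\psi_{y_i}+\sum_{j\ne i}\chi_j(G_{y_j}^*\psi_{y_j}-G_{y_i}^*\psi_{y_i})+\chi_\infty(\widetilde\phi-G_{y_i}^*\psi_{y_i})$, so that every cutoff derivative (each costing $\varepsilon^{-\frac12}|y|^{-1}$) falls on a difference controlled by Lemma~\ref{fiber deviation 4};
\item horizontal differentiation of $G_{y_i}^*f$ through the vertical field $v-v'$, with $|v-v'|\le Cr^{-3}$ and commutators $\le Cr^{-7}$ (giving e.g.\ $|\partial_y\partial_{\bar y}G_{y_i}^*\psi_{y_i}|\le Cr^{-6}$), together with $\|\Xi\psi_y\|_{C^{k+2,\alpha}_\beta}\le C|y|^{-\frac14(\beta+2)}$ from Lemma~\ref{fiber deviation 4}(f);
\item the covector bounds $|dy|_{\omega_\varepsilon}\le C\varepsilon^{\frac12}|y|$ and $|dz|_{\omega_\varepsilon}\le Cr$, with negative powers of $r$ absorbed via $r^4\ge|y|\ge\varepsilon^{\frac{144}{23}}$.
\end{itemize}

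Three smaller corrections. First, your expected bound $C\varepsilon|y|r^{-2}$ misses the cutoff terms, which keep only $\varepsilon^{\frac12}$. For instance, $\varepsilon|y|^2\,\partial_y\partial_{\bar y}\chi_j\,(G_{y_j}^*\psi_{y_j}-G_{y_i}^*\psi_{y_i})$ is only bounded by $C\varepsilon^{\frac12}|y|^{\frac{25}{48}}r^{\mu}$, i.e.\ $\varepsilon^{\frac12}|y|^{-\frac1{48}}r^2$ after weighting. That is comparable to the fiberwise term and small only because of the difference structure. Second, $\frac1{60}$ is not where the bookkeeping is tight. At $|y|=\varepsilon^{\frac{12}{2+\mu}}$ the outer bound $\varepsilon^{\frac12}|y|^{-\frac1{48}}$ and the inner bound $|y|^{\frac{17}{288}}$ from Proposition~\ref{fiber deviation 2} both equal $\varepsilon^{\frac{17}{46}}$. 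The value $\frac1{60}=\frac7{20}-\frac13$ is just what Proposition~\ref{Ricci potential decay 1} needs after the factor $\varepsilon^{\frac12(\delta-\tau)}$. Third, in your Eguchi--Hanson computation the $O(z)$ correction is $O(r^2)$, not $O(r^4)$, since $|z|=r^2$. The weighted bound $r^{\frac{49}{12}}|y|^{-\frac{13}{24}}\le|y|^{\frac{89}{288}}$ still closes.
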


\begin{proof}
Set $\mu=-\frac{1}{12}$, $\lambda=\frac{13}{24}$. 

By direct computation,
\begin{align*}
\exp(h_\varepsilon)=
\frac{\varepsilon\omega_\varepsilon^3}
{3\sqrt{-1}\,\Omega\wedge\overline{\Omega}}
=
1+E_\varepsilon^{(1)}
 +E_\varepsilon^{(2)}
 +E_\varepsilon^{(3)},
\end{align*}
where
\begin{align*}
E_\varepsilon^{(1)}
&=\frac{A_y\,dy\wedge d\bar y\wedge
 \left(\omega_X+\sqrt{-1}\,\partial\bar\partial\phi\right)^2}{dy\wedge d\bar y\wedge
 \Omega_y\wedge\overline{\Omega}_y
}
-1,\\
E_\varepsilon^{(2)}
&=
\frac{
 \varepsilon |y|^2
 \left(\omega_X+\sqrt{-1}\,\partial\bar\partial\phi\right)^3
}{
 3\sqrt{-1}\,dy\wedge d\bar y\wedge
 \Omega_y\wedge\overline{\Omega}_y
},\\
E_\varepsilon^{(3)}
&=
\frac{
 \varepsilon |y|^2\pi^*\beta\wedge
 \left(\omega_X+\sqrt{-1}\,\partial\bar\partial\phi\right)^2
}{
 \sqrt{-1}\,dy\wedge d\bar y\wedge
 \Omega_y\wedge\overline{\Omega}_y
}.
\end{align*}

Set $E_\varepsilon=E_\varepsilon^{(1)}+E_\varepsilon^{(2)}+E_\varepsilon^{(3)}$. It suffices to prove $\left\|E_\varepsilon\right\|_{C^{0,\alpha}_{\frac{13}{24},-\frac{25}{12}}(U_3)}\leq C\varepsilon^{\frac{1}{60}}$. Indeed, the weighted pointwise estimate therefore implies $\|E_\varepsilon\|_{C^0(U_3)}\leq C\varepsilon^{\frac1{60}}$. After decreasing $\varepsilon$ if necessary, we may assume that
$|E_\varepsilon|\leq\frac12$. Since
\[
h_\varepsilon=\log(1+E_\varepsilon),
\]
we have
\[
|h_\varepsilon|\leq 2|E_\varepsilon|,\quad |dh_\varepsilon|_{\omega_\varepsilon}=\frac{|dE_\varepsilon|_{\omega_\varepsilon}}{|1+E_\varepsilon|}\leq 2|dE_\varepsilon|_{\omega_\varepsilon}.
\]
Moreover, the map $t\mapsto\log(1+t)$ is uniformly Lipschitz on
$[-\frac12,\frac12]$, so the same estimate holds for the local
H{\"o}lder seminorms on the rescaled coordinate balls. Consequently,
\[
\|h_\varepsilon\|_{C^{0,\alpha}_{\frac{13}{24},-\frac{25}{12}}(U_3)}
\leq C\left\|E_\varepsilon\right\|_{C^{0,\alpha}_{\frac{13}{24},-\frac{25}{12}}(U_3)}\leq C\varepsilon^{\frac1{60}}.
\]

Since $\beta$ is supported in $\left\{
\frac{1}{2}\delta_1<|y|<2\delta_2
\right\}$, all the weights are uniformly equivalent to constants on its support. The explicit factor $\varepsilon$ therefore gives
\[
\left\|E_\varepsilon^{(3)}\right\|_
{C^{0,\alpha}_{\frac{13}{24},-\frac{25}{12}}(U_3)}
\leq C\varepsilon.
\]
It remains to estimate the first two terms.

\medskip

\noindent
\textbf{Case 1:} $\varepsilon^{\frac{12}{2+\mu}}\leq |y|<\epsilon_1$.

In this region,
\begin{align*}
\omega_\varepsilon
=
\frac{1}{\varepsilon}\pi^*\omega_B+\omega_X
+\sqrt{-1}\,\partial\bar\partial
\left(
\sum_{i=1}^{N}\chi_iG_{y_i}^*\psi_{y_i}
+\chi_\infty\widetilde\phi
\right).
\end{align*}

We first estimate $E_\varepsilon^{(1)}$. Note that
\begin{align*}
E_\varepsilon^{(1)}
=\frac{ \left. \left(\omega_X+\sqrt{-1}\,\partial\bar\partial\phi\right)^2
 \right|_{X_y}}{ \left.\omega_{\mathrm{SRF}}^2\right|_{X_y}}
-1.
\end{align*}
Consequently, Proposition~\ref{fiber deviation 7} gives
\begin{align*}
\left|E_\varepsilon^{(1)}\right|
&\leq
C
\left\|
\left.\omega_\varepsilon\right|_{X_y}
-
\left.\omega_{\mathrm{SRF}}\right|_{X_y}
\right\|_{C^0_{\mu-2}(X_y)}
r^{\mu-2}\\
&\leq
C\varepsilon^{\frac12}
|y|^{1-\frac14(\mu+2)}
r^{\mu-2}\\
&\leq
C\varepsilon^{\frac{1}{60}}
|y|^{\frac{13}{24}}r^{\mu-2}.
\end{align*}
The last inequality follows from $|y|\geq\varepsilon^{\frac{12}{2+\mu}}$, $\mu=-\frac{1}{12}$.

If $g_{\mathrm{SRF}}$ denotes the matrix of
$\left.\omega_{\mathrm{SRF}}\right|_{X_y}$, and $R$ is the
matrix associated with
\[
\left.
\left(\omega_X+\sqrt{-1}\,\partial\bar\partial\phi\right)
\right|_{X_y}
-
\left.\omega_{\mathrm{SRF}}\right|_{X_y},
\]
then
\[
E_\varepsilon^{(1)}
=
\operatorname{tr}\left(g_{\mathrm{SRF}}^{-1}R\right)
+
\det\left(g_{\mathrm{SRF}}^{-1}R\right).
\]
Proposition~\ref{fiber deviation 7} therefore also gives the required estimates for all
vertical derivatives.

We next consider horizontal differentiation. In the region $D=\left\{|z_3|>\frac12\max\{|z_1|,|z_2|\}
\right\}$, we use the holomorphic coordinates $(y,z_1,z_2)$. The corresponding
holomorphic lift of $\frac{\partial}{\partial y}$ is $v=\frac{\partial}{\partial y}=\frac{1}{2z_3}\frac{\partial}{\partial z_3}$ in the $(z_1,z_2,z_3)$-coordinates. Recall that $G_{y_i}$ is
constructed by flowing along $\Xi=\sum_{k=1}^{3}
\frac{\bar z_k}{2|z|^2}
\frac{\partial}{\partial z_k}$. Set $v'=(G_{y_i}^{-1})_*\left(\frac{\partial}{\partial y}\right)$, $\bar v'=(G_{y_i}^{-1})_*\left(\frac{\partial}{\partial\bar y}\right)$, where the vector fields on the right-hand side are the trivial vector
fields on the product. Then $v'(G_{y_i}^*f)=0$, $\bar v'(G_{y_i}^*f)=0$ for every smooth function $f$ on $X_{y_i}$.

Since $v-v'$ and $\bar v-\bar v'$ are vertical vector fields, the
explicit formula for $G_{y_i}$ gives
\begin{align*}
&|v-v'|_{\mathrm{EH}_y}+|\bar v-\bar v'|_{\mathrm{EH}_y}\leq Cr^{-3},\\
&|[v,v']|_{\mathrm{EH}_y}+|[v,\bar v']|_{\mathrm{EH}_y}+|[\bar v,v']|_{\mathrm{EH}_y}+|[\bar v,\bar v']|_{\mathrm{EH}_y}\leq Cr^{-7}.
\end{align*}
More generally, direct differentiation of the same formulas yields
\[
\left|
\nabla_{X_y}^{m}(v-v')
\right|_{\mathrm{EH}_y}
+
\left|
\nabla_{X_y}^{m}(\bar v-\bar v')
\right|_{\mathrm{EH}_y}
\leq C_mr^{-3-m}.
\]
These estimates also control the derivatives of the coefficients of
$v-v'$ that arise when ordinary horizontal derivatives are converted
into vertical covariant derivatives.

Indeed,
\begin{align*}
\left|v(G_{y_i}^*f)\right|=\left|(v-v')(G_{y_i}^*f)\right|\leq
Cr^{-3}
\left|\nabla_{X_{y_i}}f\right|_{\mathrm{EH}_{y_i}}.
\end{align*}
Moreover,
\[
v\left(v(G_{y_i}^*f)\right)
=
(v-v')(v-v')G_{y_i}^*f
-
[v,v']G_{y_i}^*f,
\]
and hence
\[
\left|
\frac{\partial^2}{\partial y^2}G_{y_i}^*f
\right|
\leq
Cr^{-6}
\left|\nabla_{X_{y_i}}^2f\right|_{\mathrm{EH}_{y_i}}
+
Cr^{-7}
\left|\nabla_{X_{y_i}}f\right|_{\mathrm{EH}_{y_i}}.
\]
The analogous estimates hold for
$\partial_{\bar y}^2G_{y_i}^*f$ and
$\partial_y\partial_{\bar y}G_{y_i}^*f$. Proceeding inductively, we
obtain, for $0\leq j\leq k$,
\[
\left|
\frac{\partial^k}
{\partial y^j\partial\bar y^{k-j}}
G_{y_i}^*f
\right|
\leq
Cr^{-3k}r^{\mu-k}
\|f\|_{C^{k,\alpha}_\mu(X_{y_i})}.
\]
The same argument remains valid after taking vertical derivatives,
because the estimates for
$\nabla_{X_y}^{m}(v-v')$ and
$\nabla_{X_y}^{m}(\bar v-\bar v')$ control every additional
commutator.

It remains to estimate the variation of the fiberwise potential.
Lemma~\ref{fiber deviation 4} (f) gives
\[
\left\|
G_{y',y}^*\psi_{y'}-\psi_y
\right\|_{C^{k+2,\alpha}_\beta(X_y)}
\leq
C|y-y'|\,|y'|^{-\frac14(\beta+2)}.
\]
Letting $y'\to y$, we obtain
\[
\left\|\Xi\psi_y\right\|_{C^{k+2,\alpha}_\beta(X_y)}+\left\|\bar\Xi\psi_y\right\|_{C^{k+2,\alpha}_\beta(X_y)}\leq C|y|^{-\frac14(\beta+2)}.
\]
In particular, for $1\leq k,l\leq2$,
\begin{align*}
\left|
\partial_y\partial_{z_k}\partial_{\bar z_l}\psi_y
\right|
&\leq
C\left(
r^{-6}
+
|y|^{-\frac14(\beta+2)}r^{\beta-4}
\right),\\
\left|
\partial_{\bar y}\partial_{z_k}\partial_{\bar z_l}\psi_y
\right|
&\leq
C\left(
r^{-6}
+
|y|^{-\frac14(\beta+2)}r^{\beta-4}
\right).
\end{align*}
Combining these estimates with Proposition~\ref{fiber deviation 7} and using the
horizontal and vertical scales of $\omega_\varepsilon$, we obtain
\[
\left\|E_\varepsilon^{(1)}\right\|_
{C^{0,\alpha}_{\frac{13}{24},-\frac{25}{12}}
\left(
U_3\cap
\{|y|\geq\varepsilon^{\frac{12}{2+\mu}}\}
\right)}
\leq
C\varepsilon^{\frac{1}{60}}.
\]

We now estimate $E_\varepsilon^{(2)}$. Fix a point in $D=\left\{|z_3|>\frac12\max\{|z_1|,|z_2|\}
\right\}$ and use the coordinates $(y,z_1,z_2)$. Write
\[
\omega_X+\sqrt{-1}\,\partial\bar\partial\phi
=
A+B+C+D,
\]
where
\begin{align*}
A
&=
\sqrt{-1}\,
\eta_{\alpha\bar\beta}\,
dz_\alpha\wedge d\bar z_\beta,\\
B
&=
\sqrt{-1}\,
\eta_{y\bar y}\,
dy\wedge d\bar y,\\
C
&=
\sqrt{-1}\,
\eta_{y\bar\beta}\,
dy\wedge d\bar z_\beta,\\
D
&=
\sqrt{-1}\,
\eta_{\alpha\bar y}\,
dz_\alpha\wedge d\bar y.
\end{align*}
A direct expansion gives
\[
\left(
\omega_X+\sqrt{-1}\,\partial\bar\partial\phi
\right)^3
=
3B\wedge A\wedge A
+
6A\wedge C\wedge D.
\]
Let $(\eta^{\alpha\bar\beta})$ denote the inverse of
$(\eta_{\alpha\bar\beta})$. Then
\begin{align*}
B\wedge A\wedge A
&=
\sqrt{-1}\,\eta_{y\bar y}\,
dy\wedge d\bar y\wedge
\left.
\left(
\omega_X+\sqrt{-1}\,\partial\bar\partial\phi
\right)^2
\right|_{X_y},\\
A\wedge C\wedge D
&=
-\frac{\sqrt{-1}}{2}
\sum_{\alpha,\beta}
\eta^{\alpha\bar\beta}
\eta_{y\bar\beta}
\eta_{\alpha\bar y}\,
dy\wedge d\bar y\wedge
\left.
\left(
\omega_X+\sqrt{-1}\,\partial\bar\partial\phi
\right)^2
\right|_{X_y}.
\end{align*}
Therefore,
\[
\left(
\omega_X+\sqrt{-1}\,\partial\bar\partial\phi
\right)^3
=
3\sqrt{-1}
\left(
\eta_{y\bar y}
-
\sum_{\alpha,\beta}
\eta^{\alpha\bar\beta}
\eta_{y\bar\beta}
\eta_{\alpha\bar y}
\right)
dy\wedge d\bar y\wedge
\left.
\left(
\omega_X+\sqrt{-1}\,\partial\bar\partial\phi
\right)^2
\right|_{X_y}.
\]
It follows that
\[
E_\varepsilon^{(2)}
=
\frac{\varepsilon|y|^2}{A_y}
\left(1+E_\varepsilon^{(1)}\right)
\left(
\eta_{y\bar y}
-
\sum_{\alpha,\beta}
\eta^{\alpha\bar\beta}
\eta_{y\bar\beta}
\eta_{\alpha\bar y}
\right).
\]
Thus it suffices to prove the corresponding weighted $C^1$-estimate for the expression in parentheses.

We first estimate $\eta_{y\bar y}$. Fix $i$ such that the point
under consideration lies in the region in which the $i$-th local
model is used. Since
\[
\sum_i\chi_i+\chi_\infty=1,
\]
we have the identity
\[
\phi
=
G_{y_i}^*\psi_{y_i}
+
\sum_{j\neq i}
\chi_j
\left(
G_{y_j}^*\psi_{y_j}
-
G_{y_i}^*\psi_{y_i}
\right)
+
\chi_\infty
\left(
\widetilde\phi-G_{y_i}^*\psi_{y_i}
\right).
\]
Notice that the order of the two terms in the first difference is
important for this identity.

For the first term, the horizontal differentiation estimate gives
\[
\left|
\partial_y\partial_{\bar y}
G_{y_i}^*\psi_{y_i}
\right|
\leq
Cr^{-6}
\left|
\nabla_{X_{y_i}}^2\psi_{y_i}
\right|_{\mathrm{EH}_{y_i}}
\leq Cr^{-6}.
\]
For $j\neq i$, Lemma~\ref{fiber deviation 4} and the cutoff estimates give
\begin{align*}
&\left|\chi_j\partial_y\partial_{\bar y}\left(G_{y_j}^*\psi_{y_j}-G_{y_i}^*\psi_{y_i}\right)\right|\leq Cr^{-6},\\
&\left|
\partial_y\chi_j\,
\partial_{\bar y}
\left(
G_{y_j}^*\psi_{y_j}
-
G_{y_i}^*\psi_{y_i}
\right)
\right|
\leq
C\varepsilon^{-\frac12}|y|^{-1}r^{-2},\\
&\left|
\partial_{\bar y}\chi_j\,
\partial_y\left(G_{y_j}^*\psi_{y_j}-G_{y_i}^*\psi_{y_i}\right)\right|\leq C\varepsilon^{-\frac12}|y|^{-1}r^{-2},\\
&\left|\partial_y\partial_{\bar y}\chi_j\left(G_{y_j}^*\psi_{y_j}-G_{y_i}^*\psi_{y_i}\right)\right|\leq C\varepsilon^{-\frac12}
|y|^{-1-\frac14(\mu+2)}r^\mu.
\end{align*}
Similarly,
\begin{align*}
&\left|\partial_y\chi_\infty\,\partial_{\bar y}\left(\widetilde\phi-G_{y_i}^*\psi_{y_i}\right)\right|\leq C\varepsilon^{-\frac12}|y|^{-1}r^{-2},\\
&\left|\partial_{\bar y}\chi_\infty\,
\partial_y\left(\widetilde\phi-G_{y_i}^*\psi_{y_i}\right)\right|\leq C\varepsilon^{-\frac12}|y|^{-1}r^{-2},\\
&\left|\partial_y\partial_{\bar y}\chi_\infty\left(\widetilde\phi-G_{y_i}^*\psi_{y_i}\right)\right|
\leq C\varepsilon^{-\frac12}|y|^{-1-\frac14(\mu+2)}r^\mu.
\end{align*}

When $H\geq2|y|^{\frac{5}{6}}$, the horizontal differentiation argument
gives
\[
\left|
\chi_\infty
\partial_y\partial_{\bar y}
\left(
\widetilde\phi-G_{y_i}^*\psi_{y_i}
\right)
\right|
\leq Cr^{-6}.
\]
When $H\leq2|y|^{\frac{5}{6}}$, we need to analyze the term $\sqrt{H+|y|}-\Theta_\infty$. Since
\[
\partial_y|y|=O(1),
\quad
\partial_y\partial_{\bar y}|y|=O(|y|^{-1}),
\]
direct differentiation gives
\begin{align*}
&\left|\partial_y\widetilde\phi|+|\partial_{\bar y}\widetilde\phi\right|\leq Cr^{-2},\\
&\left|\partial_y\partial_{\bar y}\widetilde\phi\right|\leq C\left(r^{-6}+r^2|y|^{-2}\right).
\end{align*}
Consequently,
\[
|\eta_{y\bar y}|\leq C\left(r^{-6}+\varepsilon^{-\frac12}|y|^{-1}r^{-2}+\varepsilon^{-\frac12}|y|^{-1-\frac14(\mu+2)}r^\mu\right)
\]
when $H\geq2|y|^{\frac{5}{6}}$; whereas
\[|\eta_{y\bar y}|\leq C\left(r^{-6}+r^2|y|^{-2}+\varepsilon^{-\frac12}|y|^{-1}r^{-2}+\varepsilon^{-\frac12}|y|^{-1-\frac14(\mu+2)}r^\mu\right)
\]
when $H\leq2|y|^{\frac{5}{6}}$.

Using $|y|\geq\varepsilon^{\frac{12}{2+\mu}}$, $|y|\leq H$, together with $r\leq C|y|^{\frac{5}{24}}$ when $H\leq2|y|^{\frac{5}{6}}$, we obtain
\[
\varepsilon|y|^2|\eta_{y\bar y}|
\leq
C\varepsilon^{\frac{1}{60}}
|y|^{\frac{13}{24}}r^{\mu-2}.
\]

We next estimate the first derivatives of $\eta_{y\bar y}$. The
general horizontal differentiation argument gives
\[
\left|
\partial_y^2\partial_{\bar y}
G_{y_i}^*\psi_{y_i}
\right|
+
\left|
\partial_y\partial_{\bar y}^2
G_{y_i}^*\psi_{y_i}
\right|
\leq Cr^{-10}.
\]
Differentiating each cutoff term one more time and applying the same
estimates as above gives
\begin{align*}
&\left|\partial_y^2\partial_{\bar y}\left[\chi_j\left(G_{y_j}^*\psi_{y_j}-G_{y_i}^*\psi_{y_i}\right)\right]\right|+\left|\partial_y\partial_{\bar y}^2\left[\chi_j\left(G_{y_j}^*\psi_{y_j}-G_{y_i}^*\psi_{y_i}\right)\right]\right|\\
\leq&
C\left(r^{-10}+\varepsilon^{-1}|y|^{-2}r^{-2}+\varepsilon^{-1}|y|^{-2-\frac14(\mu+2)}r^\mu\right).
\end{align*}

When $H\geq2|y|^{\frac{5}{6}}$, the same estimate holds with $\chi_j$
replaced by $\chi_\infty$ and
$G_{y_j}^*\psi_{y_j}$ replaced by $\widetilde\phi$.

When $H\leq2|y|^{\frac{5}{6}}$, direct differentiation of
$\sqrt{H+|y|}$ gives
\begin{align*}
&|\partial_y^2\partial_{\bar y}\widetilde\phi|+|\partial_y\partial_{\bar y}^2\widetilde\phi|\leq C\left(r^{-10}+r^2|y|^{-3}\right),\\
&|\partial_{z_\alpha}\partial_y\partial_{\bar y}\widetilde\phi|+|\partial_{\bar z_\alpha}\partial_y\partial_{\bar y}\widetilde\phi|\leq C\left(r^{-8}+|y|^{-2}\right).
\end{align*}
It follows that, when $H\geq2|y|^{\frac{5}{6}}$,
\begin{align*}
&\left|\partial_y\eta_{y\bar y}\right|+\left|\partial_{\bar y}\eta_{y\bar y}\right|\leq C\left(r^{-10}+\varepsilon^{-1}|y|^{-2}r^{-2}+\varepsilon^{-1}|y|^{-2-\frac14(\mu+2)}r^\mu
\right),\\
&\left|\partial_{z_\alpha}\eta_{y\bar y}\right|+\left|\partial_{\bar z_\alpha}\eta_{y\bar y}\right|\leq C\left(r^{-8}+|y|^{-1}r^{-4}+\varepsilon^{-\frac12}|y|^{-1-\frac14(\mu+2)}r^{\mu-2}\right).
\end{align*}
When $H\leq2|y|^{\frac{5}{6}}$, we instead have
\begin{align*}
&\left|\partial_y\eta_{y\bar y}|+|\partial_{\bar y}\eta_{y\bar y}\right|\leq C\left(r^{-10}+r^2|y|^{-3}+\varepsilon^{-1}|y|^{-2}r^{-2}+\varepsilon^{-1}|y|^{-2-\frac14(\mu+2)}r^\mu
\right),\\
&\left|\partial_{z_\alpha}\eta_{y\bar y}\right|+\left|\partial_{\bar z_\alpha}\eta_{y\bar y}\right|\leq C\left(r^{-8}+|y|^{-2}+|y|^{-1}r^{-4}+\varepsilon^{-\frac12}|y|^{-1-\frac14(\mu+2)}r^{\mu-2}\right).
\end{align*}

The horizontal and vertical covectors satisfy
\[
|dy|_{\omega_\varepsilon}
+
|d\bar y|_{\omega_\varepsilon}
\leq C\varepsilon^{\frac12}|y|,
\quad
|dz_\alpha|_{\omega_\varepsilon}
+
|d\bar z_\alpha|_{\omega_\varepsilon}
\leq Cr.
\]
Therefore,
\begin{align*}
|d\eta_{y\bar y}|_{\omega_\varepsilon}
\leq C\left[\varepsilon^{\frac12}|y|\left(|\partial_y\eta_{y\bar y}|+|\partial_{\bar y}\eta_{y\bar y}|
\right)+r\sum_{\alpha=1}^{2}\left(|\partial_{z_\alpha}\eta_{y\bar y}|+|\partial_{\bar z_\alpha}\eta_{y\bar y}|\right)
\right].
\end{align*}
Substituting the preceding estimates and using again $|y|\geq\varepsilon^{\frac{12}{2+\mu}}$, $r\leq C|y|^{\frac{5}{24}}$ when $H\leq2|y|^{\frac{5}{6}}$, we obtain
\[
\varepsilon|y|^2
|d\eta_{y\bar y}|_{\omega_\varepsilon}
\leq
C\varepsilon^{\frac{1}{60}}
|y|^{\frac{13}{24}}r^{\mu-3}.
\]

We now consider the mixed term. Since the fiber metric is comparable
to the Eguchi--Hanson metric in this region, its inverse satisfies $\left|\eta^{\alpha\bar\beta}\right|\leq Cr^2$. Moreover,
\[
|\eta_{y\bar\beta}|
+
|\eta_{\alpha\bar y}|
\leq
C\left(
1+r^{-4}
+
|y|^{-\frac14(\mu+2)}r^{\mu-2}
\right).
\]
Hence, for each fixed pair $(\alpha,\beta)$, with no summation over
$\alpha,\beta$,
\begin{align*}
\left|
\eta^{\alpha\bar\beta}
\eta_{y\bar\beta}
\eta_{\alpha\bar y}
\right|
&\leq
Cr^2
\left(
1+r^{-4}
+
|y|^{-\frac14(\mu+2)}r^{\mu-2}
\right)^2\\
&\leq
C\left(
r^{-6}
+
|y|^{-\frac14(\mu+2)}r^{\mu-4}
+
|y|^{-\frac12(\mu+2)}r^{2\mu-2}
\right).
\end{align*}
There are only four pairs $(\alpha,\beta)$, so summing the
componentwise estimate gives
\[
\varepsilon|y|^2
\left|
\sum_{\alpha,\beta}
\eta^{\alpha\bar\beta}
\eta_{y\bar\beta}
\eta_{\alpha\bar y}
\right|
\leq
C\varepsilon^{\frac{1}{60}}
|y|^{\frac{13}{24}}r^{\mu-2}.
\]

For the first derivative, we use
\[
d\eta^{\alpha\bar\beta}
=
-
\sum_{\gamma,\delta}
\eta^{\alpha\bar\gamma}
\left(d\eta_{\delta\bar\gamma}\right)
\eta^{\delta\bar\beta}.
\]
The vertical metric estimates and Lemma~\ref{fiber deviation 4} give
\begin{align*}
&\left|\partial_y\eta^{\alpha\bar\beta}\right|+\left|\partial_{\bar y}\eta^{\alpha\bar\beta}\right|\leq C\left(r^{-2}+|y|^{-\frac14(\mu+2)}r^\mu
\right),\\
&\left|\partial_{z_p}\eta^{\alpha\bar\beta}\right|+\left|\partial_{\bar z_p}\eta^{\alpha\bar\beta}\right|
\leq C.
\end{align*}
They also give
\begin{align*}
&\left|\partial_{z_p}\eta_{y\bar\beta}\right|+\left|\partial_{\bar z_p}\eta_{y\bar\beta}\right|\leq C\left(r^{-6}+|y|^{-\frac14(\mu+2)}r^{\mu-4}\right),\\
&\left|\partial_{z_p}\eta_{\alpha\bar y}\right|+\left|\partial_{\bar z_p}\eta_{\alpha\bar y}\right| \leq C\left(r^{-6}+|y|^{-\frac14(\mu+2)}r^{\mu-4}\right).
\end{align*}
For the holomorphic vertical derivative, for example, the K{\"a}hler $\partial_{z_p}\eta_{y\bar\beta}
=
\partial_y\eta_{p\bar\beta}$
identity reduces the estimate to the previously established horizontal
variation of the fiber metric. The antiholomorphic derivative is
estimated directly by differentiating the corresponding mixed
component. The conjugate estimates apply to
$\eta_{\alpha\bar y}$.

When $H\geq2|y|^{\frac{5}{6}}$, we have
\begin{align*}
|\partial_y\eta_{y\bar\beta}|
+
|\partial_{\bar y}\eta_{y\bar\beta}|
+
|\partial_y\eta_{\alpha\bar y}|
+
|\partial_{\bar y}\eta_{\alpha\bar y}|\leq
C\left(
|y|^{-1}r^{-4}
+
\varepsilon^{\frac12}
|y|^{-1-\frac14(\mu+2)}r^{\mu-2}
\right).
\end{align*}
When $H\leq2|y|^{\frac{5}{6}}$, the explicit formula for
$\widetilde\phi$ instead gives
\begin{align*}
|\partial_y\eta_{y\bar\beta}|
+
|\partial_{\bar y}\eta_{y\bar\beta}|
+
|\partial_y\eta_{\alpha\bar y}|
+
|\partial_{\bar y}\eta_{\alpha\bar y}|\leq
C\left(
|y|^{-2}
+
\varepsilon^{\frac12}
|y|^{-1-\frac14(\mu+2)}r^{\mu-2}
\right).
\end{align*}

For each fixed pair $(\alpha,\beta)$, with no summation over
$\alpha,\beta$, the product rule gives
\begin{align*}
d\left(\eta^{\alpha\bar\beta}\eta_{y\bar\beta}\eta_{\alpha\bar y}\right)=\left(d\eta^{\alpha\bar\beta}\right)
\eta_{y\bar\beta}
\eta_{\alpha\bar y}+\eta^{\alpha\bar\beta}\left(d\eta_{y\bar\beta}\right)\eta_{\alpha\bar y}+
\eta^{\alpha\bar\beta}
\eta_{y\bar\beta}
\left(d\eta_{\alpha\bar y}\right).
\end{align*}
Using the horizontal and vertical covector bounds above, together with
the corresponding component estimates, we obtain separately
\begin{align*}
&\varepsilon|y|^2
\left|
\left(d\eta^{\alpha\bar\beta}\right)
\eta_{y\bar\beta}
\eta_{\alpha\bar y}
\right|_{\omega_\varepsilon}\leq C\varepsilon^{\frac{1}{60}}
|y|^{\frac{13}{24}}r^{\mu-3},\\
&\varepsilon|y|^2
\left|
\eta^{\alpha\bar\beta}
\left(d\eta_{y\bar\beta}\right)
\eta_{\alpha\bar y}
\right|_{\omega_\varepsilon}
\leq
C\varepsilon^{\frac{1}{60}}
|y|^{\frac{13}{24}}r^{\mu-3},\\
&\varepsilon|y|^2
\left|
\eta^{\alpha\bar\beta}
\eta_{y\bar\beta}
\left(d\eta_{\alpha\bar y}\right)
\right|_{\omega_\varepsilon}
\leq
C\varepsilon^{\frac{1}{60}}
|y|^{\frac{13}{24}}r^{\mu-3}.
\end{align*}
Summing over the four possible pairs gives
\[
\varepsilon|y|^2
\left|
d\left(
\sum_{\alpha,\beta}
\eta^{\alpha\bar\beta}
\eta_{y\bar\beta}
\eta_{\alpha\bar y}
\right)
\right|_{\omega_\varepsilon}
\leq
C\varepsilon^{\frac{1}{60}}
|y|^{\frac{13}{24}}r^{\mu-3}.
\]

Finally, differentiating the formula for $E_\varepsilon^{(2)}$
gives
\begin{align*}
dE_\varepsilon^{(2)}
={}&
\varepsilon\,
d\left(\frac{|y|^2}{A_y}\right)
\left(1+E_\varepsilon^{(1)}\right)
\left(
\eta_{y\bar y}
-
\sum_{\alpha,\beta}
\eta^{\alpha\bar\beta}
\eta_{y\bar\beta}
\eta_{\alpha\bar y}
\right)\\
&+
\frac{\varepsilon|y|^2}{A_y}
\,dE_\varepsilon^{(1)}
\left(
\eta_{y\bar y}
-
\sum_{\alpha,\beta}
\eta^{\alpha\bar\beta}
\eta_{y\bar\beta}
\eta_{\alpha\bar y}
\right)\\
&+
\frac{\varepsilon|y|^2}{A_y}
\left(1+E_\varepsilon^{(1)}\right)
d\left(
\eta_{y\bar y}
-
\sum_{\alpha,\beta}
\eta^{\alpha\bar\beta}
\eta_{y\bar\beta}
\eta_{\alpha\bar y}
\right).
\end{align*}
The expansion of $A_y$, the estimate for $E_\varepsilon^{(1)}$,
and the estimates just established imply
\begin{align*}
&\left|E_\varepsilon^{(2)}\right|\leq C\varepsilon^{\frac{1}{60}}
|y|^{\frac{13}{24}}r^{\mu-2},\\
&\left|dE_\varepsilon^{(2)}\right|_{\omega_\varepsilon}\leq
C\varepsilon^{\frac{1}{60}}
|y|^{\frac{13}{24}}r^{\mu-3}.
\end{align*}
The mean-value inequality on the rescaled coordinate balls used in the
definition of the weighted H{\"o}lder norm now gives
\[
\left\|E_\varepsilon^{(2)}\right\|_
{C^{0,\alpha}_{\frac{13}{24},-\frac{25}{12}}
\left(
U_3\cap
\{|y|\geq\varepsilon^{\frac{12}{2+\mu}}\}
\right)}
\leq
C\varepsilon^{\frac{1}{60}}.
\]
Thus
\[
\left\|h_\varepsilon\right\|_
{C^{0,\alpha}_{\frac{13}{24},-\frac{25}{12}}
\left(
U_3\cap
\{|y|\geq\varepsilon^{\frac{12}{2+\mu}}\}
\right)}
\leq
C\varepsilon^{\frac{1}{60}}.
\]

\medskip

\noindent
\textbf{Case 2:} $|y|\leq\varepsilon^{\frac{12}{2+\mu}}$.

In this region,
\[
\omega_\varepsilon=\frac{\sqrt{-1}\,A_y}{\varepsilon|y|^2}dy\wedge d\bar y
+
\omega_X+\sqrt{-1}\,\partial\bar\partial\phi,
\]
where
\[
\phi=c_\infty+\gamma_1\left(\frac{H}{|y|^{\frac{5}{6}}}\right)
G_\infty^*\left(\psi_0^{(\infty)}-c_\infty\right)+\gamma_2\left(\frac{H}{|y|^{\frac{5}{6}}}\right)
\left(\sqrt{H+|y|}-\Theta_\infty\right).
\]
As before,
\[
\exp(h_\varepsilon)
=
1+E_\varepsilon^{(1)}+E_\varepsilon^{(2)}.
\]

Proposition~\ref{fiber deviation 2} gives
\begin{align*}
\left|E_\varepsilon^{(1)}\right|
&\leq C\left\|\omega_y-\left.\omega_{\mathrm{SRF}}\right|_{X_y}\right\|_{C^0_{\mu-2}(X_y)}r^{\mu-2}\\
&\leq C|y|^{\frac{7}{12}-\frac{5\mu}{24}}r^{\mu-2}\\
&\leq C\varepsilon^{\frac{1}{60}}
|y|^{\frac{13}{24}}r^{\mu-2}.
\end{align*}
Applying the horizontal differentiation argument from Case~1, we
obtain
\[
\left\|E_\varepsilon^{(1)}\right\|_{C^{0,\alpha}_{\frac{13}{24},-\frac{25}{12}}\left(U_3\cap\{|y|\leq\varepsilon^{\frac{12}{2+\mu}}\}
\right)}
\leq C\varepsilon^{\frac{1}{60}}.
\]

It remains to estimate $E_\varepsilon^{(2)}$. On the region
$H<|y|^{\frac{5}{6}}$, Proposition~\ref{prop of omega 1} gives
\[
\frac{\varepsilon|y|^2\left(\omega_X+\sqrt{-1}\,\partial\bar\partial\phi\right)^3}{\operatorname{Vol}_E}=\frac{\varepsilon|y|}{8(H+|y|)^{\frac{1}{2}}}.
\]
Differentiating the right-hand side and using
\[
|dy|_{\omega_\varepsilon}
\leq C\varepsilon^{\frac12}|y|,
\quad
|dH|_{\omega_\varepsilon}\leq Cr^3,
\]
we obtain
\begin{align*}
\varepsilon
\left|d\left(\frac{|y|}{(H+|y|)^{\frac{1}{2}}}\right)\right|_{\omega_\varepsilon}\leq C\varepsilon\left(\frac{\varepsilon^{\frac12}|y|}{(H+|y|)^{\frac{1}{2}}}+\frac{\varepsilon^{\frac12}|y|^2}{(H+|y|)^{\frac{3}{2}}}+\frac{|y|r^3}{(H+|y|)^{\frac{3}{2}}}
\right).
\end{align*}
Since $|y|\leq H= r^4$, these estimates imply
\[
\left\|E_\varepsilon^{(2)}\right\|_{C^{0,\alpha}_{\frac{13}{24},-\frac{25}{12}}\left(U_3\cap
\left\{|y|\leq\varepsilon^{\frac{12}{2+\mu}},\ H<|y|^{\frac{5}{6}}\right\}\right)}\leq C\varepsilon^{\frac{1}{60}}.
\]

On the region $H>2|y|^{\frac{5}{6}}$, the explicit formula for $G_\infty$
gives
\[
\frac{\varepsilon|y|^2\left(\omega_X+\sqrt{-1}\,\partial\bar\partial\phi\right)^3}{\sqrt{-1}\,dy\wedge d\bar y\wedge\Omega_y\wedge\overline{\Omega}_y}\leq C\varepsilon\frac{|y|^2}{H^{\frac{3}{2}}}\leq C\varepsilon^{\frac{1}{60}}|y|^{\frac{13}{24}}r^{\mu-2},
\]
and
\[
\varepsilon|y|^2\left|d\left(\frac{\left(\omega_X+\sqrt{-1}\,\partial\bar\partial\phi\right)^3}{\sqrt{-1}\,dy\wedge d\bar y\wedge\Omega_y\wedge\overline{\Omega}_y}\right)
\right|_{\omega_\varepsilon}\leq C\varepsilon\frac{|y|^2}{H^{\frac{7}{4}}}\leq C\varepsilon^{\frac{1}{60}}
|y|^{\frac{13}{24}}r^{\mu-3}.
\]

Finally, on the transition region $|y|^{\frac{5}{6}}\leq H\leq2|y|^{\frac{5}{6}}$, the derivatives of the cutoff functions must also be included. The
resulting estimates are
\[
\frac{\varepsilon|y|^2\left(\omega_X+\sqrt{-1}\,\partial\bar\partial\phi\right)^3}{\sqrt{-1}\,dy\wedge d\bar y\wedge\Omega_y\wedge\overline{\Omega}_y}\leq C\varepsilon\left(H^{\frac{1}{2}}+\frac{|y|^2}{H^{\frac{3}{2}}}+\frac{|y|}{H^{\frac{1}{2}}}\right)\leq C\varepsilon^{\frac{1}{60}}|y|^{\frac{13}{24}}r^{\mu-2},
\]
and
\begin{align*}
\varepsilon|y|^2\left|d\left(\frac{\left(\omega_X+\sqrt{-1}\,\partial\bar\partial\phi\right)^3}{\sqrt{-1}\,dy\wedge d\bar y\wedge \Omega_y\wedge\overline{\Omega}_y}\right)\right|_{\omega_\varepsilon}\leq C\varepsilon
\left(H^{\frac{1}{4}}+\frac{|y|^2}{H^{\frac{7}{4}}}+\frac{|y|}{H^{\frac{3}{4}}}
\right)\leq C\varepsilon^{\frac{1}{60}}|y|^{\frac{13}{24}}r^{\mu-3}.
\end{align*}

The three regions $H<|y|^{\frac{5}{6}}$, $|y|^{\frac{5}{6}}\leq H\leq2|y|^{\frac{5}{6}}$, $H>2|y|^{\frac{5}{6}}$ cover the whole region under consideration. The weighted
$C^1$-estimates above, followed by the mean-value inequality on the
rescaled coordinate balls, therefore give
\[
\left\|E_\varepsilon^{(2)}\right\|_
{C^{0,\alpha}_{\frac{13}{24},-\frac{25}{12}}
\left(
U_3\cap
\{|y|\leq\varepsilon^{\frac{12}{2+\mu}}\}
\right)}
\leq
C\varepsilon^{\frac{1}{60}}.
\]
Combining this with the estimate for $E_\varepsilon^{(1)}$, we conclude that
\[
\left\|h_\varepsilon\right\|_
{C^{0,\alpha}_{\frac{13}{24},-\frac{25}{12}}(U_3)}
\leq
C(\alpha)\varepsilon^{\frac{1}{60}}.
\]
\end{proof}

\begin{remark}

From the above computation, one observes that the main loss comes from the fiberwise deviation term $\frac{A_y(\omega_X+\sqrt{-1}\partial\bar{\partial}\phi)\big{|}_{X_y}^2}{\Omega_y\wedge\overline{\Omega}_y}-1$. In this term, we do not have any collapsing factor a priori. Therefore, we must rely on the condition $|y|\lesssim \varepsilon^{\frac{12}{2+\mu}}$ and the decay rate in $|y|$ to extract a small collapsing factor. This is precisely what Yang Li did when dealing with the corresponding term for his ansatz. 

The essential difference from his case lies in the behavior of $\varepsilon|y|^2\frac{(\omega_X+\sqrt{-1}\partial\bar{\partial}\phi)^3}{dy\wedge d\bar{y}\wedge\Omega_y\wedge\overline{\Omega}_y}$ in our model at infinity: this term is much better behaved for the semi-Ricci-flat approximation. The reason is that the base term in our ansatz at infinity carries an extra factor of $\frac{1}{|y|^2}$, which ensures decay in $|y|$. In Yang Li's case, however, when he computes the Ricci potential, he encounters a term of the form $\varepsilon \frac{(\omega_X+\sqrt{-1}\partial\bar{\partial}\phi)^3}{dy\wedge d\bar{y}\wedge\Omega_y\wedge\overline{\Omega}_y}$. Without regularizing his ansatz near the node, the semi-Ricci-flat approximation fails inside the quantum scale, meaning the Ricci potential becomes unbounded inside $\{r < \varepsilon^{\frac16}\}$ as $|y| \to 0$.

The above observation might suggest that we could simply use the genuine semi-Ricci-flat ansatz $\omega_{\operatorname{SRF}}^{\varepsilon}$ at infinity instead of the glued one, which would significantly improve the behavior of the Ricci potential. However, we do not adopt this approach because of the difficulty in analyzing the precise dependence of the semi-Ricci-flat potential $\psi_y$ on $y$ near the singular node. Our gluing ansatz, while not optimal in terms of behavior, is quite explicit and easier to handle. More importantly, the collapsing and decaying properties described above are sufficient for our purposes.

\end{remark}

Finally, we prove Proposition \ref{Ricci potential decay 1}.

\begin{proof}

(1) Combine Lemmas \ref{Ricci potential decay 2} and \ref{Ricci potential decay 3}. Then convert the norm on $U_3$ to the global norm.

(2) Use the elementary numerical properties of the weighted H{\"o}lder spaces listed in Remark \ref{numerical properties of the weighted Holder spaces}.

\end{proof}

\subsection{Metric deviation}\label{Metric deviation}When estimating the Ricci potential, we have in essence proved that $\omega_\varepsilon$ is close to various simpler model metrics on their respective regions. We now record a coarser formulation of these bounds, which is valid over slightly larger domains. These estimates will be useful in constructing an approximate inverse for the Laplacian of the collapsing metric.

Let $\Lambda_1$, $\epsilon_1$, and $\epsilon_2$ be as introduced earlier. Let $\Lambda_2 \gg 1$ and $\Lambda_3 \gg 1$ be two large constants, and let $0 < \epsilon_3, \epsilon_4 \le \epsilon_1$ be small parameters, all of which will be chosen independently of $\varepsilon$. We require $\epsilon_3 \ll (\frac{1}{\Lambda_1 \Lambda_2})^{12}$ to be sufficiently small that the set $\{r > \Lambda_2^{-2}(\varepsilon^{\frac{1}{10}} + \varepsilon^{\frac{1}{12}}\rho'^{\frac{1}{6}}),\ |t| < \epsilon_3\}$ is contained in $U_1$. For technical convenience, we additionally impose $\epsilon_3^{\frac{1}{3}}\Lambda_2^8 \ll \epsilon_3^{\frac{1}{7}}$.

\begin{proposition}\label{metric deviation}

Assume that $\epsilon_3,\epsilon_4, \Lambda_2, \Lambda_3$ are given as above. Let $g_\varepsilon$, $g_{\operatorname{SRF}}$ and $g_{y'}$ be the corresponding Riemannian metrics associated to the K{\"a}hler forms $\omega_\varepsilon$, $\omega_{\operatorname{SRF}}$ and $\omega_{y'}$ respectively. Set $dt\odot d\bar{t}\coloneqq \frac12(dt\otimes  d\bar{t}+d\bar{t}\otimes dt)$.

Then, for all sufficiently small $\varepsilon>0$, the following estimates hold.

(a) In the region $\left\{r \lesssim \varepsilon^{\frac{1}{10}} + \varepsilon^{\frac{1}{12}}\rho'^{\frac16}, |t| < \epsilon_3\right\} \subset U_0$, the metric $\omega_\varepsilon$ deviates from the scaled $\mathbb{C}^3$ model metric by
\begin{align*}
  \left\| \omega_\varepsilon - \left( \frac{\varepsilon}{2A_0} \right)^{\frac{1}{3}} (\Upsilon_\varepsilon^{-1})^*\omega_{\mathbb{C}^3} \right\|_{C^{0,\alpha}_{0,0,0,0,\varepsilon }} \le C(\alpha)\epsilon_3^{\frac{1}{7}}.
\end{align*}

(b)  In the region $\left\{r > \Lambda_2^{-2}(\varepsilon^{\frac{1}{10}} + \varepsilon^{\frac{1}{12}}\rho'^{\frac16}), |t| < \epsilon_3\right\} \subset U_1$, which can be identified as a subset of $\mathbb{C}\times X_0 $ via the trivialization $G_0$, the metric $\omega_\varepsilon$ deviates from the product metric by
 \begin{align*}
    \left\| g_\varepsilon - G_0^* \left(\frac{2A_0 }{\varepsilon}  dt \odot d\bar{t}+ g_{\operatorname{SRF}}|_{X_0} \right) \right\|_{C^{0,\alpha}_{0,0,0,0,\varepsilon}} \le C(\alpha)\epsilon_3^{\frac{1}{7}}.
  \end{align*}

 (c) Let $X_{y'}$ be any fiber with $y'$ in the region $\left\{|t|>\frac12\epsilon _3,|y|>\frac12\epsilon_4\right\}$.
 
 If $y'=t'$ is in the coordinate neighborhood $(W_0,t)$, then for $|t-t'|\leq \Lambda_3\varepsilon^{\frac12}\ll \epsilon_2|t'|$, the trivialization $G_{t'}$ is well-defined. The metric $\omega_\varepsilon$ deviates from the product metric by
  \begin{align*}
    \left\| g_\varepsilon - G_{t'}^* \left(\frac{ 2A_{t'}}{\varepsilon}  dt \odot d\bar{t} + g_{\operatorname{SRF}}|_{X_{t'}} \right) \right\|_{C^{1,\alpha}_{0,0,0,0,\varepsilon}} \le C(\alpha, \epsilon_3) \Lambda_3 \varepsilon^{\frac12}.
  \end{align*}

If $y'$ is in the coordinate neighborhood $(W_\infty,y)$, then for $|y-y'|\leq \Lambda_3\varepsilon^{\frac12}|y'|\ll \epsilon_2|y'|$, the trivialization $G_{y'}$ is well-defined. The metric $\omega_\varepsilon$ deviates from the product metric by
\begin{align*}
    \left\| g_\varepsilon - G_{y'}^* \left(\frac{ 2A_{y'}}{\varepsilon|y'|^2}  dy \odot d\bar{y} + g_{\operatorname{SRF}}|_{X_{y'}} \right) \right\|_{C^{1,\alpha}_{0,0,0,0,\varepsilon}} \le C(\alpha) \Lambda_3 \varepsilon^{\frac12}.
  \end{align*}

If $y'$ lies outside both coordinate charts, one may simply work in a local coordinate $w$ centered at $y'$. On the region $\left\{w \in B \mid d_{\omega_B}(w, y') < \Lambda_3\varepsilon^{\frac12}\right\}$, an estimate of the same form holds, with the base term in the product approximation replaced by the local Euclidean metric on the base that best approximates $\omega_B$; the dominant term on the right-hand side remains to be $C(\alpha)\,\Lambda_3\varepsilon^{\frac12}$.

(d) Let $X_{y'}$ be any fiber with $y'$ in the region $\left\{0<|y|<\epsilon_4\right\}$. For $|y - y'| \le \Lambda_3 |y'|\varepsilon^{\frac{1}{2}} \ll \epsilon_2 |y'|$, the trivialization $G_{y'}$ is well defined. The metric $\omega_\varepsilon$ deviates from the product metric $g_{\operatorname{prod},y'}\coloneqq \frac{2A_{y'}}{\varepsilon |y'|^2}   dy \odot d\bar{y} +g_{y'} $ in the region $U_{y'}=\{|y - y'| \le \Lambda_3|y'| \varepsilon^{\frac12}\}$ by
 \begin{align*}
    \left\| \left(G_{y'}^{-1}\right)^*g_\varepsilon -g_{\operatorname{prod},y'}\right\|_{C^{1,\alpha}_{0,0}\left(U_{y'},G_{y'}^* \left(g_{\operatorname{prod},y'}\right)\right)} \le C(\alpha) \epsilon_4^{\frac{1}{24}},
  \end{align*}where the weighted H{\"o}lder norm $C^{1,\alpha}_{0,0}\left(G_{y'}(U_{y'}), g_{\operatorname{prod},y'}\right)$ is defined by replacing $\mathbb{C}\times X_{y}$ and $g_{\operatorname{P},y}$ in (\ref{Weighted norm on C Xy}) by $G_{y'}(U_{y'})$ and $g_{\operatorname{prod},y'}$, respectively.

\end{proposition}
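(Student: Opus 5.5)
Parts (a)--(c) concern regions away from the removed fiber, where the ansatz coincides with that of \cite{li2019gluing}. We therefore plan to import those estimates (Lemma 2.7 and the metric deviation statements of \cite{li2019gluing}) and only check that the weights match. For (a) and (b), the ansatz near $q_0$ and on $U_1$ is literally Li's. The bounds then follow from Proposition~\ref{fiber deviation 5}(b), the asymptotics $\|\phi'_{\mathbb{C}^3}\|_{C^{k,\alpha}_{\delta,0}}\le C$, and the choice of $\epsilon_3$. The power $\epsilon_3^{\frac17}$ comes from $|t|<\epsilon_3$ together with the cutoff scale $\varepsilon^{\frac1{10}}+\varepsilon^{\frac1{12}}\rho'^{\frac16}$. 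The hypothesis $\epsilon_3^{\frac13}\Lambda_2^8\ll\epsilon_3^{\frac17}$ absorbs the loss from enlarging the region by $\Lambda_2^{-2}$.

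For (c), away from the singular fibers, the plan has three steps.
\begin{itemize}
\item Write $\omega_\varepsilon$ on $\{|y-y'|\le\Lambda_3\varepsilon^{\frac12}|y'|\}$ as $G_{y'}^*$ of the product plus errors.
\item Bound the base error by $\frac1\varepsilon|A_y/|y|^2-A_{y'}/|y'|^2|\lesssim\frac{1}{\varepsilon}\Lambda_3\varepsilon^{\frac12}$ relative to the base scale $\frac1\varepsilon$. This gives a relative error $\Lambda_3\varepsilon^{\frac12}$, using that $A_y$ is Lipschitz.
\item Bound the fiber error by Lemma~\ref{fiber deviation 3}(a),(c) and Lemma~\ref{fiber deviation 4}(c),(f) with $|y-y'|\lesssim\Lambda_3\varepsilon^{\frac12}$. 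Bound the mixed and horizontal terms by the horizontal-derivative estimates from the proof of Lemma~\ref{Ricci potential decay 3}, which cost $|dy|_{\omega_\varepsilon}\lesssim\varepsilon^{\frac12}|y|$ each.
\end{itemize}
Since $y'$ stays a bounded distance ($\epsilon_3,\epsilon_4$) from $S$, all weights are comparable to constants and the constants may depend on $\epsilon_3$. The $C^{1,\alpha}$ control follows by differentiating once more.

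The genuinely new part is (d), near the removed fiber. We work on $U_{y'}$ in the product chart $G_{y'}(U_{y'})\subset\mathbb{C}\times X_{y'}$, rescaling the base coordinate by $\zeta=(2A_{y'}/\varepsilon)^{\frac12}\log(y/y')$ roughly, so that the base metric becomes Euclidean on a disc of radius $\sim\Lambda_3$. We then estimate four terms.
\begin{enumerate}
\item The base coefficient deviation $\frac{A_y|y'|^2}{A_{y'}|y|^2}-1=O(\Lambda_3\varepsilon^{\frac12})$.
\item The fiber deviation $\omega_\varepsilon|_{X_y}-G_{y',y}^*\omega_{y'}$. Using $\omega_y$ as the reference, this is controlled by Lemma~\ref{fiber deviation 4}(a),(c),(e) and Proposition~\ref{fiber deviation 7}/Proposition~\ref{fiber deviation 2}. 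In the weighted norm with $\mu=0$ these give relative errors like $|y-y'|r^{-4}$, and $|y|^{\frac7{12}}r^{-2}\lesssim|y|^{\frac1{24}}$ using $r\ge|y|^{\frac14}$, which is bounded by $\epsilon_4^{\frac1{24}}$.
\item The mixed terms $\eta_{y\bar\beta}$, measured against $|dy||dz|\lesssim\varepsilon^{\frac12}|y|r$. Using the bounds on $\eta_{y\bar\beta}$ established in Lemma~\ref{Ricci potential decay 3}, these contribute $\varepsilon^{\frac12}|y|\,r\,(1+r^{-4}+\dots)\cdot r$, which is small since $r^4\gtrsim|y|$.
\item The $\eta_{y\bar y}$ term relative to $\frac{1}{\varepsilon|y|^2}$, handled exactly as $E^{(2)}_\varepsilon$ was, giving $\varepsilon|y|^2(r^{-6}+r^2|y|^{-2}+\dots)$, again small.
\end{enumerate}
The $C^{1,\alpha}$ bound comes from differentiating once more, with the same derivative bounds used there.

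The main obstacle is (d) in the vanishing-cycle region $r^4\sim|y|$. There the sectional curvature blows up, so the deviation must be measured in the scale-invariant weighted norm rather than the plain one. We would verify it by the same rescaling $\Phi_y$ as in Lemma~\ref{weighted Schauder 1}: after scaling, both $g_\varepsilon$ and $g_{\operatorname{prod},y'}$ become close to $\omega_{\mathbb{C}}+\mathrm{EH}_1$ up to order $|y|^{\frac1{24}}$. This would need to be checked uniformly on the transition annulus $H\sim|y|^{\frac56}$ using Corollary~\ref{total deviation 1}.
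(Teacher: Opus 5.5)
For (a)--(c) you do what the paper does, namely import Proposition~2.10 of \cite{li2019gluing}. For (d) your outline is sound, but it is organized differently from the paper's argument.

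\emph{The paper's route.} It isolates the whole non-holomorphicity of the trivialization in a single estimate, $\|\widehat J-J_{\operatorname{prod}}\|_{C^3_{0,0}}\le C\Lambda_3\varepsilon^{1/2}$ with $\widehat J=(G_{y'}^{-1})^*J$, obtained from the expansion of $G_{y',y}$. It then splits the pulled-back form into three pieces:
\begin{itemize}
\item the $\omega_X$-pullback error;
\item the commutator $-\frac12 d\bigl[d((G_{y'}^{-1})^*\phi)\circ(\widehat J-J_{\operatorname{prod}})\bigr]$;
\item $\sqrt{-1}\partial_{J_{\operatorname{prod}}}\bar\partial_{J_{\operatorname{prod}}}$ of the potential difference.
\end{itemize}
Only at the end does it pass from forms to metrics, via $\widehat g_\varepsilon-g_{\operatorname{prod},y'}=(\widehat\omega_\varepsilon-\omega_{\operatorname{prod},y'})(\cdot,J_{\operatorname{prod}}\cdot)+\widehat\omega_\varepsilon(\cdot,(\widehat J-J_{\operatorname{prod}})\cdot)$. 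This keeps every remaining estimate in one fixed product complex structure and reduces it to potential bounds.

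\emph{Your route.} You work blockwise (base coefficient, fiber, mixed, $\eta_{y\bar y}$) in the holomorphic frame $(\partial_y,\partial_{z_\alpha})$. You recycle the component bounds already proved in Lemma~\ref{Ricci potential decay 3}, which saves recomputation and makes the smallness of each block transparent. Near the vanishing cycle, your rescaling to $\mathbb C\times V_1$ replaces the paper's direct scale-invariant bounds with the normalized fields $\mathcal H=\varepsilon^{1/2}|y'|\partial_y$.

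\emph{What your route still needs.} There is a frame change your plan does not mention. The metric $g_{\operatorname{prod},y'}$ is block-diagonal in the frame built from $v'=(G_{y'}^{-1})_*\partial_y\neq\partial_y$, and $G_{y'}^*g_{\operatorname{prod},y'}$ is not $J$-Hermitian. Your item (2) sees this mismatch only in the vertical block, as the $|y-y'|r^{-4}$ error. You must also include the terms generated by the vertical field $v'-\partial_y$, which satisfies $|v'-\partial_y|_{\operatorname{EH}_y}\le Cr^{-3}$. In the normalized frame these are at most:
\begin{itemize}
\item $C\varepsilon^{1/2}|y|r^{-3}\le C\varepsilon^{1/2}|y|^{1/4}$ in the mixed block;
\item $C\varepsilon|y|^2r^{-6}\le C\varepsilon|y|^{1/2}$ in the horizontal block.
\end{itemize}
So the argument still closes once they are added.

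\emph{Two smaller corrections.} First, near the vanishing cycle the rescaled metrics agree with $\omega_{\mathbb C}+\omega_{\operatorname{EH},1}$ only up to $O(\Lambda_3\varepsilon^{1/2}+|y|^{c})$, not $O(|y|^{1/24})$. The trivialization and base-coefficient errors do not decay in $|y|$. This is harmless, since both are below $C\epsilon_4^{1/24}$ once $\varepsilon$ is small.

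Second, in the $W_\infty$ case of (c), your ``weights comparable to constants'' argument gives a constant depending on $\epsilon_4$, not the stated $C(\alpha)$. That is enough for Lemma~\ref{approximate invert 1}, because those terms carry $\varepsilon^{1/2}$. The uniform constant follows by running the scale-invariant computation of (d) with $\psi_{y_i}$ in place of $\widetilde\phi$.
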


\begin{proof}

Parts (a)–(c) follow from Proposition 2.10 in \cite{li2019gluing}. We therefore explain only part (d) in detail.

Use $G_{y'}$ to identify $U_{y'}$ with $G_{y'}(U_{y'})=\left\{y\in \mathbb{C}:|y-y'|\leq\Lambda_3\varepsilon^{\frac{1}{2}}|y'|\right\}\times X_{y'}$.

Set
\[
\widehat\omega_\varepsilon\coloneqq \left(G_{y'}^{-1}\right)^*\omega_\varepsilon,\qquad\widehat J\coloneqq \left(G_{y'}^{-1}\right)^*J,\qquad\widehat g_\varepsilon\coloneqq \left(G_{y'}^{-1}\right)^*g_\varepsilon.
\]
The product K{\"a}hler form and product metric are $\omega_{\operatorname{prod},y'}=\frac{\sqrt{-1}\,A_{y'}}{\varepsilon |y'|^2}\,dy\wedge d\bar y+\omega_{y'}$ and $g_{\operatorname{prod},y'}=\frac{2A_{y'}}{\varepsilon |y'|^2}\,dy\odot d\bar y+g_{y'}$ respectively.

We use the normalized horizontal vector fields $\mathcal{H}_y=\varepsilon^{\frac{1}{2}}|y'|\partial_y$, $\mathcal{H}_{\bar y}=\varepsilon^{\frac{1}{2}}|y'|\partial_{\bar y}$. Then $\mathcal{H}^a\nabla_{X_{y'}}^b f$ denotes a component of
the full product covariant derivative of $f$ with $a$
horizontal slots and $b$ vertical slots. All such components are
measured with respect to $g_{\operatorname{prod},y'}$.

The explicit expansion of $G_{y',y}$ gives
\begin{align*}
G_{y',y}(w)=w+\frac{y'-y}{2|w|^2}\bar w+O\left(\frac{|y-y'|^2}{|w|^3}\right).
\end{align*}
Since $\frac{|y-y'|}{r^4}\leq C\Lambda_3\varepsilon^{\frac12}$ on $U_{y'}$, differentiating this expansion gives
\begin{equation}\label{eq:complex-structure-estimate}
\left\|\widehat J-J_{\operatorname{prod}}\right\|_{C^3_{0,0}}\leq C\Lambda_3\varepsilon^{\frac{1}{2}}.
\end{equation}

We have the following decomposition
\begin{equation*}
\begin{aligned}
\left(G_{y'}^{-1}\right)^*\left(\omega_X+\sqrt{-1}\,\partial_J\bar\partial_J\phi\right)-\omega_{y'}=&
\left[\left(G_{y'}^{-1}\right)^*\omega_X-\left.\omega_X\right|_{X_{y'}}\right]\\
&+\left[\left(G_{y'}^{-1}\right)^*\left(\sqrt{-1}\,\partial_J\bar\partial_J\phi\right)-\sqrt{-1}\,\partial_{J_{\mathrm{prod}}}\bar\partial_{J_{\mathrm{prod}}}
\left(\left(G_{y'}^{-1}\right)^*\phi\right)\right]\\
&+\sqrt{-1}\,\partial_{J_{\mathrm{prod}}}
\bar\partial_{J_{\mathrm{prod}}}\left[\left(G_{y'}^{-1}\right)^*\phi-\widetilde\phi_{y'}
\right].
\end{aligned}
\end{equation*}
We estimate these three terms separately.

The explicit expansion
\[
G_{y',y}(w)=w+\frac{y'-y}{2|w|^2}\,\bar w+O\left(\frac{|y-y'|^2}{|w|^3}\right)
\]
and the smoothness of $\omega_X$ gives, for $a+b\leq2$,
\begin{align*}
\left|\mathcal H^a\nabla_{X_{y'}}^b\left[\left(G_{y'}^{-1}\right)^*\omega_X-\left.\omega_X\right|_{X_{y'}}\right]\right|_{g_{\mathrm{prod},y'}}\leq C\Lambda_3\varepsilon^{\frac{1}{2}}r^{2-b}.
\end{align*}
Hence
\begin{equation*}
\left\|\left(G_{y'}^{-1}\right)^*\omega_X-\left.\omega_X\right|_{X_{y'}}\right\|_{C^2_{0,0}}\leq C\Lambda_3\varepsilon^{\frac{1}{2}}.
\end{equation*}

The explicit potential estimates and the calculation used in Lemma~\ref{Ricci potential decay 3} imply that, for $0\leq a+b\leq4$,
\begin{align*}
&\left|\mathcal{H}^a\nabla_{X_{y'}}^b\left[\left(G_{y'}^{-1}\right)^*\left(G_{y_i}^*\psi_{y_i}\right)-c_\infty
\right]\right|_{g_{\operatorname{prod},y'}}\leq C\varepsilon^{\frac{a}{2}}r^{2-b},\\
&\left|\mathcal{H}^a\nabla_{X_{y'}}^b\left[\left(G_{y'}^{-1}\right)^*\widetilde\phi-c_\infty\right]\right|_{g_{\operatorname{prod},y'}}\leq C\varepsilon^{\frac{a}{2}}r^{2-b}.
\end{align*}

The cutoff functions satisfy $\left|\mathcal{H}^a\chi_i\right|+\left|\mathcal{H}^a\chi_\infty\right|\leq C_a$. Consequently, using the local finiteness of the partition of unity, we obtain
\begin{equation}\label{eq:Phi-product-derivatives}
\left|\mathcal{H}^a\nabla_{X_{y'}}^b\left[\left(G_{y'}^{-1}\right)^*\phi
\right]\right|_{g_{\operatorname{prod},y'}}\leq Cr^{2-b},\quad 1\leq a+b\leq4.
\end{equation}

For a real-valued function $f$, we have by definition that $\sqrt{-1}\,\partial_J\bar\partial_Jf=-\frac12d(df\circ J)$. Therefore,
\begin{align*}
\left(G_{y'}^{-1}\right)^*\left(\sqrt{-1}\,\partial_J\bar\partial_J\phi\right)-\sqrt{-1}\,\partial_{J_{\operatorname{prod}}}\bar\partial_{J_{\operatorname{prod}}}\left(\left(G_{y'}^{-1}\right)^*\phi\right)=-\frac12d\left[d\left(\left(G_{y'}^{-1}\right)^*\phi\right)\circ\left(\widehat J-J_{\operatorname{prod}}\right)\right].
\end{align*}
Applying the product rule to the above identity, and using\eqref{eq:complex-structure-estimate} and\eqref{eq:Phi-product-derivatives}, gives
\begin{align*}
\left\|\left(G_{y'}^{-1}\right)^*\left(\sqrt{-1}\,\partial_J\bar\partial_J\phi\right)-\sqrt{-1}\,\partial_{J_{\operatorname{prod}}}\bar\partial_{J_{\operatorname{prod}}}\left(\left(G_{y'}^{-1}\right)^*\phi\right)\right\|_{C^2_{0,0}}\leq C\Lambda_3\varepsilon^{\frac{1}{2}}.
\end{align*}

We now estimate $\sqrt{-1}\, \partial_{J_{\operatorname{prod}}}\bar\partial_{J_{\operatorname{prod}}}\left(\left(G_{y'}^{-1}\right)^*\phi-\widetilde{\phi}_{y'}\right)$.

If $\chi_i$ is nonzero somewhere in the product neighborhood, then $|y_i-y'|\leq C\Lambda_3\varepsilon^{\frac{1}{2}}|y'|$. The argument is divided according to whether the product
covariant derivative contains horizontal slots.

If $a=0$, all the covariant slots are vertical. Lemma~\ref{fiber deviation 4} and Proposition~\ref{fiber deviation 2} give, for $b\leq4$,
\begin{align}\label{eq:vertical-psi-difference}
\left|\nabla_{X_{y'}}^b\left[\left(G_{y'}^{-1}\right)^*\left(G_{y_i}^*\psi_{y_i}\right)-\widetilde\phi_{y'}\right]\right|_{g_{\operatorname{prod},y'}}\leq C\left(\Lambda_3\varepsilon^{\frac{1}{2}}
|y'|^{1-\frac{\beta+2}{4}}+|y'|^{\frac{7}{12}-\frac{5\beta}{24}}
\right)
r^{\beta-b}.
\end{align}
Similarly,
\begin{align}\label{eq:vertical-tilde-phi-difference}
\left|\nabla_{X_{y'}}^b\left[\left(G_{y'}^{-1}\right)^*\widetilde\phi-\widetilde\phi_{y'}\right]\right|_{g_{\operatorname{prod},y'}}\leq C\Lambda_3\varepsilon^{\frac{1}{2}}|y'|^{1-\frac{\beta+2}{4}}r^{\beta-b}.
\end{align}
These are estimates for the all-vertical components of the full product covariant derivative tensors. They do not assert that the functions themselves are restricted to the fibers.

If $a>0$, then at least one horizontal slot occurs. In this case, we do not differentiate the fiberwise smallness estimates \eqref{eq:vertical-psi-difference} and \eqref{eq:vertical-tilde-phi-difference}. Instead, we estimate the two potentials separately.

Since $\widetilde\phi_{y'}$ is constant in the horizontal
direction, we have
\begin{align}\label{eq:horizontal-psi-difference}
\left|\mathcal{H}^a\nabla_{X_{y'}}^b\left[\left(G_{y'}^{-1}\right)^*\left(G_{y_i}^*\psi_{y_i}\right)-\widetilde\phi_{y'}\right]\right|_{g_{\operatorname{prod},y'}}\leq C\varepsilon^{\frac{a}{2}}r^{2-b},\quad
a>0,\quad a+b\leq4
\end{align}
and 
\begin{align}\label{eq:horizontal-tilde-phi-difference}
\left|\mathcal{H}^a\nabla_{X_{y'}}^b\left[\left(G_{y'}^{-1}\right)^*\widetilde\phi-\widetilde\phi_{y'}
\right]\right|_{g_{\operatorname{prod},y'}}\leq C\varepsilon^{\frac{a}{2}}r^{2-b},\quad a>0,\quad a+b\leq4.
\end{align}

Using $\chi_\infty+\sum_i\chi_i=1$, we write
\begin{align*}
\left(G_{y'}^{-1}\right)^*\phi-\widetilde\phi_{y'}=\chi_\infty\left[\left(G_{y'}^{-1}\right)^*\widetilde\phi-\widetilde\phi_{y'}
\right]+\sum_i\chi_i\left[\left(G_{y'}^{-1}\right)^*\left(G_{y_i}^*\psi_{y_i}\right)-\widetilde\phi_{y'}\right].
\end{align*}
When differentiating the above identity, if every horizontal derivative falls on the cutoffs, we use \eqref{eq:vertical-psi-difference} and \eqref{eq:vertical-tilde-phi-difference}. If at least one horizontal derivative falls on a potential, we use \eqref{eq:horizontal-psi-difference} and \eqref{eq:horizontal-tilde-phi-difference}.

Fix $k\leq2$, and consider a component of the $k$-th product
covariant derivative of
\[
\sqrt{-1}\,
\partial_{J_{\operatorname{prod}}}
\bar\partial_{J_{\operatorname{prod}}}
\left(
\left(G_{y'}^{-1}\right)^*\phi-\widetilde\phi_{y'}
\right)
\]
having $a$ horizontal slots. The total number of horizontal and
vertical slots is $2+k$.

If $a=0$, then the component is purely vertical, and
\begin{equation*}
\begin{aligned}
&r^k\left|\left(\nabla_{g_{\operatorname{prod},y'}}\right)^k\left[\sqrt{-1}\,\partial_{J_{\operatorname{prod}}}
\bar\partial_{J_{\operatorname{prod}}}\left(\left(G_{y'}^{-1}\right)^*\phi-\widetilde\phi_{y'}
\right)\right]_{\mathrm{vert}}\right|_{g_{\operatorname{prod},y'}}\\
\leq &C
\left(\Lambda_3\varepsilon^{\frac{1}{2}}|y'|^{1-\frac{\beta+2}{4}}+|y'|^{\frac{7}{12}-\frac{5\beta}{24}}\right)r^{\beta-2}.
\end{aligned}
\end{equation*}

If $a>0$, then the components for which all horizontal derivatives fall on the cutoffs are bounded by $C\left(\Lambda_3\varepsilon^{\frac{1}{2}}|y'|^{1-\frac{\beta+2}{4}}+|y'|^{\frac{7}{12}-\frac{5\beta}{24}}\right)r^{\beta-2+a}$. The components for which at least one horizontal derivative falls on a potential are bounded by $C\varepsilon^{\frac{1}{2}}r^a$. Thus,
\begin{align*}
\left\|\sqrt{-1}\,\partial_{J_{\operatorname{prod}}}\bar\partial_{J_{\operatorname{prod}}}\left(\left(G_{y'}^{-1}\right)^*\phi-\widetilde\phi_{y'}\right)\right\|_{C^2_{0,0}}&\leq C \left[\left(\Lambda_3\varepsilon^{\frac{1}{2}}|y'|^{1-\frac{\beta+2}{4}}+|y'|^{\frac{7}{12}-\frac{5\beta}{24}}\right)|y'|^{\frac{\beta-2}{4}}\right]+C\varepsilon^{\frac{1}{2}}\\
&\leq C\left(\Lambda_3\varepsilon^{\frac{1}{2}}+|y'|^{\frac{23}{288}}\right)\\
&\leq C(\alpha)\epsilon_4^{\frac{1}{24}},
\end{align*}where we take $\beta=-\frac{1}{12}$ and choose $\varepsilon>0$ sufficiently small. 

Combining all the above estimates, we conclude that 
\begin{align*}
\left\|\left(G_{y'}^{-1}\right)^*\left(\omega_X+\sqrt{-1}\,\partial_J\bar\partial_J\phi\right)
-\omega_{y'}\right\|_{C^2_{0,0}}\leq C(\alpha)\epsilon_4^{\frac{1}{24}}.
\end{align*}

It remains to compare the base terms. A direct computation yields
\begin{align*}
\left|\frac{\sqrt{-1}A_y}{\varepsilon|y|^2}-\frac{\sqrt{-1}A_{y'}}{\varepsilon|y'|^2}\right|&=\left|\frac{A_y|y'|^2-A_{y'}|y|^2}{\varepsilon|y|^2|y'|^2}\right|\\
&\le \frac{A_y(|y'|+|y|)(||y|-|y'||)+\left|A_{y'}-A_y\right||y|^2}{\varepsilon|y|^2|y'|^2}\\
&\le \frac{C\Lambda_3}{\varepsilon^{\frac12}|y|^2}+\frac{C\Lambda_3}{\varepsilon^{\frac12}|y|}
\end{align*}
Hence
\begin{align*}
\left|\left(\frac{\sqrt{-1}A_y}{\varepsilon|y|^2}-\frac{\sqrt{-1}A_{y'}}{\varepsilon|y'|^2}\right)dy\wedge d\bar{y}\right|_{\omega_{\operatorname{prod},y'}}\le C\Lambda_3\varepsilon^{\frac12}.
\end{align*}After differentiating with respect to the rescaled horizontal variable, we obtain
\begin{equation*}
\left\|\left(\frac{\sqrt{-1}\,A_y}{\varepsilon |y|^2}-\frac{\sqrt{-1}\,A_{y'}}{\varepsilon |y'|^2}
\right)dy\wedge d\bar y\right\|_{C^2_{0,0}}\leq C\Lambda_3\varepsilon^{\frac{1}{2}}.
\end{equation*}
The term $\pi^*\beta$, when present, is $O(\varepsilon)$ in
this norm. Hence
\begin{equation}\label{eq:full-form-comparison}
\left\|\widehat\omega_\varepsilon-\omega_{\operatorname{prod},y'}\right\|_{C^2_{0,0}}\leq C(\alpha)\epsilon_4^{\frac{1}{24}}.
\end{equation}

Finally,
\begin{align*}
\widehat g_\varepsilon-g_{\operatorname{prod},y'}=\left(\widehat\omega_\varepsilon-\omega_{\operatorname{prod},y'}
\right)\left(\,\cdot\,,J_{\operatorname{prod}}\,\cdot\,\right)+\widehat\omega_\varepsilon\left(\,\cdot\,,
\left(\widehat J-J_{\operatorname{prod}}\right)\,\cdot\,\right).
\end{align*}
Using \eqref{eq:complex-structure-estimate} and \eqref{eq:full-form-comparison}, we obtain \begin{equation*}
\left\|\widehat g_\varepsilon-g_{\operatorname{prod},y'}\right\|_{C^2_{0,0}}\leq C(\alpha)\epsilon_4^{\frac{1}{24}}.
\end{equation*}
The mean-value estimate on the rescaled coordinate balls therefore
gives
\begin{equation*} 
\left\|\widehat g_\varepsilon-g_{\operatorname{prod},y'}\right\|_{C^{1,\alpha}_{0,0}}\leq C(\alpha)\epsilon_4^{\frac{1}{24}}.
\end{equation*}

\end{proof}

\begin{remark}\label{metric deviation higher order}

(a) and (b) cannot be upgraded to higher orders, solely due to the presence of the non-smooth term $\sqrt{-1}A_t\,dt\wedge d\bar{t}$ in $\omega_{\varepsilon}$. If this term were replaced by $\sqrt{-1}A_0\,dt\wedge d\bar{t}$, higher-order generalizations would likewise be possible in the corresponding regions.

\end{remark}

\section{Decomposition, patching and parametrix}\label{Decomposition, patching and parametrix}

The discussion in this section is carried out under Setting~(A). 

Let $\mathcal{A}^{0,\alpha}_{\delta-2,\tau-2,\lambda,\mu-2,\varepsilon}(M)$ denote the space of $\partial\bar{\partial}$-exact $(1,1)$-forms completed with respect to the $C^{0,\alpha}_{\delta-2,\tau-2,\lambda,\mu-2,\varepsilon}$ norm on $2$-forms, where we assume $\lambda - \frac12 + \frac{\mu-\alpha}{4} > 0$. The trace with respect to $\omega_\varepsilon$ gives a bounded linear map
\begin{equation*}
\operatorname{Tr}_{\omega_\varepsilon} : \mathcal{A}^{0,\alpha}_{\delta-2,\tau-2,\lambda,\mu-2,\varepsilon}(M) \longrightarrow C^{0,\alpha}_{\delta-2,\tau-2,\lambda,\mu-2,\varepsilon}(M).
\end{equation*}

To carry out harmonic analysis on $M$, it is convenient to work with the subspace of functions having vanishing integral:
\begin{equation*}
\left(C^{0,\alpha}_{\delta-2,\tau-2,\lambda,\mu-2,\varepsilon}(M)\right)_0
\coloneqq \left\{ f \in C^{0,\alpha}_{\delta-2,\tau-2,\lambda,\mu-2,\varepsilon}(M)
\;\Big|\; \int_M f\,\omega_\varepsilon^3 = 0 \right\},
\end{equation*}
where the integral is understood in the Lebesgue sense. Owing to the condition $\lambda - \frac12 + \frac{\mu-\alpha}{4} > 0$, every function in $C^{0,\alpha}_{\delta-2,\tau-2,\lambda,\mu-2,\varepsilon}(M)$ decays exponentially with respect to the distance function. Therefore, it is routine  to verify that the subspace $\left(C^{0,\alpha}_{\delta-2,\tau-2,\lambda,\mu-2,\varepsilon}(M)\right)_0$ remains a Banach space.

\begin{proposition}\label{inverse of Laplacian}

Let $\tau=-\frac23$, $-\frac{1}{40}<\delta < 0$, $\mu=-\frac{1}{12}$, $\lambda=\frac{13}{24}$, $0<\alpha<\frac{1}{12}$. Then there exists a right inverse $\mathcal{R}$ to $\mathrm{Tr}_{\omega_\varepsilon}$ on the subspace of functions with vanishing integral,
\begin{align*}
\mathcal{R} : \left(C^{0,\alpha}_{\delta-2,\tau-2,\lambda,\mu-2,\varepsilon}(M)\right)_0\to \mathcal{A}^{0,\alpha}_{\delta-2,\tau-2,\lambda,\mu-2,\varepsilon}(M),
\end{align*}
whose operator norm satisfies $\|\mathcal{R}\| \le C(\epsilon_1,\epsilon_3,\epsilon_4,\Lambda_3,\delta,\tau,\lambda,\mu,\alpha)$, with the constant $C$ independent of $\varepsilon$.

\end{proposition}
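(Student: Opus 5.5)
We will construct $\mathcal{R}$ as a perturbation of an approximate right inverse $\mathcal{R}_0$, obtained by decomposing $f$ into fiberwise-average and average-zero parts, inverting each on the model spaces of Section~\ref{Harmonic analysis on model spaces}, and patching with the cutoffs $\chi_0,\chi_i,\chi_\infty$. Precisely, we aim for $\|\operatorname{Tr}_{\omega_\varepsilon}\mathcal{R}_0 f - f\|\le \frac12\|f\|$ in the $C^{0,\alpha}_{\delta-2,\tau-2,\lambda,\mu-2,\varepsilon}$ norm, with $\|\mathcal{R}_0\|$ bounded independently of $\varepsilon$. Then $\mathcal{R}=\mathcal{R}_0(\operatorname{Tr}_{\omega_\varepsilon}\mathcal{R}_0)^{-1}$ follows by a Neumann series. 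Throughout, $\mathcal{R}_0 f$ is defined as $\sqrt{-1}\partial\bar\partial$ of a potential, so it lands in $\mathcal{A}^{0,\alpha}$, and only Hessian-level bounds are tracked. This is essential: the potential bounds on the cylindrical end and on the base are not uniform in $\varepsilon$.

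\textbf{Step 1 (base part).} Write $\bar f(y)=\int_{X_y} f\,\omega_\varepsilon^2$ for the pushforward, using Lemma~\ref{holder on pushforward} near $p_0$ and the exponential-decay integration estimate near $p_\infty$. The latter is where $\lambda-\frac12+\frac{\mu-\alpha}{4}>0$ makes $\bar f$ integrable against the cylindrical metric $\omega_B\sim\frac{\sqrt{-1}}{2|y|^2}dy\wedge d\bar y$. The vanishing-integral hypothesis gives $\int_B \bar f\,\omega_B=0$. We then solve $\Delta_{\omega_B}v=\varepsilon\bar f$ on the asymptotically cylindrical base $\mathbb{C}$ via Lockhart--McOwen Fredholm theory, taking a decaying weight just beyond the zero indicial root so that the mean-zero condition removes the cokernel. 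Setting $u_{\rm base}=\pi^*v$, the Hessian $\sqrt{-1}\partial\bar\partial\pi^*v=\pi^*(\varepsilon^{-1}\cdot\varepsilon\,\Delta_B v\,\omega_B)$ is controlled in the collapsed metric. Near $p_0$, the Lipschitz singularity of $A_t$ is treated as in \cite{li2019gluing}.

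\textbf{Step 2 (fiber part).} Apply a partition of unity subordinate to the regions of Section~\ref{Global weighted norms and deviation estimates} to $f-\bar f$. On the region $\{r\lesssim\varepsilon^{1/10}+\dots\}$ near $q_0$ use $P_{\mathbb{C}^3}$ from Proposition~\ref{Laplace for C3}. On $U_1$ use $P_{t=0}$ from Proposition~\ref{Laplace for C times K3 singular version}. On the balls of $\omega_B$-scale $\Lambda_3\varepsilon^{1/2}$ around $y_i$ away from $S$, use $P_{y_i}$ from Proposition~\ref{Laplace for C times K3}. On the end $U_3$, use the uniform weighted inverse of Proposition~\ref{Laplace for C times K3 weighed version} (in its $\omega_{\operatorname{SRF},y}$ version, Remark~\ref{Everything holds for SRF,y}), transported by $G_{y'}$ with the base rescaled by $\zeta=\varepsilon^{-1/2}|y'|^{-1}(y-y')$. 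Each local solution is multiplied by a second cutoff equal to $1$ on the support of the first, of base scale $\Lambda_3$ in $\zeta$. The exponential decay estimates \eqref{exponential decay with weight} and \eqref{exponential decay without weight} then make the cutoff errors $O(e^{-m\Lambda_3})$. The factor $|y|^{13/24}$ in those estimates is exactly what matches the weight $\lambda$.

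\textbf{Step 3 (error).} The error $\operatorname{Tr}_{\omega_\varepsilon}\mathcal{R}_0f-f$ collects three contributions. The first is the metric deviation between $\omega_\varepsilon$ and each model, bounded by Proposition~\ref{metric deviation} as $\epsilon_3^{1/7}$, $\Lambda_3\varepsilon^{1/2}$, or $\epsilon_4^{1/24}$ times the model Hessian bound. The second is the derivatives of the outer cutoffs, which are $\varepsilon$-small or $e^{-m\Lambda_3}$-small. The third is the overlap compatibility between the average-zero fiber inverses on different regions, and in particular between regions with smooth and with degenerating fibers; this is handled by the key comparison Lemma~\ref{approximate invert 1}. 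We will fix $\Lambda_3$ large, then $\epsilon_3,\epsilon_4$ small, then $\varepsilon$ small, so that the total error is at most $\frac12$.

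\textbf{Main obstacle.} We expect the main obstacle to be the uniform control on the end $U_3$, where $\varepsilon$ and $|y|$ interact. The fiberwise-average parts of the local solutions are not exactly zero after transport by $G_{y'}$, and the leaked average must be fed back into Step~1 without losing powers of $|y|$. We will first try to absorb it using Lemma~\ref{fiber deviation 4}, which gives leakage of size $\varepsilon^{1/2}|y|^{1-(\mu+2)/4}$ against the weight $|y|^{13/24}$.
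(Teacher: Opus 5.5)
Your architecture is the paper's: the same model Green operators ($P_{\mathbb{C}^3}$, $P_{t=0}$, $P_y$, and the uniform weighted inverse transported by $G_{y'}$ on the end), cutoffs of base scale $\Lambda_3$ whose errors are killed by exponential decay, Lockhart--McOwen on the cylindrical base with only Hessian-level control of the base potential, and a Neumann series. As written, however, the argument fails at three points.

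\textbf{First: the base mean is not zero.} The hypothesis $\int_M f\,\omega_\varepsilon^3=0$ does not give $\int_B\bar f\,\omega_B=0$. With $\omega_\phi\coloneqq\omega_X+\sqrt{-1}\partial\bar\partial\phi$ one has $\omega_\varepsilon^3=\frac{3}{\varepsilon}\pi^*\omega_B\wedge\omega_\phi^2+3\pi^*\beta\wedge\omega_\phi^2+\omega_\phi^3$. Hence $\int_B\bar f\,\omega_B=-\frac{\varepsilon}{3}\int_M f\,(3\pi^*\beta\wedge\omega_\phi^2+\omega_\phi^3)$, which is small but in general nonzero. The decaying-weight Poisson problem on the base, however, is solvable only for exactly mean-zero data. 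You need the paper's correction $f_B''$. Subtract $c\,\chi_0''$, where $\chi_0''$ is a bump in the compact part of $B$ and $c\int_B\chi_0''\omega_B=\int_B\bar f\,\omega_B$. Solve for the mean-zero remainder, and put $c\,\chi_0''$ (which is $O(\varepsilon)\|f\|$ in the weighted norm) into the error. The base equation should also read $\Delta_{\frac1\varepsilon\omega_B}v=\bar f$, i.e.\ $\sqrt{-1}\partial\bar\partial v=\frac{1}{2\varepsilon}\bar f\,\omega_B$, not $\Delta_{\omega_B}v=\varepsilon\bar f$.

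\textbf{Second: the Neumann series needs mean preservation.} Because $\mathcal{R}_0$ is defined only on mean-zero data, the series requires $\operatorname{Tr}_{\omega_\varepsilon}\mathcal{R}_0$ to preserve the zero-integral subspace. On a cylindrical end this is not automatic. If $\chi$ is a base cutoff equal to $1$ near $p_\infty$, then $\sqrt{-1}\partial\bar\partial(\chi\log|y|)$ is compactly supported and $\partial\bar\partial$-exact. Yet its trace has nonzero integral, proportional to the flux of $d^c\log|y|$ through $\{|y|=R\}$. The paper proves preservation by Stokes on $\{|y|\ge R\}$, killing that flux with the ($\varepsilon$-dependent) decay $|\nabla_{\omega_\varepsilon}Pg|_{\omega_\varepsilon}\le C(\varepsilon)|y|^{13/48}$ of Remark~\ref{exp decay prop of P}. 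You must supply the analogous decay for your potentials.

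\textbf{Third: the order of parameters.} What Lemma~\ref{approximate invert 1} actually supplies is the approximate-inverse estimate for $P_1,P_2$, and the coupling of smooth and degenerating regions there is a balancing of two errors. The smooth-region cutoff error is $C(\epsilon_3)\epsilon_4^{-37/24}\Lambda_3^2e^{-m\Lambda_3}$, since $\|P_y\|\lesssim|y|^{-1}$ in unweighted norms and the norm comparison costs $\epsilon_4^{-13/24}$. The end deviation error is $C\Lambda_3^2\epsilon_4^{1/24}$, since $O(\Lambda_3^2)$ cutoffs overlap at each point. Hence $\epsilon_3$ (with $\Lambda_2$) must be fixed first. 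Then $\Lambda_3$ and $\epsilon_4$ are chosen jointly, with $\epsilon_4$ in a window that is nonempty once $\Lambda_3$ is large. Fixing $\Lambda_3$ and then shrinking $\epsilon_3,\epsilon_4$ does not close.

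\textbf{On your ``main obstacle''.} It is an artifact of subtracting the global $\omega_\varepsilon$-pushforward before localizing. The paper localizes first and takes $f_i'$ to be the average of $\chi_i'f$ against the transported model fiber volume ($\omega_{\mathrm{SRF}}|_{X_{y_i'}}^2$, resp.\ $\omega_{y_i'}^2$ on the end). So each $f_i$ is exactly fiberwise mean-zero for $P_{y_i'}$ and nothing leaks. The price is that $f_B=\chi_0'f_0'+\sum_i f_i'$ has total mean only bounded by $C\varepsilon^{\frac{\tau-\delta}{2}}\max\{\epsilon_3^{\delta-\tau+2},\epsilon_4^{23/288}\}\|f\|$ (Lemma~\ref{decay of base function}), which the same bump correction absorbs. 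Even in your ordering, the leaked averages on the end are merely further error terms. They are bounded by the relative fiber-volume deviation times the finite integral $\int_{X_{y'}}r^{\mu-2}$, so nothing needs to be fed back into Step~1.
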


\begin{remark}

 The right inverse $\mathcal{R}$ can be formally understood as $\mathcal{R} = 2\sqrt{-1}\partial\bar{\partial}\Delta_{\omega_\varepsilon}^{-1}$, mapping a real-valued function to a real $(1,1)$-form.

\end{remark}

\begin{remark}

The above proposition holds for more general indices satisfying $-2+\alpha < \delta < 0$, $-2+\alpha < \tau < 0$, $-2+\alpha<\mu<0$, $\lambda-\frac12+\frac{\mu-\alpha}{4}>0$. For convenience, we just specify those concrete indices.

\end{remark}

From now on, we fix the parameters as follows:
\begin{equation*}
\tau = -\frac{2}{3}, \quad -\frac{1}{40} < \delta < 0, \quad \mu = -\frac{1}{12}, \quad \lambda = \frac{13}{24}, \quad 0 < \alpha < \frac{1}{12}.
\end{equation*}
With these choices, $\delta$ automatically avoids the discrete set of exceptional values appearing in Proposition~\ref{Laplace for C3}.

The parameters $\delta, \tau, \lambda, \mu, \alpha, \epsilon_1, \epsilon_2, \Lambda_1$ are now fixed once and for all, and all constants are allowed to depend on them. Additional parameters $\epsilon_3, \epsilon_4, \Lambda_2, \Lambda_3$ will be introduced and fixed as the argument proceeds. Whenever a uniform constant $C$ appears without explicit indication of its dependence, it is understood that $C$ may depend on all the parameters listed above. Moreover, we always assume that $\varepsilon$ is chosen sufficiently small relative to all other fixed parameter choices.

For the remainder of this section, let $f\in \left(C^{0,\alpha}_{\delta-2,\tau-2,\lambda,\mu-2,\varepsilon}(M)\right)_0$. Our goal is to construct an approximate solution to the Poisson equation $\Delta_{\omega_\varepsilon} u = f$. This construction forms the core of the proof of Proposition~\ref{inverse of Laplacian}.

We shall decompose $f$ both spatially and spectrally, so that the harmonic analysis becomes simpler on each piece. For the region near the singular fiber $X_0$, the decomposition closely parallels that of \cite{li2019gluing}; we therefore only state the corresponding results without spelling out the details. The essential difference lies in the treatment of the end and the overlap region.

Define a partition of unity $\{\chi_i'\}_{i=0}^{\infty}$ on the base $B=Y\backslash\{p_\infty\}\cong \mathbb{C}$ such that
\begin{itemize}

\item $\chi_0'=1$ for $|t| \le \frac12\epsilon_3$ and $\operatorname{supp}\chi_0'\subset \{|t| < \frac23\epsilon_3\}$, where $0 < \epsilon_3 \le \epsilon_1$ is some small number to be fixed independent of $\varepsilon$, satisfying the constraints listed in Subsection \ref{Metric deviation}.

\item For each $i = 1, 2, \dots$, the function $\chi_i'$ is supported on $\{|t| \ge \frac12\epsilon_3\}$, its support has characteristic length scale $\sim \varepsilon^{\frac12}$ measured with respect to the $\omega_B$ metric, and it possesses a marked point $y_i'$ situated at the center of that support. We may also arrange that $0 \le \chi_i' \le 1$, and that the whole family $\{\chi_i'\}$ enjoys uniform $C^k$ estimates relative to the rescaled metric $\frac{1}{\varepsilon}\omega_B$ for any prescribed $k \in \mathbb{N}$. Furthermore, at any given point of $Y$, the number of $\chi_i'$ that do not vanish is uniformly bounded, independently of $\varepsilon$.

\end{itemize}

Let $\{1,...,N'\}$ denote the indices for which $y_i'$ lies in the region $\left\{|t|>\frac{\epsilon_3}{2},|y|>\frac{\epsilon_4}{2}\right\}$; Let $\{N'+1,N'+2,...\}$ denote the indices for which $y_i'$ lies in the region $\left\{|y|<\frac{\epsilon_4}{2}\right\}$.

We also need to construct cutoff functions to decompose the function near the singular fiber into regions modeled by $\mathbb{C}^3$ and $\mathbb{C}\times X_0 $. Let $\Lambda_2 \gg 1$ be a large number to be chosen independent of $\varepsilon$. On the region $\{|t| < \epsilon_1\}$, we introduce the cutoff functions
\begin{align*}
\eta_1 = \gamma_1\left( \frac{r \Lambda_2}{\varepsilon^{\frac{1}{10}} + \varepsilon^{\frac{1}{12}} \rho'^{\frac16}} \right), \quad \eta_2 = 1 - \eta_1,
\end{align*}where $\gamma_1$ is the smooth cutoff function introduced in Subsection \ref{Rough description of the ansatz}.

Now decompose $f$ into pieces:
\begin{itemize}
    \item For $i = 1,2, \dots$, we use the trivialization
\begin{align*}
G_{y_i'}:\left\{x\in X:|y_i'-\pi(x)|<\epsilon_2|y_i'|\right\}\to \left\{y\in \mathbb{C}:|y-y_i'|<\epsilon_2|y_i'|\right\}\times X_{y_i'}\subset \mathbb{C}\times X_{y_i'}
\end{align*}
to identify $\chi_i' f$ as a compactly supported function $(G_{y_i'}^{-1})^*(\chi_i' f)$ on $\mathbb{C}\times X_{y_i'}$, and decompose it, via fiberwise integration, into the sum of a function $\tilde{f}_i$ with zero fiberwise average on $\mathbb{C}\times X_{y_i'}$, and a function $\tilde{f}_i'$ purely on the base:
\begin{align*}
   (G_{y_i'}^{-1})^*(\chi_i' f) = \tilde{f}_i + \tilde{f}_i'.
\end{align*}
Here
\begin{align*}
\tilde{f}_i'(y)=\left\{
	\begin{aligned}
&\int_{X_{y_i'}}(G_{y_i',y}^{-1})^*(\chi_i'f)\omega_{\operatorname{SRF}}^2|_{X_{y_i'}} \quad &&\text{for}\quad  i=1,...,N'\\
   &  \int_{X_{y_i'}}(G_{y_i',y}^{-1})^*(\chi_i'f)\omega_{y_i'}^2 \quad  &&\text{for}\quad  i=N'+1,...
    \end{aligned}
	\right.
\end{align*}and $\tilde{f}_i= (G_{y_i'}^{-1})^*(\chi_i' f)-\tilde{f}_i'$. 

Define 
\begin{align*}
f_i'(y)=G_{y_i',y}^*\tilde{f}_i'=\left\{
	\begin{aligned}
&\int_{X_{y}}\chi_i'f\left(G_{y_i',y}^*\omega_{\operatorname{SRF}}|_{X_{y_i'}}\right)^2 \quad &&\text{for}\quad  i=1,...,N'\\
   &  \int_{X_{y}}\chi_i'f\left(G_{y_i',y}^*\omega_{y_i'}\right)^2 \quad  &&\text{for}\quad  i=N'+1,...
    \end{aligned}
	\right.
\end{align*}and $f_i=\chi_i' f-f_i'$.

Clearly, both $f_i$ and $f_i'$ are supported within the support of $\chi_i'$. Their $C^{0,\alpha}_{\delta-2,\tau-2,\lambda,\mu-2,\varepsilon}$ norms are controlled by the corresponding norm of $f$, since fiberwise integration is a bounded operator.

 \item For $i=0$, we use the trivialization 
 \begin{align*}
G_0:\left\{x\in X:|t|<\epsilon_1\right\}\left\backslash\left\{r\le \Lambda_1 |t|^{\frac14}\right.\right\}\to U' \subset\{|t|<\epsilon_1\}\times X_0\subset \mathbb{C}\times X_0,
 \end{align*} 
 to identify $\chi_0'\eta_1 f$ as a compactly supported function $(G_0^{-1})^*(\chi_0'\eta_1 f)$ on $\mathbb{C}\times X_0$. By integrating over the fibers of $\mathbb{C} \times X_0$, we obtain a locally defined function on the base $\mathbb{C}$, given by
\begin{align*}
    \tilde{f}_0'(t) = \frac{\int_{\{t\}\times X_0} (G_0^{-1})^*(\eta_1 f) \omega_{\operatorname{SRF}}|_{X_0}^2}{\int_{\{t\}\times X_0} (G_0^{-1})^*(\eta_1) \omega_{\operatorname{SRF}}|_{X_0}^2}.
\end{align*}
We then set
\begin{align*}
f_0'(t)=G_0^*\tilde{f}_0'=\frac{\int_{X_t} \eta_1 f \left(G^*_{0,t} \omega_{\operatorname{SRF}}|_{X_0}\right)^2}{\int_{X_t} \eta_1\left(G^*_{0,t} \omega_{\operatorname{SRF}}|_{X_0}\right)^2}
\end{align*}
which satisfies the norm bound $\|\chi_0'f_0'\|_{C^{0,\alpha}_{\delta-2,\tau-2,\lambda,\mu-2,\varepsilon}}\le C\|f\|_{C^{0,\alpha}_{\delta-2,\tau-2,\lambda,\mu-2,\varepsilon}}$
    
\end{itemize}

Write
\begin{align*}
f_{0,1} = \chi_0'\eta_1 (f - f_0'(t)), \quad f_{0,2} = \chi_0'\eta_2 (f - f_0'(t)).
\end{align*}This gives a decomposition
\begin{align*}
\chi_0'f = f_{0,1} + f_{0,2} + \chi_0'f_0'.
\end{align*}

The construction above thus yields the following:
\begin{itemize}
    \item functions $f_{0,1}, f_1, f_2, \dots$, each enjoying a suitable fiberwise mean-zero property;
    \item a function $f_{0,2}$ supported near the nodal point $q_0$;
    \item a function on the base $f_B = \chi_0'f_0' + \sum_{i=1}^\infty f_i'$, which satisfies the weighted H\"older estimate
    \[
    \|f_B\|_{C^{0,\alpha}_{\delta-2,\tau-2,\lambda,\mu-2,\varepsilon}(M)} \le C \|f\|_{C^{0,\alpha}_{\delta-2,\tau-2,\lambda,\mu-2,\varepsilon}(M)},
    \]
    with a constant $C$ independent of $\varepsilon$. In fact, as will be noted later, with additional effort one can establish the stronger bound
    \[
    \|f_B\|_{C^{0,\alpha}_{\delta-2,\tau-2,\lambda,0,\varepsilon}(M)} \le C \|f\|_{C^{0,\alpha}_{\delta-2,\tau-2,\lambda,\mu-2,\varepsilon}(M)},
    \]
    again with $C$ independent of $\varepsilon$.
\end{itemize}

These functions together give a decomposition of $f$:
\begin{equation}\label{eq:f-decomposition}
f = \left(f_{0,1} + \sum_{i=1}^\infty f_i\right) + f_{0,2} + f_B,
\end{equation}where the infinite sum is locally finite and therefore well defined.

We first treat $f_{0,1}$ and $f_{0,2}$. 

The function $f_{0,1}$ is supported on the support of $\chi'_0\eta_1$ and may therefore be regarded as a function on $\mathbb{C} \times X_0$; moreover, it possesses a suitable fiberwise average zero property. To proceed, we introduce an auxiliary cutoff function $\widetilde{\chi}'_0$ on $B$ that is supported in $\{|t| < \epsilon_3\}$, identically equal to $1$ on $\{|t| \le \frac{3}{4}\epsilon_3\}$, and satisfies the estimate
\begin{align*}
    \left|\nabla_{\frac{1}{\varepsilon}\omega_{\operatorname{FS}}}^k \widetilde{\chi}'_0\right| \le C(k) (\varepsilon^{\frac12}\epsilon_3^{-1})^k,
\end{align*}where $\omega_{\operatorname{FS}}$ is the Fubini-Study metric on $Y=\mathbb{P}^1$.
We also need a logarithmic cutoff function
\begin{align*}
\tilde{\eta}_1 = \gamma_1\left( \frac{\log\left( \frac{r\Lambda_2^3}{\varepsilon^{\frac{1}{10}}+\varepsilon^{\frac{1}{12}}\rho'^{\frac16}} \right)}{\log \Lambda_2} \right),
\end{align*}
which equals 1 on the support of $f_{0,1}$, and whose gradient is supported in the range $\frac{1}{\Lambda_2^2} \le \frac{r}{\varepsilon^{\frac{1}{10}}+\varepsilon^{\frac{1}{12}}\rho'^{\frac16}} \le \frac{1}{\Lambda_2}$. Then we set
\begin{equation}
P_{0,1}f = \widetilde{\chi}'_0 \tilde{\eta}_1 P_{t=0} f_{0,1},
\end{equation}
where $P_{t=0}$ is the Green operator on $\mathbb{C}\times X_0 $ associated to the product metric, as constructed in Proposition~\ref{Laplace for C times K3 singular version}.

We treat $f_{0,2}$ in a similar fashion. This function is supported on the support of $\chi'_0\eta_2$ and can therefore be viewed as a function on $\mathbb{C}^3$. We introduce another logarithmic cutoff function
\begin{align*}
\tilde{\eta}_2 = \gamma_2 \left( \frac{2 \log \left( \frac{r \Lambda_2^{\frac32}}{2 \left(\varepsilon^{\frac{1}{10}}+\varepsilon^{\frac{1}{12}}\rho'^{\frac16}\right)} \right)}{\log \Lambda_2} \right),
\end{align*}
which equals $1$ on the support of $f_{0,2}$, and whose gradient is supported in the range $\frac{2}{\Lambda_2} \le \frac{r}{\varepsilon^{\frac{1}{10}}+\varepsilon^{\frac{1}{12}}\rho'^{\frac16}} \le \frac{2}{\Lambda_2^{\frac12}} \ll 1$. Then we set
\begin{equation}
P_{0,2}f = \left( \frac{\varepsilon}{2A_0} \right)^{\frac{1}{3}} \widetilde{\chi}'_0 \tilde{\eta}_2 P_{\mathbb{C}^3} f_{0,2},
\end{equation}
where $P_{\mathbb{C}^3}$ is the Green operator on $(\mathbb{C}^3, \omega_{\mathbb{C}^3})$ constructed in Proposition \ref{Laplace for C3}.

\begin{lemma}[\text{cf.} \cite{li2019gluing} Lemma 3.8]\label{approximate invert 2}

One can select $\Lambda_2 \gg 1$, $\epsilon_3 \ll 1$ subject to the constraints in Subsection \ref{Metric deviation}, so that for $\varepsilon$ sufficiently small with respect to all previous choices, the following estimates hold:
\begin{align*}
&\| \Delta_{\omega_\varepsilon} P_{0,1} f - f_{0,1} \|_{C^{0,\alpha}_{\delta-2,\tau-2,\lambda,\mu-2,\varepsilon}(M)} \le \frac{1}{100} \| f \|_{C^{0,\alpha}_{\delta-2,\tau-2,\lambda,\mu-2,\varepsilon}(M)};\\
&\| \Delta_{\omega_\varepsilon} P_{0,2} f - f_{0,2} \|_{C^{0,\alpha}_{\delta-2,\tau-2,\lambda,\mu-2,\varepsilon}(M)} \le \frac{1}{100} \| f \|_{C^{0,\alpha}_{\delta-2,\tau-2,\lambda,\mu-2,\varepsilon}(M)};\\
&\| P_{0,1} f \|_{C^{2,\alpha}_{\delta,\tau,\lambda,\mu,\varepsilon}(M)} \le C(\delta, \tau, \alpha) \| f \|_{C^{0,\alpha}_{\delta-2,\tau-2,\lambda,\mu-2,\varepsilon}(M)};\\
&\| P_{0,2} f \|_{C^{2,\alpha}_{\delta,\tau,\lambda,\mu,\varepsilon}(M)} \le C(\delta, \tau, \alpha) \| f \|_{C^{0,\alpha}_{\delta-2,\tau-2,\lambda,\mu-2,\varepsilon}(M)}.
\end{align*}

\end{lemma}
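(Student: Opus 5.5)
The plan is to follow the scheme of Lemma~3.8 in \cite{li2019gluing}. The supports of $P_{0,1}f$ and $P_{0,2}f$ lie in $\{|t|<\epsilon_3\}$, which is far from the end $U_3$. So the new weights $\lambda,\mu$ never enter, and only the first two terms of the global norm (\ref{global weighted norm}) matter. The argument therefore reduces to the finite-region weighted norms.

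\textbf{Step 1: the bounds on $P_{0,i}f$.} First I would check that the restrictions are bounded in the model norms:
\[
\|f_{0,1}\|_{C^{0,\alpha}_{\delta-2,\tau-2}(\mathbb{C}\times X_0)}\le C\|f\|, \qquad \varepsilon^{-\delta/6}\|f_{0,2}\|_{C^{0,\alpha}_{\delta-2,\tau-2}(\mathbb{C}^3)}\le C\|f\|.
\]
This uses the boundedness of fiberwise averaging (Lemma~\ref{holder on pushforward}) and the fact that $f_{0,1}$ has the fiberwise mean-zero property. Then I apply Proposition~\ref{Laplace for C times K3 singular version} and Proposition~\ref{Laplace for C3}.

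Next I multiply by the cutoffs $\widetilde{\chi}_0'$ and $\tilde\eta_i$. Their derivatives are uniformly bounded in the weighted sense: $\tilde\eta_i$ is logarithmic, so $|\nabla^k\tilde\eta_i|\lesssim r^{-k}(\log\Lambda_2)^{-1}$, and $\widetilde\chi_0'$ has derivative $O(\varepsilon^{1/2}\epsilon_3^{-1})$. Finally, on the supports, the metric equivalence in Proposition~\ref{metric deviation}(a),(b) converts the model norms to $\omega_\varepsilon$-norms. Together these give the two $C^{2,\alpha}$ bounds.

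\textbf{Step 2: the error estimates.} I write
\[
\Delta_{\omega_\varepsilon}P_{0,1}f-f_{0,1}=\widetilde\chi_0'\tilde\eta_1(\Delta_{\omega_\varepsilon}-\Delta_{\mathrm{prod}})P_{t=0}f_{0,1}+[\Delta,\widetilde\chi_0'\tilde\eta_1]P_{t=0}f_{0,1},
\]
using $\widetilde\chi_0'\tilde\eta_1 f_{0,1}=f_{0,1}$. There are three contributions to control.

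The metric-deviation term is bounded by $C\epsilon_3^{1/7}\|f\|$, by Proposition~\ref{metric deviation}(b). Only $C^{0,\alpha}$ closeness of the metric is available there (Remark~\ref{metric deviation higher order}). This is still enough, because the Laplacian difference involves only $g^{-1}-g^{-1}_{\mathrm{prod}}$ contracted with $\nabla^2u$, plus Christoffel terms. The Christoffel terms come from the explicit smooth parts; the only non-smooth piece is $A_t$, which is Lipschitz, so its first derivative is bounded.

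The $\tilde\eta_1$ commutator is $O((\log\Lambda_2)^{-1})\|f\|$. The $\widetilde\chi_0'$ commutator is $O(\varepsilon^{1/2}\epsilon_3^{-1})$ times the weights. Its supports sit at $|t|\sim\epsilon_3$, and there the conversion factor between the $U_1$ norm and the $\varepsilon^{\frac12(\delta-\tau)}$-weighted $U_2$ norm is harmless: $\rho'\sim\epsilon_3\varepsilon^{-1/2}$ makes $\rho'^{\delta-\tau}$ comparable to $\varepsilon^{\frac12(\tau-\delta)}$, up to powers of $\epsilon_3$. The same decomposition, with Proposition~\ref{metric deviation}(a), handles $P_{0,2}$.

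\textbf{Step 3: choice of constants.} I fix $\Lambda_2$ large first, so that $C(\log\Lambda_2)^{-1}<1/300$. Then I fix $\epsilon_3$ small, subject to the constraints of Subsection~\ref{Metric deviation}, so that $C\epsilon_3^{1/7}<1/300$. Finally I take $\varepsilon$ small so that the $\varepsilon^{1/2}\epsilon_3^{-1}$ terms are below $1/300$.

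The \textbf{main obstacle} is the $\widetilde\chi_0'$ cutoff error for $P_{0,1}$. Near $|t|\sim\epsilon_3$ the solution $P_{t=0}f_{0,1}$ is not exponentially small: it only decays polynomially, at a rate governed by $\rho'^{\delta}w'^{\tau}$. So one must verify carefully that the $\varepsilon^{1/2}\epsilon_3^{-1}$ gain beats the weight mismatch between the $U_1$ and $U_2$ norms. To do this I would use the mean-zero property to invoke the fiberwise spectral gap, which gives exponential decay in $|\zeta|$ away from the support of $f_{0,1}$, and fall back on the $\varepsilon$-smallness if that is insufficient.
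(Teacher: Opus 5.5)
Your skeleton (metric-deviation term, $\tilde{\eta}_i$ commutator, $\widetilde{\chi}_0'$ commutator, then fixing $\Lambda_2$, $\epsilon_3$, $\varepsilon$ in that order) is the one in Lemma~3.8 of \cite{li2019gluing}, which is all the paper invokes here. However, the metric-deviation step does not go through as you justify it.

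If you estimate $\Gamma(g_\varepsilon)-\Gamma(G_0^*g_{\mathrm{prod}})$ term by term, you need first derivatives of $g_\varepsilon-G_0^*g_{\mathrm{prod}}$. For the smooth part, Proposition~\ref{metric deviation}(b) gives no such control. For $A_t$, ``Lipschitz, hence bounded derivative'' is not enough: $\partial_t|t|=\bar t/(2|t|)$ is discontinuous at $t=0$. Such a term has no H\"older modulus across $X_0$, so it is not controlled in $C^{0,\alpha}_{\delta-2,\tau-2}$, let alone small.

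What actually makes $C^{0,\alpha}$ closeness suffice is the K\"ahler condition: the Laplacian sees the metric only to zeroth order. Use the paper's convention $\Delta_\omega=2\operatorname{Tr}_\omega\sqrt{-1}\partial\bar\partial$ and the identity $\sqrt{-1}\partial_J\bar\partial_J u=-\frac12 d(du\circ J)$. Let $J_{\mathrm{prod}}$ be the pullback under $G_0$ of the product complex structure. Then
\[
\Delta_{\omega_\varepsilon}u-\Delta_{\mathrm{prod}}u=2\left(\operatorname{Tr}_{\omega_\varepsilon}-\operatorname{Tr}_{\omega_{\mathrm{prod}}}\right)\sqrt{-1}\partial_J\bar\partial_J u-\operatorname{Tr}_{\omega_{\mathrm{prod}}}d\left(du\circ(J-J_{\mathrm{prod}})\right).
\]
The first term needs only the $C^{0,\alpha}$ estimate (b), so $A_t$ enters only through $|A_t-A_0|=O(\epsilon_3)$. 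The second term involves only $J-J_{\mathrm{prod}}$ and its first derivatives. These come from the explicit smooth map $G_0$, with relative size $O(|t|r^{-4})$, and $|t|r^{-4}\lesssim\Lambda_2^{8}|t|^{2/3}$ on $\operatorname{supp}(\widetilde{\chi}_0'\tilde{\eta}_1)$. They never involve $A_t$, and they are small once $\epsilon_3$ is small relative to $\Lambda_2$. For $P_{0,2}$ the embedding $\Upsilon_\varepsilon$ is holomorphic, so only the trace term appears and (a) suffices directly.

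Separately, the ``main obstacle'' you flag is not one. Your spectral-gap fallback would not exist for $P_{0,2}$ anyway, since $f_{0,2}$ has no mean-zero property and $\mathbb{C}^3$ has no compact fibers. The gain $\varepsilon^{\frac12}\epsilon_3^{-1}$ survives the weights because of $w'$. Where $r\le\kappa\rho'$ one has $\rho'^{\delta-j}w'^{\tau-j}=\rho'^{\delta-\tau}(r/\kappa)^{\tau-j}$, so each derivative costs only $(\rho'w')^{-1}=\kappa r^{-1}$, the fiber scale, not $\rho'^{-1}$. On $\operatorname{supp}\nabla\widetilde{\chi}_0'$ one has $\rho'\sim\epsilon_3\varepsilon^{-\frac12}$ and $r\le C$, hence
\[
\rho'^{2-\delta}w'^{2-\tau}\left(|\nabla\widetilde{\chi}_0'|\,|\nabla u|+|\Delta\widetilde{\chi}_0'|\,|u|\right)\lesssim\left(\rho'w'\,\varepsilon^{\frac12}\epsilon_3^{-1}+(\rho'w')^2\varepsilon\epsilon_3^{-2}\right)\|u\|_{C^{2,\alpha}_{\delta,\tau}}\lesssim\varepsilon^{\frac12}\epsilon_3^{-1}\|u\|_{C^{2,\alpha}_{\delta,\tau}}.
\]
The H\"older parts are handled the same way, using only the polynomial weighted bounds on $u$. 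On $\mathbb{C}^3$ the same computation works with $w$, since $R\ll\rho$ on $\operatorname{supp}\tilde{\eta}_2\cap\{|t|\sim\epsilon_3\}$. Finally, for $\epsilon_3$ small these supports do not meet $U_2$, so no $U_1$/$U_2$ conversion enters.
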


We then treat $f_i$ with $i\ge 1$. 

For those $y_i'$ in the region $\{|t|>\frac{\epsilon_3}{2},|y|>1\}$, define cutoff functions 
\begin{align*}
\widetilde{\chi}_i'=\gamma_2\left(\frac{|y-y_i'|}{\Lambda_3\varepsilon^{\frac12}}\right);
\end{align*}For those $y_i'$ in the region $\{|y|<1\}$, define cutoff functions 
\begin{align*}
\widetilde{\chi}_i'=\gamma_2\left(\frac{|y-y_i'|}{\Lambda_3|y_i'|\varepsilon^{\frac12}}\right).
\end{align*}
Define
\begin{equation}
P_1f\coloneqq \sum\limits_{i=1}^{N'}\widetilde{\chi}_i'P_{y_i'}f_i,\quad  P_2f\coloneqq \sum\limits_{i=N'+1}^{\infty}\widetilde{\chi}_i'P_{y_i'}f_i,
\end{equation}where $P_{y_i'}f_i=G^*_{y_i'}\left(P_{y_i'}\left((G_{y_i'}^{-1})^*f_i\right)\right)$. For $i=1,...,N'$, the Green operators $P_{y_i'}$ are those constructed in Proposition \ref{Laplace for C times K3}, while for $i=N'+1,...$, the Green operators $P_{y_i'}$ are those constructed in Proposition \ref{Laplace for C times K3 weighed version}.

\begin{lemma}\label{approximate invert 1}

For any prescribed $\epsilon_3$, one can simultaneously select $\Lambda_3$ sufficiently large and $\epsilon_4$ sufficiently small so that, for sufficiently small $\varepsilon > 0$ relative to the previous choices, the following estimates hold:
\begin{align*}
&\left\|\Delta_{\omega_\varepsilon}P_1f-\sum\limits_{i=1}^{N'} f_i\right\|_{C^{0,\alpha}_{\delta-2,\tau-2,\lambda,\mu-2,\varepsilon}(M)}\le \frac{1}{100}\|f\|_{C^{0,\alpha}_{\delta-2,\tau-2,\lambda,\mu-2,\varepsilon}(M)};\\
&\left\|\Delta_{\omega_\varepsilon}P_2f-\sum\limits_{i=N'+1}^\infty f_i\right\|_{C^{0,\alpha}_{\delta-2,\tau-2,\lambda,\mu-2,\varepsilon}(M)}\le \frac{1}{100}\|f\|_{C^{0,\alpha}_{\delta-2,\tau-2,\lambda,\mu-2,\varepsilon}(M)};\\
&\left\|P_1f\right\|_{C^{2,\alpha}_{\delta,\tau,\lambda,\mu,\varepsilon}(M)}\le C(\epsilon_3,\epsilon_4,\Lambda_3,\delta,\tau,\lambda,\mu,\alpha)\|f\|_{C^{0,\alpha}_{\delta-2,\tau-2,\lambda,\mu-2,\varepsilon}(M)};\\
&\left\|P_2f\right\|_{C^{2,\alpha}_{\delta,\tau,\lambda,\mu,\varepsilon}(M)}\le C(\epsilon_3,\epsilon_4,\Lambda_3,\delta,\tau,\lambda,\mu,\alpha)\|f\|_{C^{0,\alpha}_{\delta-2,\tau-2,\lambda,\mu-2,\varepsilon}(M)}.
\end{align*}

\end{lemma}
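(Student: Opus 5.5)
The approach is to estimate each summand $\widetilde{\chi}_i'P_{y_i'}f_i$ separately and then sum, using local finiteness of $\{\chi_i'\}$ and of the enlarged cutoffs $\{\widetilde{\chi}_i'\}$. Together with the exponential decay in \eqref{exponential decay coarse version}, \eqref{exponential decay with weight} and \eqref{exponential decay without weight}, this makes the overlap count harmless. For a single $i$ I would write, on the support of $\widetilde{\chi}_i'$ and via the trivialization $G_{y_i'}$,
\begin{align*}
\Delta_{\omega_\varepsilon}\left(\widetilde{\chi}_i'P_{y_i'}f_i\right)-f_i
=\widetilde{\chi}_i'\left(\Delta_{\omega_\varepsilon}-\Delta_{\operatorname{prod}}\right)P_{y_i'}f_i
+2\langle\nabla\widetilde{\chi}_i',\nabla P_{y_i'}f_i\rangle+(\Delta_{\omega_\varepsilon}\widetilde{\chi}_i')P_{y_i'}f_i .
\end{align*}
This uses $\widetilde{\chi}_i'=1$ on $\operatorname{supp}f_i$ and $\Delta_{\operatorname{prod}}P_{y_i'}f_i=f_i$. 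Here $\Delta_{\operatorname{prod}}$ is the Laplacian of $\frac{\sqrt{-1}A_{y_i'}}{\varepsilon}dt\wedge d\bar t+\omega_{\operatorname{SRF}}|_{X_{y_i'}}$ for $i\le N'$, and of $g_{\operatorname{prod},y_i'}$ for $i>N'$. After rescaling the base by $\varepsilon^{-1/2}$ (or $\varepsilon^{-1/2}|y_i'|^{-1}$), this is exactly the model $\mathbb{C}\times X_{y_i'}$ of Propositions~\ref{Laplace for C times K3} and \ref{Laplace for C times K3 weighed version}, or of Remark~\ref{Everything holds for SRF,y}. I would first check that $\|(G_{y_i'}^{-1})^*f_i\|$ in the model norm is bounded by $C$ times the global norm of $f$, with the $\varepsilon^{\frac12(\delta-\tau)}$ factors in \eqref{global weighted norm} cancelling on both sides. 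This gives the third and fourth estimates, $\|P_jf\|_{C^{2,\alpha}}\le C\|f\|$, directly from $\|P_{y_i'}\|\le C$, with $C$ independent of $\varepsilon$ and, for $i>N'$, of $y_i'$.

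Next come the two error terms. The \emph{metric-deviation term} is bounded using Proposition~\ref{metric deviation}(c) for $i\le N'$: $\|g_\varepsilon-g_{\operatorname{prod}}\|_{C^{1,\alpha}}\le C(\epsilon_3)\Lambda_3\varepsilon^{1/2}$, so the term is $\le C(\epsilon_3)\Lambda_3\varepsilon^{1/2}\|f\|$, which is small once $\varepsilon$ is small. For $i>N'$ I use (d): $\le C\epsilon_4^{1/24}\|P_{y_i'}f_i\|_{C^{2,\alpha}_{\lambda,\mu}}$, which is small by choosing $\epsilon_4$ small. Since (d) is stated in the scale-invariant $C^{1,\alpha}_{0,0}$ norm, it multiplies correctly against the $r^{-\mu+2}/|y|^\lambda$ weights. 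A small point is that (d) compares with $g_{y'}$ rather than $g_{\operatorname{SRF}}$. The operator $P_{y_i'}$ is built for $\omega_{\operatorname{P},y}$, which matches, and $f_i$ has fiber mean zero with respect to $\omega_{y_i'}^2$, as arranged in the definition of $\tilde f_i'$. The \emph{cutoff terms} are supported where $|y-y_i'|\ge\Lambda_3|y_i'|\varepsilon^{1/2}$ (respectively $\Lambda_3\varepsilon^{1/2}$), which is at rescaled base distance $\ge\Lambda_3-O(1)$ from $\operatorname{supp}f_i$. The gradients of $\widetilde{\chi}_i'$ are $O(\Lambda_3^{-1})$ in the rescaled metric. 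So the exponential decay estimates give bounds $Ce^{-m\Lambda_3}\|f\|$, with the correct $r^{-1},r^{-2}$ weights already built into \eqref{exponential decay with weight}, which is small for $\Lambda_3$ large.

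The main obstacle is the order of parameters and the summation. Near the overlap $\{|y|\sim\epsilon_4\}$, which falls under (c) with $C(\alpha)\Lambda_3\varepsilon^{1/2}$ and where $\epsilon_1,\epsilon_4$ enter the norm equivalences, $\Lambda_3$ must be fixed first (large, depending only on $m$ and the uniform overlap count), then $\epsilon_4$, then $\varepsilon$. One also must make sure that the number of $\widetilde{\chi}_i'$ overlapping at a point grows only like $\Lambda_3^2$. That growth is dominated by $e^{-m\Lambda_3}$ for the cutoff errors. For the deviation error I would instead not sum bluntly. Since $\sum_i\widetilde\chi_i'$ is not a partition of unity, I would bound the deviation contribution pointwise by $\sum_i C\epsilon_4^{1/24}|P_{y_i'}f_i|$, and control this sum using the exponential decay of each $P_{y_i'}f_i$ away from $\operatorname{supp}\chi_i'$. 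That gives a constant independent of $\Lambda_3$, so $\epsilon_4$ can be chosen after $\Lambda_3$. Finally I would check that the Hölder seminorms of the sum are controlled locally, by the same finite-overlap count, yielding the factor $\frac1{100}$.
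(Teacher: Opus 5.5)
\textbf{Gap: the order in which $\Lambda_3$ and $\epsilon_4$ are chosen.} Your ingredients are the paper's: the split into a metric-deviation term and a cutoff term, Proposition~\ref{metric deviation}(c) for $i\le N'$ and (d) for $i>N'$, the exponential decay, and the $O(\Lambda_3^2)$ overlap count. The gap is the parameter selection, which is the real content of the lemma.

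Consider the pieces $i\le N'$ whose centers $y_i'$ lie near $|y|\sim\frac12\epsilon_4$. In your own set-up their constants degenerate as $\epsilon_4\to0$. You allow $y_i'$-dependence for $i\le N'$, you use Proposition~\ref{Laplace for C times K3} with $C(y,\alpha)$ uniform only away from $S$, and you note that $\epsilon_4$ enters the norm equivalences. Concretely, \eqref{coarse weighted inverse} gives an operator norm of order $\epsilon_4^{-1}$ in unweighted H\"older norms. Converting the unweighted model norm back to the global norm costs a further $\epsilon_4^{-13/24}$. So the $i\le N'$ cutoff error is bounded only by $C(\epsilon_1,\epsilon_3)\epsilon_4^{-37/24}\Lambda_3^2e^{-m\Lambda_3}\|f\|$, which is what the paper records. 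It is not bounded by $Ce^{-m\Lambda_3}\|f\|$ with $C$ independent of $\epsilon_4$.

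If you fix $\Lambda_3$ first, ``depending only on $m$ and the overlap count,'' and then let $\epsilon_4\to0$, this term is no longer small. On the other side, the $i>N'$ deviation error $C\epsilon_4^{1/24}\Lambda_3^2$ forces $\epsilon_4$ to be small relative to $\Lambda_3$. The paper reconciles these competing constraints by taking $\Lambda_3$ large and then choosing $\epsilon_4$ in the window $(200C\Lambda_3^2e^{-m\Lambda_3})^{24/37}<\epsilon_4<(200C'\Lambda_3^2)^{-24}$. This window is nonempty because $e^{-m\Lambda_3}$ beats every power of $\Lambda_3$.

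Your last sentence also runs the logic backwards. If the refined summation really gives a deviation bound $C\epsilon_4^{1/24}$ independent of $\Lambda_3$, that is what lets you choose $\epsilon_4$ \emph{before} $\Lambda_3$. With the crude bound $C\epsilon_4^{1/24}\Lambda_3^2$ you can already choose $\epsilon_4$ after $\Lambda_3$; that was never the difficulty.

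\textbf{Two possible repairs.} First, you could take $\epsilon_4$ first, then $\Lambda_3=\Lambda_3(\epsilon_1,\epsilon_3,\epsilon_4)$, then $\varepsilon$. This is a legitimate alternative to the window argument, but only after you prove the $\Lambda_3$-independent summation. That proof must be both pointwise and for the H\"older seminorm, using the decay of each $P_{y_i'}f_i$ away from $\operatorname{supp}\chi_i'$. Second, you could keep your order and show the $i\le N'$ constants are uniform in $\epsilon_4$. One way is to treat every $y_i'$ with $|y_i'|<\epsilon_1$ through the weighted $\omega_{\operatorname{SRF},y}$ theory of Remark~\ref{Everything holds for SRF,y}, whose norms match the $U_3$ part of the global norm without loss. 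As written, you do not establish this uniformity.
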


\begin{proof}

We first estimate$\left\|\Delta_{\omega_\varepsilon }P_1f-\sum\limits_{i=1}^{N'}f_i\right\|_{C^{0,\alpha}_{\delta-2,\tau-2,\lambda,\mu-2,\varepsilon}(M)}$. 

For $i=1,...,N'$, by Remark \ref{Everything holds for SRF,y}, we know that
\begin{align*}
\|P_{y_i'}f_i\|_{C^{2,\alpha}(\mathbb{C}\times X_{y_i'})}\le C(\epsilon_1,\epsilon_3,\alpha,\delta,\tau,\lambda,\mu)\epsilon_4^{-1}\|f_i\|_{C^{0,\alpha}(\mathbb{C}\times X_{y_i'})}.
\end{align*}
Note that
\begin{align*}
\Delta_{\omega_\varepsilon }P_1f-\sum\limits_{i=1}^{N'}f_i=\sum\limits_{i=1}^{N'}\widetilde{\chi}_i'\left(\Delta_{\omega_\varepsilon}(P_{y_i'}f_i)-f_i\right)+\sum\limits_{i=1}^{N'} \left\{2\left\langle \nabla\widetilde{\chi}_i',\nabla P_{y_i'}f_i \right\rangle_{\omega_\varepsilon}+\left(\Delta_{\omega_\varepsilon}\widetilde{\chi}_i'\right)P_{y_i'}f_i\right\}.
\end{align*}

The first sum accounts for the error arising from the deviation away from the product metric. For these functions $f_i$, the norm $\varepsilon^{\frac12(\delta-\tau)}\|\cdot\|_{C^{0,\alpha}(\mathbb{C}\times X_{y_i'})}$ is equivalent to $\|\cdot\|_{C^{0,\alpha}_{\delta-2,\tau-2,\lambda,\mu-2,\varepsilon}(M)}$ up to a factor $C(\epsilon_1,\epsilon_3,\alpha)\,\epsilon_4^{\frac{13}{24}}$. More precisely, we have
\begin{align*}
C_1(\epsilon_1,\epsilon_3,\alpha)\epsilon_4^{\frac{13}{24}}\|\cdot\|_{C^{0,\alpha}_{\delta-2,\tau-2,\lambda,\mu-2,\varepsilon}(M)}\leq \varepsilon^{\frac12(\delta-\tau)}\|\cdot\|_{C^{0,\alpha}(\mathbb{C}\times X_{y_i'})}\leq C_2(\epsilon_1,\epsilon_3,\alpha) \|\cdot\|_{C^{0,\alpha}_{\delta-2,\tau-2,\lambda,\mu-2,\varepsilon}(M)}.
\end{align*}Hence, by Proposition \ref{metric deviation} and Remark \ref{Everything holds for SRF,y}, we know that 
\begin{align*}
\left\|\Delta_{\omega_\varepsilon}(P_{y_i'}f_i)-f_i\right\|_{C^{0,\alpha}_{\delta-2,\tau-2,\lambda,\mu-2,\varepsilon}(M)}&\leq C(\epsilon_1,\epsilon_3,\alpha)\epsilon_4^{-\frac{13}{24}}\varepsilon^{\frac12(\delta-\tau)}\left\|\Delta_{\omega_\varepsilon}(P_{y_i'}f_i)-f_i\right\|_{C^{0,\alpha}(\mathbb{C}\times X_{y_i'})}\\
&\leq C(\epsilon_1,\epsilon_3,\alpha)\epsilon_4^{-\frac{13}{24}}\varepsilon^{\frac12(\delta-\tau)}\Lambda_3\varepsilon^{\frac12}\|P_{y_i'}f_i\|_{C^{2,\alpha}(\mathbb{C}\times X_{y_i'})}\\
&\leq C(\epsilon_1,\epsilon_3,\alpha)\epsilon_4^{-\frac{37}{24}}\varepsilon^{\frac12(\delta-\tau)}\Lambda_3\varepsilon^{\frac12}\|f_i\|_{C^{0,\alpha}(\mathbb{C}\times X_{y_i'})}\\
&\leq C(\epsilon_1,\epsilon_3,\alpha)\epsilon_4^{-\frac{37}{24}}\Lambda_3\varepsilon^{\frac12}\|f_i\|_{C^{0,\alpha}_{\delta-2,\tau-2,\lambda,\mu-2,\varepsilon}(M)}
\end{align*}
At each point only $O(\Lambda_3^2)$ terms are nonzero, so the errors add up to
\begin{align*}
\left\|\sum\limits_{i=1}^{N'}\widetilde{\chi}_i'\left(\Delta_{\omega_\varepsilon}(P_{y_i'}f_i)-f_i\right)\right\|_{C^{0,\alpha}_{\delta-2,\tau-2,\lambda,\mu-2,\varepsilon}(M)}\le C(\alpha,\epsilon_3)\epsilon_4^{-\frac{37}{24}}\Lambda_3^3\varepsilon^{\frac12} \| f\|_{C^{0,\alpha}_{\delta-2,\tau-2,\lambda,\mu-2,\varepsilon}(M)}.
\end{align*}
Here we have a collapsing factor $\varepsilon^{\frac12}$, so this term is actually negligible.

The second sum corresponds to the cutoff error. By the exponential decay property established in Proposition~\ref{Laplace for C times K3} and Remark~\ref{Everything holds for SRF,y}, together with the fact that at each point at most $O(\Lambda_3^2)$ terms are nonvanishing, this error is bounded by
\begin{align*}
    &\left\|\sum\limits_{i=1}^{N'}\left\{2\left\langle \nabla\widetilde{\chi}_i',\nabla P_{y_i'}f_i \right\rangle_{\omega_\varepsilon}+\left(\Delta_{\omega_\varepsilon}\widetilde{\chi}_i'\right)P_{y_i'}f_i\right\}\right\|_{C^{0,\alpha}_{\delta-2,\tau-2,\lambda,\mu-2,\varepsilon}(M)}\\
    \leq &C(\epsilon_1,\epsilon_3,\alpha)\epsilon_4^{-\frac{13}{24}}\varepsilon^{\frac12(\delta-\tau)}\left\|\sum\limits_{i=1}^{N'}\left\{2\left\langle \nabla\widetilde{\chi}_i',\nabla P_{y_i'}f_i \right\rangle_{\omega_\varepsilon}+\left(\Delta_{\omega_\varepsilon}\widetilde{\chi}_i'\right)P_{y_i'}f_i\right\}\right\|_{C^{0,\alpha}(\mathbb{C}\times X_{y_i'})}\\
    \le & C(\epsilon_1,\epsilon_3,\alpha)\epsilon_4^{-\frac{37}{24}}\Lambda_3^2e^{-m\Lambda_3} \| f\|_{C^{0,\alpha}_{\delta-2,\tau-2,\lambda,\mu-2,\varepsilon}(M)}.
\end{align*}

We then estimate $\left\|\Delta_{\omega_\varepsilon }P_2f-\sum\limits_{i=N'+1}^{\infty}f_i\right\|_{C^{0,\alpha}_{\delta-2,\tau-2,\lambda,\mu-2,\varepsilon}(M)}$. 

For $i=N'+1,N'+2,...$, by Proposition \ref{Laplace for C times K3 weighed version}, we know that
\begin{align*}
\|P_{y_i'}f_i\|_{C^{2,\alpha}_{\lambda,\mu}(\mathbb{C}\times X_{y_i})}\le C(\alpha)\|f_i\|_{C^{0,\alpha}_{\lambda,\mu-2}(\mathbb{C}\times X_{y_i})}.
\end{align*}
Note that
\begin{align*}
\Delta_{\omega_\varepsilon }P_2f-\sum\limits_{i=N'+1}^{\infty}f_i&=\sum\limits_{i=N'+1}^{\infty}\widetilde{\chi}_i'\left(\Delta_{\omega_\varepsilon}(P_{y_i'}f_i)-f_i\right)+\sum\limits_{i=N'+1}^{\infty}\left\{2\left\langle \nabla\widetilde{\chi}_i',\nabla P_{y_i'}f_i \right\rangle_{\omega_\varepsilon}+\left(\Delta_{\omega_\varepsilon}\widetilde{\chi}_i'\right)P_{y_i'}f_i\right\}.
\end{align*}

The first sum corresponds to the error caused by deviation from product metric. For these functions $f_i$, the norm $\varepsilon^{\frac12(\delta-\tau)}\|\cdot\|_{C^{0,\alpha}_{\lambda,\mu-2}(\mathbb{C}\times X_{y_i'})}$ is equivalent to $\|\cdot\|_{C^{0,\alpha}_{\delta-2,\tau-2,\lambda,\mu-2,\varepsilon}(M)}$. By Proposition \ref{metric deviation} and Proposition \ref{Laplace for C times K3 weighed version}, we know that 
\begin{align*}
\left\|\Delta_{\omega_\varepsilon}(P_{y_i'}f_i)-f_i\right\|_{C^{0,\alpha}_{\delta-2,\tau-2,\lambda,\mu-2,\varepsilon}(M)}&\leq C(\alpha)\varepsilon^{\frac12(\delta-\tau)}\left\|\Delta_{\omega_\varepsilon}(P_{y_i'}f_i)-f_i\right\|_{C^{0,\alpha}_{\lambda,\mu-2}(\mathbb{C}\times X_{y_i'})}\\
&\le C(\alpha)\varepsilon^{\frac12(\delta-\tau)}\epsilon_4^{\frac{1}{24}} \|P_{y_i'}f_i\|_{C^{2,\alpha}_{\lambda,\mu}(\mathbb{C}\times X_{y_i'})}\\
&\le C(\alpha)\varepsilon^{\frac12(\delta-\tau)}\epsilon_4^{\frac{1}{24}} \| f_i\|_{C^{0,\alpha}_{\lambda,\mu-2}(\mathbb{C}\times X_{y_i'})}\\
&\le C(\alpha)\epsilon_4^{\frac{1}{24}} \| f_i\|_{C^{0,\alpha}_{\delta-2,\tau-2,\lambda,\mu-2,\varepsilon}(M)}.
\end{align*}
At any given point, at most $O(\Lambda_3^2)$ terms are non-vanishing. Summing these errors gives
\begin{align*}
\left\|\sum\limits_{i=N'+1}^{\infty}\widetilde{\chi}_i'\left(\Delta_{\omega_\varepsilon}(P_{y_i'}f_i)-f_i\right)\right\|_{C^{0,\alpha}_{\delta-2,\tau-2,\lambda,\mu-2,\varepsilon}(M)}\le C(\alpha)\epsilon_4^{\frac{1}{24}}\Lambda_3^2\| f\|_{C^{0,\alpha}_{\delta-2,\tau-2,\lambda,\mu-2,\varepsilon}(M)}
\end{align*}

The second sum corresponds to the cutoff error. By the exponential decay property in Proposition \ref{Laplace for C times K3 weighed version} and the fact that at each point at most $O(\Lambda_3^2)$ terms are nonzero, this error is controlled by
\begin{align*}
    &\left\|\sum\limits_{i=N'+1}^{\infty}\left\{2\left\langle \nabla\widetilde{\chi}_i',\nabla P_{y_i'}f_i \right\rangle_{\omega_\varepsilon}+\left(\Delta_{\omega_\varepsilon}\widetilde{\chi}_i'\right)P_{y_i'}f_i\right\}\right\|_{C^{0,\alpha}_{\delta-2,\tau-2,\lambda,\mu-2,\varepsilon}(M)}\\
    \leq &C(\alpha)\varepsilon^{\frac12(\delta-\tau)}\left\|\sum\limits_{i=N'+1}^{\infty}\left\{2\left\langle \nabla\widetilde{\chi}_i',\nabla P_{y_i'}f_i \right\rangle_{\omega_\varepsilon}+\left(\Delta_{\omega_\varepsilon}\widetilde{\chi}_i'\right)P_{y_i'}f_i\right\}\right\|_{C^{0,\alpha}_{\lambda,\mu-2}(\mathbb{C}\times X_{y_i'})}\\
    \le & C(\alpha)\Lambda_3^2e^{-m\Lambda_3} \| f\|_{C^{0,\alpha}_{\delta-2,\tau-2,\lambda,\mu-2,\varepsilon}(M)}.
\end{align*}

We need to simultaneously choose $\epsilon_4$ and $\Lambda_3$ such that, for fixed $\epsilon_3$, the following inequalities hold
\begin{align*}
\left\{
	\begin{aligned}
&C(\epsilon_1,\epsilon_3,\alpha)\epsilon_4^{-\frac{37}{24}}\Lambda_3^2e^{-m\Lambda_3}<\frac{1}{200};\\
   & C(\alpha)\Lambda_3^2e^{-m\Lambda_3}<\frac{1}{200};\\
   &C(\alpha)\Lambda_3^2\epsilon_4^{\frac{1}{24}}<\frac{1}{200}.
    \end{aligned}
	\right.
\end{align*}
This is always possible since for sufficiently large $\Lambda_3$, it holds that
\begin{align*}
20C(\epsilon_1,\epsilon_3,\alpha)\Lambda_3e^{-\frac{m\Lambda_3}{2}}<\left(\frac{1}{200C(\alpha)\Lambda_3^2}\right)^{24}.
\end{align*}Then choose an $\epsilon_4$ in this interval. Note that the admissible $\epsilon_4$ tends to zero as $\Lambda_3\to+\infty$.

The boundedness of $P_1$ and $P_2$ follows directly from the construction.

\end{proof}

Finally, we discuss the function $f_B(y)$ on the base.

\begin{lemma}\label{decay of base function}

The function $f_B$ satisfies the bound
\begin{align*}
|f_B(x)|\le C\|f\|_{C^{0,\alpha}_{\delta-2,\tau-2,\lambda,\mu-2,\varepsilon}(M)}\left\{
	\begin{aligned}
&\left(1+\varepsilon^{-\frac12}|t|\right)^{\delta-\tau}\quad && \text{if}\quad x=t, |t|<\epsilon_1;\\
& \varepsilon^{\frac{\tau-\delta}{2}} \quad && \text{if}\quad x\in\left\{|t|> \frac12\epsilon_1,|y|> \frac12\epsilon_1\right\};\\
&\varepsilon^{\frac{\tau-\delta}{2}}|y|^{\frac{13}{24}} \quad && \text{if}\quad x=y,|y|<\epsilon_1,
\end{aligned}
	\right.
\end{align*}where $C=C(\epsilon_1,\delta,\tau,\lambda,\mu)$ is a uniform constant independent of $\varepsilon$.

For $d_{\omega_B}(x,x')<\varepsilon^{\frac12}$, the following H{\"o}lder estimates hold
\begin{align*}
\frac{|f_B(x)-f_B(x')|}{\left(d_{\frac{1}{\varepsilon}\omega_B}(x,x')\right)^\alpha}\leq C \|f\|_{C^{0,\alpha}_{\delta-2,\tau-2,\lambda,\mu-2,\varepsilon}(M)}  \left\{
	\begin{aligned}
&\left(1+\varepsilon^{-\frac12}|t|\right)^{\delta-\tau}\; && \text{if}\quad x=t,x'=t', |t|<\epsilon_1;\\
& \varepsilon^{\frac{\tau-\delta}{2}} \; && \text{if}\quad x\in\left\{|t|> \frac12\epsilon_1,|y|> \frac12\epsilon_1\right\};\\
&\varepsilon^{\frac{\tau-\delta}{2}}|y|^{\frac{13}{24}} \; && \text{if}\quad x=y,x'=y',|y|<\epsilon_1,
\end{aligned}
	\right.
\end{align*}
where $C=C(\epsilon_1,\delta,\tau,\lambda,\mu)$ is a uniform constant independent of $\varepsilon$.

Set
\begin{equation}
\overline{f_B}=\int_{B}f_B\omega_B.
\end{equation}Then
\begin{align*}
|\overline{f_B}|\le C\varepsilon^{\frac{\tau-\delta}{2}}\max\left\{\epsilon_3^{\delta-\tau+2},\epsilon_4^{\frac{23}{288}}\right\}\|f\|_{C^{0,\alpha}_{\delta-2,\tau-2,\lambda,\mu-2,\varepsilon}(M)},
\end{align*}where $C=C(\epsilon_1,\delta,\tau,\lambda,\mu)$ is a uniform constant independent of $\varepsilon$.

\end{lemma}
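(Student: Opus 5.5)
The plan is to treat the three parts of $f_B$ separately and estimate each as a weighted fiber integral, using only the definition of the global norm (\ref{global weighted norm}) and the bounded overlap of the partition $\{\chi_i'\}$.

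\textbf{Pointwise bounds.} On $\{|t|<\epsilon_1\}$ the function is $\chi_0' f_0'$. Here $f_0'$ is the normalized fiber average of $\eta_1 f$ with respect to $G_{0,t}^*\omega_{\operatorname{SRF}}|_{X_0}$. Transporting to $\mathbb{C}_\zeta\times X_0$ via $\upsilon\circ G_0$, I would apply Lemma \ref{holder on pushforward}. The $U_1$-part of the global norm is unweighted in $\varepsilon$, so Lemma \ref{holder on pushforward} gives $|\pi_* f(\zeta)|\le C(1+|\zeta|)^{\delta-\tau}\|f\|$ with $|\zeta|\sim\varepsilon^{-1/2}|t|$. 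The denominator $\int\eta_1(G_{0,t}^*\omega)^2$ is bounded below, since $\eta_1=1$ outside a region of small volume. The pieces $f_i'$ with $|t|<\epsilon_1$ are handled by the same estimate, because on the overlap the norms are equivalent. On $U_2$ the global norm carries the factor $\varepsilon^{\frac12(\delta-\tau)}$ in front of the plain $C^{0,\alpha}$ norm. Hence $|f|\le\varepsilon^{\frac{\tau-\delta}{2}}\|f\|$ there, fiber integrals inherit this bound, and bounded overlap finishes this region.

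On $U_3$ we have $|f|\le \varepsilon^{\frac{\tau-\delta}2}|y|^{\lambda}r^{\mu-2}\|f\|$ with $\mu-2=-\frac{25}{12}$. The key step is the integration trick already used in Proposition \ref{Laplace for C times K3 weighed version}: I need $\int_{X_y} r^{\mu-2}\,\omega_{y'}^2\le C$ uniformly in $y$. Near the node $\omega_{y'}$ is comparable to $\operatorname{EH}_y$, whose volume form is comparable to $r^{7}dr$ in the real radial variable $r$ (real dimension $4$, $r^4\sim|z|^2$ with the metric scaling like $r^2$), on the region $r\gtrsim|y|^{1/4}$. The integral $\int r^{\mu-2}r^7dr$ converges for $\mu>-6$, so the fiber integral is bounded. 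Away from the node, everything is smooth. This yields the factor $|y|^{13/24}$.

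\textbf{Hölder bounds.} For $d_{\omega_B}(x,x')<\varepsilon^{1/2}$ I would compare the fiber integrals at $y$ and $y'$ through the trivialization $G_{y_i',y}$. The difference splits into two parts. The first is $\int(f(\cdot)-f(G\cdot))$, which is controlled by the Hölder seminorm of $f$ in the rescaled base directions, where the horizontal scale is $\varepsilon^{1/2}$ (or $\varepsilon^{1/2}|y|$ near infinity, matching $\frac1\varepsilon\omega_B$). The second is the variation of the measure $(G_{y_i',y}^*\omega)^2$ together with the cutoffs $\chi_i'$, which is Lipschitz in $d_{\frac1\varepsilon\omega_B}$ by Lemma \ref{fiber deviation 4} and the uniform $C^k$ bounds on $\chi_i'$. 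Near $X_0$ this is exactly the second statement of Lemma \ref{holder on pushforward}. The weights pass through as in the pointwise case. The one subtle point is that the Hölder quotient in $U_3$ carries $r^{\mu-2-\alpha}$, and this remains integrable for the same reason.

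\textbf{The average.} Using the pointwise bounds, $\overline{f_B}=\int_B f_B\omega_B$ splits over the regions.
\begin{itemize}
\item Near $t=0$: $\int_{|t|<\epsilon_3}(1+\varepsilon^{-1/2}|t|)^{\delta-\tau}\,dt\lesssim \varepsilon^{\frac{\tau-\delta}{2}}\epsilon_3^{\delta-\tau+2}$, since $\delta-\tau+2>0$.
\item Near infinity: $\int_{|y|<\epsilon_4}|y|^{13/24}|y|^{-2}\,|dy|^2\sim\epsilon_4^{13/24}$.
\end{itemize}
The remaining region is where the genuine difficulty lies. There $f_B$ is only bounded by $\varepsilon^{\frac{\tau-\delta}{2}}$, and I must instead use $\int_M f\,\omega_\varepsilon^3=0$. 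Since $\int_B f_B\,\omega_B$ agrees with $\int_M f\cdot\pi^*\omega_B\wedge(\text{fiber volume})$ up to errors, the mean-zero condition converts the bulk contribution into the error terms coming from the near-singular regions and from the metric deviations. By Proposition \ref{metric deviation}(d) and Proposition \ref{fiber deviation 2}, these produce relative errors like $|y'|^{23/288}\le\epsilon_4^{23/288}$; this is where the stated exponent comes from. I expect this last step, identifying the comparison of the fiberwise volume forms $\omega_{y_i'}^2$, $\omega_{\operatorname{SRF}}^2$ and $\omega_\varepsilon^3/(\frac1\varepsilon\omega_B)$ with the right uniform smallness, to be the main obstacle.
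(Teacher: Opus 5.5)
Your overall route is the paper's: weighted fiber integrals with the Eguchi--Hanson rescaling on $X_y$, Lemma~\ref{holder on pushforward} near $X_0$, and the mean-zero condition for $\overline{f_B}$. However, two steps do not go through as written. Below, $\|f\|$ denotes the global $C^{0,\alpha}_{\delta-2,\tau-2,\lambda,\mu-2,\varepsilon}$ norm.

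\textbf{The H\"older estimate on $\{|y|<\epsilon_1\}$.} The weighted seminorm on $U_3$ only controls pairs with $d_{\omega_\varepsilon}(x,x')\ll r(x)$. Write $f_i'(y)=\chi_i'(y)\int_{X_{y_i'}}f(p_z)\,dV_i(z)$ with $p_z=(y,G_{y,y_i'}(z))$ and $q_z=(y',G_{y',y_i'}(z))$. The measure $dV_i$ on the reference fiber is fixed, so no separate ``variation of the measure'' term arises. The two points differ not only horizontally but also vertically, by roughly $|y-y'|\,r(z)^{-3}$ in the fiber metric. You need $r(z)^4\ge|y_i'|\sim|y|$ to see that $d_{\omega_\varepsilon}(p_z,q_z)\le CD$, where $D=d_{\frac1\varepsilon\omega_B}(y,y')$. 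Even then, the seminorm says nothing on $\{r(z)\lesssim D\}$, which is nonempty as soon as $D\gtrsim|y|^{1/4}$. So ``controlled by the H\"older seminorm'' fails near the vanishing cycle.

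The fix is to split $X_{y_i'}=\{r\ge KD\}\cup\{r<KD\}$. On the outer part, use the seminorm together with $\int r^{\mu-2-\alpha}dV_i<\infty$. On the inner part, use the pointwise bound $|f|\le\varepsilon^{\frac{\tau-\delta}{2}}|y|^{\lambda}r^{\mu-2}\|f\|$ together with $\int_{\{r<KD\}}r^{\mu-2}dV_i\lesssim D^{\mu+2}\le D^{\alpha}$. This uses the correct volume form $\omega_{\operatorname{EH}_y}^2\sim r^3\,dr\wedge dL$, since $r=H^{1/4}$ is the apex distance on a real $4$-dimensional cone. Your $r^7dr$ is the Euclidean volume of the cone. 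The true integrability threshold is exponent $>-4$; $\mu-2-\alpha$ still meets it, so your pointwise conclusion survives.

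\textbf{The average $\overline{f_B}$.} This step is left open, but the difficulty you foresee disappears once you use the exact identity
\begin{align*}
\varepsilon\omega_\varepsilon^3=3\pi^*\omega_B\wedge\theta^2+\varepsilon\left(3\pi^*\beta\wedge\theta^2+\theta^3\right),\qquad \theta\coloneqq\omega_X+\sqrt{-1}\partial\bar\partial\phi .
\end{align*}
It holds because any product of two forms pulled back from the one-dimensional base vanishes. Set $\widetilde f_B\coloneqq\pi_*(f\theta^2)$. Then $\int_Mf\omega_\varepsilon^3=0$ gives $\left|\int_B\widetilde f_B\,\omega_B\right|\le C\varepsilon^{1+\frac{\tau-\delta}{2}}\|f\|$. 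Hence $\overline{f_B}=\int_B(f_B-\widetilde f_B)\,\omega_B$ up to that error. Only fiberwise volume forms are ever compared; neither the mixed components of $\omega_\varepsilon$ nor Proposition~\ref{metric deviation}(d) enter.

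The paper bounds this difference in two pieces. Over $\{|t|<\epsilon_3\}$ it uses the argument of Lemma~3.9 in \cite{li2019gluing}. Your sketch omits this: unlike $f_0'$, $\widetilde f_B$ integrates $f$ through the quantum region near $q_0$ where $\eta_1<1$, so your direct bound on $f_B$ near $t=0$ does not control $\int_{\{|t|<\epsilon_3\}}\widetilde f_B\,\omega_B$. Over $\{|t|>\epsilon_3\}$ the paper uses Proposition~\ref{fiber deviation 2} and Lemmas~\ref{fiber deviation 3}, \ref{fiber deviation 4}. There the factor $\epsilon_4^{23/288}$ comes from comparing $\omega_{y_i'}^2$ with $\theta^2|_{X_y}$ on $\{\varepsilon^{\frac{12}{2+\mu}}<|y|<\epsilon_4\}$.

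In your own split you can even avoid that deviation estimate. Near infinity, the crude bound $C\varepsilon^{\frac{\tau-\delta}{2}}\epsilon_4^{13/24}\|f\|$ holds for $f_B$ and $\widetilde f_B$ alike, because $\theta|_{X_y}$ is uniformly equivalent to $\omega_y$. On $\{|t|>\epsilon_3,|y|>\epsilon_4\}$, the two fiber measures differ by a relative factor $C(\epsilon_3,\epsilon_4)\varepsilon^{\frac12}$, which is absorbed once $\varepsilon$ is small.
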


\begin{proof}

For the pointwise estimate on $f_B$, it suffices to treat the case $x = y$ with $|y| < \epsilon_1$, as the remaining two cases have already been dealt with in \cite{li2019gluing}.

\textbf{Claim:} For $\omega_y=\omega_X|_{X_y}+\sqrt{-1}\partial\bar{\partial}\widetilde{\phi}_y$, where $|y|<\epsilon_1$, it holds that
\begin{align*}
\left|\int_{X_y}f\omega_y^2\right|\le C\varepsilon^{\frac{\tau-\delta}{2}}|y|^{\frac{13}{24}}\|f\|_{C^{0,\alpha}_{\delta-2,\tau-2,\lambda,\mu-2,\varepsilon}(M)},
\end{align*}where $C$ is a uniform constant independent of $y$.

By the proof of Lemma \ref{fiber deviation 1}, we know that
\begin{align*}
\left|\nabla_{\operatorname{EH}_y}^2(\phi_1 - \phi_2)\right|_{\operatorname{EH}_y}
&\le C\left( \frac{|y|}{H}+H^{\frac12} + \frac{|y|^2}{H^2}\right)
\end{align*}for $H\ge |y|^{\frac23}$, where $\phi_1=\sqrt{H+|y|}$, $\phi_2=\Theta_\infty+G_{\infty,y}^*(\psi_0^{(\infty)}-c_\infty)$. 

Define a new metric 
\begin{align*}
\omega'_y=\omega_X|_{X_y}+\sqrt{-1}\partial\bar{\partial}\left(\gamma_1\left(\frac{r}{c}\right)G_{\infty,y}^*\left(\psi_0^{(\infty)}-c_\infty\right)+\gamma_2\left(\frac{r}{c}\right)\left(\sqrt{H+|y|}-\Theta_\infty\right)\right),
\end{align*}where $c$ is a small number such that $Cc^2<\frac{1}{100}$.

Then in the region $\{r<c\}$, the metric $\omega_y'$ and $\omega_y$ are uniformly equivalent. In the region $\{r\ge c\}$, the metric $\omega_y$ is uniformly equivalent to $\omega_X|_{X_y}$. So we have
\begin{align*}
\left|\int_{X_y}f\omega_y^2\right|&\le C\left(\int_{X_y\cap \{r<c\}}|f|\omega_y'^2+\int_{X_y\cap \{r\ge c\}}|f|\omega_X|_{X_y}^2\right)\\
&\le C\varepsilon^{\frac{\tau-\delta}{2}}\|f\|_{C^{0,\alpha}_{\delta-2,\tau-2,\lambda,\mu-2,\varepsilon}(M)}\left(\int_{X_y\cap \{r<c\}}\frac{|y|^{\frac{13}{24}}}{r^{2+\frac{1}{12}}}\omega_y'^2+\int_{X_y\cap \{r\ge c\}}|y|^{\frac{13}{24}}\omega_X|_{X_y}^2\right)\\
&\le C\varepsilon^{\frac{\tau-\delta}{2}}\|f\|_{C^{0,\alpha}_{\delta-2,\tau-2,\lambda,\mu-2,\varepsilon}(M)}\left(\int_{X_y\cap \{r<c\}}\frac{|y|^{\frac{13}{24}}}{r^{2+\frac{1}{12}}}\omega_y'^2+|y|^{\frac{13}{24}}\right).
\end{align*}
Note that
\begin{align*}
\int_{X_y\cap \{r<c\}}\frac{|y|^{\frac{13}{24}}}{r^{2+\frac{1}{12}}}\omega_y'^2&=\int_{X_y\cap \left\{|y|^{\frac14}\le r<c\right\}}\frac{|y|^{\frac{13}{24}}}{r^{2+\frac{1}{12}}}\omega_{\operatorname{EH}_y}^2\\
&=\int_{X_1\cap \left\{1\le r\le c|y|^{-\frac14}\right\}}\frac{|y|^{\frac{13}{24}}}{|y|^{\frac12+\frac{1}{48}}r^{2+\frac{1}{12}}}|y|\omega_{\operatorname{EH}_1}^2\\
&\le C|y|^{\frac{13}{24}+1-\frac{1}{2}-\frac{1}{48}}\int_1^{ c|y|^{-\frac14}}r^{-2-\frac{1}{12}}r^3dr\\
&\le C|y|^{\frac{13}{24}}.
\end{align*}
We carefully explain the above calculation. The K{\"a}hler metric on $X_y\cap \{r<c\}$ is $|y|^{\frac12}\Psi_y^*\omega_{\operatorname{EH}_1}$, where $\Psi_y:V_y\to V_1,\; x\mapsto \frac{x}{|y|^{\frac12}}$ and $\omega_{\operatorname{EH}_1}$ is the standard Stenzel metric on $V_1$. Via the diffeomorphism $\Psi_y$, the region $\{|y|\le H\le 1\}$ on $X_y$ corresponds to the region $\{1\le H\le |y|^{-1}\}$; the function $r$ turns into $|y|^{\frac14}r$. We also have the following relation between the volume forms $dV_{\omega_{\operatorname{EH}_y}}=|y|dV_{\omega_{\operatorname{EH}_1}}$. This explains the second equality. The Stenzel metric on $V_1$ is given by $\left(\sqrt{-1}\partial\bar{\partial}\sqrt{H+1}\right)\Big{|}_{V_1}$ which is uniformly equivalent to the cone metric $\left(\sqrt{-1}\partial\bar{\partial}\sqrt{H}\right)\Big{|}_{V_0}$ outside a fixed compact region. It therefore suffices to perform the integration on the K\"ahler cone $V_0$ instead. On $V_0$, the distance $r$ to the apex satisfies $r = H^{\frac14}$, and the volume form factorizes as $dV_{\omega_{\operatorname{EH}_0}} = r^3\,dr \wedge dL$, where $dL$ is the volume form of the link of the cone. The link has finite volume. This justifies the third inequality.

By Proposition \ref{fiber deviation 2} and Lemma \ref{fiber deviation 4}, we know that for $|y|$ uniformly small, and for $y'$ satisfying $|y-y'|<\varepsilon^{\frac12}|y'|$, the metrics $\omega_X|_{X_y}+\sqrt{-1}\partial\bar{\partial}\psi_y$, $G^*_{y',y}\left(\omega_X|_{X_{y'}}+\sqrt{-1}\partial\bar{\partial}\psi_{y'}\right)$ and $\omega_X|_{X_y}+\sqrt{-1}\partial\bar{\partial}G^*_{y',y}\psi_{y'}$ are all uniformly equivalent to $\omega_y$ on $X_y$. So they all share the same integration property in the above Claim. In particular, we can derive the pointwise bound on $f_B$.

Next we prove the weighted H{\"o}lder estimate.

We give a detailed argument in the region $\{|y|<\epsilon_1\}$, since this is the only case involving the degenerating fibers.

For each $i\geq 1$, let $y_i'$ be the marked point associated
with the cutoff function $\chi_i'$, and let $G_{y,y_i'}:X_{y_i'}\longrightarrow X_y$ be the fiber-preserving diffeomorphism used in the construction. Thus, for $z\in X_{y_i'}$,
\[
(G_{y,y_i'}^*f)(z)=f\left(y,G_{y,y_i'}(z)\right).
\]
Define
\[
dV_i\coloneqq
\begin{cases}
\omega_{\mathrm{SRF}}|_{X_{y_i'}}^2,
 & 1\leq i\leq N',\\[3pt]
\omega_{y_i'}^2,
 & i\geq N'+1.
\end{cases}
\]
Then
\[
f_i'(y)=\chi_i'(y)\int_{X_{y_i'}}f\left(y,G_{y,y_i'}(z)\right)\,dV_i(z).
\]

We first specify the distance function used on the reference fiber
$X_{y_i'}$. Recall that inside the Lefschetz coordinate neighborhood
$U_\infty$, we have $H(z)=|z_1|^2+|z_2|^2+|z_3|^2$ and $r(z)= H(z)^{\frac{1}{4}}$. Outside $U_\infty$, we use a fixed smooth extension of $r$ satisfying $r\sim 1$.

For $z\in X_{y_i'}$, put $p_z\coloneqq\left(y,G_{y,y_i'}(z)\right)\in X_y$, $q_z\coloneqq \left(y',G_{y',y_i'}(z)\right)\in X_{y'}$. 

The explicit construction of $G_{y,y_i'}$ gives, inside
$U_\infty$,
\begin{align*}
&r(p_z)^8=r(z)^8+|y|^2-|y_i'|^2,\\
&r(q_z)^8=r(z)^8+|y'|^2-|y_i'|^2.
\end{align*}
Since $z\in X_{y_i'}$, we have
\[
r(z)^8=H(z)^2\geq |y_i'|^2.
\]
Moreover, whenever $\chi_i'(y)\neq 0$, the construction of the
partition of unity gives
\[
|y-y_i'|
\leq
C\varepsilon^{\frac{1}{2}}|y_i'|.
\]
If, in addition, $D\coloneqq d_{\frac{1}{\varepsilon}\omega_B}(y,y')<1$, then $|y-y'| \leq C\varepsilon^{\frac{1}{2}}|y|$. Consequently, whenever either $\chi_i'(y)$ or $\chi_i'(y')$
is nonzero,
\[
|y|\sim|y'|\sim |y_i'|
\]
and
\[
r(p_z)\sim r(q_z)\sim r(z),
\]
uniformly in $i$ and $\varepsilon$. That is, the above quantities mutually control each other up to a uniform constant independent of $i$ and $\varepsilon$. Outside $U_\infty$, the latter comparison follows immediately from $r\sim 1$.

We shall also use the distance comparison
\[
d_{\omega_\varepsilon}(p_z,q_z)\leq CD.
\]
Indeed, near the end, $D\sim\frac{|y-y'|}{\varepsilon^{\frac{1}{2}}|y|}$. Thus the horizontal part of the displacement between $p_z$ and $q_z$ is bounded by $CD$. The explicit formula for
$G_{y,y_i'}$ shows that the vertical displacement is bounded by
\[
C\frac{|y-y'|}{r(z)^3} \leq C\varepsilon^{\frac{1}{2}}|y|r(z)^{-3}D.
\]
Since $r(z)^4\geq |y_i'|\sim |y|$, we have
\[
\varepsilon^{\frac{1}{2}}|y|r(z)^{-3}\leq C\varepsilon^{\frac{1}{2}}|y|^{\frac{1}{4}} \leq C.
\]
This proves the claimed distance comparison.

We now estimate $f_i'(y)-f_i'(y')$. After interchanging $y$ and $y'$, if necessary, we may suppose that $\chi_i'(y)\neq 0$. We write
\begin{equation*}
\begin{aligned}
f_i'(y)-f_i'(y')=&\chi_i'(y)\int_{X_{y_i'}}\left(f(p_z)-f(q_z)\right)\,dV_i(z)\\
&+\left(\chi_i'(y)-\chi_i'(y')\right)\int_{X_{y_i'}}f(q_z)\,dV_i(z).
\end{aligned}
\end{equation*}

We first estimate $\chi_i'(y)\int_{X_{y_i'}}\left(f(p_z)-f(q_z)\right)\,dV_i(z)$. Let $c_H>0$ be the fixed constant
implicit in the definition of the weighted H\"older seminorm, so that
this seminorm controls pairs $p,q$ satisfying $d_{\omega_\varepsilon}(p,q)\leq c_Hr(p)$. Choose a sufficiently large constant $K$, independent of $i$ and
$\varepsilon$, and decompose
\[
X_{y_i'}=E_{\mathrm{out}}\cup E_{\mathrm{in}},
\]
where
\[
E_{\mathrm{out}}\coloneqq\left\{z\in X_{y_i'}:r(z)\geq KD\right\}, \quad E_{\mathrm{in}} \coloneqq\left\{z\in X_{y_i'}:r(z)<KD\right\}.
\]

On $E_{\mathrm{out}}$, the preceding distance and weight comparisons give
\[
d_{\omega_\varepsilon}(p_z,q_z)\leq CD\leq\frac{C}{K}r(z)\leq c_Hr(p_z),
\]
provided that $K$ is sufficiently large. Therefore, the weighted H\"older seminorm of $f$ applies.

By the definition of the weighed H{\"o}lder norm and the relations $|y|\sim |y_i'|$, $r(p_z)\sim r(z)$, $d_{\omega_\varepsilon}(p_z,q_z)\leq CD$, we have
\begin{align*}
|f(p_z)-f(q_z)|&\leq \|f\|_{C^{0,\alpha}_{\lambda,\mu-2}(U_3)}\frac{|y|^\lambda}{r(p_z)^{2-\mu+\alpha}}d_{\omega_\varepsilon}(p_z,q_z)^\alpha\\
&\leq \|f\|_{C^{0,\alpha}_{\delta-2,\tau-2,\lambda,\mu-2,\varepsilon}}\varepsilon^{\frac{\tau-\delta}{2}}
|y_i'|^\lambda r(z)^{\mu-2-\alpha}D^\alpha.
\end{align*}

It follows that
\begin{equation*}
\begin{aligned}
\int_{E_{\mathrm{out}}}|f(p_z)-f(q_z)|\,dV_i(z)&\leq C\|f\|_{C^{0,\alpha}_{\delta-2,\tau-2,\lambda,\mu-2,\varepsilon}}\varepsilon^{\frac{\tau-\delta}{2}}|y_i'|^\lambda D^\alpha \int_{E_{\mathrm{out}}}r(z)^{\mu-2-\alpha}\,dV_i(z)\\
&\leq C\|f\|_{C^{0,\alpha}_{\delta-2,\tau-2,\lambda,\mu-2,\varepsilon}}\varepsilon^{\frac{\tau-\delta}{2}}|y_i'|^\lambda D^\alpha.
\end{aligned}
\end{equation*}
The last integral is uniformly bounded because $\mu-2-\alpha=-\frac{25}{12}-\alpha>-4$.

On $E_{\mathrm{in}}$, the points $p_z,q_z$ need not satisfy the
locality condition in the weighted H\"older seminorm. We therefore use
the weighted pointwise estimate. By definition of the weighted norm, we have
\begin{align*}
&|f(p_z)|\leq C \|f\|_{C^{0,\alpha}_{\delta-2,\tau-2,\lambda,\mu-2,\varepsilon}} \varepsilon^{\frac{\tau-\delta}{2}} |y_i'|^\lambda r(z)^{\mu-2},\\
&|f(q_z)|\leq C \|f\|_{C^{0,\alpha}_{\delta-2,\tau-2,\lambda,\mu-2,\varepsilon}} \varepsilon^{\frac{\tau-\delta}{2}} |y_i'|^\lambda r(z)^{\mu-2}.
\end{align*}
 Hence
\begin{equation*}
\begin{aligned}
\int_{E_{\mathrm{in}}}|f(p_z)-f(q_z)|\,dV_i(z)&\leq C\|f\|_{C^{0,\alpha}_{\delta-2,\tau-2,\lambda,\mu-2,\varepsilon}}\varepsilon^{\frac{\tau-\delta}{2}}|y_i'|^\lambda\int_{\{r<KD\}}r(z)^{\mu-2}\,dV_i(z)\\
&\leq C\|f\|_{C^{0,\alpha}_{\delta-2,\tau-2,\lambda,\mu-2,\varepsilon}}\varepsilon^{\frac{\tau-\delta}{2}}|y_i'|^\lambda D^{\mu+2}.
\end{aligned}
\end{equation*}
Since $D<1$ and $\mu+2=\frac{23}{12}>\alpha$, we have $D^{\mu+2}\leq D^\alpha$. Therefore,
\[
\int_{E_{\mathrm{in}}}|f(p_z)-f(q_z)|\,dV_i(z) \leq C \|f\|_{C^{0,\alpha}_{\delta-2,\tau-2,\lambda,\mu-2,\varepsilon}} \varepsilon^{\frac{\tau-\delta}{2}}|y_i'|^\lambda D^\alpha.
\]
If $KD$ is larger than the fixed coordinate radius $c$, the same conclusion follows from the full weighted integral and the fact that $D\geq c/K$.

Combining the inner and outer regions gives
\[
\left|\chi_i'(y)\int_{X_{y_i'}}\left(f(p_z)-f(q_z)\right)\,dV_i(z)\right| \leq C\|f\|_{C^{0,\alpha}_{\delta-2,\tau-2,\lambda,\mu-2,\varepsilon}} \varepsilon^{\frac{\tau-\delta}{2}}
|y_i'|^\lambda D^\alpha.
\]

It remains to estimate the cutoff term $\left(\chi_i'(y)-\chi_i'(y')\right)\int_{X_{y_i'}}f(q_z)\,dV_i(z)$. The cutoff functions have uniformly bounded first derivatives with respect to the rescaled base metric $\varepsilon^{-1}\omega_B$. Therefore,
\[
|\chi_i'(y)-\chi_i'(y')| \leq CD.
\]
The weighted pointwise estimate and the uniform integrability of
$r^{\mu-2}$ give
\[
\int_{X_{y_i'}}|f(q_z)|\,dV_i(z)\leq C\|f\|_{C^{0,\alpha}_{\delta-2,\tau-2,\lambda,\mu-2,\varepsilon}} \varepsilon^{\frac{\tau-\delta}{2}} |y_i'|^\lambda.
\]
Consequently,
\begin{align*}
\left|\left(\chi_i'(y)-\chi_i'(y')\right)\int_{X_{y_i'}}f(q_z)\,dV_i(z)\right|&\leq C\|f\|_{C^{0,\alpha}_{\delta-2,\tau-2,\lambda,\mu-2,\varepsilon}}\varepsilon^{\frac{\tau-\delta}{2}}|y_i'|^\lambda D\\
&\leq C\|f\|_{C^{0,\alpha}_{\delta-2,\tau-2,\lambda,\mu-2,\varepsilon}}\varepsilon^{\frac{\tau-\delta}{2}}|y_i'|^\lambda D^\alpha,
\end{align*}where the last inequality uses $D<1$.

We have proved that
\[
|f_i'(y)-f_i'(y')| \leq C\|f\|_{C^{0,\alpha}_{\delta-2,\tau-2,\lambda,\mu-2,\varepsilon}} \varepsilon^{\frac{\tau-\delta}{2}} |y_i'|^\lambda d_{\frac{1}{\varepsilon}\omega_B}(y,y')^\alpha.
\]
Whenever either $\chi_i'(y)$ or $\chi_i'(y')$ is nonzero, we have $|y_i'|\sim |y|$. Moreover, the family $\{\chi_i'\}$ is uniformly locally finite. Summing over $i$, we therefore obtain
\[
|f_B(y)-f_B(y')|\leq C\|f\|_{C^{0,\alpha}_{\delta-2,\tau-2,\lambda,\mu-2,\varepsilon}} 
\varepsilon^{\frac{\tau-\delta}{2}} |y|^\lambda d_{\frac{1}{\varepsilon}\omega_B}(y,y')^\alpha,
\quad |y|<\epsilon_1.
\]

We finally treat the other base regions. Near the finite singular fiber, introduce the rescaled coordinate $\zeta=\sqrt{2A_0}\varepsilon^{-\frac{1}{2}}t$. Then $d_{\frac{1}{\varepsilon}\omega_B}(t,t')\sim|\zeta-\zeta'|$ whenever $d_{\frac{1}{\varepsilon}\omega_B}(t,t')<1$. Lemma~\ref{holder on pushforward}, applied to the
fiber integral on $\mathbb C_\zeta\times X_0$, gives
\[
|f_B(t)-f_B(t')|\leq C\|f\|_{C^{0,\alpha}_{\delta-2,\tau-2,\lambda,\mu-2,\varepsilon}} (1+|\zeta|)^{\delta-\tau}|\zeta-\zeta'|^\alpha.
\]
The denominator occurring in the definition of $f_0'$ is uniformly bounded above and below and has uniformly bounded derivatives in the rescaled coordinate. Multiplication by $\chi_0'$ therefore preserves this estimate. Since $1+|\zeta|\sim 1+\varepsilon^{-\frac{1}{2}}|t|$, we obtain
\[
|f_B(t)-f_B(t')|   \leq C\|f\|_{C^{0,\alpha}_{\delta-2,\tau-2,\lambda,\mu-2,\varepsilon}} \left(1+\varepsilon^{-\frac{1}{2}}|t|\right)^{\delta-\tau}  d_{\frac{1}{\varepsilon}\omega_B} (t,t')^\alpha.
\]

Finally, on $\left\{|t|>\frac12\epsilon_1,\ |y|>\frac12\epsilon_1\right\}$, the fibers, the trivializations, and their volume forms vary in a uniformly controlled smooth family. Fiberwise integration is therefore a uniformly bounded map from $C^{0,\alpha}$ on the total space to
$C^{0,\alpha}$ on the base. Since the global norm contains the term $\varepsilon^{\frac{\delta-\tau}{2}}\|f\|_{C^{0,\alpha}(U_2)}$, we have
\[
\|f\|_{C^{0,\alpha}(U_2)} \leq \varepsilon^{\frac{\tau-\delta}{2}}\|f\|_{C^{0,\alpha}_{\delta-2,\tau-2,\lambda,\mu-2,\varepsilon}}.
\]
It follows that
\[
|f_B(x)-f_B(x')| \leq C\|f\|_{C^{0,\alpha}_{\delta-2,\tau-2,\lambda,\mu-2,\varepsilon}}\varepsilon^{\frac{\tau-\delta}{2}}d_{\frac{1}{\varepsilon}\omega_B}(x,x')^\alpha
\]
in the compact base region.

The weights in the three estimates above are uniformly equivalent on
the corresponding overlap regions. This proves the asserted weighted
H\"older estimate for $f_B$.

Define
\begin{align*}
\widetilde{f}_B=\pi_*\left(f(\omega_X+\sqrt{-1}\partial\bar{\partial}\phi)^2\right),
\end{align*}that is
\begin{align*}
\widetilde{f}_B(y)= \int_{X_y}f(\omega_X+\sqrt{-1}\partial\bar{\partial}\phi)\big{|}_{X_y}^2.
\end{align*}

Now by assumption\
\begin{align*}
0&=\varepsilon\int_Mf\omega_\varepsilon^3\\
&=\int_M3\pi^*\omega_B\wedge f(\omega_X+\sqrt{-1}\partial\bar{\partial}\phi)^2+3\varepsilon\pi^*\beta\wedge f(\omega_X+\sqrt{-1}\partial\bar{\partial}\phi)^2+\varepsilon f(\omega_X+\sqrt{-1}\partial\bar{\partial}\phi)^3.
\end{align*}
Note that the latter two terms are $O\left(\varepsilon^{1+\frac12(\tau-\delta)}\|f\|_{C^{0,\alpha}_{\delta-2,\tau-2,\lambda,\mu-2,\varepsilon}(M)}\right)$. Hence
\begin{align*}
\left| \int_M\pi^*\omega_B\wedge f(\omega_X+\sqrt{-1}\partial\bar{\partial}\phi)^2\right|&=\left|\int_{\{|t|<\epsilon_3\}}\widetilde{f}_B\omega_B+\int_{\{|t|>\epsilon_3\}}\widetilde{f}_B\omega_B\right|\\
&\leq C\varepsilon^{1+\frac12(\tau-\delta)}\|f\|_{C^{0,\alpha}_{\delta-2,\tau-2,\lambda,\mu-2,\varepsilon}(M)}.
\end{align*}

By the proof of Lemma 3.9 in \cite{li2019gluing}, we know that
\begin{align*}
\left|\int_{\{|t|<\epsilon_3\}}(f_B-\widetilde{f}_B)\omega_B\right|\le C\varepsilon^{\frac12(\tau-\delta)}\epsilon_3^{\delta-\tau+2}\|f\|_{C^{0,\alpha}_{\delta-2,\tau-2,\lambda,\mu-2,\varepsilon}(M)}
\end{align*}

By Proposition \ref{fiber deviation 2} and Lemmas \ref{fiber deviation 3}, \ref{fiber deviation 4}, we have
\begin{align*}
&\left|\int_{\{|t|>\epsilon_3\}}(f_B-\widetilde{f}_B)\omega_B\right|\\
\leq&\int_{\{|t|>\epsilon_3\}}\left|\int_{X_y}\sum\limits_{i=1}^{N'}\chi_i'f\left(G_{y_i',y}^*\omega_{\operatorname{SRF}}|_{X_{y_i'}}\right)^2+\sum\limits_{i=N'+1}^{\infty}\chi_i'f\left(G_{y_i',y}^*\omega_{y_i'}\right)^2-f(\omega_X|_{X_y}+\sqrt{-1}\partial\bar{\partial}\phi)^2\right|\omega_B\\
\le &C\varepsilon^{\frac12(\tau-\delta)}\epsilon_4^{\frac{23}{288}}\|f\|_{C^{0,\alpha}_{\delta-2,\tau-2,\lambda,\mu-2,\varepsilon}(M)},
\end{align*}where the factor $\epsilon_4^{\frac{23}{288}}$ comes from the deviation of metrics on the region $\left\{\varepsilon^{\frac{12}{2+\mu}}<|y|<\epsilon_4\right\}$. More precisely, taking $\beta=-\frac{1}{12}$ in Proposition \ref{fiber deviation 2}, we gain a factor $|y|^{\frac{7}{12}+\frac{5}{288}-\frac12-\frac{1}{48}}=|y|^{\frac{23}{288}}$.

\end{proof}

We now turn to the inversion of the Laplacian on $B \cong \mathbb{C}$, namely, to solving the Poisson equation
\begin{equation}\label{base eq 1}
\Delta_{\frac{1}{\varepsilon}\omega_B} u = f_B.
\end{equation}
Since $\omega_B \sim \frac{\sqrt{-1}}{2|y|^2}\,dy \wedge d\bar{y}$ for $|y| < \epsilon_1$, the manifold $\left(B, \frac{1}{\varepsilon}\omega_B\right)$ is asymptotically cylindrical in the sense of \cite{haskins2015asymptotically}. On a complete manifold of this type, a necessary condition for the Poisson equation to be solvable is that the forcing term has zero average. Consequently, equation~\eqref{base eq 1} must be adjusted, as it may otherwise admit no solution at all. One approach is to decompose $f_B$ into a function with zero average and a small error term.

Let $\chi_0''$ be a cutoff function on $Y$ supported in $\{|t| > \epsilon_1,\ |y| > \epsilon_1\}$, identically equal to $1$ on $\{|t| > 2\epsilon_1,\ |y| > 2\epsilon_1\}$, and having uniformly bounded higher derivatives with respect to the metric $\frac{1}{\varepsilon}\omega_B$. Let
\begin{equation}
f_B' = f_B - \overline{f_B} \frac{\chi_0''}{\int_B \chi_0'' \omega_B}, \quad f_B'' = \overline{f_B} \frac{\chi_0''}{\int_B \chi_0'' \omega_B}.
\end{equation}

By construction, we have
\begin{itemize}

\item $\int_{B}f'_B\omega_B=0$;

\item $|f'_B(x)|  \le C\|f\|_{C^{0,\alpha}_{\delta-2,\tau-2,\lambda,\mu-2,\varepsilon}(M)}\left\{
	\begin{aligned}
&\left(1+\varepsilon^{-\frac12}|t|\right)^{\delta-\tau}\quad && \text{if}\quad x=t, |t|<\epsilon_1;\\
& \varepsilon^{\frac{\tau-\delta}{2}} \quad && \text{if}\quad x\in\left\{|t|> \frac12\epsilon_1,|y|> \frac12\epsilon_1\right\};\\
&\varepsilon^{\frac{\tau-\delta}{2}}|y|^{\frac{13}{24}} \quad && \text{if}\quad x=y,|y|<\epsilon_1;
\end{aligned}
	\right.$

\item For $d_{\omega_B}(x,x')<\varepsilon^{\frac12}$, the following H{\"o}lder estimates hold
\begin{align*}
\frac{|f_B'(x)-f_B'(x')|}{\left(d_{\frac{1}{\varepsilon}\omega_B}(x,x')\right)^\alpha}\leq C \|f\|_{C^{0,\alpha}_{\delta-2,\tau-2,\lambda,\mu-2,\varepsilon}(M)}  \left\{
	\begin{aligned}
&\left(1+\varepsilon^{-\frac12}|t|\right)^{\delta-\tau}\; && \text{if}\quad x=t,x'=t', |t|<\epsilon_1;\\
& \varepsilon^{\frac{\tau-\delta}{2}} \; && \text{if}\quad x\in\left\{|t|> \frac12\epsilon_1,|y|> \frac12\epsilon_1\right\};\\
&\varepsilon^{\frac{\tau-\delta}{2}}|y|^{\frac{13}{24}} \; && \text{if}\quad x=y,x'=y',|y|<\epsilon_1;
\end{aligned}
	\right.
\end{align*}

\item $\|f''_B\|_{C^{0,\alpha}_{\delta-2,\tau-2,\lambda,\mu-2,\varepsilon}(M)}\le C|\overline{f_B}|\varepsilon^{\frac12(\delta-\tau)}\le C\max\left\{\epsilon_3^{\delta-\tau+2},\epsilon_4^{\frac{23}{288}}\right\}\|f\|_{C^{0,\alpha}_{\delta-2,\tau-2,\lambda,\mu-2,\varepsilon}(M)}$.

\end{itemize}
Here $C=C(\alpha,\epsilon_1,\delta,\tau,\lambda,\mu)$ is a uniform constant.

Define $P_B f$ to be the unique solution to the following Poisson equation on $\mathbb{C}=Y\backslash\{p_\infty\}$ that satisfies the exponential decay property:
\begin{equation}\label{base eq 2}
\Delta_{\frac{1}{\varepsilon}\omega_B} (P_B f) = f_B',\quad \left|\nabla_{\omega_B}^kP_Bf\right|_{\omega_B}\le C(k,\varepsilon)|y|^{\frac{13}{24}},\quad \forall k\in\{0,1,2\};
\end{equation}
or equivalently,
\begin{equation}\label{base eq 3}
\sqrt{-1} \partial \bar{\partial} (P_B f) = \frac{1}{2\varepsilon} f_B' \omega_B, \quad \left|\nabla_{\omega_B}^kP_Bf\right|_{\omega_B}\le C(k,\varepsilon)|y|^{\frac{13}{24}},\quad \forall k\in\{0,1,2\}.
\end{equation}
The existence and uniqueness of such solutions were established in \cite{lockhart1985elliptic}; see also Proposition~2.7 in \cite{haskins2015asymptotically}, which provides precisely the result needed here.

\begin{lemma}\label{approximate invert 3}

If $\epsilon_3$ and $\epsilon_4$ are chosen sufficiently small, then for sufficiently small $\varepsilon$ with respect to all previous choices, the following estimates hold:
\begin{align*}
\left\| \Delta_{\omega_\varepsilon} P_B f - f_B \right\|_{C^{0,\alpha}_{\delta-2,\tau-2,\lambda,\mu-2,\varepsilon}(M)} \le \frac{1}{100} \| f \|_{C^{0,\alpha}_{\delta-2,\tau-2,\lambda,\mu-2,\varepsilon}(M)},
\end{align*}
and
\begin{align*}
\left\| \partial \bar{\partial} (P_B f) \right\|_{C^{0,\alpha}_{\delta-2,\tau-2,\lambda,\mu-2,\varepsilon}(M)} \le C(\alpha,\delta, \tau,\lambda,\mu) \| f \|_{C^{0,\alpha}_{\delta-2,\tau-2,\lambda,\mu-2,\varepsilon}(M)}.
\end{align*}

\end{lemma}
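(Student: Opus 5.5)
The first step is to split the error and identify what each piece costs. Since $P_Bf$ is pulled back from the base,
\begin{align*}
\sqrt{-1}\partial\bar\partial(P_Bf)=\frac{1}{2\varepsilon}f_B'\,\pi^*\omega_B ,
\end{align*}
so
\begin{align*}
\Delta_{\omega_\varepsilon}P_Bf=2\operatorname{Tr}_{\omega_\varepsilon}\sqrt{-1}\partial\bar\partial P_Bf=f_B'\cdot\frac{1}{\varepsilon}\operatorname{Tr}_{\omega_\varepsilon}\pi^*\omega_B .
\end{align*}
Consequently
\begin{align*}
\Delta_{\omega_\varepsilon}P_Bf-f_B=f_B'\left(\frac{1}{\varepsilon}\operatorname{Tr}_{\omega_\varepsilon}\pi^*\omega_B-1\right)-f_B'' .
\end{align*}
The term $f_B''$ is already bounded by $C\max\{\epsilon_3^{\delta-\tau+2},\epsilon_4^{\frac{23}{288}}\}\|f\|$. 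This is at most $\frac{1}{300}\|f\|$ once $\epsilon_3$ and $\epsilon_4$ are chosen small, because $\delta-\tau+2>0$. What remains is to bound the factor $\Theta:=\frac1\varepsilon\operatorname{Tr}_{\omega_\varepsilon}\pi^*\omega_B-1$ in the weighted sense, and to multiply it against $f_B'$ using the first item of Remark~\ref{numerical properties of the weighted Holder spaces}.

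To bound $\Theta$, I would use the identity
\begin{align*}
\frac{1}{\varepsilon}\operatorname{Tr}_{\omega_\varepsilon}\pi^*\omega_B=\frac{3\,\frac1\varepsilon\pi^*\omega_B\wedge\omega_\varepsilon^2}{\omega_\varepsilon^3}.
\end{align*}
Since $\pi^*\omega_B\wedge\pi^*\omega_B=0$, the numerator equals
\begin{align*}
\frac{3}{\varepsilon}\pi^*\omega_B\wedge(\omega_X+\sqrt{-1}\partial\bar\partial\phi)^2 .
\end{align*}
This is exactly the quantity $E^{(1)}_\varepsilon$ from Lemma~\ref{Ricci potential decay 3}: it equals $\frac{3}{\varepsilon}(1+E^{(1)}_\varepsilon)\sqrt{-1}\Omega\wedge\overline\Omega$. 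Dividing by $\omega_\varepsilon^3=\frac{3}{\varepsilon}e^{h_\varepsilon}\sqrt{-1}\Omega\wedge\overline\Omega$ (the Ricci potential identity) gives
\begin{align*}
\Theta=(1+E^{(1)}_\varepsilon)e^{-h_\varepsilon}-1 .
\end{align*}
Here $E^{(1)}_\varepsilon$ also absorbs $\beta$, which only contributes an $O(\varepsilon)$ error.

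I would then estimate $\Theta$ region by region, with the following targets.
\begin{itemize}
\item On $U_3$, Lemma~\ref{Ricci potential decay 3} and the $E^{(1)}_\varepsilon$ estimates in its proof give $|\Theta|\le C\varepsilon^{1/60}|y|^{13/24}r^{\mu-2}$, together with the corresponding H\"older bound.
\item On the compact region $U_2$ and near $X_0$, I would instead invoke Proposition~\ref{metric deviation} (a)--(c) and the Ricci potential bounds of \cite{li2019gluing} (Lemma~\ref{Ricci potential decay 2}). There $\Theta$ is small in $C^{0,\alpha}_{0,0,0,0,\varepsilon}$, of size $C\epsilon_3^{1/7}$ plus $\varepsilon$-powers.
\end{itemize}

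Multiplying, on the finite part I get
\begin{align*}
\|f_B'\Theta\|\le \|\Theta\|_{C^{0,\alpha}_{0,0,0,0,\varepsilon}}\,\|f_B'\|_{C^{0,\alpha}_{\delta-2,\tau-2,\lambda,\mu-2,\varepsilon}}\le C\epsilon_3^{1/7}\|f\| ,
\end{align*}
using the pointwise and H\"older bounds on $f_B'$ listed above. At the end the mechanism is different, and this is where I expect the main obstacle. There $f_B'$ carries $|y|^{13/24}$ but no $r^{\mu-2}$ factor, which is exactly the stronger bound $\|f_B\|_{C^{0,\alpha}_{\ldots,\lambda,0,\varepsilon}}$ mentioned earlier. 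So I must supply the $r^{\mu-2}$ weight from $\Theta$ itself, while still gaining a small factor. From $|\Theta|\lesssim\varepsilon^{1/60}|y|^{13/24}r^{\mu-2}$ and $|y|<\epsilon_1$ one gets
\begin{align*}
|f_B'\Theta|\lesssim \varepsilon^{\frac{\tau-\delta}{2}}|y|^{\frac{13}{24}}\cdot\varepsilon^{\frac1{60}}r^{\mu-2}\,\|f\| ,
\end{align*}
which is small in $C^{0,\alpha}_{\lambda,\mu-2}(U_3)$.

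The H\"older seminorm of this product needs care, because $f_B'$ is H\"older only with respect to the scale $\frac1\varepsilon\omega_B$. On balls of radius $\ll r$ the vertical variation of $f_B'$ is zero, and the horizontal variation is controlled by Lemma~\ref{decay of base function}. Since $r\le C$, the factor $r^{\alpha}$ appearing in the seminorm is harmless.

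For the second estimate, I would take the formula $\sqrt{-1}\partial\bar\partial P_Bf=\frac1{2\varepsilon}f_B'\pi^*\omega_B$ and observe that $\frac1\varepsilon\pi^*\omega_B$ is dominated by $\omega_\varepsilon$, with uniformly bounded norm, including its H\"older derivative, on each region. Here I would use Proposition~\ref{metric deviation}, and Lemma~\ref{almost product 1} at the end. Then the weighted norm of the form is bounded by that of $f_B'$. At the end one also uses $r^{2-\mu}\le C$ to pass from weight $\mu=0$ to $\mu-2$ on $U_3$, which gives a bound by $C\|f\|$. No solution estimate for $P_B$ itself is needed, because only $\partial\bar\partial P_Bf$ enters.
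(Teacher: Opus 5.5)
Your outline matches the paper's proof in structure. You split off $f_B''$ and write $\Delta_{\omega_\varepsilon}P_Bf=f_B'\,\frac{3\pi^*\omega_B\wedge\omega_\varepsilon^2}{\varepsilon\,\omega_\varepsilon^3}$. You then estimate the factor $\Theta$ region by region, and bound $\sqrt{-1}\partial\bar\partial P_Bf=\frac{1}{2\varepsilon}f_B'\,\pi^*\omega_B$ by the product rule for the weighted norms. Your identity $\Theta=(1+E^{(1)}_\varepsilon)e^{-h_\varepsilon}-1$ is a clean way to import Lemma~\ref{Ricci potential decay 3} at the end.

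There is, however, a genuine gap near the finite node. You claim that on the finite part $\Theta$ is small in $C^{0,\alpha}_{0,0,0,0,\varepsilon}$, of size $C\epsilon_3^{1/7}$ plus powers of $\varepsilon$. This is false in the $\mathbb{C}^3$ region around $q_0$:
\begin{itemize}
\item Since $q_0$ is a critical point of $\pi$, the form $d(\pi^*t)=2\sum_i z_i\,dz_i$ vanishes there. So $\pi^*\omega_B|_{q_0}=0$, while $\omega_\varepsilon$ is nondegenerate at $q_0$.
\item Hence $\Theta(q_0)=-1$, and $|\Theta|\geq\frac12$ on a ball of fixed size around the origin in the rescaled $\mathbb{C}^3$ coordinates.
\item Proposition~\ref{metric deviation}(a) only makes $\omega_\varepsilon$ close to $(\frac{\varepsilon}{2A_0})^{\frac13}(\Upsilon_\varepsilon^{-1})^*\omega_{\mathbb{C}^3}$. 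For that model the base term $\frac12\sqrt{-1}\,d\tilde t\wedge d\bar{\tilde t}$ does not dominate near the origin.
\end{itemize}
So the bound $\|f_B'\Theta\|\leq\|\Theta\|_{C^{0,\alpha}_{0,0,0,0,\varepsilon}}\|f_B'\|\leq C\epsilon_3^{1/7}\|f\|$ is unavailable there. Shrinking $\epsilon_3$ does not help, because this core is always present.

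The missing idea is that on $\{|t|<\varepsilon^{\frac{6}{14+\tau}},\ r<\varepsilon^{\frac1{10}}+\varepsilon^{\frac1{12}}\rho'^{\frac16}\}$ the smallness must come from $f_B'$, not from $\Theta$. The paper uses only the coarse bound $\|\Theta\|_{C^{0,\alpha}_{0,0,0,0,\varepsilon}}\leq C$ there. It then observes that $f_B'$ is a base function bounded by $C(1+\varepsilon^{-\frac12}|t|)^{\delta-\tau}\|f\|$. Restricted to this small region, its weighted norm is therefore only $O(\varepsilon^{\frac{2}{14+\tau}})\|f\|$, because the weights there tolerate much worse behaviour than a function pulled back from the base (Lemma~3.10 of \cite{li2019gluing}). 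Outside this region, smallness of $\Theta$ does hold (again by Lemma~3.10 of \cite{li2019gluing}), so the rest of your plan goes through.

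Two minor remarks:
\begin{itemize}
\item At the end there is no real obstacle. Since $r^4\geq|y|$, you have $|y|^{\frac{13}{24}}r^{\mu-2}\leq|y|^{\frac1{48}}$, so $\Theta$ is simply small in $C^{0,\alpha}_{0,0,0,0,\varepsilon}(U_3)$ and the same multiplication bound applies.
\item The form $\pi^*\beta$ does not enter the numerator at all, since $\pi^*\omega_B\wedge\pi^*\beta=0$. It affects $\Theta$ only through $h_\varepsilon$.
\end{itemize}
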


\begin{proof}

By definition of $P_Bf$, we have
\begin{align*}
\left\| \partial \bar{\partial} (P_B f) \right\|_{C^{0,\alpha}_{\delta-2,\tau-2,\lambda,\mu-2,\varepsilon}(M)} &\le C \left\|f'_B\frac{1}{\varepsilon}\pi^*\omega_B \right\|_{C^{0,\alpha}_{\delta-2,\tau-2,\lambda,\mu-2,\varepsilon}(M)}\\
&\leq C \left\|f'_B \right\|_{C^{0,\alpha}_{\delta-2,\tau-2,\lambda,\mu-2,\varepsilon}(M)}\left\|\frac{1}{\varepsilon}\pi^*\omega_B \right\|_{C^{0,\alpha}_{0,0,0,0,\varepsilon}(M)}\\
&\le C\left\| f \right\|_{C^{0,\alpha}_{\delta-2,\tau-2,\lambda,\mu-2,\varepsilon}(M)},
\end{align*}where the second inequality follows from the numerical properties of the weights recorded in Remark~\ref{numerical properties of the weighted Holder spaces}, and the third inequality is implied by the weighted H{\"o}lder estimate on $f_B(x)$ and the uniform boundedness of $\left\| \frac{1}{\varepsilon} \pi^*\omega_B \right\|_{C^{0,\alpha}_{0,0,0,0,\varepsilon}(M)}$.

To estimate $\left\| \Delta_{\omega_\varepsilon} P_B f - f_B \right\|_{C^{0,\alpha}_{\delta-2,\tau-2,\lambda,\mu-2,\varepsilon}(M)}$, it suffices to bound $\left\| \Delta_{\omega_\varepsilon} P_B f - f_B' \right\|_{C^{0,\alpha}_{\delta-2,\tau-2,\lambda,\mu-2,\varepsilon}(M)}$, since the term $$\| f_B'' \|_{C^{0,\alpha}_{\delta-2,\tau-2,\lambda,\mu-2,\varepsilon}(M)} \le C\max\left\{\epsilon_3^{\delta-\tau+2},\epsilon_4^{\frac{23}{288}}\right\} \| f \|_{C^{0,\alpha}_{\delta-2,\tau-2,\lambda,\mu-2,\varepsilon}(M)}$$ can be made arbitrarily small by choosing sufficiently small $\epsilon_3$ and $\epsilon_4$. By $ \sqrt{-1} \partial \bar{\partial} (P_B f) \wedge \omega_\varepsilon^2 = \frac{1}{6} (\Delta_{\omega_\varepsilon} P_B f) \omega_\varepsilon^3$, we have 
\begin{align*}
\Delta_{\omega_\varepsilon} P_B f = f_B' \frac{3 \pi^*\omega_B \wedge \omega_\varepsilon^2}{\varepsilon \omega_\varepsilon^3}.
\end{align*}

In the region $\left\{ |t| \gtrsim \varepsilon^{\frac{6}{14+\tau}},|y|>\epsilon_1 \} \cup \{ |t| \le \varepsilon^{\frac{6}{14+\tau}}, r \gtrsim \varepsilon^{\frac{1}{10}} + \varepsilon^{\frac{1}{12}} \rho'^{\frac16} \right\}$ by Lemma 3.10 in \cite{li2019gluing}, one obtains the estimate in this region
\begin{align*}
\left\| \frac{3\pi^*\omega_B \wedge \omega_\varepsilon^2}{\varepsilon\omega_\varepsilon^3} - 1 \right\|_{C^{0,\alpha}_{0,0,0,0,\varepsilon}} \le C \varepsilon^{\frac{\tau+2}{2(14+\tau)}},
\end{align*}hence in this region
\begin{align*}
\left\| \Delta_{\omega_\varepsilon} P_B f - f_B' \right\|_{C^{0,\alpha}_{\delta-2,\tau-2,\lambda,\mu-2,\varepsilon}} \le C \varepsilon^{\frac{\tau+2}{2(14+\tau)}} \left\| f_B' \right\|_{C^{0,\alpha}_{\delta-2,\tau-2,\lambda,\mu-2,\varepsilon}} \le C \varepsilon^{\frac{\tau+2}{2(14+\tau)}} \left\| f \right\|_{C^{0,\alpha}_{\delta-2,\tau-2,\lambda,\mu-2,\varepsilon}}.
\end{align*}

In the region $\left\{|t| < \varepsilon^{\frac{6}{14+\tau}}, r < \varepsilon^{\frac{1}{10}} + \varepsilon^{\frac{1}{12}} \rho'^{\frac16}\right\}$, by Lemma 3.10 in \cite{li2019gluing}, we have a very coarse estimate
\begin{align*}
\left\| \frac{3\pi^*\omega_B \wedge \omega_\varepsilon^2}{\varepsilon\omega_\varepsilon^3} - 1 \right\|_{C^{0,\alpha}_{0,0,0,0,\varepsilon}} \le C,
\end{align*}
which implies
\begin{align*}
\left\| \Delta_{\omega_\varepsilon} P_B f - f_B' \right\|_{C^{0,\alpha}_{\delta-2,\tau-2,\lambda,\mu-2,\varepsilon}} \le C \left\| f \right\|_{C^{0,\alpha}_{\delta-2,\tau-2,\lambda,\mu-2,\varepsilon}(M)} \varepsilon^{\frac{2}{14+\tau}}
\end{align*}

In the region $\{|y|<\epsilon_1\}$, by exactly the same argument as in the proof of Proposition \ref{Ricci potential decay 1}, we have
\begin{align*}
\left\| \frac{3\pi^*\omega_B \wedge \omega_\varepsilon^2}{\varepsilon\omega_\varepsilon^3} - 1 \right\|_{C^{0,\alpha}_{0,0,0,0,\varepsilon}} \le C \varepsilon^{\frac23\delta+\frac{1}{60}},
\end{align*}hence in this region
\begin{align*}
\left\| \Delta_{\omega_\varepsilon} P_B f - f_B' \right\|_{C^{0,\alpha}_{\delta-2,\tau-2,\lambda,\mu-2,\varepsilon}} \le C \varepsilon^{\frac23\delta+\frac{1}{60}} \left\| f_B' \right\|_{C^{0,\alpha}_{\delta-2,\tau-2,\lambda,\mu-2,\varepsilon}} \le C \varepsilon^{\frac23\delta+\frac{1}{60}} \left\| f \right\|_{C^{0,\alpha}_{\delta-2,\tau-2,\lambda,\mu-2,\varepsilon}}.
\end{align*}

Now that all three terms are suppressed by a power of $\varepsilon$, they are negligible in the error estimate.

\end{proof}

We now sum over the pieces to define an approximate Green operator
\begin{equation}
Pf=P_{0,1}f+P_{0,2}f+P_1f+P_2f+P_Bf.
\end{equation}

Combining all the above arguments, we obtain the following result.

\begin{corollary}\label{approximate invert 4}

Let $\tau=-\frac23$, $-\frac{1}{40}<\delta < 0$, $\mu=-\frac{1}{12}$, $\lambda=\frac{13}{24}$, $0<\alpha<\frac{1}{12}$. Let $f$ be a function in $\left(C^{0,\alpha}_{\delta-2,\tau-2,\lambda,\mu-2,\varepsilon}(M)\right)_0$. Then one can choose $\epsilon_3,\epsilon_4, \Lambda_2, \Lambda_3$ such that
\begin{align*}
\left\|\Delta_{\omega_\varepsilon} P f - f\right\|_{C^{0,\alpha}_{\delta-2,\tau-2,\lambda,\mu-2,\varepsilon}(M)} \le \frac{1}{20} \|f\|_{C^{0,\alpha}_{\delta-2,\tau-2,\lambda,\mu-2,\varepsilon}(M)},
\end{align*}
and
\begin{align*}
\left\|\partial\bar{\partial} P f\right\|_{C^{0,\alpha}_{\delta-2,\tau-2,\lambda,\mu-2,\varepsilon}(M)} \le C \|f\|_{C^{0,\alpha}_{\delta-2,\tau-2,\lambda,\mu-2,\varepsilon}(M)},
\end{align*}where $C$ is a uniform constant.

\end{corollary}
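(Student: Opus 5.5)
The plan is to assemble the corollary directly from the decomposition \eqref{eq:f-decomposition} and the four preceding approximate-inversion lemmas. Since $\Delta_{\omega_\varepsilon}$ is linear and
\[
f=f_{0,1}+f_{0,2}+\sum_{i=1}^{N'}f_i+\sum_{i=N'+1}^{\infty}f_i+f_B ,
\]
I write
\begin{align*}
\Delta_{\omega_\varepsilon}Pf-f=&\left(\Delta_{\omega_\varepsilon}P_{0,1}f-f_{0,1}\right)+\left(\Delta_{\omega_\varepsilon}P_{0,2}f-f_{0,2}\right)+\Bigl(\Delta_{\omega_\varepsilon}P_{1}f-\sum_{i=1}^{N'}f_i\Bigr)\\
&+\Bigl(\Delta_{\omega_\varepsilon}P_{2}f-\sum_{i>N'}f_i\Bigr)+\left(\Delta_{\omega_\varepsilon}P_Bf-f_B\right).
\end{align*}
Each bracket is then estimated by the corresponding lemma and the five bounds are added with the triangle inequality. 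This gives $5\cdot\frac{1}{100}=\frac{1}{20}$ times $\|f\|$, which is exactly the stated constant.

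The real content is the order in which the auxiliary parameters are fixed, so that the hypotheses of the lemmas hold simultaneously. I would proceed as follows.
\begin{enumerate}
\item Use Lemma~\ref{approximate invert 2} to choose $\Lambda_2$ and $\epsilon_3$, subject to the constraints of Subsection~\ref{Metric deviation}.
\item With $\epsilon_3$ fixed, use Lemma~\ref{approximate invert 1} to choose $\Lambda_3$ large and $\epsilon_4$ small in the admissible window.
\item Check Lemma~\ref{approximate invert 3}, which needs $C\max\{\epsilon_3^{\delta-\tau+2},\epsilon_4^{23/288}\}<\frac{1}{200}$. Since $\delta-\tau+2>0$, this holds after possibly shrinking $\epsilon_3$ and $\epsilon_4$ further.
\item Finally take $\varepsilon$ small relative to all of these choices.
\end{enumerate}

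The main obstacle is the compatibility of steps 1–3. Shrinking $\epsilon_3$ in step 3 changes the constants $C(\epsilon_1,\epsilon_3,\alpha)$ in Lemma~\ref{approximate invert 1}, and shrinking $\epsilon_4$ can leave the admissible window found in step 2. I would therefore fix $\epsilon_3$ first, small enough for both Lemma~\ref{approximate invert 2} and the $\epsilon_3$-part of Lemma~\ref{approximate invert 3}. Then I take $\Lambda_3\to\infty$, using the remark that the admissible $\epsilon_4$ tends to $0$ as $\Lambda_3\to\infty$. This forces $\epsilon_4^{23/288}$ below the threshold needed in Lemma~\ref{approximate invert 3}, while the three inequalities of Lemma~\ref{approximate invert 1} remain satisfied. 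The constants in Lemma~\ref{approximate invert 3} depend only on the fixed indices, so this ordering is consistent.

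For the second estimate, the $P_{0,1},P_{0,2},P_1,P_2$ parts are treated through the weighted $C^{2,\alpha}_{\delta,\tau,\lambda,\mu,\varepsilon}$ bounds of Lemmas~\ref{approximate invert 2} and \ref{approximate invert 1}. A $C^{2,\alpha}$ bound with weights $(\delta,\tau,\lambda,\mu)$ controls $\partial\bar\partial$ in $C^{0,\alpha}$ with weights $(\delta-2,\tau-2,\lambda,\mu-2)$, because each covariant derivative lowers the weight by one in every region of \eqref{global weighted norm}. The $P_B$ part is bounded directly by Lemma~\ref{approximate invert 3}. Summing these bounds gives $\|\partial\bar\partial Pf\|\le C\|f\|$, with $C$ depending on the now-fixed parameters but not on $\varepsilon$.
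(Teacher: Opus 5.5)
Your proposal is correct and follows the paper's proof. It uses the same five-term splitting along \eqref{eq:f-decomposition}, with each piece bounded by $\frac{1}{100}$, and the same parameter ordering: first $\epsilon_3,\Lambda_2$ for Lemmas~\ref{approximate invert 2} and \ref{approximate invert 3}, then $\Lambda_3$ large with $\epsilon_4$ in the shrinking admissible window for Lemmas~\ref{approximate invert 1} and \ref{approximate invert 3}, and finally $\varepsilon$ small. Your explicit use of the fact that the admissible $\epsilon_4$ tends to zero as $\Lambda_3\to\infty$ is exactly what justifies the paper's terse remark that both smallness conditions in Lemma~\ref{approximate invert 3} can be met.
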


\begin{proof}

This is essentially a consequence of Lemmas~\ref{approximate invert 2}, \ref{approximate invert 1}, and \ref{approximate invert 3}. The only subtlety lies in the order in which the parameters are chosen. We first select a sufficiently small $\epsilon_3$ and a sufficiently large $\Lambda_2$ to satisfy the requirements of Lemmas~\ref{approximate invert 2} and \ref{approximate invert 3}. Subsequently, one may simultaneously choose a sufficiently small $\epsilon_4$ and a sufficiently large $\Lambda_3$ to fulfill the conditions of Lemmas~\ref{approximate invert 1} and \ref{approximate invert 3}. Note that the smallness conditions imposed on $\epsilon_3$ and $\epsilon_4$ in Lemma~\ref{approximate invert 3} are independent of each other.

\end{proof}

\begin{remark}\label{exp decay prop of P}

By the construction of $P_2$ and $P_B$, we can obtain the following $\varepsilon$-dependent decay estimate. Let $f\in \left(C^{0,\alpha}_{\delta-2,\tau-2,\lambda,\mu-2,\varepsilon}(M)\right)_0$, then for points $x\in M$ with $|y(x)|\ll 1$, we have
\begin{align*}
&|Pf(x)|<\frac{C(\varepsilon)|y|^{\frac{13}{24}}}{r^{\frac{1}{12}}}<C(\varepsilon)|y|^{\frac{25}{48}};\\
&\left|\nabla_{\omega_\varepsilon}Pf(x)\right|_{\omega_\varepsilon}<\frac{C(\varepsilon)|y|^{\frac{13}{24}}}{r^{1+\frac{1}{12}}}<C(\varepsilon)|y|^{\frac{13}{48}};\\
&\left|\nabla^2_{\omega_\varepsilon}Pf(x)\right|_{\omega_\varepsilon}<\frac{C(\varepsilon)|y|^{\frac{13}{24}}}{r^{2+\frac{1}{12}}}<C(\varepsilon)|y|^{\frac{1}{48}}.
\end{align*}

\end{remark}

Finally, we prove Proposition \ref{inverse of Laplacian}.

\begin{proof}

Let $f$ be a function in $C^{0,\alpha}_{\delta-2,\tau-2,\lambda,\mu-2,\varepsilon}(M)$ satisfying $\int_M f\,\omega_\varepsilon^3 = 0$, and set
\begin{align*}
u \coloneqq P \sum_{j=0}^{\infty} \left(\operatorname{Id} - \Delta_{\omega_\varepsilon} P\right)^j f.
\end{align*}

We first verify that the operator $I - \Delta_{\omega_\varepsilon}P$ preserves the zero-integral subspace. Let $g \in \left( C^{0,\alpha}_{\delta-2,\tau-2,\lambda,\mu-2,\varepsilon}(M) \right)_0$, and set $v = Pg$. By the weighted estimate for $P$, together with Remark~\ref{exp decay prop of P}, near infinity we have
\begin{align*}
|v|\leq C(\varepsilon)|y|^{\frac{25}{48}},\quad \left|\nabla_{\omega_\varepsilon} v\right|_{\omega_\varepsilon}
\le C(\varepsilon)\,|y|^{\frac{13}{48}},\quad \left|\nabla_{\omega_\varepsilon}^2 v\right|_{\omega_\varepsilon}
\le C(\varepsilon)\,|y|^{\frac{1}{48}}.
\end{align*}
In particular, $|\Delta_{\omega_\varepsilon} v| \le C(\varepsilon)\,|y|^{\frac{1}{48}}$.

Since
\begin{align*}
\omega_\varepsilon^3
= \frac{3}{\varepsilon}\,\pi^*\omega_B \wedge \left(\omega_X + \sqrt{-1}\,\partial\bar\partial\phi\right)^2
  + \left(\omega_X + \sqrt{-1}\,\partial\bar\partial\phi\right)^3,
\end{align*}
the second term is integrable by Proposition~\ref{Well-definedness of the metric ansatz}, while
\begin{align*}
\int_{0<|y|<1} |y|^{\frac{1}{48}}\,\omega_B
\le C\int_0^1 \rho^{\frac{1}{48}}\,\frac{d\rho}{\rho} < \infty.
\end{align*}
Thus $\Delta_{\omega_\varepsilon} v \in L^1\left(M,\omega_\varepsilon^3\right)$. 

Let $M_R = \{x \in M : |y(x)| \ge R\}$, $\Sigma_R = \partial M_R = \{|y| = R\}$. Using $\sqrt{-1}\,\partial\bar\partial v \wedge \omega_\varepsilon^2
= \frac{1}{6}\,(\Delta_{\omega_\varepsilon} v)\,\omega_\varepsilon^3$ and Stokes' theorem, we obtain
\begin{align*}
\left| \int_{M_R} \Delta_{\omega_\varepsilon} v \;\omega_\varepsilon^3 \right|
= 6\,\left| \int_{\Sigma_R} \sqrt{-1}\,\bar\partial v \wedge \omega_\varepsilon^2 \right|.
\end{align*}
The hypersurfaces $\Sigma_R$ have uniformly bounded volume as $R \to 0$, because the base part of the metric $\omega_\varepsilon$ is asymptotically cylindrical and the fiber volume is fixed. Therefore,
\begin{align*}
\left| \int_{M_R} \Delta_{\omega_\varepsilon} v \;\omega_\varepsilon^3 \right|
\le C\,\sup_{\Sigma_R} |\nabla v|_{\omega_\varepsilon}\,
\operatorname{Vol}_{\omega_\varepsilon}(\Sigma_R)
\le C(\varepsilon)\,R^{\frac{13}{48}}.
\end{align*}
Letting $R \to 0$, and using the absolute integrability established above, gives
\begin{align*}
\int_M \Delta_{\omega_\varepsilon} v \;\omega_\varepsilon^3 = 0.
\end{align*}
This is exactly the boundary-flux calculation used later in Theorem~6.1.

Consequently,
\begin{align*}
\int_M \left(\operatorname{Id} - \Delta_{\omega_\varepsilon}P\right)g \;\omega_\varepsilon^3
= \int_M g \;\omega_\varepsilon^3 - \int_M \Delta_{\omega_\varepsilon} v \;\omega_\varepsilon^3 = 0.
\end{align*}
Hence
\begin{align*}
\left(\operatorname{Id} - \Delta_{\omega_\varepsilon}P\right)
\left( C^{0,\alpha}_{\delta-2,\tau-2,\lambda,\mu-2,\varepsilon}(M) \right)_0
\subset
\left( C^{0,\alpha}_{\delta-2,\tau-2,\lambda,\mu-2,\varepsilon}(M) \right)_0.
\end{align*}
It follows inductively that $\left(\operatorname{Id} - \Delta_{\omega_\varepsilon}P\right)^j g$ has zero integral for every $j \ge 0$, so every term in the Neumann iteration is well defined.

The series converges thanks to the completeness of $C^{0,\alpha}_{\delta-2,\tau-2,\lambda,\mu-2,\varepsilon}(M)$ and the estimate $\|\Delta_{\omega_\varepsilon} P - \operatorname{Id}\| \le \frac{1}{20}$.

Corollary~\ref{approximate invert 4} then gives
\[
\left\| \partial \bar{\partial} u \right\|_{C^{0,\alpha}_{\delta-2,\tau-2,\lambda,\mu-2,\varepsilon}}
\le C \,\left\| \partial \bar{\partial} P \right\|
\|f\|_{C^{0,\alpha}_{\delta-2,\tau-2,\lambda,\mu-2,\varepsilon}}
\le C \|f\|_{C^{0,\alpha}_{\delta-2,\tau-2,\lambda,\mu-2,\varepsilon}}.
\]

The $(1,1)$-form $\theta = 2\sqrt{-1}\,\partial \bar{\partial} u$ is $\partial \bar{\partial}$-exact and verifies $\operatorname{Tr}_{\omega_\varepsilon} \theta = \Delta_{\omega_\varepsilon} u = f$. Defining
\begin{equation}\label{eq:R}
\mathcal{R}(f) = \theta = 2\sqrt{-1}\,\partial \bar{\partial} P \sum_{j=0}^{\infty} \left(\operatorname{Id} - \Delta_{\omega_\varepsilon} P\right)^j f
\end{equation}therefore yields the desired right inverse $\mathcal{R}$, together with the required bounds.

\end{proof}

\begin{remark}\label{exp decay prop}

Note that the construction above also yields an $\varepsilon$-dependent bound on the solution $u=\Delta_{\omega_{\varepsilon}}^{-1}f$:
\begin{align*}
\left\|\Delta_{\omega_{\varepsilon}}^{-1}f\right\|_{C^{2,\alpha}_{\delta,\tau,\lambda,\mu,\varepsilon}}\leq C(\varepsilon) \left\|f\right\|_{C^{0,\alpha}_{\delta-2,\tau-2,\lambda,\mu-2,\varepsilon}}.
\end{align*}Hence for points $x\in M$ with $|y(x)|\ll 1$,  we have
\begin{align*}
&|u(x)|<\frac{C(\varepsilon)|y|^{\frac{13}{24}}}{r^{\frac{1}{12}}}<C(\varepsilon)|y|^{\frac{25}{48}};\\
&\left|\nabla_{\omega_\varepsilon}u(x)\right|_{\omega_\varepsilon}<\frac{C(\varepsilon)|y|^{\frac{13}{24}}}{r^{1+\frac{1}{12}}}<C(\varepsilon)|y|^{\frac{13}{48}};\\
&\left|\nabla^2_{\omega_\varepsilon}u(x)\right|_{\omega_\varepsilon}<\frac{C(\varepsilon)|y|^{\frac{13}{24}}}{r^{2+\frac{1}{12}}}<C(\varepsilon)|y|^{\frac{1}{48}}.
\end{align*}

\end{remark}

\section{Perturbation to a Calabi--Yau metric}\label{Perturbation to Calabi--Yau metric}

In this section, we apply the Banach fixed point theorem to perturb the ansatz $\omega_\varepsilon$ into a genuine Calabi--Yau metric, and we also analyze the geometric properties of the resulting Calabi--Yau metric. Theorem~\ref{pertubation argument} and Proposition~\ref{properties of final CY} below together constitute a complete proof of Theorem~\ref{main theorem}.

\begin{theorem}\label{pertubation argument}

We work under Setting~(A). Let $\tau = -\frac23$, $-\frac{1}{40} < \delta < 0$, $\mu = -\frac{1}{12}$, $\lambda = \frac{13}{24}$, and $0 < \alpha < \frac{1}{12}$. Then, for all $\varepsilon > 0$ sufficiently small, there exists a complete Calabi--Yau metric $\omega_{\operatorname{CY},\varepsilon}$ in the class $\left[ \omega_X|_M \right]$ satisfying $\omega_{\operatorname{CY},\varepsilon}^3 = \frac{3\sqrt{-1}}{\varepsilon}\,\Omega \wedge \overline{\Omega}$ and 
\begin{equation}
\|\omega_{\operatorname{CY},\varepsilon} - \omega_\varepsilon\|_{C^{0,\alpha}_{\delta-2,\tau-2,\lambda,\mu-2,\varepsilon}(M)} \le C \varepsilon^{\frac{1}{2}\delta + \frac{7}{20}}.
\end{equation}
In particular, the numerical property of the weights implies that
\begin{equation}
\|\omega_{\operatorname{CY,\varepsilon}} - \omega_\varepsilon\|_{C^{0,\alpha}_{0,0,0,0,\varepsilon}(M)} \le C\varepsilon^{\frac23\delta+\frac{1}{60}}.
\end{equation}
Here $C$ is a uniform constant independent of $\varepsilon$.

\end{theorem}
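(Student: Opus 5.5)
We look for the Calabi--Yau metric in the form $\omega_{\operatorname{CY},\varepsilon}=\omega_\varepsilon+\theta$, where $\theta$ lies in $\mathcal{A}^{0,\alpha}_{\delta-2,\tau-2,\lambda,\mu-2,\varepsilon}(M)$. The equation $\omega_{\operatorname{CY},\varepsilon}^3=\frac{3\sqrt{-1}}{\varepsilon}\Omega\wedge\overline\Omega$ is equivalent to
\begin{align*}
\frac{(\omega_\varepsilon+\theta)^3}{\omega_\varepsilon^3}=e^{-h_\varepsilon},\qquad\text{i.e.}\qquad \tfrac12\operatorname{Tr}_{\omega_\varepsilon}\theta+Q(\theta)=e^{-h_\varepsilon}-1,
\end{align*}
where $Q(\theta)$ collects the quadratic and cubic terms $3\theta^2\wedge\omega_\varepsilon/\omega_\varepsilon^3+\theta^3/\omega_\varepsilon^3$. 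Using Proposition~\ref{inverse of Laplacian}, we make the ansatz $\theta=2\mathcal{R}(g)$ and consider the map
\begin{align*}
T(g)=e^{-h_\varepsilon}-1-Q(2\mathcal{R}(g)),
\end{align*}
which acts on $\left(C^{0,\alpha}_{\delta-2,\tau-2,\lambda,\mu-2,\varepsilon}(M)\right)_0$. A fixed point $g=T(g)$ then solves the equation.

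\textbf{Step 1: $T$ preserves the zero-integral subspace.} The quantity
\begin{align*}
\int_M\left(e^{-h_\varepsilon}-1-Q\right)\omega_\varepsilon^3=\int_M\left(\tfrac{3\sqrt{-1}}{\varepsilon}\Omega\wedge\overline\Omega-\omega_\varepsilon^3\right)-\int_M\left((\omega_\varepsilon+\theta)^3-\omega_\varepsilon^3-\tfrac12(\operatorname{Tr}\theta)\omega_\varepsilon^3\right)
\end{align*}
vanishes. The first integral is zero by the integration condition of Proposition~\ref{Well-definedness of the metric ansatz}. For the second, $\theta=2\sqrt{-1}\partial\bar\partial u$ is $\partial\bar\partial$-exact, so the remaining integrand, including the linear term, is exact. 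We kill the boundary fluxes on $\Sigma_R=\{|y|=R\}$ as $R\to0$ using the decay of $u,\nabla u$ from Remark~\ref{exp decay prop}, exactly as in the proof of Proposition~\ref{inverse of Laplacian}. This is where $\lambda-\frac12+\frac{\mu-\alpha}{4}>0$ is used, and absolute integrability comes from the same estimates.

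\textbf{Step 2: contraction estimates.} By Remark~\ref{numerical properties of the weighted Holder spaces}, the $C^{0,\alpha}_{0,0,0,0,\varepsilon}$ norm is dominated by $K_\varepsilon=C\max\{\varepsilon^{\frac{\delta-2}{6}},\varepsilon^{\frac12(\tau-\delta)}\}$ times the weighted norm. The product rule in the same remark then gives
\begin{align*}
\|Q(\theta_1)-Q(\theta_2)\|\le CK_\varepsilon\left(\|\theta_1\|+\|\theta_2\|\right)\|\theta_1-\theta_2\|,
\end{align*}
valid whenever $\|\theta_i\|_{C^{0,\alpha}_{0,\ldots}}\le 1/2$. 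Combined with $\|\mathcal{R}\|\le C$, the map $T$ is a contraction with constant $\frac12$ on the ball of radius $2\|e^{-h_\varepsilon}-1\|\le C\varepsilon^{\frac12\delta+\frac7{20}}$, by Proposition~\ref{Ricci potential decay 1}(1). This requires
\begin{align*}
K_\varepsilon\,\varepsilon^{\frac12\delta+\frac{7}{20}}\to0.
\end{align*}
With $\tau=-\frac23$ and $-\frac1{40}<\delta<0$ the exponents are positive, for example $\frac{\delta-2}{6}+\frac{\delta}{2}+\frac{7}{20}>0$. The same bound (Proposition~\ref{Ricci potential decay 1}(2) together with the weight comparison) gives the stated $C^{0,\alpha}_{0,0,0,0,\varepsilon}$ estimate $C\varepsilon^{\frac23\delta+\frac1{60}}$, and this bound also ensures $\omega_\varepsilon+\theta>0$.

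\textbf{Step 3: regularity, class and completeness.} The fixed point gives a $C^{0,\alpha}$ Kähler form solving the Monge--Ampère equation locally. Local elliptic regularity for the complex Monge--Ampère equation (Evans--Krylov/Schauder) then makes it smooth. It lies in $[\omega_X|_M]$ because $\theta$ is $\partial\bar\partial$-exact and $[\omega_\varepsilon]=[\omega_X|_M]$. Completeness follows from uniform equivalence with $\omega_\varepsilon$, which contains the complete cylindrical term $\frac1{\varepsilon}\pi^*\omega_B$.

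\textbf{Main obstacle.} The delicate point is Step 1 together with the closure of the space under the nonlinearity on the noncompact end. We must check that $Q(\theta)$ stays in the weighted space with its $|y|^\lambda$ decay. The product rule gives this, since $Q$ is at least quadratic and the factor multiplying the weighted term only needs to be bounded. The zero-integral condition must be verified with flux terms that decay only like $R^{13/48}$. I would first try handling $Q$ via the exact form $d\bigl(\bar\partial u\wedge(\text{polynomial in }\omega_\varepsilon,\theta)\bigr)$, bounding the boundary term by $\sup_{\Sigma_R}|\nabla u|\cdot\mathrm{Vol}(\Sigma_R)$.
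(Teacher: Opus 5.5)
Your argument is essentially the paper's proof: the same fixed-point formulation through the right inverse $\mathcal{R}$ on the zero-integral subspace, the same boundary-flux argument (using the decay in Remark~\ref{exp decay prop}) to show the map preserves that subspace, the same contraction estimate driven by $\varepsilon^{\frac{\delta-2}{6}}\cdot\varepsilon^{\frac12\delta+\frac7{20}}=\varepsilon^{\frac23\delta+\frac1{60}}\to 0$, and elliptic regularity at the end. There is one factor-of-two correction: in the paper's normalization $3\,\omega_\varepsilon^2\wedge\theta=(\operatorname{Tr}_{\omega_\varepsilon}\theta)\,\omega_\varepsilon^3$ (equivalently $\sqrt{-1}\partial\bar\partial v\wedge\omega_\varepsilon^2=\frac16(\Delta_{\omega_\varepsilon}v)\,\omega_\varepsilon^3$), so the linear term is $\operatorname{Tr}_{\omega_\varepsilon}\theta$ rather than $\frac12\operatorname{Tr}_{\omega_\varepsilon}\theta$, and you should take $\theta=\mathcal{R}(g)$ instead of $2\mathcal{R}(g)$; as written, a fixed point of $T$ solves $(\omega_\varepsilon+\theta)^3/\omega_\varepsilon^3=e^{-h_\varepsilon}+g$ instead of the Monge--Amp\`ere equation.
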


\begin{proof}

Consider the operator
\begin{align*}
\mathcal{N}_\varepsilon:\left(C^{0,\alpha}_{\delta-2,\tau-2,\lambda,\mu-2,\varepsilon}(M)\right)_0\to \left(C^{0,\alpha}_{\delta-2,\tau-2,\lambda,\mu-2,\varepsilon}(M)\right)_0,\;f\mapsto \frac{\left(\omega_\varepsilon+\mathcal{R}f\right)^3}{\omega_\varepsilon^3}-e^{-h_\varepsilon}.
\end{align*}

This map is well-defined. Indeed, for $f=0$, we have by Proposition \ref{Well-definedness of the metric ansatz}
\begin{align*}
\int_M\left(1-e^{-h_\varepsilon}\right)\omega_\varepsilon^3=\int_M\left(\omega_\varepsilon^3-\frac{3\sqrt{-1}\Omega\wedge \overline{\Omega}}{\varepsilon}\right)=0
\end{align*}By the exponential decay property with respect to the distance function in Remark \ref{exp decay prop}, for $f\in C^{0,\alpha}_{\delta-2,\tau-2,\lambda,\mu-2,\varepsilon}(M)$ we have 
\begin{align*}
\int_M\left(\frac{\left(\omega_\varepsilon+\mathcal{R}f\right)^3}{\omega_\varepsilon^3}-e^{-h_\varepsilon}\right)\omega_\varepsilon^3=\int_M\left(\omega_\varepsilon+\mathcal{R}f\right)^3-\omega_\varepsilon^3=0,
\end{align*}where the integral is understood in the Lebesgue sense. 

As an example, we verify that $\omega_\varepsilon^2 \wedge \mathcal{R}f$ is absolutely integrable on $M$ and that its integral vanishes; the remaining two terms can be treated similarly. Recall $\mathcal{R}f = 2\sqrt{-1}\partial \bar{\partial} u$. Then
\begin{align*}
\int_{\pi^{-1}(B(p_\infty,1)\setminus B(p_\infty,R))} \left|\omega_\varepsilon^2 \wedge \partial \bar{\partial} u\right|&
\le \int_{\pi^{-1}(B(p_\infty,1)\setminus B(p_\infty,R))} \left|\omega_\varepsilon^2 \wedge \partial \bar{\partial} u\right|_{\omega_\varepsilon} \,\omega_\varepsilon^3\\
&
\le \int_{\pi^{-1}(B(p_\infty,1)\setminus B(p_\infty,R))} \left|\partial \bar{\partial} u\right|_{\omega_\varepsilon} \,\omega_\varepsilon^3\\
&
\le C \int_{\pi^{-1}(B(p_\infty,1)\setminus B(p_\infty,R))} |y|^{\frac{1}{48}}\,\omega_\varepsilon^3.
\end{align*}
Recall that $\omega_\varepsilon^3 = \frac{3}{\varepsilon}\,\pi^*\omega_B \wedge (\omega_X + \sqrt{-1}\,\partial \bar{\partial} \phi)^2 + (\omega_X + \sqrt{-1}\,\partial \bar{\partial} \phi)^3$.
We have already shown in Proposition~\ref{Well-definedness of the metric ansatz} that $(\omega_X + \sqrt{-1}\,\partial \bar{\partial} \phi)^3$ is Lebesgue integrable on $M$, hence
\begin{align*}
\lim_{R \to 0} \int_{\pi^{-1}(B(p_\infty,1)\setminus B(p_\infty,R))} |y|^{\frac{1}{48}}\,|\omega_X + \sqrt{-1}\,\partial \bar{\partial} \phi|^3 < +\infty.
\end{align*}
On the other hand,
\begin{align*}
\int_{\pi^{-1}(B(p_\infty,1)\setminus B(p_\infty,R))} |y|^{\frac{1}{48}}\,\pi^*\omega_B \wedge (\omega_X + \sqrt{-1}\,\partial \bar{\partial} \phi)^2
\le C \int_{B(0,1)\setminus B(0,R)} \frac{\sqrt{-1}\,|y|^{\frac{1}{48}} \,dy \wedge d\bar{y}}{|y|^2},
\end{align*}
and consequently
\begin{align*}
\lim_{R \to 0} \int_{\pi^{-1}(B(p_\infty,1)\setminus B(p_\infty,R))} |y|^{\frac{1}{48}}\,\pi^*\omega_B \wedge (\omega_X + \sqrt{-1}\,\partial \bar{\partial} \phi)^2 < +\infty.
\end{align*}
Set $\Sigma_R = \{x \in M : |y(x)| = R\}$ and $M_R = \{x \in M : |y(x)| \ge R\}$. Integrating by parts gives
\begin{align*}
\left|\int_{M_R} \omega_\varepsilon^2 \wedge \partial \bar{\partial} u\right|
= \left|\int_{\Sigma_R} \omega_\varepsilon^2 \wedge \bar{\partial} u\right|
\le \int_{\Sigma_R} \left|\omega_\varepsilon^2 \wedge \bar{\partial} u\right|_{\omega_\varepsilon} \,dV_{\Sigma_R}
< C \int_{\Sigma_R} R^{\frac{13}{48}} \,dV_{\Sigma_R}
< C R^{\frac{13}{48}}.
\end{align*}
Letting $R \to 0$, we conclude $\int_M \omega_\varepsilon^2 \wedge \mathcal{R}f = 0$.

Our goal is to find a $f\in \left(C^{0,\alpha}_{\delta-2,\tau-2,\lambda,\mu-2,\varepsilon}(M)\right)_0$ such that $\mathcal{N}_\varepsilon(f)=0$. This is equivalent to finding a fixed point for the following map
\begin{align*}
\Phi_\varepsilon:\left(C^{0,\alpha}_{\delta-2,\tau-2,\lambda,\mu-2,\varepsilon}(M)\right)_0\to \left(C^{0,\alpha}_{\delta-2,\tau-2,\lambda,\mu-2,\varepsilon}(M)\right)_0,\;f\mapsto -\mathcal{N}_\varepsilon(0)-\mathcal{Q}_\varepsilon(f),
\end{align*}where $\mathcal{Q}_\varepsilon(f)$ is the nonlinear part of $\mathcal{N}_\varepsilon(f)$. More precisely, we have an expansion
\begin{align*}
\mathcal{N}_\varepsilon(f)=\mathcal{N}_\varepsilon(0)+d_0\mathcal{N}_\varepsilon(f)+\mathcal{Q}_\varepsilon(f)=\mathcal{N}_\varepsilon(0)+f+\mathcal{Q}_\varepsilon(f).
\end{align*}

Consider a closed ball of the Banach space $\left(C^{0,\alpha}_{\delta-2,\tau-2,\lambda,\mu-2,\varepsilon}(M)\right)_0$:
\begin{align*}
\mathcal{B}=\left\{f\in C^{0,\alpha}_{\delta-2,\tau-2,\lambda,\mu-2,\varepsilon}(M),\left\|f\right\|_{C^{0,\alpha}_{\delta-2,\tau-2,\lambda,\mu-2,\varepsilon}(M)}\leq\Lambda_4\varepsilon^{\frac12\delta+\frac{7}{20}}\left|\int_M f\omega_\varepsilon^3 = 0\right.\right\},
\end{align*}where $\Lambda_4$ is a large number to be chosen. Note that $\Phi_\varepsilon(0)=e^{-h_\varepsilon}-1$. Hence by Proposition \ref{Ricci potential decay 1}, there exists a uniform constant $C_0$ such that $$\left\|\Phi_\varepsilon(0)\right\|_{C^{0,\alpha}_{\delta-2,\tau-2,\lambda,\mu-2,\varepsilon}(M)}\leq C_0\varepsilon^{\frac12\delta+\frac{7}{20}}.$$ Choose $\Lambda_4>10C_0$ so that $\Phi_\varepsilon(0)\in \mathcal{B}$.

For any $f,g\in \mathcal{B}$, we have
\begin{align*}
\left\|\Phi_\varepsilon(f)-\Phi_\varepsilon(g)\right\|_{C^{0,\alpha}_{\delta-2,\tau-2,\lambda,\mu-2,\varepsilon}(M)}\leq&\left\|\frac{3\omega_\varepsilon\wedge \mathcal{R}(f+g)\wedge \mathcal{R}(f-g)}{\omega_\varepsilon^3}\right\|_{C^{0,\alpha}_{\delta-2,\tau-2,\lambda,\mu-2,\varepsilon}(M)}\\
&+\left\|\frac{\left\{\mathcal{R}f^2+\mathcal{R}f\wedge \mathcal{R}g+\mathcal{R}g^2\right\}\wedge \mathcal{R}(f-g)}{\omega_\varepsilon^3}\right\|_{C^{0,\alpha}_{\delta-2,\tau-2,\lambda,\mu-2,\varepsilon}(M)}\\
\leq &C\left\|\mathcal{R}\right\|\left\|f-g\right\|_{C^{0,\alpha}_{\delta-2,\tau-2,\lambda,\mu-2,\varepsilon}(M)}\Big\{\left\|\mathcal{R}(f+g)\right\|_{C^{0,\alpha}_{0,0,0,0,\varepsilon}(M)}\\
&+\left\|\mathcal{R}f\right\|^2_{C^{0,\alpha}_{0,0,0,0,\varepsilon}(M)}+\left\|\mathcal{R}g\right\|^2_{C^{0,\alpha}_{0,0,0,0,\varepsilon}(M)}\Big\}
\end{align*}
Note that for $h\in \mathcal{B}$, we have
\begin{align*}
\left\|\mathcal{R}h\right\|_{C^{0,\alpha}_{0,0,0,0,\varepsilon}(M)}\leq C\varepsilon^{\frac{\delta-2}{6}}\| h \|_{C^{0,\alpha}_{\delta-2,\tau-2,\lambda,\mu-2,\varepsilon}(M)}\leq C\varepsilon^{\frac23\delta+\frac{1}{60}},
\end{align*}thus
\begin{align*}
\left\|\Phi_\varepsilon(f)-\Phi_\varepsilon(g)\right\|_{C^{0,\alpha}_{\delta-2,\tau-2,\lambda,\mu-2,\varepsilon}(M)}\leq&C\left(\Lambda_4\varepsilon^{\frac23\delta+\frac{1}{60}}+\Lambda_4^2\varepsilon^{\frac43\delta+\frac{1}{30}}\right)\left\|f-g\right\|_{C^{0,\alpha}_{\delta-2,\tau-2,\lambda,\mu-2,\varepsilon}(M)}.
\end{align*}Choose $\varepsilon>0$ sufficiently small such that $C\left(\Lambda_4\varepsilon^{\frac23\delta+\frac{1}{60}}+\Lambda_4^2\varepsilon^{\frac43\delta+\frac{1}{30}}\right)<\frac{1}{10}$. Then $\Phi_\varepsilon$ is a contraction on the closed ball $\mathcal{B}$. By the Banach fixed point theorem, there exists a unique fixed point $f$ of the map $\Phi_\varepsilon$.

Since $\varepsilon$ is chosen to be sufficiently small, the perturbation solution $\omega_{\operatorname{CY},\varepsilon}=\omega_\varepsilon+\mathcal{R}f$ remains positive definite on $M$. Although $\omega_\varepsilon$ and $f$ are not smooth, the equation $\omega_{\operatorname{CY},\varepsilon}^3 = \frac{3\sqrt{-1}}{\varepsilon}\Omega \wedge \overline{\Omega}$, combined with elliptic regularity theory, forces $\omega_{\operatorname{CY},\varepsilon}=\omega_\varepsilon+\mathcal{R}f$ to be smooth. Thus $\omega_{\operatorname{CY},\varepsilon}$ is a genuine globally defined complete Calabi--Yau metric. Since $\mathcal{R}_f$ is $\partial\bar\partial$-exact and $[\omega_\varepsilon]=\left[\omega_X|_M\right]$, the resulting Calabi--Yau metric lies in the class $\left[\omega_X|_M\right]$.

\end{proof}

Next we analyze the geometric properties of the final Calabi--Yau metric $\omega_{\operatorname{CY},\varepsilon}$ and complete the proof of Theorem \ref{main theorem}.

Although Theorem \ref{pertubation argument} guarantees that $\omega_{\operatorname{CY},\varepsilon}$ is close to the metric ansatz $\omega_\varepsilon$, this closeness cannot be extended to higher-order derivatives because $\omega_\varepsilon$ itself is not smooth. Consequently, we cannot directly obtain information about the sectional curvature of $\omega_{\operatorname{CY},\varepsilon}$. Nevertheless, by employing a standard blow-up argument, we can at least prove that the sectional curvature remains unbounded.

\begin{proposition}\label{properties of final CY}

The complete Calabi--Yau metric $\omega_{\operatorname{CY},\varepsilon}$ given in Theorem {\ref{pertubation argument}} satisfies:

(a) The volume growth is linear;

(b) The tangent cone at infinity is $\left([0,\infty),dr^2\right)$;

(c) The sectional curvature is unbounded at infinity;  

(d) $(M,\omega_{\operatorname{CY},\varepsilon})$ is Gromov--Hausdorff asymptotic to $\mathbb{R}\times S^1\times X_\infty$ equipped with the product metric $\frac{\sqrt{-1}dy\wedge d\bar{y}}{2\varepsilon|y|^2}+\omega_{\operatorname{SRF}}|_{X_\infty}$.

\end{proposition}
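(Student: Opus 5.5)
The plan is to read everything off the $C^0$-closeness $\|\omega_{\operatorname{CY},\varepsilon}-\omega_\varepsilon\|_{C^{0,\alpha}_{0,0,0,0,\varepsilon}}\le C\varepsilon^{\frac23\delta+\frac1{60}}$ from Theorem~\ref{pertubation argument}, combined with the weighted decay: in $U_3$ the $\lambda$-weight forces $|\omega_{\operatorname{CY},\varepsilon}-\omega_\varepsilon|_{\omega_\varepsilon}\le C(\varepsilon)|y|^{\frac{13}{24}}r^{\mu-2}\le C(\varepsilon)|y|^{\frac1{48}}\to0$ as $|y|\to0$. Hence $g_{\operatorname{CY}}$ and $g_\varepsilon$ are uniformly bi-Lipschitz, with ratio tending to $1$ at infinity. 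This means volume growth, tangent cone and GH asymptotics can be computed for $\omega_\varepsilon$ instead.

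For (a), I would use the fact that near $X_\infty$ the base metric is $\frac{A_y}{\varepsilon|y|^2}\sqrt{-1}dy\wedge d\bar y$, which is cylindrical in $s=-\log|y|$, and that the fibre volume is $1$. By Claim~3 in Proposition~\ref{Well-definedness of the metric ansatz}, $\pi_*\omega_\varepsilon^2$ restricted to fibres has mass $1$. So $\operatorname{Vol}(\{|y|>e^{-s}\})\sim cs$, and the distance from a base point is $\sim\varepsilon^{-1/2}s$ because fibre diameters are bounded. This gives linear growth.

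For (d), I would show that $\omega_\varepsilon$ on $\{|y|<e^{-s}\}$ is GH-close to the cylinder $(-\infty,-\log|y|)\times S^1\times(X_y,\omega_y)$. Proposition~\ref{metric deviation}(d) handles the horizontal part. For the fibres, Proposition~\ref{fiber deviation 2} and Lemma~\ref{fiber deviation 1} show that $(X_y,\omega_{\operatorname{SRF}}|_{X_y})\to(X_\infty,\omega_{\operatorname{SRF}}|_{X_\infty})$ in GH (this is the standard orbifold degeneration; the Eguchi--Hanson region has diameter $O(|y|^{1/4})$). The circle length $\sim\varepsilon^{-1/2}$ stays fixed, which gives the asymptotics $\mathbb R\times S^1\times X_\infty$. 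For (b), rescaling by $R^{-1}$ kills the bounded cross-section, leaving a ray.

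The main obstacle is (c), since curvature is not controlled by $C^{0,\alpha}$ closeness. I would argue by contradiction with a blow-up at the vanishing cycle. Suppose $|\operatorname{Rm}(\omega_{\operatorname{CY},\varepsilon})|\le K$ on $\{|y|<\epsilon\}$. Take points $p_i$ on the vanishing cycle of $X_{y_i}$ with $y_i\to0$, and rescale by $\lambda_i=|y_i|^{-1/2}$, the inverse of the Eguchi--Hanson scale $|y_i|^{1/4}$ squared. The rescaled metrics $\lambda_i\omega_{\operatorname{CY}}$ have curvature $\le K\lambda_i^{-1}\to0$. They are Ricci-flat and noncollapsed near $p_i$: volume comparison with the rescaled ansatz gives a unit ball of volume bounded below, since the rescaled ansatz converges to $\omega_{\mathbb C}+\operatorname{EH}_1$ by the scaling remark after Proposition~\ref{prop of omega 1} and Lemma~\ref{almost product 1}. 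Cheeger--Gromov compactness therefore yields a smooth flat limit. On the other hand, the $C^0$-closeness persists under scaling, so the GH limit is also $\mathbb C\times V_1$ with the Stenzel/Eguchi--Hanson metric. Being a smooth limit with $C^{1,\alpha}$ harmonic-coordinate convergence, it must be isometric to this nonflat Ricci-flat space, a contradiction. The delicate point is ensuring that the smooth limit and the metric-space limit coincide and that the uniform equivalence constant tending to $1$ pins down the limit metric. I would handle this by noting that bi-Lipschitz constant $1+o(1)$ gives a distance-isometry, and Myers--Steenrod then upgrades it to a smooth isometry.
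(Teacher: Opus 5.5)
Your proposal is correct. For (a), (b) and (d) it follows the paper's route. The weighted bound gives $|\omega_{\operatorname{CY},\varepsilon}-\omega_\varepsilon|_{\omega_\varepsilon}\le C(\varepsilon)|y|^{1/48}$, so the two metrics are bi-Lipschitz with constant tending to $1$ at infinity. Volume growth, the tangent cone and the GH asymptotics can then be read off from $\omega_\varepsilon$. You even supply the volume count for $\omega_\varepsilon$ that the paper takes for granted.

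One correction in (d): Proposition~\ref{metric deviation}(d) is not strong enough as the input. Its bound $C\epsilon_4^{1/24}$ is a fixed constant over the whole fibre. Two contributions to it do not decay as $|y'|\to0$: the complex-structure term $|y-y'|/r^4$, which is saturated near the vanishing cycle, and the frozen base coefficient $A_{y'}/|y'|^2$. Both produce errors of size $\Lambda_3\varepsilon^{1/2}$. GH asymptotics, by contrast, needs distortion $o(1)$ on each slab. As in the paper, you should compare directly with the exact cylinder $ds^2+\varepsilon^{-1}d\theta^2+g_\infty$ on $\{r\ge\vartheta\}$. There $A_y\to\frac12$ and $|dy|_{\omega_\varepsilon}\le C\varepsilon^{1/2}|y|$ make the horizontal and mixed errors $o(1)$. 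The remaining region $\{r\le 2\vartheta\}$ is absorbed by its $O(\vartheta)$ diameter.

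The genuine difference is in (c). Both arguments blow up at points $p_i$ on the vanishing cycles by the factor $|y_i|^{-1/2}$, so that the rescaled ansatz converges to $\omega_{\mathbb C}+\omega_{\operatorname{EH},1}$ on $\mathbb C\times V_1$.

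The paper then uses local higher-order estimates for the complex Monge--Amp\`ere equation. These upgrade the $C^{0,\alpha}_{\mathrm{loc}}$ closeness into smooth convergence of the rescaled Calabi--Yau metrics to the model. The contradiction comes because bounded curvature would force this smooth limit to be flat.

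You instead use the contradiction hypothesis itself to get compactness. Curvature tending to zero, together with noncollapsing from volume comparison with the rescaled ansatz, gives a smooth flat Cheeger--Gromov limit. Uniqueness of pointed GH limits and Myers--Steenrod then identify this limit isometrically with the nonflat model.

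Your route needs only the scale-invariant $C^0$ closeness and standard Riemannian convergence theory. It sidesteps the Monge--Amp\`ere regularity step, which the paper only sketches. The paper's route identifies the blow-up limit of $\omega_{\operatorname{CY},\varepsilon}$ without assuming any curvature bound. It therefore gives the rate $|\operatorname{Rm}(\omega_{\operatorname{CY},\varepsilon})|(p_i)\sim|y_i|^{-1/2}$, not just unboundedness.
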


\begin{proof}

(a), (b) By Theorem~\ref{pertubation argument}, we have
\begin{align*}
\left\|
\omega_{\operatorname{CY},\varepsilon} - \omega_\varepsilon
\right\|_{C^{0,\alpha}_{\delta-2,\tau-2,\lambda,\mu-2,\varepsilon}(M)}
\le C\varepsilon^{\frac12\delta + \frac7{20}}.
\end{align*}
By the definition of the weighted norm, this implies that, for some $c > 0$,
\begin{align*}
\left|
\omega_{\operatorname{CY},\varepsilon} - \omega_\varepsilon
\right|_{\omega_\varepsilon}(x)
\le C(\varepsilon)\,e^{-c\sqrt{\varepsilon}d_{\omega_\varepsilon}(x,x_0)}
\end{align*}
along the end of $M$. In particular, if $\eta(R) \coloneqq 
\sup\limits_{\{d_{\omega_\varepsilon}(x,x_0) \ge R\}}
\left|
\omega_{\operatorname{CY},\varepsilon} - \omega_\varepsilon
\right|_{\omega_\varepsilon}$, then $\eta(R) \to 0$ as $R \to \infty$, and hence $(1 - \eta(R))\,\omega_\varepsilon
\le \omega_{\operatorname{CY},\varepsilon}
\le (1 + \eta(R))\,\omega_\varepsilon$ outside $B_{\omega_\varepsilon}(x_0, R)$. Thus the two metrics are asymptotically bilipschitz with bilipschitz constant tending to $1$. Their distance functions differ by $o(r)$ on regions at distance $r$ from $x_0$, while their volume forms satisfy $\frac{\omega_{\operatorname{CY},\varepsilon}^{3}}{\omega_\varepsilon^{3}}
= 1 + o(1)$ uniformly at infinity. It follows that the corresponding metric balls have the same leading-order volume growth. Moreover, after rescaling by any sequence $r_i^{-2}$ with $r_i \to \infty$, the identity map between $\left(M, r_i^{-2}\omega_\varepsilon, x_0\right)$ and $\left(M, r_i^{-2}\omega_{\operatorname{CY},\varepsilon}, x_0\right)$ has distortion tending to zero on every bounded ball. Consequently, the two metrics have the same tangent cones at infinity. In particular, since $\omega_\varepsilon$ has linear volume growth and tangent cone $[0,\infty)$, the same conclusions hold for $\omega_{\operatorname{CY},\varepsilon}$.

(c) We finally prove that the sectional curvature of
$\omega_{\operatorname{CY},\varepsilon}$ is unbounded. 

Suppose, to the contrary, that $\sup\limits_M
\left|\operatorname{Rm}(\omega_{\operatorname{CY},\varepsilon})\right|_{\omega_{\operatorname{CY},\varepsilon}}
< \infty$.
Choose points $p_i \in U_\infty \setminus X_\infty$ lying on the vanishing
cycles $\{H = |y|\}$, with $y_i \coloneqq y(p_i) \to 0$, and consider the rescaled
metrics
\begin{align*}
\omega_i \coloneqq |y_i|^{-\frac{1}{2}}\,\omega_\varepsilon,\quad  \widehat\omega_i \coloneqq |y_i|^{-\frac{1}{2}}\,\omega_{\operatorname{CY},\varepsilon}.
\end{align*}
After pulling back by the natural holomorphic blow-up maps centered at
$p_i$, the construction of the ansatz gives $\omega_i \longrightarrow \omega_{\mathbb C} + \omega_{\operatorname{EH},1}$ smoothly on compact subsets of $\mathbb C \times V_1$.

The weighted estimate in Theorem~\ref{pertubation argument} implies that $\widehat\omega_i - \omega_i \to 0$ in $C^{0,\alpha}_{\mathrm{loc}}$. Moreover, in these rescaled coordinates the background metrics and their Monge--Amp\`ere densities converge smoothly to those of the model metric. The local higher-order estimates for the complex Monge--Amp\`ere equation
therefore upgrade this convergence to $\widehat\omega_i \longrightarrow \omega_{\mathbb C} + \omega_{\operatorname{EH},1}$ smoothly on compact subsets.

On the other hand, curvature scales according to $\left| \operatorname{Rm}(\widehat\omega_i)
\right|_{\widehat\omega_i} =|y_i|^{\frac{1}{2}}\,\left| \operatorname{Rm}(\omega_{\operatorname{CY},\varepsilon})
\right|_{\omega_{\operatorname{CY},\varepsilon}}$, and hence tends uniformly to zero on compact subsets. The smooth limit $\omega_{\mathbb C} + \omega_{\operatorname{EH},1}$ would therefore have to be flat. This is impossible, since the Eguchi--Hanson factor
$\omega_{\operatorname{EH},1}$ is nonflat. The contradiction proves that
the sectional curvature of $\omega_{\operatorname{CY},\varepsilon}$ is unbounded.

(d) Let $s = -\frac{1}{\sqrt{\varepsilon}}\log |y|$, $y = e^{-\sqrt{\varepsilon}s + \sqrt{-1}\theta}$, so that $s \to +\infty$ along the end. Let $g_\infty$ denote the Riemannian metric associated to the orbifold Ricci-flat K{\"a}hler form $\omega_{\mathrm{SRF}}|_{X_\infty}$, and equip $\mathbb{R} \times S^1 \times X_\infty$ 
with the product metric $g_{\operatorname{prod}} = ds^2 + \frac{1}{\varepsilon}\,d\theta^2 + g_\infty$. 

It suffices to show that, for every fixed $L > 0$, the translated slabs $\{T - L \le s \le T + L\}$ converge in the Gromov--Hausdorff sense to $[-L, L] \times S^1 \times X_\infty$ as $T \to \infty$.

We first prove this for the approximate metric $g_\varepsilon$. Fix $\vartheta > 0$, and restrict to the complement of the $\vartheta$-neighborhood of the nodal point of $X_\infty$. For $T$ sufficiently large, the trivialization $G_\infty$ is defined on this region, and we obtain a comparison map
\begin{align*}
    F_T(\sigma, \theta, x)
= G_\infty^{-1}\left( e^{-\sqrt{\varepsilon}(T+\sigma) + \sqrt{-1}\theta},\, x \right),
\quad
|\sigma| \le L,\quad r(x) \ge \vartheta.
\end{align*}
On every such fixed region, the metric converges uniformly:
\begin{align*}
F_T^* g_\varepsilon \longrightarrow d\sigma^2 + \frac{1}{\varepsilon}\,d\theta^2 + g_\infty.
\end{align*}
Indeed, the base part satisfies
\begin{align*}
\frac{\sqrt{-1}A_y}{\varepsilon|y|^2}\,dy \wedge d\bar{y}
\longrightarrow d\sigma^2 + \frac{1}{\varepsilon}\,d\theta^2
\end{align*}
because $A_y \to A_\infty = \frac12$. The vertical part converges smoothly to $g_\infty$ under $G_\infty$, while the mixed terms tend to zero, since $dy = y\left( -\sqrt{\varepsilon}\,d\sigma + \sqrt{-1}\,d\theta \right)$. Consequently,
\begin{align*}
\left| F_T^* g_\varepsilon - g_{\operatorname{prod}} \right|_{g_{\operatorname{prod}}}
= o_T(1)
\end{align*}
uniformly on $[-L, L] \times S^1 \times \{r \ge \vartheta\}$.

The regions omitted by $G_\infty$ have uniformly small diameter. More precisely, the cone structure of $g_\infty$ near the node and the Eguchi--Hanson scaling on the nearby smooth fibers imply
\begin{align*}
\operatorname{diam}_{g_\infty}\{r \le 2\vartheta\}
+ \sup_{|y| \ll 1}
\operatorname{diam}_{g_\varepsilon|_{X_y}}\{r \le 2\vartheta\}
\le C\vartheta.
\end{align*}

We now define a correspondence between the two slabs by taking the graph of $F_T$ on $\{r \ge \vartheta\}$, and by relating the two regions $\{r \le 2\vartheta\}$ over the same base point $(\sigma, \theta)$. The preceding metric convergence on the regular region, together with the diameter estimate for the omitted regions, gives
\begin{align*}
\operatorname{dis}(\mathcal R_{T,\vartheta})
\le o_T(1) + C_L\vartheta.
\end{align*}
Hence
\begin{align*}
\limsup_{T\to\infty}\,
d_{\mathrm{GH}}\left( \{T - L \le s \le T + L\},\,
[-L, L] \times S^1 \times X_\infty \right)
\le C_L\vartheta.
\end{align*}
Letting $\vartheta \to 0$, we conclude that $g_\varepsilon$ is Gromov--Hausdorff asymptotic to $g_{\operatorname{prod}}$.

Finally, Theorem~\ref{pertubation argument} and the pointwise decay in Remark~\ref{exp decay prop} imply
\begin{align*}
|g_{\operatorname{CY},\varepsilon} - g_\varepsilon|_{g_\varepsilon}
\le C(\varepsilon)\,|y|^{\frac{1}{48}}.
\end{align*}
Therefore, on every translated slab,
\begin{align*}
\sup_{\{T - L \le s \le T + L\}}
|g_{\operatorname{CY},\varepsilon} - g_\varepsilon|_{g_\varepsilon}
\longrightarrow 0
\end{align*}
as $T \to \infty$. The identity correspondence between the same slab equipped with $g_{\operatorname{CY},\varepsilon}$ and $g_\varepsilon$ consequently has distortion tending to zero. The triangle inequality for the Gromov--Hausdorff distance now proves that $(M, g_{\operatorname{CY},\varepsilon})$ is Gromov--Hausdorff asymptotic to $\left( \mathbb{R} \times S^1 \times X_\infty,\,
ds^2 + \tfrac{1}{\varepsilon}\,d\theta^2 + g_\infty \right)$.

\end{proof}

\section{Further discussion}\label{Further discussion}

\subsection{Existence without collapsing}Theorem~\ref{main theorem} restricts attention to constructing Calabi--Yau metrics $\omega_{\operatorname{CY},\varepsilon}$ in the class $\left[\omega_X|_M\right]$ that satisfy $\omega_{\operatorname{CY},\varepsilon}^3 = \frac{3\sqrt{-1}}{\varepsilon}\,\Omega \wedge \overline{\Omega}$ with $0 < \varepsilon \ll 1$. This restriction stems solely from the lack of a satisfactory nonlinear theory for the complex Monge--Amp\`ere equation on complete manifolds with unbounded sectional curvature. There is, however, no apparent geometric obstruction to the existence of such Calabi--Yau metrics for more general values of $\varepsilon > 0$. Indeed, if one merely wishes to invert the Laplacian without tracking the dependence of the inverse's norm on $\varepsilon$, the decomposition, patching, and parametrix method developed in this paper is unnecessary. By adapting the iteration argument of \cite{haskins2015asymptotically} to the Poisson equation, one can, for any $\varepsilon > 0$, invert the Laplacian of $\omega_\varepsilon$ in a weighted H\"older space analogous to the one defined on the region $U_3$---that is, a space depending only on the distance function on $\mathbb{C}$ and on the distance to the vanishing cycle. The essential difficulty lies in establishing a priori $C^2$ estimates, for which a uniform pointwise bound on the sectional curvature is typically required. It should be pointed out that in the example considered here, although the sectional curvature is indeed unbounded, the region where this unboundedness occurs occupies only a very small portion of the manifold. The author therefore believes that, if a more robust nonlinear theory could be developed, it should be possible to show that for any $\varepsilon > 0$, the class $\left[\omega_X|_M\right]$ admits a unique complete noncompact Calabi--Yau metric $\omega_{\operatorname{CY},\varepsilon}$ satisfying $\omega_{\operatorname{CY},\varepsilon}^3 = \frac{3\sqrt{-1}}{\varepsilon}\,\Omega \wedge \overline{\Omega}$, such that near infinity $\omega_{\operatorname{CY},\varepsilon}$ and $\omega_\varepsilon$ are close in a suitable weighted H\"older space.

\subsection{More examples}The Lefschetz K3 fibrations considered in this paper are generic from the algebro-geometric viewpoint; see \cite[Proposition~2.14]{kovalev2005coassociative} and Proposition~\ref{abundance of examples}. It is nevertheless natural to expect analogous constructions for fibrations whose singular fibers have more general isolated singularities. For simplicity, suppose that every singular fiber contains at most one isolated normal singular point and that, near such a point, the fibration is locally modeled by
\begin{align*}
\left\{(z,t)\in \mathbb{C}^{3}\times\mathbb{C}:f(z)=t\right\}  \longrightarrow \mathbb{C}, \;
 (z,t)\longmapsto t,
\end{align*}
where $f$ is weighted homogeneous: 
\begin{align*}
 f\left(\lambda^{a_1}z_1,\lambda^{a_2}z_2,\lambda^{a_3}z_3\right)
 =
 \lambda^{d}f(z_1,z_2,z_3).
\end{align*}
Writing $w_i=\frac{a_i}{d}$, we restrict attention to the case $ \sum\limits_{i=1}^{3} w_i>1$, which corresponds to klt singularities.

Let $\Omega=dt\wedge\Omega_t$ near a singular fiber in the finite region, and set $ A_t\coloneqq\int_{X_t}\Omega_t\wedge\overline{\Omega_t}$. Standard Hodge-theoretic arguments then show that
\begin{equation}\label{generalized KE}
A_t = A_0 + C|t|^{2(\sum w_i-1)}+\text{higher order terms},
\end{equation}
where $A_0>0$ and $C\neq 0$.

Near each singular fiber in the finite part, one may glue the exotic model metrics constructed in \cite{Conlon2021Newexamples}   or \cite{szekelyhidi2019degenerations} to the semi-Ricci-flat metric.  Near the removed fiber, one can still work with the semi-Ricci-flat approximation. The base term continues to approximate $\frac{\sqrt{-1}}{2|y|^2}\,dy \wedge d\bar{y}$, while in the fiber directions the vanishing cycles keep shrinking as $|y|\to 0$. Consequently, the overall geometry remains close to the one analyzed here. The difference in the asymptotic geometry becomes visible only after taking a sequence of points on the vanishing cycle and passing to the pointed Gromov--Hausdorff limit.

In this setting, one no longer has an explicit model analogous to the Stenzel metric, so the geometric analysis of the metric may become more difficult. The harmonic analysis near the model metric may also require minor adjustments. Nevertheless, the overall strategy should remain consistent with the case treated in this paper. The author therefore believes that the example studied here is not an isolated phenomenon, but rather one instance of a more general picture. With additional effort, the construction may also extend to higher-dimensional fibrations.

\appendix

\section{Abundance of examples}\label{Abundance of examples}

We give a concrete construction of a threefold satisfying the geometric hypotheses of Theorem~\ref{main theorem}. For simplicity, we consider a pencil of quartic surfaces in $\mathbb{P}^3$.

\begin{proposition}\label{abundance of examples}
Let $V\coloneqq H^0\left(\mathbb{P}^3,\mathcal{O}_{\mathbb{P}^3}(4)\right)$, so that $\mathbb{P}(V)\cong\mathbb{P}^{34}$ parametrizes quartic surfaces in
$\mathbb{P}^3$. Choose two sufficiently general quartic forms
$Q_0,Q_1\in V$, and let $L\coloneqq\left\{[sQ_0+tQ_1]:[s:t]\in\mathbb{P}^1\right\}\subset \mathbb{P}(V)$ be the corresponding pencil. Let $C\coloneqq V(Q_0)\cap V(Q_1)\subset\mathbb{P}^3$ be its base locus, and let $\sigma:X\coloneqq \operatorname{Bl}_C(\mathbb{P}^3)\longrightarrow\mathbb{P}^3$ be the blow-up along $C$. Then the pencil induces a morphism $\pi:X\longrightarrow\mathbb{P}^1$ with the following properties:
\begin{enumerate}
    \item $X$ is a smooth projective threefold and $\chi(X)=-60$;
 
    \item a general fiber of $\pi$ is a smooth K3 surface;
    
    \item $K_X=-[F]$, where $F$ denotes a fiber;
    
    \item $\pi$ is a Lefschetz fibration, and every singular fiber contains exactly one ordinary double point;
    
    \item $\pi$ has precisely $108$ singular fibers.
    
\end{enumerate}

\end{proposition}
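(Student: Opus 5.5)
The plan is to treat the five items in turn, using standard facts about blow-ups of smooth complete-intersection curves and pencils of hypersurfaces.

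First I would establish smoothness of the base locus. For general $Q_0,Q_1$, Bertini gives that $C=V(Q_0)\cap V(Q_1)$ is a smooth complete intersection curve of type $(4,4)$. Then $X=\operatorname{Bl}_C\mathbb{P}^3$ is smooth projective. Moreover $X$ is the graph $\{(x,[s:t]) : tQ_0(x)=sQ_1(x)\}\subset\mathbb{P}^3\times\mathbb{P}^1$, and $\pi$ is the second projection.

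The fibers of $\pi$ are isomorphic to the quartics $V(sQ_0+tQ_1)$. The reason is that the blow-up of a smooth curve inside a surface containing it is an isomorphism on that surface. Hence a general fiber is a smooth quartic, i.e.\ a K3 surface, which gives (2).

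For (3), I would use the blow-up formula $K_X=\sigma^*K_{\mathbb{P}^3}+E=-4H+E$, where $H=\sigma^*\mathcal{O}(1)$. Since the strict transform of a quartic through $C$ is a fiber, we have $F=4H-E$, so $K_X=-F$.

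For (1), the Euler characteristic follows from $\chi(X)=\chi(\mathbb{P}^3)+\chi(C)$, since the exceptional divisor $E$ is a $\mathbb{P}^1$-bundle over $C$. Adjunction gives $K_C=\mathcal{O}_C(4)$, so $2g-2=\deg K_C=4\cdot16=64$. Hence $\chi(C)=-64$ and $\chi(X)=4-64=-60$.

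Item (4) is the main obstacle, since it requires genericity of the pencil. The discriminant $\Delta\subset\mathbb{P}(V)$ is an irreducible hypersurface. Its general point is a quartic with a single ordinary node, and its singular locus has codimension at least $2$; the latter consists of quartics with two or more singular points or with worse singularities such as cusps. A general line $L$ therefore meets $\Delta$ transversally at smooth points and avoids $\operatorname{Sing}\Delta$, so every singular member has exactly one ordinary double point. I would also check two further points. The node must avoid $C$: the locus of pencils whose singular member has its singular point on the base curve has codimension at least $1$ in the Grassmannian of lines. Locally at the node, $\pi$ is then given by $t=z_1^2+z_2^2+z_3^2$, and transversality of $L$ to $\Delta$ is exactly the condition that $t$ is a Morse coordinate there. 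The remaining claim, that a general pencil imposes transversality, is a dimension count on the incidence variety $\{(L,x): x\in\operatorname{Sing}(L\text{-member})\}$.

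Finally, for (5) I would count singular fibers topologically: $\chi(X)=\chi(\mathbb{P}^1)\chi(F)+\sum(\chi(X_s)-\chi(F))$. A nodal fiber has Euler characteristic $\chi(F)-1$ (one vanishing cycle). With $\chi(F)=24$ this gives $-60=48-N$, so $N=108$. As a cross-check, $N$ also equals the degree of the discriminant, $4(4-1)^3=108$.
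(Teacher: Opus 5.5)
Your argument is correct. For items (1)--(4) it is essentially the paper's proof: the same blow-up formulas, the same computation $\chi(X)=\chi(\mathbb{P}^3)+\chi(C)$, and the same genericity argument for a line in $\mathbb{P}(V)$ against the discriminant $\Delta$. One difference there: you describe $X$ as the $(4,1)$ hypersurface $\{tQ_0=sQ_1\}\subset\mathbb{P}^3\times\mathbb{P}^1$, which makes the identification of fibers with quartics immediate, whereas the paper uses that $C$ is Cartier on every member. Your general claim about blowing up a curve inside a surface needs exactly this Cartier property; it holds here because $C$ is cut out on each member by another member of the pencil.

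The genuine difference is in (5). The paper counts singular fibers as $\#(L\cap\Delta)=L\cdot\Delta=\deg\Delta=4\cdot 3^3=108$. This comes directly from the transversality established in (4), but it needs the classical degree formula for the discriminant. You instead use $\chi(X)=\chi(\mathbb{P}^1)\chi(F)-N$: a fiber with a single ordinary node has $\chi=23$ (a vanishing $2$-sphere collapsed to a point), so $-60=48-N$. Your route needs no input about $\deg\Delta$ and makes (5) a formal consequence of (1) and (4), in effect recomputing $\deg\Delta$ for quartic surfaces. The paper's route does not depend on the Euler characteristic computation, so the two together give a nontrivial consistency check on $\chi(X)=-60$.

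One small sharpening. That no node lies on $C$ is not merely a codimension-one condition on the pencil; it is automatic once $C$ is smooth. A singular point $p\in C$ of $sQ_0+tQ_1$ would force $s\,dQ_0(p)+t\,dQ_1(p)=0$, contradicting the linear independence of $dQ_0,dQ_1$ along $C$. This is how the paper argues.
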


\begin{proof}

Since $Q_0$ and $Q_1$ are sufficiently general, their zero loci intersect
transversely. Thus $C$ is a smooth complete-intersection curve of type
$(4,4)$. In particular, $\deg C=16$, $2g(C)-2=\deg K_C
    =\deg\left(\mathcal{O}_C(4)\right)=64$, and hence $g(C)=33$ and $\chi(C)=-64$. Since the center of the blow-up is
smooth, $X$ is smooth and projective. If $E=\mathbb{P}\left(N_{C/\mathbb{P}^3}\right)$ denotes the exceptional divisor, then $E$ is a $\mathbb{P}^1$-bundle over
$C$, so
\begin{align*}
\chi(X)=\chi(\mathbb{P}^3)-\chi(C)+\chi(E)=4-\chi(C)+2\chi(C)=-60.    
\end{align*}

The rational map $ [Q_1:-Q_0]:\mathbb{P}^3\dashrightarrow\mathbb{P}^1$ has indeterminacy locus precisely $C$ and is resolved by $\sigma$, giving the morphism $\pi$. Its fiber over $[s:t]$ is the strict transform of $S_{[s:t]}\coloneqq   V(sQ_0+tQ_1)\subset\mathbb{P}^3$. Since $C$ is a Cartier divisor on every member of the pencil, blowing up $S_{[s:t]}$ along $C$ does not change it. Hence $\pi^{-1}([s:t])\cong S_{[s:t]}$. For general $[s:t]$, the surface $S_{[s:t]}$ is a smooth quartic surface, and therefore a K3 surface by adjunction.

Let $\widetilde H=\sigma^*H$, where $H$ is the hyperplane class on
$\mathbb{P}^3$. The blow-up formula gives
\begin{align*}
    K_X=\sigma^*K_{\mathbb{P}^3}+E=-4\widetilde H+E,
\end{align*}
whereas the class of a fiber is
\begin{align*}
    [F]=4\widetilde H-E.
\end{align*}
Thus $K_X=-[F]$.

It remains to describe the singular fibers. Let $\Delta\subset\mathbb{P}(V)$ be the discriminant hypersurface parametrizing singular quartic surfaces. Consider the incidence variety $$\mathcal{I}\coloneqq \left\{([Q],p)\in\mathbb{P}(V)\times\mathbb{P}^3:     dQ(p)=0\right\}.$$ For a fixed $p$, the condition $dQ(p)=0$ imposes four independent linear conditions on $Q$; Euler's identity then implies $Q(p)=0$. Thus $\mathcal{I}$ is a projective bundle over $\mathbb{P}^3$, and is therefore irreducible of dimension $33$. Its image under the first projection is precisely $\Delta$, so $\Delta$ is an irreducible hypersurface.

The discriminant of degree-$d$ hypersurfaces in $\mathbb{P}^n$ has degree $(n+1)(d-1)^n$. Hence, for quartic surfaces in $\mathbb{P}^3$, $\deg\Delta=4(4-1)^3=108$. A point of $\Delta$ is smooth precisely when the corresponding quartic has a unique singular point and the Hessian at that point is nondegenerate, namely, when the singularity is an ordinary double point.
Quartics with at least two singular points, or with degenerate Hessian at
a singular point, form subvarieties of codimension at least two in $\mathbb{P}(V)$.

 Consequently, a general line $L\subset\mathbb{P}(V)$ avoids
$\operatorname{Sing}(\Delta)$ and meets $\Delta$ transversely. It follows
that every singular member of the pencil has exactly one node and that
\begin{align*}
    \#(L\cap\Delta)=L\cdot\Delta=108.
\end{align*}
Notice also that no singular point of a pencil member lies on $C$: indeed,
$dQ_0$ and $dQ_1$ are linearly independent along $C$, since $C$ is smooth.
Thus the blow-up does not modify the nodal points. Transversality of
$L$ to $\Delta$ implies that the corresponding critical points of $\pi$
are nondegenerate; equivalently, by the holomorphic Morse lemma, $\pi$ is
locally of the form
\begin{align*}
    (z_1,z_2,z_3)\longmapsto z_1^2+z_2^2+z_3^2.
\end{align*}
Hence $\pi$ is a Lefschetz fibration with exactly $108$ singular fibers.

\end{proof}

\bibliographystyle{plain}
\bibliography{reference}

@article{tian1990complete,
  title={Complete {K}{\"a}hler manifolds with zero {R}icci curvature {I}},
  author={Tian, G. and Yau, S.T.},
  journal={Journal of the American Mathematical Society},
  volume={3},
  number={3},
  pages={579--609},
  year={1990},
  publisher={JSTOR}
}

@article{yau1978ricci,
  title={On the {R}icci curvature of a compact {K}{\"a}hler manifold and the complex {M}onge-{A}mp{\`e}re equation, {I}},
  author={Yau, S.T.},
  journal={Communications on Pure and Applied Mathematics},
  volume={31},
  number={3},
  pages={339--411},
  year={1978},
  publisher={Wiley Online Library}
}

@book{hein2010gravitational,
  title={On gravitational instantons},
  author={Hein, H.-J.},
  year={2010},
  publisher={Princeton University}
}

@article{wang2022ricci,
  title={Ricci-flat manifolds of generalized {ALG} asymptotics},
  author={Wang, Y.},
  journal={arXiv:2212.11267},
  year={2022}
}

@article{haskins2015asymptotically,
  title={Asymptotically cylindrical {C}alabi--{Y}au manifolds},
  author={Haskins, M. and Hein, H-J. and Nordstr{\"o}m, J.},
  journal={Journal of Differential Geometry},
  volume={101},
  number={2},
  pages={213--265},
  year={2015},
  publisher={Lehigh University}
}

@article{li2019gluing,
  title={A gluing construction of collapsing {C}alabi--{Y}au metrics on {K}3 fibred 3-folds},
  author={Li, Y.},
  journal={Geometric and Functional Analysis},
  volume={29},
  number={4},
  pages={1002--1047},
  year={2019},
  publisher={Springer}
}

@article{spotti2014deformations,
  title={Deformations of nodal {K}{\"a}hler--{E}instein {D}el {P}ezzo surfaces with discrete automorphism groups},
  author={Spotti, C.},
  journal={Journal of the London Mathematical Society},
  volume={89},
  number={2},
  pages={539--558},
  year={2014},
  publisher={Oxford University Press}
}

@article{li2019new,
  title={A new complete {C}alabi--{Y}au metric on {$\mathbb{C}^3$}},
  author={Li, Y.},
  journal={Inventiones Mathematicae},
  volume={217},
  number={1},
  pages={1--34},
  year={2019},
  publisher={Springer}
}

@book{han2011elliptic,
  title={Elliptic partial differential equations},
  author={Han, Q. and Lin, F.},
  volume={1},
  year={2011},
  publisher={American Mathematical Soc.}
}

@article{szekelyhidi2019degenerations,
  title={Degenerations of {$\mathbb{C}^n$} and {C}alabi--{Y}au metrics},
  author={Sz{\'e}kelyhidi, G.},
  journal={Duke Mathematical Journal},
  volume={168},
  number={14},
  pages={2651--2700},
  year={2019},
  publisher={Duke University Press}
}

@article{lockhart1985elliptic,
  title={Elliptic differential operators on noncompact manifolds},
  author={Lockhart, R.B. and McOwen, R.C.},
  journal={Annali della Scuola Normale Superiore di Pisa-Classe di Scienze},
  volume={12},
  number={3},
  pages={409--447},
  year={1985}
}

@article{gibbons1978gravitational,
  title={Gravitational multi-instantons},
  author={Gibbons, G.W. and Hawking, S.W.},
  journal={Physics Letters B},
  volume={78},
  number={4},
  pages={430--432},
  year={1978},
  publisher={Elsevier}
}

@article{liang2025complete,
  title={Complete {C}alabi-{Y}au metrics on noncompact abelian fibered threefolds},
  author={Liang, R. and Zhang, Y.},
  journal={arXiv:2501.15205},
  year={2025}
}

@article{anderson1989complete,
  title={Complete {R}icci-flat {K}{\"a}hler manifolds of infinite topological type},
  author={Anderson, M.T. and Kronheimer, P.B. and LeBrun, C.},
  journal={Communications in Mathematical Physics},
  volume={125},
  number={4},
  pages={637--642},
  year={1989},
  publisher={Springer}
}

@article{gross2000large,
  title={Large complex structure limits of {K}3 surfaces},
  author={Gross, M. and Wilson, P.M.H.},
  journal={Journal of Differential Geometry},
  volume={55},
  number={3},
  pages={475--546},
  year={2000},
  publisher={Lehigh University}
}

@article{kovalev2005coassociative,
  title={Coassociative {K}3 fibrations of compact {$G_2$}-manifolds},
  author={Kovalev, A.},
  journal={arXiv:math/0511150},
  year={2005}
}

@article{yan2025uniqueness,
  title={Uniqueness of the asymptotic limits for {R}icci-flat manifolds with linear volume growth},
  author={Yan, Z. and Zhu, X.},
  journal={arXiv:2510.00420},
  year={2025}
}

@article{eyssidieux2009singular,
  title={Singular {K}{\"a}hler-{E}instein metrics},
  author={Eyssidieux, P. and Guedj, V. and Zeriahi, A.},
  journal={Journal of the American Mathematical Society},
  volume={22},
  number={3},
  pages={607--639},
  year={2009}
}

@incollection {donaldson2010calabi,
    AUTHOR = {Donaldson, S. K.},
     TITLE = {Calabi-{Y}au metrics on {K}ummer surfaces as a model gluing
              problem},
 BOOKTITLE = {Advances in geometric analysis},
    SERIES = {Adv. Lect. Math. (ALM)},
    VOLUME = {21},
     PAGES = {109--118},
 PUBLISHER = {Int. Press, Somerville, MA},
      YEAR = {2012},
      ISBN = {978-1-57146-248-0},
   MRCLASS = {32Q25 (14J28 14J32 32J15 53C55 58J05)},
  MRNUMBER = {3077251},
MRREVIEWER = {Sergiy\ Koshkin},
       DOI = {10.14219/jada.archive.2012.0111},
       URL = {https://doi.org/10.14219/jada.archive.2012.0111},
}

@article {Biquard2011Kummer,
    AUTHOR = {Biquard, O. and Minerbe, V.},
     TITLE = {A {K}ummer construction for gravitational instantons},
   JOURNAL = {Communications in Mathematical Physics},
  FJOURNAL = {Communications in Mathematical Physics},
    VOLUME = {308},
      YEAR = {2011},
    NUMBER = {3},
     PAGES = {773--794},
      ISSN = {0010-3616,1432-0916},
   MRCLASS = {53C26 (53C80)},
  MRNUMBER = {2855540},
MRREVIEWER = {Derek\ G.\ Harland},
       DOI = {10.1007/s00220-011-1366-y},
       URL = {https://doi.org/10.1007/s00220-011-1366-y},
}

@article {Conlon2021Newexamples,
    AUTHOR = {Conlon, R. J. and Rochon, F.},
     TITLE = {New examples of complete {C}alabi-{Y}au metrics on {$\mathbb
              C^n$} for {$n \geq 3$}},
   JOURNAL = {Annales Scientifiques de l'\'Ecole Normale Sup\'erieure.
              Quatri\`eme S\'erie (4)},
  FJOURNAL = {Annales Scientifiques de l'\'Ecole Normale Sup\'erieure.
              Quatri\`eme S\'erie},
    VOLUME = {54},
      YEAR = {2021},
    NUMBER = {2},
     PAGES = {259--303},
      ISSN = {0012-9593,1873-2151},
   MRCLASS = {32Q25 (14J32 53C55)},
  MRNUMBER = {4258163},
       DOI = {10.24033/asens.2459},
       URL = {https://doi.org/10.24033/asens.2459},
}

@article {Li2021Oncollapsing,
    AUTHOR = {Li, Y.},
     TITLE = {On collapsing {C}alabi-{Y}au fibrations},
   JOURNAL = {Journal of Differential Geometry},
  FJOURNAL = {Journal of Differential Geometry},
    VOLUME = {117},
      YEAR = {2021},
    NUMBER = {3},
     PAGES = {451--483},
      ISSN = {0022-040X,1945-743X},
   MRCLASS = {53C55 (32Q25 53C25)},
  MRNUMBER = {4255068},
MRREVIEWER = {Yanir\ A.\ Rubinstein},
       DOI = {10.4310/jdg/1615487004},
       URL = {https://doi.org/10.4310/jdg/1615487004},
}

@incollection {Li1980Estimates,
    AUTHOR = {Li, P. and Yau, S. T.},
     TITLE = {Estimates of eigenvalues of a compact {R}iemannian manifold},
 BOOKTITLE = {Proceedings of the Symposium in Pure Mathematics},
      VOLUME = {XXXVI},
     PAGES = {205--239},
 PUBLISHER = {American Mathematical Society, Providence, RI},
      YEAR = {1980},
      ISBN = {0-8218-1439-7},
   MRCLASS = {58G25 (53C20)},
  MRNUMBER = {573435},
MRREVIEWER = {P.\ G\"unther},
}


\end{document}